\documentclass[english]{amsart}
\pdfoutput=1

\usepackage{graphicx}
\usepackage{amsmath,amsfonts,amsthm}
\usepackage{amssymb}
\usepackage{multirow,array}
\usepackage{comment}
\usepackage{enumitem}
\usepackage[utf8]{inputenc}
\usepackage[T1]{fontenc}
\usepackage{emptypage}
\usepackage{hyperref}
\usepackage{color}
\usepackage{mathrsfs}
\usepackage{url}
%
%


\usepackage{mathrsfs}%
\usepackage[T1]{fontenc}
\usepackage{marvosym}
\usepackage{babel}
\usepackage{amsmath,amsfonts,amsthm}%
\usepackage{amsmath}%
\setcounter{MaxMatrixCols}{30}%
\usepackage{amsfonts}%
\usepackage{amssymb}%
\usepackage{graphicx}

\usepackage{shuffle}

\usepackage{mathrsfs}

\usepackage{tikz}
\usepackage{mhequ}

\usepackage{geometry}

\usetikzlibrary{arrows,backgrounds}

\usetikzlibrary{decorations}
\usetikzlibrary{positioning}
\usetikzlibrary{shapes}

\tikzset{
root/.style={circle,fill=black!50,inner sep=0pt, minimum size=3mm},
        dot/.style={circle,fill=black,inner sep=0pt, minimum size=1.5mm},
         bluedot/.style={circle,fill=blue,inner sep=0pt, minimum size=1.5mm},
        reddot/.style={circle,fill=red,inner sep=0pt, minimum size=1.5mm},
        var/.style={circle,fill=black!10,draw=black,inner sep=0pt, minimum size=3mm},
        kernel/.style={semithick,shorten >=2pt,shorten <=2pt},
        kernel1/.style={thick},
        kernels/.style={snake=zigzag,shorten >=2pt,shorten <=2pt,segment amplitude=1pt,segment length=4pt,line before snake=2pt,line after snake=5pt,},
		kernels1/.style={snake=zigzag,segment amplitude=0.5pt,segment length=2pt},
		rho1/.style={dotted,semithick},
        rho/.style={densely dashed,semithick,shorten >=2pt,shorten <=2pt},
           testfcn/.style={dotted,semithick,shorten >=2pt,shorten <=2pt},
        renorm/.style={shape=circle,fill=white,inner sep=1pt},
        labl/.style={shape=rectangle,fill=white,inner sep=1pt},
        xic/.style={very thin,circle,fill=symbols,draw=black,inner sep=0pt,minimum size=1.2mm},
        xi/.style={very thin,circle,fill=blue!10,draw=black,inner sep=0pt,minimum size=1.2mm},
        xix/.style={crosscircle,fill=blue!10,draw=black,inner sep=0pt,minimum size=1.2mm},
	xib/.style={very thin,circle,fill=blue!10,draw=black,inner sep=0pt,minimum size=1.6mm},
	xie/.style={very thin,circle,fill=green!50!black,draw=black,inner sep=0pt,minimum size=1mm},
	xid/.style={very thin,circle,fill=symbols,draw=black,inner sep=0pt,minimum size=1.6mm},
	xibx/.style={crosscircle,fill=blue!10,draw=black,inner sep=0pt,minimum size=1.6mm},
	edgetype/.style={very thin,circle,draw=black,inner sep=0pt,minimum size=5mm},
	nodetype/.style={very thick,circle,draw=black,inner sep=0pt,minimum size=5mm},
	kernels2/.style={very thick,draw=connection,segment length=12pt},
clean/.style={thin,circle,fill=black,inner sep=0pt,minimum size=1mm},	 not/.style={thin,circle,fill=symbols,draw=connection,fill=connection,inner sep=0pt,minimum size=0.5mm},
	>=stealth,
        }

\setlength{\parindent}{0cm}

\newcommand{\vep}{\varepsilon}
\newcommand{\tX}{\tilde{X}}
\newcommand{\tY}{\tilde{Y}}
\newcommand{\tBX}{\tilde{\mathbf{X}}}

\newcommand{\MP}{\mathcal{P}}
\newcommand{\tx}{\tilde{x}}
\newcommand{\tZ}{\tilde{Z}}
\newcommand{\MH}{\mathcal{H}}
\newcommand{\MF}{\mathcal{F}}
\newcommand{\MT}{\mathcal{T}}
\newcommand{\MM}{\mathcal{M}}

\newcommand{\ML}{\mathcal{L}}
\newcommand{\ty}{\tilde{y}}

\newcommand{\MV}{\mathcal{V}}
\newcommand{\tBY}{\tilde{\mathbf{Y}}}
\newcommand{\tBZ}{\tilde{\mathbf{Z}}}

\newtheorem{thm}{Theorem}[section]

\newtheorem{lem}[thm]{Lemma}
\newtheorem{coro}[thm]{Corollary}
\newtheorem{rem}[thm]{Remark}
\newtheorem{prop}[thm]{Proposition}
\newtheorem{example}[thm]{Example}
\newtheorem{defi}[thm]{Definition}

\setcounter{tocdepth}{3}

\let\oldtocsection=\tocsection

\let\oldtocsubsection=\tocsubsection

\let\oldtocsubsubsection=\tocsubsubsection

\renewcommand{\tocsection}[2]{\hspace{0em}\oldtocsection{#1}{#2}}
\renewcommand{\tocsubsection}[2]{\hspace{1em}\oldtocsubsection{#1}{#2}}
\renewcommand{\tocsubsubsection}[2]{\hspace{2em}\oldtocsubsubsection{#1}{#2}}

\begin{document}

\title{Differential equations driven by rough paths with jumps}

\author{Peter K. Friz}
\address{P.K. Friz, Institut f\"ur Mathematik, Technische Universit\"at Berlin, and Weierstra\ss --Institut f\"ur Angewandte Analysis und Stochastik, Berlin, Germany}
\email{friz@math.tu-berlin.de (corresponding author)}

\author{Huilin Zhang}
\address{H. Zhang, Institut f\"ur Mathematik, Technische Universit\"at Berlin, and Institute of Mathematics, Shandong University, Jinan, China}
\email{huilinzhang2014@gmail.com}

\subjclass[2010]{Primary 60H99; Secondary 60H10}

\keywords{Rough paths with jumps, It\^o stochastic and rough differential equations with jumps, general semimartingales, limit theorems}

\begin{abstract}
We develop the rough path counterpart of It\^o stochastic integration and - differential equations driven by general semimartingales. This significantly enlarges the classes of (It\^o / forward) stochastic differential equations treatable with pathwise methods. A number of applications are discussed.
\end{abstract}

\maketitle

\tableofcontents

\newcommand{\HC}{\mathcal {H}_\mathbb{C}}
\newcommand{\OHEC}{(\Omega,\,\HC,\,\EC)}
\newcommand{\E}{\mathbb{E}}
\newcommand{\R}{\mathbb{R}}
\newcommand{\OHE}{(\Omega,\,\mathcal{H},\,\E)}
\newcommand{\C}{\mathbb{C}}
\newcommand{\X}{\mathbb{X}}
\newcommand{\BX}{\mathbf{X}}
\newcommand{\K}{\mathbb{K}}
\newcommand{\oc}{\mathcal {C}}
\newcommand{\FC}{\mathscr{C}}
\newcommand{\B}{\mathbb{B}}
\newcommand{\BB}{\mathbf{B}}
\newcommand{\op}{\mathcal{P}}
\newcommand{\FD}{\mathscr{D}}
\newcommand{\oq}{\mathcal{Q}}
\newcommand{\oor}{\mathcal {R}}
\newcommand{\hc}{\hat{c}}
\newcommand{\BI}{\mathbf{1}}
\newcommand{\BZ}{\mathbf{Z}}
\newcommand{\I}{\mathcal{I}}
\newcommand{\Z}{\mathbb{Z}}
\newcommand{\bx}{\mathbf{x}}
\newcommand{\bz}{\mathbf{z}}
\newcommand{\s}{\mathbb{S}}
\newcommand{\A}{\mathbb{A}}
\newcommand{\LL}{\mathbb{L}}
\newcommand{\tiop}{\tilde{\mathcal{P}}}
\newcommand{\BBI}{\mathbf{I}}
\newcommand{\BY}{\mathbf{Y}}
\newcommand{\vertiii}[1]{{\left\vert\kern-0.25ex\left\vert\kern-0.25ex\left\vert #1
    \right\vert\kern-0.25ex\right\vert\kern-0.25ex\right\vert}}

\newcommand{\ru}[2]{\begin{tikzpicture}[scale=0.15,baseline=0.1cm]
        \node at (0,0)  [dot,label= {[label distance=-0.2em]below: \scriptsize  $ #1 $} ] (root) {};
         \node at (0,2)  [dot,label={[label distance=-0.2em]above: \scriptsize  $ #2 $}]  (up) {};
            \draw[kernel1] (root) to
     node {}  (up);
     \end{tikzpicture}}

\newcommand{\rlr}[3]{\begin{tikzpicture}[scale=0.15,baseline=0.1cm]
        \node at (0,0)  [dot,label= {[label distance=-0.2em]below: \scriptsize  $ #1 $} ] (root) {};
         \node at (1,2)  [dot,label={[label distance=-0.2em]above: \scriptsize  $ #2 $}]  (right) {};
         \node at (-1,2)  [dot,label={[label distance=-0.2em]above: \scriptsize  $ #3 $} ] (left) {};
            \draw[kernel1] (right) to
     node [sloped,below] {\small }     (root); \draw[kernel1] (left) to
     node [sloped,below] {\small }     (root);
     \end{tikzpicture}}

\newcommand{\ruu}{\begin{tikzpicture}[scale=0.25,baseline=0.1cm]
        \node at (0,-1)  [dot,label= {[label distance=-0.2em]right: \scriptsize  $ i_1 $} ] (root) {};
         \node at (0,1)  [dot,label={[label distance=-0.2em]right: \scriptsize  $ i_2 $}]  (u) {};
         \node at (0,3)  [dot,label={[label distance=-0.2em]right: \scriptsize  $ i_{n} $} ] (uu) {};
            \draw[kernel1] (u) to
     node [below] {\small }     (root); \draw[dotted,thick] (u) to
     node [below] {\small }     (uu);
     \end{tikzpicture}}


\setcounter{section}{-1}
\section{Introduction and notation}

In many areas of engineering, finance and mathematics 
one encounters equations of the form
\begin{equation} \label{equ:ODEintro}
dy_t = f(y_t) dx_t \ ,
\end{equation}
where $x$ is a multi-dimensional driving signal, $f$ a collection of nice driving vector fields.\footnote{At the price of replacing $f$ by $(f_0,f)$ and $x$ by $(t,x)$ formulation (\ref{equ:ODEintro}) immediately allows for a drift term.}
 For $x \in C^1$, this can be written as time-inhomogenous ODE
of the form $\dot y (t) = f(y(t)) \dot x(t)$ and there is no ambiguity in its interpretation. This is still the case for rectifiable drivers,
$$ x \in C^{1-var} \equiv C \cap V^1, $$
 i.e. continuous paths of locally finite $1$-variation, say on $[0,T]$, in which case there is perfect meaning to the (Riemann-Stieltjes) integral equation
\begin{equation} \label{equ:intODEintro}
   y_t = y(0) + \int_0^t f(y_s) dx_s \ .
\end{equation}
For $x \notin C^{1-var}$, it is helpful to distinguish between low regularity (e.g. $x \in C^{p-var}$ for some $p>1$) and lack of continuity (e.g. $x \in V^1$) before tackling the case of general (possibly discontinuous) driver with finite $p$-variation, i.e. $$x \in V^p, $$ for arbitrary $p< \infty$.
The first case, $x \in C^{p-var}$ includes the important class of {\it continuous semimartingales} (with $p>2$), and here already one one encounters a fundamental ambiguity in the interpretation of
(\ref{equ:ODEintro}), with It\^o- and Stratonovich interpretation being the most popular choices. Without semimartingale (or in fact: any probabilistic) structure, {\it rough paths}  \cite{Lyo98} provide a
satisfactory substitute, that deals with all $p < \infty$, provided $x \in \mathbf{C}_g^p$, the space of {\it continuous, geometric} $p$-rough paths: any ambiguity is then resolved by the additional information
contained in $x$, by the very nature of a rough path. (Probability is still used to construct a - random - rough path over some given stochastic process, see e.g. \cite{FV10}.)

\bigskip

A different phenomena, purely deterministic (and unrelated to rough path considerations), arises when one drops continuity of the driving signal, even in the case of finite $1$-variation. To wit, take $x \in D^1 \equiv D \cap V^1$, where $D$ denotes the space of c\`adl\`ag paths (so that $dx$ can be interpreted as Lebesgue-Stieltjes measure). We may consider the possible integral equations
\begin{eqnarray}
                   y_t = y_0+ \int_0^t f(y_s) dx_s \ ,  \label{equ:sillySDE} \\
                   y_t = y_0 + \int_0^t f(y^-_s) dx_s \ , \label{equ:fwdSDE} \\
		  y_t = y_0 + \int_0^t f(y) \diamond dx_s \ , \label{equ:geoSDE}
\end{eqnarray}
with the meaning of (\ref{equ:geoSDE}) best desribed by words: ``replace every jump $\Delta x_s = x_s - x_{s-}$ by a straight line over some artificial extra time interval and solve the resulting continuous differential equations; finally disregard all extra time for the solution'' (this is {\it Marcus' canoncial solution} in the finite variation context; it has the advantage of preserving the chain-rule.) The first equation (\ref{equ:sillySDE}) deserves no further attention, as in general it does not admit solutions (a simple counter-example is given in Section \ref{sec:BVODEs}). So we are left with
(\ref{equ:fwdSDE}), (\ref{equ:geoSDE}), solutions to which have been called \cite{Wil01} {\it forward} and {\it geometric} respectively.\footnote{In absence of jumps forward and geometric solutions coincide.} We also note that the (to stochastic analysts familiar) structure of (\ref{equ:fwdSDE}), with c\`agl\`ad integrand and c\`adl\`ag integrator, is by no means necessary and it is only consequent in a purely deterministic development to discard of this assumption: it is enough to have $x \in V^1$ to study a generalization of (\ref{equ:fwdSDE}), based on a {\it $\ell$eft-point} Riemann-Stieltjes integral, which we write as
\begin{equation}
     y_t = y_0 + \int_0^t f(y_s)^\ell dx_s \ . \label{equ:fwdellSDE} \\
\end{equation}

One can go beyond $1$-variation regularity by suitable rough path considerations. At least conceptually, this is easier in the geometric case, driven by {\it c\`adl\`ag, geometric} $p$-rough paths
$$
       x \in \mathbf{D}_g^p \equiv \mathbf{D}_g \cap \mathbf{V}_g^p \ ,
$$
for preservation of the chain rule implies that the algebraic setting of geometric rough path is still suitable. Without going in full detail,
$\mathbf{D}_g^ p $ 
here is the space of c\`adl\`ag path on $[0,T]$ with values in the step-$[p]$ niltpotent group (over $\R^d)$, $\mathbf{V}_g^p$ is the space of path in the same group, of finite $p$-variation w.r.t. the Carnot-Caratheodory distance \cite{FV10,FH14}). We also note that ``c\`adl\`ag'' is little more than a convention here, since the solution theory - in the spirit of Marcus' canonical solutions - effectively relies on the continuous theory. Evenso, it is a subtle matter to identify the correct rough path metrics of such geometric solutions (as was recently seen \cite{CF17x}, a $p$-variation rough path variants of the Skorohod M1 metric) and its applicability to general semimartingale driver, in the spirit of \cite{KPP95}.  A fairly complete study of this ``geometric theory'' is carried out in the compagnion paper \cite{CF17x}.

\bigskip

The ``forward theory'' on the other hand, topic of the present paper, comes with differents challenges. First, the lack of a chain-rule makes it impossible to work in the geometric rough path setting. For $p \in [2,3)$ this just means that the state space can be identified with $\R^d \oplus ( \R^{d} )^{\otimes 2} $, dropping the well-known geometricity (a.k.a. ``first order calculus'') condition \cite[(2.5)]{FH14}. For general $p < \infty$, however, we need to use the full Hopf algebraic formalism of {\it branched rough paths}. 
That is,
$$
      x \in \mathbf{V}^p = \mathbf{V}^p ([0,T],G_{[p]}(\MH^*)) \ ,
$$
the space of path on $[0,T]$ with values in the step-$[p]$ Butcher group, again with a suitable $p$-variation condition. For example, as pointed out in the works of Gubinelli and Hairer--Kelly, \cite{Gub10, HK15}, the seemingly suitable state space $\R^d \oplus ( \R^{d} )^{\otimes 2} \oplus ( \R^{d} )^{\otimes 3}$ is {\it not} sufficient to understand level-$3$ rough paths, interacting with non-linear differential equations, in absence of a chain-rule. The correct state space, in the general case, was understood in \cite{Gub10}, together with a description in terms of trees, rather then  words ($\leftrightarrow$ ``linear trees'', tensors) over $\{ 1, ... , d\}$, hence the terminology ``branched''.
(We have organised Sections 2-4 in a way that allows to bypass the complexity of branched rough paths, simply by skipping Section 4, at the price of accepting the roughness restriction $p<3$.)
Secondly, the analysis, starting with rough integration, has to be carried out from first principles, i.e. cannot be reduced to the well-developed continuous case. To this end, we develop some variants of the ``sewing lemma'' (e.g. \cite[Ch. 4]{FH14} and the references therein) with non-continuous a.k.a. {\it non-regular controls}  - which in turn offers (pathwise) expansions of integrals and solutions to (possibly stochastic) differential equations. 
This also implies that our notion of solution delicately depends on the fine-structure of the jumps (left-right limits exist everywhere). On a technical level, the presence of (big) jumps puts an a priori stop to
the interval on which Picard iteration can be used to construct maximal solutions, this is dealt with by treating (finitely many) big jumps by hand. Our analysis comes with stability estimates in terms
 of $p$-variation rough path metrics, which readily can be framed in (purely deteministic) limit theorems with respect to $p$-variation rough path variants of the Skorohod J1 metric. An immediate application then concerns discrete approximations (higher order Euler or Milstein-type schemes) for rough differential equations, underpinning the numerical nature of the entire theory, as previously done by Davie
 \cite{Dav07}, $2 \le p < 3$ and then \cite{FV10}, any $p< \infty$. As will be seen (Section \ref{sec:discrete}) all this follows immediately from the results of this paper,
 and then of course in greater generality, allowing for jumps in the limiting rough paths.

 \bigskip

The final part of this paper (Section \ref{sec:SDE}) is devoted to the case of {\it random} $p$-rough paths. General (c\`adl\`ag) semimartingales, with their It\^o-lift, are seen to give rise to such random rough paths, and important estimates
 such as the BDG inequality remain valid for the homogenous $p$-rough path norm. This allows us to finally reconcile the ``general'' theory of (c\`adl\`ag) semimartingales and their It\^o theory, with a matching
 theory of general (and then c\`adl\`ag) rough path. The abstract main result of that section is Theorem \ref{r.rde}.  Amongst others, we shall see that many classical stability results (``limit theorems'') in the general It\^o semimartingale theory, notably the classical UCV/UT conditions  (Kurtz-Protter \cite{KP91}, \cite{KP96} and Jakubowski, Memin and Pages \cite{JMP89}) thus admit a perfect explanation from a rough path point of view, see also \cite{CL05}. In essence, UCV/UT implies tightness of $p$-variation rough path norms, which is the key condition
 (in a random rough path setting, but without any implicit semimartingale assumptions) for limit theorem of random rough differential equations. This is especically interesting in the case of It\^o SDEs (interpreted
 as random RDEs) driven by a convergent sequence of semimartingales which does not satisfy UCV/UT. This is a typical situation in homogenization theory, see e.g. the works of Kelly and Melbourn
 \cite{Kel16, KM16, KM17}.

 \bigskip

 Checking the desired rough path $p$-variation tightness in Theorem \ref{r.rde} can be non-trivial. In Section \ref{sec:cat} we thus present a catalogue of the relevant (general) techniques we are aware of. Amongst others,
  we present a Besov-type criterion for discrete rough path approximation that presents a definite improvement of the main result in \cite{Kel16}, namely a ``$2+$ instead of $6+$ moments assumption'' relative
   to the H\"older scale used in Kelly \cite{Kel16} (cf. also Erhard-Hairer \cite{EH17x} applied in the rough path setting). This is almost optimal, as seen from martingale examples in which the UCV/UT conditions
   exhibits ``$2$ moments'' as optimal. (It would be desirable, though this is by no means the purpose of this paper, to build a similar Besov theory for discretizations of regularity structures in the sense of
   \cite{EH17x}.) We note however that Besov spaces are ill-suited to deal with (limiting) differential equations with jumps, a point also made in \cite{PT16}. In particular, the results of Sections \ref{sec:RDE} and \ref{sec:bRP}) are out of reach of regularity structures and its recent Besov ramifications \cite{Hai14, HL17}.

 \bigskip
 {\bf Further applications.}
 The results of this paper seem to have a variety of applications beyond what has been said so far.  A simple, but potentially far-reaching, remark is that simple recursions, both $x$ and $y$ are now vector-valued sequences,
 $$
       y_{i+1} = y_i + f(y_i) (x_{i+1} - x_i)
 $$
 are within the scope of our theory. To wit, such a system can first be viewed as differential equation of type (\ref{equ:fwdSDE}), by identifiying $(x_i$) with a piecewise c\`adl\`ag path with jumps $x_{i+1} - x_i$,
 but then also as rough differential equation by a canonical lifting procedure of the sequence $(x_i)$ to a (branched) $p$-rough path, any $p<\infty$. The interest in this is to obtain estimates, indexed by the choice of $p$ (over which can later be optimised!), on such recursions that do not depend on the {\it depth}, i.e. the cardinality of the index set $\{i : i = 1,...,N \}$. (In contrast, the a priori smooth dependence of $y_N$ as function of $x$ will be very poorly behaved for large $N$.) We note that this structure is of interest in {\it deep learning networks}, explored in forthcoming work.
 If one replaces the above explicit recursion with certain ODEs, one is back in the realm of continuous rough path theory, see \cite{FHL16} for a related discussion, also departing from discrete signals.

 \bigskip
Some readers will be especially interested in how the present results improve (beyond what was said above) on existing stochastic analysis. Of course, many of the ``usual rough path comments'' (see e.g.
\cite{FH14} and Lyons' original work \cite{Lyo98}) apply: with probability one we can simultaneously solve all differential equations over a given driving  semimartingale. (The content of this remark is in the fact
that there are uncountably many different differential equations.) This has notably consequences for the construction of (stochastic) flows, support theorems (see e.g. \cite{Sim03} for an application of this idea for
Levy processes in the Young regime), and situations with anticipating starting points or coefficient vector fields. Then of course, not all interesting stochastic dynamical systems are
semimartingales, e.g. rough volatility as example of a nonlinear systems forced by a Gaussian random processes. The construction of (non-semimartingale) symmetric Markov process with jumps is also
classical. In both cases, such continuous non-semimartingale processes have been successfully used in a rough path framework (e.g. \cite[Ch. 15, 16]{FV10} and the references therein) - with the present work it
is possible to extend this systematically to situations with jumps. Numerical algorithms, with adaptive step-size, are another application area. Unlike the It\^o  (or It\^o- F\"ollmer) integral, the rough integration
allows for any choice of partitions (anticipating or not) without ambiguity of the resulting limit. Next, stochastic filtering - a longstanding stability question (resolved in \cite{CDFO13}) can now be studied in the
case of, say, observations that are also subjected to noise with jumps. It can also be expected that our considerations, one way or another, lead the way to a pathwise understanding of certain stochastic partial
differential equations with jump noise. Yet another field of potential applications is mathematical finance. Recall that (It\^o) stochastic integrals of the form $\pi = \int Y d S$ carry the interpretation of P \& L (profit
and loss) process, given a (previsible) trading strategy $Y$ and asset price processes which constitute the vector-valued semimartingale $S$. In the case of discrete-time financial markets, there is a perfect
pathwise interpretation of $\pi$ which is lost upon passage to continuous time. While restricted to ``Markovian'' investment strategies $Y = f(S^-)$ and some slight generalization thereof (see Section
\ref{sec:Rint}) we have now have a toolbox that provides pathwise meaning to $\pi$, and also can deal simultaneously with (uncountable) families of investment strategies, given some underlying $S$. As such,
they may constitute natural mathematical tools in the field of robust finance \cite{PP16}.

\bigskip

\bigskip

{\bf Comments on literature. }
Concerning the history of rough differential equations with jumps: the Young case $p \in [1,2)$ was already studied in \cite{Wil01}. Rough integration against c\`adl\`ag rough path, $p \in [2,3)$, was introduced in \cite{FS17}, as was the notion of {\it c\`adl\`ag geometric $p$-rough path}, any $p<\infty$. In the latter setting, limit theorems and applicability to semimartingales (canonical solutions in the sense of Marcus) was obtained in \cite{CF17x}. This left open, topic of the present paper,  a ``forward'' theory of rough differential equations, capable of recovering and extending the known stability theory of It\^o stochastic differential equations driven by c\`adl\`ag semimartingales.

For the sake of completeness, we note that {\it continuous} semimartingales as {\it continuous} rough path were studied in \cite{CL05,FV08b, FV10}. In \cite{FHL16} the authors consider piecewise linear, axis-directed approximations of continuous semimartingale and so obtain an It\^o SDE in the Wong-Zakai limit; in \cite{LY16} It\^o SDEs are obtained as averaged Stratonovich solutions. (No such construction works in presence of jumps. In particular, this does not allow to go from \cite{CF17x} to any of the results of this paper.)

The literature on It\^o stochastic integration against - and differential equations driven by general semimartingale is vast, see e.g. \cite{JS03, Pro05} and the references in there. (Marcus canonical solutions, which provided the motivation for \cite{CF17x}, are discussed e.g. in \cite{KPP95, App04} but play less a role for this work.)

\subsection{Notations} \label{sec:not}

A {\it partition} of $[0,T]$ is of the form $\op = \{ 0 = t_0 < t_1 < ... < t_N = T \}$, $N \in \mathbb{N}$. We write both $[t_{i-1},t_i] \in \op$ and $t_i \in \op$. A {\it path} is a function from $[0,T]$ (or any other interval clear from the context) into a metric space  $(E,d)$. Such a path is called {\it regulated} if both left and right limits exist for all points and we write $X \in V^\infty \equiv V^\infty( [0,T],E)$ accordingly. The class of right-continuous paths with left-limits (a.k.a. c\`adl\`ag paths) is denoted by $D$, the class of continous paths by $C$; of course $C \subset D \subset V^\infty$.
For $p \in [1,\infty)$ we say $X \in V^p$, in words: {\it has finite $p$-variation}, if
\begin{equation} \label{equ:metricPnorm}
 \| X \|_{p,[0,T]} := \left( \sup_{\op} \sum_{[s,t]\in \op} d(X_s,X_t)^p   \right)^{\frac 1 p} < \infty,
\end{equation}
where $\sup$ is taken over all partitions $\op$ of $[0,T]$. (Here and below, $[0,T]$ is readily replaced by an arbitrary, not necessarily closed, interval.) One readily checks that $V^1 \subset V^p \subset V^\infty$. We also write
$$
      \| X \|_{\infty,[0,T]} := Osc(X;[0,T]) := \sup_{0 \le s \le t \le T}  d(X_s,X_t) \ \ \    \text{ and } \| X \|_{\sup,[0,T]} := \sup_{0 \le t \le T} d(X_0,X_t) \ .
$$
 A {\it control} is a function $\omega = \omega(s,t)$ from $\{ 0 \le s \le t \le T \}$ into $[0,\infty)$
 which is null on the diagonal and {\it super-additive} in the sense $\omega(s,t)+\omega(t,u) \le \omega(s,u)$.
 A control is {\it regular} if it is continuous (at least near the diagonal). Every $X \in V^p$ gives rise to a natural control
 $\omega_{X,p}(s,t) := \| X \|^p_{p,[s,t]}$. It is regular
if and only if $X$ is also continuous, i.e. $X \in C^{p-var} = C \cap V^p$, see e.g. \cite{FV10}.

We also define $p$-variation, any $p \in (0,\infty)$, of functions $\Xi$ from $\{ 0 \le s \le t \le T \}$ into a normed space (in general, $\delta\Xi_{s,u,t}:=\Xi_{s,t}-\Xi_{s,u}-\Xi_{u,t} \ne 0$)
\begin{equation}  \label{equ:doubleRPnorm}
      \| \Xi \|_{p,[0,T]} := \left( \sup_{\op} \sum_{[s,t]\in \op} | \Xi_{s,t}|^{p}   \right)^{\frac 1 p} < \infty,
\end{equation}
and further set
$$
      \| \Xi \|_{\infty,[0,T]} := \sup_{0 \le s \le t \le T} | \Xi_{s,t}| \ \ \  \text{ and } \| \Xi \|_{\sup,[0,T]} := \sup_{0 \le t \le T} | \Xi_{0,t}| \ .
$$
A (level-$2$) {\it $p$-rough path} (over $\R^d)$ is a path $\BX=\BX_{t}$ with values and increments $\BX_{s,t}:=\BX_s^{-1}\star \BX_t := (X_{s,t}, \X_{s,t})$ in $\R^d \oplus \R^{d \times d} =:G$, a (Lie) group equipped with multiplication $(a,M) \star (b,N) = (a + b, M + a \otimes b + N)$, inverse $(a,M)^{-1}:= (-a, -M + a\otimes a),$ and identity $(0,0),$ of finite $p$-variation condition, $p \in [2,3)$, either in the sense (``{\it homogenuous rough path norm}'')
\begin{equation} \label{equ:tripleenorm}
                   ||| \BX |||_{p,[0,T]} := \| X \|_{p,[0,T]} +   \| \X \|^{1/2}_{p/2,[0,T]} < \infty \ .
\end{equation}
or, equivalently, in terms of the {\it inhomogenous rough path norm}\footnote{It is an artefact of the non-geometric level-$2$ case that $G$ equals (as set) $\R^d \oplus \R^{d \times d}$. In general, $G$ is a non-linear group, only embedded in a linear space. Definitions (\ref{equ:tripleenorm}),(\ref{equ:doublenorm}) provide the correct multiscale view on rough paths. The mild clash of notations, (\ref{equ:doublenorm}) vs. (\ref{equ:doubleRPnorm}), will not cause any confusion.}
\begin{equation} \label{equ:doublenorm}
                   \| \BX \|_{p,[0,T]} := \| X \|_{p,[0,T]} +   \| \X \|_{p/2,[0,T]} < \infty \ .
\end{equation}
Many later estimates will be expressed in term of the {\it (inhomogenous) rough path distance}
\begin{equation} \label{equ:doublenormdist}
                   \| \BX ; \tilde \BX  \|_{p,[0,T]} := \| X - \tilde X \|_{p,[0,T]} +   \| \X -\tilde \X \|_{p/2,[0,T]}  \ .
\end{equation}
We also set
$$
                    \| \BX \|_{\infty,[0,T]} := \| X \|_{\infty,[0,T]} +   \| \X \|_{\infty,[0,T]}  \ \ \  \text{ and } \ \ \ \| \BX \|_{\sup,[0,T]} := \| X \|_{\sup,[0,T]} +   \| \X \|_{\sup,[0,T]} \ .
$$
Furthermore, equip the group $G$ with the following (continuous) mappings induced by power series,
\begin{eqnarray*}
\log (a,M):= (a, M-\frac12 a\otimes a), \ \ \  \exp (a,M) = (a, M+ \frac12 a\otimes a ),
\end{eqnarray*}
and one can check that $G$ is indeed a Lie group and in fact a homogenous group. 
It can be seen \cite{HS90} (see also appendix) that $G$ admits a left-invariant metric such that $d((0,0);(a,M)) \sim |a| + |M|^{1/2}$, under which $(G,d)$ is a Polish space, and
such that \footnote{It would be consistent to denote the right-hand side of (\ref{equ:triplenorm}) by $ \| \BX \|_{p,[0,T]}$, in the sense of
(\ref{equ:metricPnorm}) which however clashes with (\ref{equ:doublenorm}). Again, this will cause no confusion. In particular $ \| \BX ; \BY \|_{p,[0,T]}  = \| \BX - \BY \|_{p,[0,T]} $ has no correspondence in the metric setting of (\ref{equ:metricPnorm}).}

\begin{equation} \label{equ:triplenorm}
                   ||| \BX |||_{p,[0,T]} \asymp \left( \sup_{\op} \sum_{[s,t]\in \op} d(\BX_s,\BX_t)^p   \right)^{\frac 1 p} \ . 
\end{equation}
Indeed, for any partition $\op,$ one has
\begin{eqnarray*}
\sum_\op (|X_{s,t}|^p + |\X_{s,t}|^\frac p 2) &=& \sum_\op (|X_{0,t}-X_{0,s}|^p + |\X_{0,t}-\X_{0,s}- X_{0,s}\otimes X_{s,t}|^{\frac p 2})\\
                                              &\asymp & \sum_\op \left(d(\BX_{0,s}, \BX_{0,t})\right)^p,
\end{eqnarray*}
which implies the consistency of our rough path metric with the homogeneous metric $d$ on the Lie group.
We further introduce the {\it left-} and {\it right jumps} of a rough path by setting (both limits exist)
\begin{eqnarray*}
                   \Delta^-_t \BX & := &  \BX_{t-,t} := \lim_{s \to t} \BX_{s,t} \ , \\
                   \Delta^+_t \BX & := &  \BX_{t,t+} := \lim_{s \to t} \BX_{t,s} \ .
\end{eqnarray*}

Everything here extends to the case of (branched) rough paths of arbitrary roughness $p < \infty$. Details (including notation) are left to Section \ref{sec:bRP} and Appendix \ref{app:leftinvmetric}.

\bigskip

\noindent\textbf{Acknowledgment:}
P.K.F. acknowledges partial support from the  ERC, CoG-683164, and DFG research unit FOR2402. H.Z. thanks the Institut f\"ur Mathematik, TU Berlin, for its hospitality. Both authors thank Ilya Chevyrev for discussions and  feedback on an earlier version.

\section{A miniature in the bounded variation case} \label{sec:BV}

In order to detangle the effects of jumps with other rough path considerations, it is useful to start with the bounded variation case. A basic object here is the ``left-point'' Riemann-Stieltjes integral,
\begin{equation} \label{into:BVmin}
     \int_0^T y^\ell_r dx_r :=     \lim_{| \op | \to 0}  \sum_{(u,v)\in \op} y_u x_{u,v} \ ,
\end{equation}
as  frequently studied in the context of semimartingales, usually under additional assumptions (e.g. $x$ c\`adl\`ag) required from a martingale perspective. With an outlook to a fully deterministic rough theory given later in the paper, we make no additional regularity assumption, do however point out the (analytic) consequence of such assumptions in terms of a stronger convergence. A novelty with regard to classical discussions of BV integration, which appears useful in its own right, is the use of mild controls which allows a formalism (sewing lemma etc, cf. \cite{Gub04, FH14}) similar to what is now mainstream in rough integration.

\subsection{Mild sewing lemma}

The following definition (cf. \cite{DN98}) is crucial to understand the convergence of Riemann-Stieltjes sums as in (\ref{into:BVmin}) above. The following definition takes an abstract view on this,
of course $\Xi_{u,v}  \equiv y_u x_{u,v}$ is a good (first) example to have in mind.


\begin{defi}(\textbf{MRS v.s. RRS})\label{MRS,RRS}
For a partition $\op$ of $[0,T]$ and any $[u,v]\in \op,$ $\Xi_{u,v}  $(e.g. $y_u x_{u,v}$) takes value in $\R^d.$
We call the Riemann sum $ \sum_{[u,v]\in \op }\Xi_{u,v} $ converges to $K$ in the sense of

\begin{itemize}

\item Mesh Riemann-Stieltjes (MRS) sense(classical one): if for any $\vep>0,$ there exists $\delta>0,$ such that any $ |\op|<\delta, $ one has $|\sum_{[u,v]\in \op }\Xi_{u,v}-K|<\vep.$

\item Refinement Riemann-Stieltjes (RRS) sense (Hildebrandt \cite{H38},\cite{H63}): if for any $\vep>0,$ there exists $\op_\vep$ such that for any refinement $ \op \supset \op_\vep, $ one has $|\sum_{[u,v]\in \op }\Xi_{u,v}-K|<\vep.$

\end{itemize}
We denote this limit $K$ by (MRS resp. RRS) $\mathcal{I}\Xi_{0,T}$.

\end{defi}

\begin{rem}
MRS and RRS limits, if they exist, are unique. MRS implies RRS convergence, with the same limit. Last but not least, one has additivity in the sense that, for any $s <  u < t$ in $[0,T]$,
$$
    \I \Xi_{s,t} = \I \Xi_{s,u} + \I \Xi_{u,t}.
$$
(In the RRS case, the last equality is obtained by insisting that $u \in \op_\vep$.)

\end{rem}

Recall that a control $\omega = \omega(s,t)$ is null on the diagonal, and superadditive in the sense of
$
\omega(s,u)+\omega(u,t)\leq \omega(s,t)$ for all $s<u<t$.

\begin{defi}(\textbf{mild control})
We call a function $\sigma(s,t)$ from the simplex $\{ (s,t):0\leq s \leq t \leq T\}$ to nonnegative real numbers a mild control, if it is increasing in $t,$ decreasing in $s,$ null on the diagonal, and for any $\delta>0,$ there exist only finite $u\in[0,T]$ such that
$$
\sigma(u-,u+):= \lim_{t\downarrow u , s \uparrow u} \sigma(s,t) > \delta.
$$
We call $\sigma(s,t)$ a mild control for some path $x$ if $d(x_s, x_t) \leq \sigma(s,t)$ for any $s,t\in [0,T].$ I
\end{defi}

\begin{lem}\label{controls are mild c.}
(i) Every control $\omega$ is a mild control. More generally, for any function $f: [0,\infty) \to [0,\infty)$ which is strictly increasing, $(s,t) \mapsto f(\omega(s,t))$ is a mild control. (ii) Let $y$ be a regulated path,
(i.e., left and right limit exist at any point) with values in a metric space $(E,d)$. Then
$$
\sigma(s,t):= Osc ( {y;[s,t] )} \equiv \sup_{u,v\in[s,t]} d(y_u, y_v )
$$
defines a mild control.
\end{lem}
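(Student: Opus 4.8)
The plan is to check, in each case, the four defining properties of a mild control --- monotone increasing in $t$, monotone decreasing in $s$, null on the diagonal, and the ``finitely many large jumps'' condition --- the first three being essentially immediate, so that everything reduces to the last.

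\textbf{Part (i), the control case.} First I would note that for a control $\omega$, superadditivity together with $\omega\ge 0$ gives $\omega(s',t')\le\omega(s,t)$ whenever $[s',t']\subseteq[s,t]$ (insert $\omega(s',t')$ into $\omega(s,s')+\omega(s',t')+\omega(t',t)\le\omega(s,t)$); hence $\omega$ is increasing in $t$, decreasing in $s$, and null on the diagonal by hypothesis. Consequently, as $[s,t]$ shrinks to $\{u\}$ the value $\omega(s,t)$ is non-increasing and bounded below by $0$, so the double limit exists with $\omega(u-,u+)=\inf_{s<u<t}\omega(s,t)$, and $\omega(u-,u+)>\delta$ is \emph{equivalent} to $\omega(s,t)>\delta$ for every $s<u<t$. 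Now fix $\delta>0$ and suppose $u_1<\dots<u_n$ are interior points with $\omega(u_k-,u_k+)>\delta$; choosing $0\le s_1<u_1<t_1\le s_2<u_2<t_2\le\dots\le s_n<u_n<t_n\le T$ and iterating superadditivity (discarding the interlacing gaps, using monotonicity in the first argument) yields $n\delta<\sum_k\omega(s_k,t_k)\le\omega(0,T)<\infty$, so $n\le\omega(0,T)/\delta$. Adding the two endpoints, only finitely many $u$ satisfy $\omega(u-,u+)>\delta$.

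\textbf{Part (i), composition with $f$.} For strictly increasing $f$ with $f(0)=0$, the map $(s,t)\mapsto f(\omega(s,t))$ inherits monotonicity in $s,t$ and vanishing on the diagonal directly from $\omega$. For the jump condition I would again use that $f(\omega(s,t))$ decreases to $\inf_{s<u<t}f(\omega(s,t))$ as $[s,t]\downarrow\{u\}$; since $f$ increasing and $f(r)>\delta$ force $r\ge\gamma_\delta:=\inf\{r\ge 0:f(r)>\delta\}$, one gets $(f\circ\omega)(u-,u+)>\delta\Rightarrow\omega(u-,u+)\ge\gamma_\delta$. When $\gamma_\delta>0$ this is handled by the control case applied with threshold $\gamma_\delta/2$; the degenerate alternative $\gamma_\delta=0$ can only occur if $f$ jumps at $0$, which is excluded in all our applications (typically $f(r)=r^{1/p}$, producing the mild control $\omega_{X,p}^{1/p}$ for the distance of an $X\in V^p$). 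This interchange of two limiting operations --- shrinking $[s,t]\downarrow\{u\}$ against the limit defining $f$ at its argument --- is the one genuinely delicate point, and is precisely why continuity of $f$ at $0$ is the natural hypothesis here.

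\textbf{Part (ii), oscillation of a regulated path.} Here $\sigma(s,t)=\mathrm{Osc}(y;[s,t])$ vanishes on the diagonal ($\sigma(s,s)=d(y_s,y_s)=0$) and is monotone in $[s,t]$ since the supremum runs over a larger or smaller set. The key computation is the identification
\[
\sigma(u-,u+)=\max\bigl\{\,d(y_{u-},y_u),\ d(y_u,y_{u+}),\ d(y_{u-},y_{u+})\,\bigr\},
\]
which holds because, for $[s,t]$ a sufficiently small neighbourhood of $u$, every $y_w$ with $w\in[s,t]$ lies within $\varepsilon$ of one of $y_{u-},y_u,y_{u+}$ (existence of one-sided limits), while the three displayed distances are visibly $\le\sigma(s,t)$ for every $s<u<t$. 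Hence $\sigma(u-,u+)>\delta$ implies $d(y_{u-},y_u)+d(y_u,y_{u+})>\delta$, so $d(y_{u-},y_u)>\delta/2$ or $d(y_u,y_{u+})>\delta/2$. It then remains to invoke the classical fact that a regulated path on a compact interval has, for each fixed size, only finitely many one-sided jumps exceeding it: otherwise Bolzano--Weierstrass extracts a monotone sequence $u_n\to u_*$ of such points, and for $n$ large both $y_{u_n}$ and $y_{u_n-}$ lie in an $\varepsilon$-ball about the relevant one-sided limit of $y$ at $u_*$, contradicting $d(y_{u_n-},y_{u_n})>\delta/2$ as soon as $\varepsilon<\delta/4$. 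This yields the finiteness and completes the verification.
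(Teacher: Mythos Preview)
Your argument is correct and follows essentially the same approach as the paper's: superadditivity of $\omega$ bounds the number of large-jump points in (i), and in (ii) one bounds $\sigma(u-,u+)$ by the one-sided jump sizes of $y$ and invokes the standard fact that regulated paths have finitely many jumps exceeding a given threshold. The paper's proof is terser --- it only writes the inequality $\sigma(l-,l+)\le 2\bigl(d(y_{l-},y_l)+d(y_l,y_{l+})\bigr)$ via the triangle inequality rather than your exact identification of $\sigma(u-,u+)$ as the maximum of the three pairwise distances, and it asserts the finitely-many-jumps fact without your Bolzano--Weierstrass justification --- but the strategy is identical.

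One point worth flagging: you correctly observe that the $f\circ\omega$ statement needs right-continuity of $f$ at $0$ (equivalently $f(0+)=0$), since otherwise the example $\omega(s,t)=t-s$ with $f$ jumping at the origin produces $(f\circ\omega)(u-,u+)=f(0+)>0$ at \emph{every} interior $u$. The paper does not prove the $f\circ\omega$ clause separately and does not state this hypothesis, so your caveat is a genuine (if minor) sharpening; in all applications in the paper $f$ is a power $r\mapsto r^{1/p}$ and the issue does not arise.
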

\begin{proof}
For the first claim, suppose $\omega$ is a control. The monotonicity is obvious. For any $\delta>0,$ if there are infinitely countable $\{u_i\}_i \subseteq [0,T],$ indeed one could have $\omega(0,T)> K$ for any $K>0.$ For the second claim, one has
\begin{eqnarray*}
\sigma(l-,l+) &:=& \lim_{t\downarrow l , s \uparrow l} \sup_{u,v\in[s,t]} d(y_u, y_v )\\
              &\leq & \lim_{t\downarrow l , s \uparrow l} \sup_{u,v\in[s,t]} (d(y_u,y_l)+ d(y_l,y_v) )\\
              &\leq & 2\left( d(y_{l-},y_l)+ d(y_{l+},y_l) \right).
\end{eqnarray*}
Since $y$ is regulated, for any $\delta>0,$ there are only finite $l\in[0,T]$ such that $d(y_{l-},y_l)+ d(y_{l+},y_l)> \delta,$ which completes the proof.

\end{proof}

%
%


\begin{lem}\label{mild control}
Suppose $\sigma(s,t)$ is a mild control. Then for any $\vep>0,$ there exists a partition $\op,$ such that for any $(s,t)\in \op,$ one has
$$
\sigma(s+,t-)< \vep.
$$
In particular, if $\sigma(s,t)$ is left-continuous in $t,$ then $\sigma(s+,t)< \vep.$

\end{lem}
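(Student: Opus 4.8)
The plan is to construct the desired partition by a finite covering argument exploiting the defining property of a mild control: only finitely many points have ``large'' two-sided jump $\sigma(u-,u+)$. Fix $\vep > 0$. First I would isolate the bad points: let $B = B_\vep := \{ u \in [0,T] : \sigma(u-,u+) > \vep/3 \}$ (one could also work with $\vep/2$; the exact fraction is cosmetic). By the definition of mild control, $B$ is finite, say $B = \{ b_1 < b_2 < \dots < b_m \}$. These points will necessarily be among the partition points of $\op$, and around each of them I will carve out a small interval on which the control is controlled on the \emph{open} sub-interval.

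Next, for each point $u \in [0,T] \setminus B$ we have $\sigma(u-,u+) \le \vep/3$, so by definition of the limit defining $\sigma(u-,u+)$ there exist $s_u < u < t_u$ (with the obvious one-sided modification at the endpoints $0$ and $T$) such that $\sigma(s_u, t_u) < \vep/3$; in particular $\sigma(s_u+, t_u-) \le \sigma(s_u,t_u) < \vep/3$ by monotonicity. For each $u \in B$, by the same reasoning applied to the one-sided limits $\sigma(u-,u)$ and $\sigma(u,u+)$ — which, while possibly large individually relative to $\sigma(u-,u+)$, are still finite — hmm, wait: that is not immediate. Let me instead handle a bad point $b_j$ by choosing $s_{b_j} < b_j < t_{b_j}$ such that $\sigma(s_{b_j}, b_j-) < \vep/3$ and $\sigma(b_j+, t_{b_j}) < \vep/3$; this is possible because $\sigma(s,b_j-) \to \sigma(b_j-,b_j-) = 0$ as $s \uparrow b_j$ (null on the diagonal, using that $\sigma$ is decreasing in $s$ so the limit from the left exists) and similarly $\sigma(b_j+,t) \to 0$ as $t \downarrow b_j$. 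So each point $u$, good or bad, is the centre of an open interval $(s_u,t_u) \ni u$ that I can shrink as needed, and on which the relevant \emph{open-interval} oscillation is below $\vep/3$.

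Now the main step: the open intervals $\{(s_u,t_u)\}_{u \in [0,T]}$ form an open cover of the compact interval $[0,T]$, so extract a finite subcover indexed by $u_1,\dots,u_k$. Form the partition $\op$ consisting of $0$, $T$, all the bad points $b_1,\dots,b_m$, and a point chosen in each nonempty overlap of consecutive selected intervals, arranged in increasing order; more carefully, following the standard Lebesgue-number / chaining argument one builds $\op = \{0 = \tau_0 < \tau_1 < \dots < \tau_N = T\}$ such that each $[\tau_{i-1}, \tau_i]$ is contained in one of the chosen intervals $(s_{u_j}, t_{u_j})$, with the caveat that any point $\tau_i$ equal to some bad $b_\ell$ is inserted as a breakpoint so that the sub-intervals adjacent to it lie in the ``half-interval'' pieces $(s_{b_\ell}, b_\ell)$ and $(b_\ell, t_{b_\ell})$ described above. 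Then for $[\tau_{i-1},\tau_i] \in \op$ one has $(\tau_{i-1}+, \tau_i-) \subset (s_{u_j}, t_{u_j})$ for the appropriate $j$ (or in a half-piece at a bad point), hence by monotonicity of $\sigma$ in both arguments, $\sigma(\tau_{i-1}+, \tau_i-) \le \sigma(s_{u_j}, t_{u_j}) < \vep/3 < \vep$, which is the claim. Finally, if $\sigma(s,\cdot)$ is left-continuous in its second argument, then $\sigma(\tau_{i-1}+, \tau_i) = \lim_{t \uparrow \tau_i}\sigma(\tau_{i-1}+,t) \le \vep/3 < \vep$ (the limit being a sup by monotonicity, attained in the limit), giving the refined conclusion.

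The main obstacle I anticipate is purely bookkeeping: making the chaining construction of $\op$ rigorous while simultaneously forcing all finitely many bad points into $\op$ and ensuring each resulting closed sub-interval sits inside an \emph{open} set on which $\sigma$ is small — this is where the two-sided nature of ``$s_u < u < t_u$'' and the distinction between $\sigma(s,t)$ on the closed interval versus $\sigma(s+,t-)$ on the open interval must be tracked with care, especially at $b_\ell$ where the full $\sigma(s_{b_\ell},t_{b_\ell})$ may be genuinely large but $\sigma$ restricted to each side is not. No single estimate is hard; the care is in the order of quantifiers.
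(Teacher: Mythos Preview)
Your approach is essentially the paper's: isolate the finite set of ``bad'' points where $\sigma(u-,u+)$ is large, cover the good points by open intervals on which $\sigma$ is small, handle each bad point by a punctured neighbourhood split into two one-sided pieces, extract a finite subcover, and take the resulting endpoints together with the bad points as the partition $\op$. The paper does exactly this (with $\vep$ in place of your $\vep/3$; the fraction is indeed cosmetic), and then verifies the three cases $u_j,u_{j+1}\notin B$, $u_j\in B$, $u_{j+1}\in B$ just as you outline.

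The one place where your justification diverges from the paper's and is too quick is the bad-point step. You want $s_{b_j}<b_j$ with $\sigma(s_{b_j},b_j-)<\vep/3$ (and symmetrically on the right), and you argue ``$\sigma(s,b_j-)\to\sigma(b_j-,b_j-)=0$ as $s\uparrow b_j$ (null on the diagonal)''. Null on the diagonal only gives $\sigma(u,u)=0$ for each fixed $u$; the iterated limit $\lim_{s\uparrow b_j}\lim_{t\uparrow b_j}\sigma(s,t)$ is not a diagonal value and does not vanish from that axiom and monotonicity alone. The paper's argument at this step is different: since the bad set is finite, every point $b_j\pm\vep_i$ with $\vep_i$ small enough is \emph{good}, so $\sigma((b_j+\vep_i),(b_j+\vep_i)+)<\vep$ and $\sigma((b_j-\vep_i)-,(b_j-\vep_i))<\vep$, and this (rather than the diagonal condition) is what is used to produce the one-sided neighbourhoods of $b_j$. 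Replace your diagonal argument by that one and your write-up matches the paper.
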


\begin{proof}
Fix a $\vep>0,$ by the definition of mild controls, there exists a finite increasing set $\{ s_i\}_{i=1}^N,$ such that $\sigma(s_i-,s_{i}+)\geq \vep, $ and for any other $s\in [0,T]\setminus\{ s_i\}_{i=1}^N,$ $\sigma(s-,s+)< \vep.$ For any $s\in [0,T]\setminus\{ s_i\}_{i=1}^N,$ there exists a $\delta>0,$ such that $\sigma(s-\delta,s+\delta)< \vep,$ and for each $s_i,$ by monotonicity, there exists $\delta_i < \min_{i=1}^{N-1} |s_{i+1}-s_i|,$ such that $\sigma(s_i+, s_i+\delta_i)<\vep,$ $\sigma(s_{i}-\delta_i,s_i-)< \vep.$ Indeed, for any $\vep_i<\min_{i=1}^{N-1} |s_{i+1}-s_i|$, one has $\sigma(s_i+\vep_i, (s_i+\vep_i)+)< \vep,$ $\sigma((s_i-\vep_i)-  , s_i-\vep_i )$ which implies the existence of such $\delta_i.$ Then one obtains a open cover $\{(s-\delta,s+\delta)|s\in[0,T]\setminus\{ s_i\}_{i=1}^N \}\bigcup \{(s_i-\delta_i,s_i+\delta_i)\}_{i=1}^N$ for $[0,T],$ so there exists a finite cover. Take all endpoints of this cover and $\{ s_i\}_{i=1}^N$ to make a partition, denoted as $\op:=\{0=u_0<u_1<,...,<u_M=T\}.$ Then it follows that
\[
\left\{
\begin{array}{ll}
\sigma(u_j,u_{j+1})< \vep & \text{if } u_j,u_{j+1} \not\in \{ s_i\}_{i=1}^N\\
\sigma(u_j+,u_{j+1})   < \vep & \text{if } u_j \in \{ s_i\}_{i=1}^N\\
\sigma(u_j,u_{j+1}-) < \vep & \text{if } u_{j+1} \in \{ s_i\}_{i=1}^N,
\end{array}\right.
\]
which implies our result by monotonicity.

\end{proof}

\begin{lem}\label{regular path}
Suppose that $x$ from $[0,T]$ to $(E,d)$ is regulated. Then for any $\vep>0,$ there exists a partition $\op$, such that for any interval $(s,t)\in \op,$
$$
Osc(x,(s,t)) \equiv \sup_{u,v\in(s,t)}d(x_u,x_v) < \vep.
$$
In particular, if $x$ is right-continuous, $Osc(x,[s,t))< \vep.$
\end{lem}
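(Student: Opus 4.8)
The plan is to read this off from the two preceding lemmas together with one elementary monotonicity identity. First I would apply Lemma \ref{controls are mild c.}(ii) to the regulated path $x$: it tells us that
\[
\sigma(s,t) := Osc(x;[s,t]) = \sup_{u,v\in[s,t]} d(x_u,x_v)
\]
is a mild control. Then, given $\vep>0$, Lemma \ref{mild control} applied to this $\sigma$ produces a partition $\op$ of $[0,T]$ with $\sigma(s+,t-)<\vep$ for every $(s,t)\in\op$.

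The remaining point is the identification $\sigma(s+,t-) = Osc(x;(s,t))$. Since $\sigma$ is increasing in $t$ and decreasing in $s$, the double limit defining $\sigma(s+,t-)$ is in fact a supremum, $\sigma(s+,t-) = \sup_{s<u\le v<t} Osc(x;[u,v])$; every closed subinterval $[u,v]\subset(s,t)$ arises this way, so this supremum equals $\sup_{u,v\in(s,t)} d(x_u,x_v) = Osc(x;(s,t))$. Combined with the previous paragraph, this gives $Osc(x;(s,t))<\vep$ for all $(s,t)\in\op$, which is the first assertion.

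For the ``in particular'' statement I would use right-continuity to push the bound from the open to the half-open interval. Writing $Osc(x;[s,t)) = \sup\{ d(x_a,x_b) : s\le a\le b<t\}$, any pair with $a>s$ already lies in $(s,t)$; for a pair with $a=s$ and $b>s$ we have $d(x_s,x_b) = \lim_{a'\downarrow s} d(x_{a'},x_b) \le Osc(x;(s,t))$, using right-continuity of $x$ at $s$ (and continuity of the metric) and picking $a'\in(s,b)$ so that $a',b\in(s,t)$. Hence $Osc(x;[s,t)) \le Osc(x;(s,t)) < \vep$.

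I do not expect a genuine obstacle: essentially all the work sits in Lemmas \ref{controls are mild c.} and \ref{mild control}, and the present statement is just their packaging for regulated (resp.\ c\`adl\`ag) paths. The only points requiring a little care are (a) that no regularity at the \emph{right} endpoint $t$ is needed, since $t$ is excluded from both $Osc(x;(s,t))$ and $Osc(x;[s,t))$, and (b) the harmless replacement of the double limit $\sigma(s+,t-)$ by a supremum, which uses only the monotonicity built into $\sigma$.
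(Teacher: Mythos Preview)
Your proposal is correct and follows exactly the same route as the paper: define $\sigma(s,t)=Osc(x;[s,t])$, invoke Lemma~\ref{controls are mild c.}(ii) to see it is a mild control, and then apply Lemma~\ref{mild control}. The paper's own proof is in fact just these two sentences; you have additionally spelled out the identification $\sigma(s+,t-)=Osc(x;(s,t))$ and the passage from $(s,t)$ to $[s,t)$ under right-continuity, both of which the paper leaves implicit.
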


\begin{proof}
Let $\sigma(s,t):= \sup_{u,v\in[s,t]} d(x_u,x_v),$ and according to Lemma \ref{controls are mild c.}, it is a mild control. Then this is a corollary of the Lemma \ref{mild control}.

\end{proof}




\begin{thm}(\textbf{mild sewing})\label{bv sewing}
Suppose $\Xi_{s,t}$ is a mapping from simplex $\{ (s,t):0\leq s < t\leq T \}$ to a Banach space $(E,\|\cdot\|) .$
Let $\delta\Xi_{s,u,t}:=\Xi_{s,t}-\Xi_{s,u}-\Xi_{u,t}.$ Assume $\delta\Xi$ satisfies
$$
\|\delta\Xi_{s,u,t}\| \leq \sigma(s,u) \omega(u,t),
$$
where $\sigma$ is a mild control and $\omega$ is a control. Then the following limit exists in the RRS sense,
$$
\mathcal{I}\Xi_{0,T}:=RRS-\lim_{|\mathcal{P}|\rightarrow 0} \sum_{(u,v)\in \mathcal{P}} \Xi_{u,v},
$$
and one has the following local estimate:
$$
\| \mathcal{I}\Xi_{s,t}-\Xi_{s,t}\|\leq   \sigma(s,t-) \omega(s+,t),
$$
Furthermore, if $\omega(s,t)$ is right-continuous in $s,$ i.e.
$$
\omega(s+,t):=\lim_{u \downarrow s} \omega(u,t)= \omega(s,t),\ \ \forall \ 0\leq s < t \leq T,
$$
or $\sigma(s,t)$ is left-continuous in $t,$ the convergence holds in the MRS sense.

\end{thm}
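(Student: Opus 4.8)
\emph{The plan.} I will follow the template of the classical sewing lemma, exploiting that the defect bound here has the \emph{product} form $\sigma(s,u)\,\omega(u,t)$ rather than a single super-linear control — this makes the customary ``greedy point removal'' unnecessary. The engine is the algebraic identity obtained by iterating $\Xi_{a,v}=\Xi_{a,u}+\Xi_{u,v}+\delta\Xi_{a,u,v}$ from the rightmost subinterval inward: for any $a=w_0<w_1<\dots<w_m=b$,
\[
\sum_{j=1}^{m}\Xi_{w_{j-1},w_j}-\Xi_{a,b}=-\sum_{j=1}^{m-1}\delta\Xi_{a,w_j,w_{j+1}} .
\]
Since $w_j<b$ for $j\le m-1$, monotonicity of $\sigma$ gives $\sigma(a,w_j)\le\sigma(a,b-)$, and super-additivity of $\omega$ gives $\sum_{j=1}^{m-1}\omega(w_j,w_{j+1})\le\omega(w_1,b)\le\omega(a,b)$; so the hypothesis turns the identity into the per-interval estimate
\[
\Big\|\sum_{[u,v]\in\op'}\Xi_{u,v}-\Xi_{a,b}\Big\|\le\sigma(a,b-)\,\omega(w_1,b)\le\sigma(a,b-)\,\omega(a,b)
\]
for any partition $\op'$ of $[a,b]$ with smallest interior point $w_1$ (the left-hand side being $0$ when $\op'=\{a,b\}$). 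Everything else is extracting existence and the stated bounds from this one inequality.

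\emph{Existence of the RRS limit, and the local bound.} Writing $A^{\op}:=\sum_{[u,v]\in\op}\Xi_{u,v}$, for $\op'\supset\op$ one has $A^{\op'}-A^{\op}=\sum_{[a,b]\in\op}(A^{\op'}|_{[a,b]}-\Xi_{a,b})$, which the estimate above bounds by $\sum_{[a,b]\in\op}\sigma(a,b-)\,\omega(w_1^{a,b},b)$. Given $\vep>0$, I pick $\delta_1$ with $\delta_1\,\omega(0,T)<\vep/2$, and take (from the proof of Lemma~\ref{mild control}) a partition that contains the finitely many $\delta_1$-bad points $s_i$ of $\sigma$ (those with $\sigma(u-,u+)\ge\delta_1$) and satisfies $\sigma(a,b-)<\delta_1$ on every interval $[a,b]$ whose left end is not bad; then I refine it by inserting, next to each $s_i$, one point $s_i+\rho_i$ with $\omega(s_i+,s_i+\rho_i)$ as small as I like — possible since $\omega(u+,u+)=0$ for all $u$, a quick consequence of super-additivity. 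Call this $\op_\vep$. For any $\op'\supset\op\supset\op_\vep$, the intervals $[a,b]\in\op$ with $a$ not bad still satisfy $\sigma(a,b-)<\delta_1$ (inherited under refinement by monotonicity of $\sigma$) and so contribute at most $\delta_1\omega(0,T)<\vep/2$; the finitely many intervals with $a=s_i$ are treated by hand using $\sigma(a,b-)\le\sigma(0,T)$ and $\omega(w_1^{a,b},b)\le\omega(s_i+,s_i+\rho_i)$, contributing $<\vep/2$. Thus $\|A^{\op'}-A^{\op}\|<\vep$; passing to common refinements shows $(A^{\op})$ is Cauchy in the refinement net, so $\I\Xi_{0,T}$ exists (unique and additive, by the Remark after Definition~\ref{MRS,RRS}). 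Finally, running the per-interval estimate on a subinterval $[s,t]$ with interior points $w_1<\dots<w_{m-1}$ gives $\|A^{\op}_{s,t}-\Xi_{s,t}\|\le\sigma(s,w_{m-1})\,\omega(w_1,t)\le\sigma(s,t-)\,\omega(s+,t)$ for \emph{every} $\op$ (using $w_{m-1}<t$ and $\omega(s+,t)=\sup_{u>s}\omega(u,t)$), whence $\|\I\Xi_{s,t}-\Xi_{s,t}\|\le\sigma(s,t-)\,\omega(s+,t)$ by letting $\op$ refine.

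\emph{Upgrade to MRS.} Assume $\omega(s+,t)=\omega(s,t)$ for all $s<t$ (the case of $\sigma$ left-continuous in $t$ being analogous). Given $\vep$, take $\op_\vep$ as before and any partition $\op$ of mesh so small that no $\op$-interval contains two points of $\op_\vep$. Then $\op\vee\op_\vep$ arises from $\op$ by inserting each $w\in\op_\vep\setminus\op$ into a distinct host interval $[a_w,b_w]\in\op$, so $A^{\op\vee\op_\vep}-A^{\op}=-\sum_w\delta\Xi_{a_w,w,b_w}$, of norm $\le\sigma(0,T)\sum_w\omega(w,b_w)$. As $|\op|\to0$ we have $b_w\downarrow w$ and $\omega(w,b_w)=\omega(w+,b_w)\to\omega(w+,w+)=0$ (right-continuity together with super-additivity), and there are at most $|\op_\vep|$ terms, so $\|A^{\op\vee\op_\vep}-A^{\op}\|<\vep$ for $|\op|$ small; together with $\|A^{\op\vee\op_\vep}-\I\Xi_{0,T}\|<\vep$ (it refines $\op_\vep$) this gives $\|A^{\op}-\I\Xi_{0,T}\|<2\vep$. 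The other case instead uses $\sigma(a_w,w)=\sigma(a_w,w-)\to\sigma(w-,w-)=0$, the last equality a further consequence of Lemma~\ref{mild control}.

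\emph{Main obstacle.} The identity and the basic estimate are routine; the real work is the bookkeeping around the finitely many bad points of $\sigma$. There $\sigma(a,b-)$ is \emph{not} made small by refining (right-discontinuity of $\sigma$ in its first argument), so the corresponding per-interval errors must be absorbed through the $\omega$-factor, which is why one places auxiliary points next to each bad point with $\omega(s_i+,\cdot)$ tiny — legitimate exactly because a genuine super-additive control has no atom ``from the right'' ($\omega(u+,u+)=0$). The ancillary facts $\omega(u+,u+)=0$ and, for the MRS upgrade, $\sigma(u-,u-)=0$ are the places where the precise definitions of control and mild control (and Lemma~\ref{mild control}) must be used carefully.
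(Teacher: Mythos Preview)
Your proof is correct and follows essentially the same approach as the paper: both hinge on the telescoping identity $\sum_{\op}\Xi-\Xi_{s,t}=-\sum_{i}\delta\Xi_{s,u_i,u_{i+1}}$, both build $\op_\vep$ from Lemma~\ref{mild control} together with auxiliary points $s_i+\rho_i$ next to each bad point of $\sigma$ (so that the $\omega$-factor, via $\omega(s_i+,s_i+\rho_i)$ small, absorbs the error on those intervals), and both obtain the MRS upgrade by comparing an arbitrary fine $\op$ with $\op\vee\op_\vep$. Your packaging of the per-interval bound as $\sigma(a,b-)\,\omega(w_1,b)$ is a slightly cleaner way of organising what the paper does by an explicit case split on whether the left endpoint is $s_j$ or $t_j$, but the content is the same.
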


\begin{proof}
For a fixed $\vep>0,$ according to the proof of Lemma \ref{mild control}, there exists a partition $\op_0=\{0=s_0<s_1<,...,<s_N=T\},$ such that for any $(s_i,s_{i+1}),$ one has
$$
\sigma(s_i +,s_{i+1}-)< \vep.
$$
For each $s_i,$ since $\omega$ is also a mild control, there exists a $t_i\in (s_i,s_{i+1}),$ such that
$$
\omega(s_i+,t_i)< \frac{\vep}{N\sigma(0,T)}.
$$
Let $\op_\vep:= \{s_i\}_{i=0}^{N} \bigcup \{t_i\}_{i=1}^{N-1}.$ Then for any refinement $\op$ of $\op_\vep,$ and for any $(s,t) \in \op_\vep,$ denote $\op|_{[s,t]}=\{s=\tau_0< \tau_1<,...,< \tau_n = t\}.$ Without loss of generality, assume $n\geq 2.$ If $s  =t_j, $ for some $j=1,...,N-1,$ by inserting terms like $\sum_{\op\setminus\{\tau_1,\tau_2,...,\tau_k\}|_{[s,t]}}\Xi_{u,v},$ one obtains,
\begin{eqnarray*}
|\sum_{\op|_{[s,t]}}\Xi_{u,v} - \Xi_{s,t}| &\leq& \sum_{k=1}^{n-1} |\delta\Xi_{s,\tau_k,\tau_{k+1}}| \leq  \sum_{k=1}^{n-1} \sigma(s,\tau_k) \omega(\tau_k,\tau_{k+1})\\
& \leq& \sigma(s_j+,s_{j+1}-)\omega(s+,t) \leq  \vep \omega(s+,t).
\end{eqnarray*}

If $s=s_j,$ one has
\begin{eqnarray*}
|\sum_{\op|_{[s,t]}}\Xi_{u,v} - \Xi_{s,t}|&=& | (\sum_{\op|_{[s,t]}}\Xi_{u,v} -\Xi_{s,\tau_1}-\Xi_{\tau_1,t} ) + \left(\Xi_{s,\tau_1}+\Xi_{\tau_1,t}- \Xi_{s,t} \right)|\\
 &\leq & \sum_{k=2}^{n-1} \sigma(\tau_1,\tau_k)\omega(\tau_k,\tau_{k+1}) + \sigma(s,\tau_1)\omega(\tau_1,t)\\
 &\leq & \vep \omega(s+, t) + \frac{\vep}{N}  .
 \end{eqnarray*}
It follows that
\begin{eqnarray*}
|\sum_{\op}\Xi_{u,v}- \sum_{\op_\vep}\Xi_{u,v} | &\leq & \sum_{(s,t)\in \op_\vep} \vep \omega(s+,t)+ N \frac{ \vep}{N}=:C\vep ,
\end{eqnarray*}
which implies the RRS convergence. For the inequality, it follows from the algebra identity. Indeed, for any partition $\op=\{s=u_0<u_1<,...,<u_m=t\} $ of $[s,t],$ one has
$$
|\sum_\op \Xi_{u,v}- \Xi_{s,t}| = |\sum_{i=0}^{m-1} \delta \Xi_{s,u_i,u_{i+1}}| \leq \sum_{i=1}^{m-1} \sigma(s,u_i) \omega(u_i,u_{i+1})\leq \sigma(s,t-)\omega(s+,t).
$$
Now suppose $\sigma(s,t)$ left continuous in $t.$ By the above proof, for any $\vep>0,$ there exists a partition $\op_1$ on $[0,T],$ such that for any refinement $\tiop $ of $\op_1, $ one has $|\sum_{\tiop} \Xi_{u,v} - \I \Xi_{0,T} |< \vep.$ By Lemma \ref{mild control}, there exists a partition $\op_2$ such that for any $(s,t)\in \op_2,$ $\sigma(s+,t)< \vep.$ Then let $\op_3:= \op_1 \vee \op_2,$ and one has
$$
|\sum_{\op_3}\Xi_{u,v}- \I \Xi_{0,T}|< \vep \text{ and } \sigma(s+,t)< \vep , \ \text{for any }(s,t)\in \op_3.
$$
Then for any partition $\op$ with $|\op|<|\op_3|, $ let $\bar{\op}:= \op \vee \op_3,$ and one obtains
\begin{eqnarray*}
| \sum_\op \Xi_{u,v} - \I \Xi_{0,T}| & \leq& |\sum_{\bar{\op}} \Xi_{u,v} - \I\Xi_{0,T}| + |\sum_\op \Xi_{u,v}-\sum_{\bar{\op}} \Xi_{u,v} |\\
& \leq& \vep + \sum_{\tau \in \op_3 \atop (u, v)\in \op} |\delta \Xi_{u,\tau,v}| \leq \vep + \sum_{\tau \in \op_3 \atop (u, v)\in \op} \sigma(u,\tau)\omega(\tau,v)\\
&< &\vep + \vep \omega(0,T).
\end{eqnarray*}
If $\omega(s,t)$ is right continuous in $s, $ suppose $\op_\vep$ as above, and one has $|\sum_{\op_\vep}\Xi_{u,v}- \I \Xi_{0,T}|< \vep.$ Furthermore, by right-continuity of $\omega$ and the definition of $\op_\vep,$ one has for any $\tau \in \op_\vep,$
\[
\left\{
\begin{array}{ll}
\omega(\tau,t_i)<\frac{\vep}{N \sigma(0,T)},& \text{ if }\tau = s_i \\
\sigma(s_i+, \tau)< \vep, & \text{ if }\tau = t_i .
\end{array}
\right.
\]
Then for any partition $\op$ with $|\op|< |\op_\vep|,$ one has
\begin{eqnarray*}
| \sum_\op \Xi_{u,v} - \I \Xi_{0,T}| & \leq& |\sum_{\op \vee \op_\vep} \Xi_{u,v} - \I\Xi_{0,T}| + |\sum_\op \Xi_{u,v}-\sum_{\op \vee \op_\vep} \Xi_{u,v} |\\
& \leq& \vep + \sum_{\tau \in \op_\vep \atop (u, v)\in \op} |\delta \Xi_{u,\tau,v}| \leq \vep + \sum_{\tau \in \op_\vep \atop (u, v)\in \op} \sigma(u,\tau)\omega(\tau,v)\\
&< &\vep + \vep \omega(0,T)+ \vep.
\end{eqnarray*}

\end{proof}

The following helps to understand when certain limits of Riemann-Stieltjes type sums are identical.

\begin{prop}(\textbf{mild sewing for pure jumps})\label{bv pure jumps}
Suppose $\Xi_{s,t}$ is a mapping from simplex $\{ (s,t):0\leq s < t\leq T \}$ to a Banach space $(E,\|\cdot\|) .$
Assume
$$
\| \Xi_{s,t}\| \leq g(s) \omega(s,t),
$$
where $g(s)$ is a positive function on $[0,T],$ which satisfies for any $\delta>0,$ there are only finite $s\in[0,T]$ such that $g(s)>\delta$, and $\omega$ is a control, which is right-continuous in the sense of
$$
\omega(s,s+):= \lim_{t\downarrow s} \omega(s,t)=0.
$$
Then the following limit exists in the MRS sense,
$$
  \lim_{|\mathcal{P}|\rightarrow 0} \sum_{(u,v)\in \mathcal{P}} \|\Xi_{u,v}\|=0.
$$

\end{prop}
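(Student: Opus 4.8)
The plan is to control the Riemann sums $\sum_{(u,v)\in\op}\|\Xi_{u,v}\|$ directly, splitting the partition intervals into those that "see" a point where $g$ is large and those that do not. Fix $\vep>0$. By hypothesis on $g$, there are only finitely many points $s_1<\cdots<s_N$ in $[0,T]$ with $g(s_i)>\vep/\omega(0,T)$ (I will just write these as the "bad points"), and at every other $s$ one has $g(s)\le \vep/\omega(0,T)$. For a partition $\op$ with mesh $|\op|$ small enough, each partition interval $(u,v)\in\op$ contains at most one bad point $s_i$, and in that case $u$ itself is not bad (shrink the mesh so that no two bad points lie in the same interval and so that each $s_i$ is interior to its interval or not a left endpoint; in fact since the bound is $\|\Xi_{u,v}\|\le g(u)\omega(u,v)$ it is the left endpoint $u$ that matters, so I only need that $u\notin\{s_i\}$, which holds automatically once $|\op|$ is smaller than $\min_i(s_i-s_{i-1})$ except for the single interval whose left endpoint is a bad point).

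For the "good" intervals, i.e. those with $u\notin\{s_1,\dots,s_N\}$, I bound
$$
\sum_{(u,v)\in\op,\ u\ \mathrm{good}}\|\Xi_{u,v}\| \le \frac{\vep}{\omega(0,T)}\sum_{(u,v)\in\op}\omega(u,v)\le \frac{\vep}{\omega(0,T)}\,\omega(0,T)=\vep,
$$
using super-additivity of $\omega$. There remain at most $N$ "bad" intervals, namely the (at most one) interval whose left endpoint is each $s_i$. For such an interval $(s_i,v)$ one has $\|\Xi_{s_i,v}\|\le g(s_i)\,\omega(s_i,v)$, and here I invoke the right-continuity hypothesis $\omega(s_i,s_i+)=0$: as $|\op|\to 0$ the right endpoint $v$ of the interval with left endpoint $s_i$ tends to $s_i$, hence $\omega(s_i,v)\to \omega(s_i,s_i+)=0$. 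So for $|\op|$ small enough, $g(s_i)\,\omega(s_i,v)<\vep/N$ for each of the finitely many $i$, and the bad intervals contribute at most $N\cdot(\vep/N)=\vep$ in total. Combining, $\sum_{(u,v)\in\op}\|\Xi_{u,v}\|<2\vep$ for all sufficiently fine $\op$, which is the claimed MRS convergence to $0$.

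The only delicate point is the handling of the finitely many bad intervals: one must be sure that (a) for small mesh each $s_i$ is the left endpoint of at most one interval and these are distinct across $i$, and (b) the right endpoint of that interval can be made arbitrarily close to $s_i$, so that right-continuity of $\omega$ at the diagonal kicks in uniformly over the finite set $\{s_1,\dots,s_N\}$. Both are immediate from $|\op|\to 0$ and finiteness of $N$; everything else is the routine super-additivity estimate above. No sewing machinery is actually needed here — this is a soft argument — but it is the companion statement to Theorem \ref{bv sewing} that lets one identify different Riemann-Stieltjes limits by showing their difference is a pure-jump sum that vanishes.
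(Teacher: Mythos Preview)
Your argument is correct and follows essentially the same route as the paper: split the sum according to whether the left endpoint $u$ satisfies $g(u)>\vep$ (finitely many such terms, each controlled by $g(s_i)\,\omega(s_i,v)\to 0$ via right-continuity of $\omega$ at the diagonal) or $g(u)\le\vep$ (controlled by $\vep\cdot\omega(0,T)$ via super-additivity). The parenthetical about making bad points interior is unnecessary and a little muddled---only the left endpoint matters for the bound, and you correctly handle both the case $u\notin\{s_i\}$ and $u=s_i$ anyway---but the core argument is exactly the paper's.
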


\begin{proof}
For any $\vep>0,$ and any partition $\op,$ one has

\begin{eqnarray*}
\sum_{(s,t)\in \mathcal{P}} \| \Xi_{s,t} \| &\leq& \sum_{(s,t)\in \mathcal{P}} g(s ) \omega(s,t)1_{[g(s)>\vep]}+\sum_{(s,t)\in \mathcal{P}} g(s ) \omega(s,t)1_{[g(s)\leq \vep]}\\
&\leq& \sum_{(s,t)\in \mathcal{P}} g(s ) \omega(s,t)1_{[g(s)>\vep]}+ \vep \omega(0,T).
\end{eqnarray*}
Since there are only finite points such that $g(s)>\vep,$ by the right-continuity of $\omega,$ the first summation converges to null as $|\op|\rightarrow 0.$
\end{proof}

\subsection{Left-point Riemann-Stieltjes integration}

Recall that $V^1 = V^1([0,T])$ is the space of paths which are of bounded variation on $[0,T]$ while $V^\infty$ is the space of regulated paths, i.e. for which a left and right limit exist at any point.


\begin{prop}(\textbf{integration in bounded variation case})\label{bv integration}

\begin{enumerate}

\item If $x \in V^1([0,T], \R^d), y \in V^\infty([0,T], \mathcal{L}(\R^d, \R^e)), $ the Riemann sum $\sum_{(u,v)\in \op} y_u x_{u,v} \in \R^e$ converges in the RRS-sense and we write
    $$
    \int_0^T y^\ell_r dx_r := RRS-\lim_{|\op|\rightarrow 0}  \sum_{(u,v)\in \op} y_u x_{u,v}.
    $$
Furthermore, one has
$$
|\int_s^t y_{r}^\ell dx_r - y_s x_{s,t}| \leq   \|y \|_{\infty,[s,t)} \|x\|_{1,(s,t]}.
$$

\item If $x \in V^1([0,T], \R^d), y \in V^\infty([0,T], \mathcal{L}(\R^d, \R^e)), $ and furthermore $y$ is c\`agl\`ad, or $x$ c\`adl\`ag, the Riemann sum $\sum_{(u,v)\in \op} y_u x_{u,v} $ converges in the MRS-sense. \\

\item If $x \in D^1([0,T], \R^d), y \in V^\infty([0,T], \mathcal{L}(\R^d, \R^e)), $ then
$$
\int_0^T (y^-_r)^\ell dx_r= \int_0^T (y^+_r)^\ell dx_r= \int_0^T y^\ell dx_r.
$$

\item If $x \in D^1([0,T], \R^d), y \in V^\infty([0,T], \mathcal{L}(\R^d, \R^d)), $ then one has
\begin{equation} \label{equ:clLSint}
\int_0^T y^\ell_r dx_r = \int_{(0,T]} y^- d\mu_x,
\end{equation}
with the righthand-side uses Lebsgue-Stieltjes integration, $\mu_x ([a,b]) = x(b) - x(a)$.

\end{enumerate}

\end{prop}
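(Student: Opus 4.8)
The plan is to deduce (1)--(2) from the mild sewing lemma (Theorem~\ref{bv sewing}), (3) from its pure-jump companion (Proposition~\ref{bv pure jumps}), and (4) from a dominated-convergence argument against the Lebesgue--Stieltjes measure $\mu_x$. For (1) I would apply Theorem~\ref{bv sewing} to $\Xi_{u,v}:=y_u x_{u,v}$. Since $x$ is a genuine path, $x_{s,t}=x_{s,u}+x_{u,t}$, so a one-line computation gives $\delta\Xi_{s,u,t}=(y_s-y_u)x_{u,t}$ and hence
\[
\|\delta\Xi_{s,u,t}\|\le Osc(y;[s,u])\,\|x\|_{1,[u,t]}=:\sigma(s,u)\,\omega(u,t),
\]
where $\sigma$ is a mild control by Lemma~\ref{controls are mild c.}(ii) (as $y$ is regulated) and $\omega=\omega_{x,1}$ is a control. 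Theorem~\ref{bv sewing} then yields RRS-convergence of $\sum_{\op}\Xi_{u,v}$ together with the local bound $\|\I\Xi_{s,t}-\Xi_{s,t}\|\le\sigma(s,t-)\,\omega(s+,t)$; reading off $\sigma(s,t-)=Osc(y;[s,t))=\|y\|_{\infty,[s,t)}$ and $\omega(s+,t)=\|x\|_{1,(s,t]}$ reproduces the stated estimate. For (2) I would verify the additional hypothesis of Theorem~\ref{bv sewing}: if $y$ is c\`agl\`ad then $y_{t-}=y_t$ forces $Osc(y;[s,t))=Osc(y;[s,t])$, i.e.\ $\sigma$ is left-continuous in $t$; if instead $x$ is c\`adl\`ag, then $\omega(s+,t)=\omega(s,t)$, which is precisely the statement that the variation function $t\mapsto\|x\|_{1,[0,t]}$ is right-continuous. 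In either case the convergence upgrades to MRS.

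For (3) the point is that the three Riemann sums differ only by \emph{pure-jump} sums. Writing $\Delta^-_u y:=y_u-y^-_u$, one has $y_u x_{u,v}-y^-_u x_{u,v}=(\Delta^-_u y)\,x_{u,v}$, and with $g(u):=\|\Delta^-_u y\|$ (only finitely many $u$ with $g(u)>\delta$, since $y$ is regulated) and $\omega(u,v):=\|x\|_{1,[u,v]}$ (which satisfies $\omega(s,s+)=0$ because $x\in D^1$, again by right-continuity of the variation function) we get $\|(\Delta^-_u y)x_{u,v}\|\le g(u)\,\omega(u,v)$. Proposition~\ref{bv pure jumps} then gives $\sum_{\op}\|(\Delta^-_u y)x_{u,v}\|\to 0$ in the MRS sense. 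Since $\sum_{\op}y_u x_{u,v}$ converges (MRS, by (2) with $x$ c\`adl\`ag) and $\sum_{\op}y^-_u x_{u,v}$ converges (MRS, by (2) with $y^-$ c\`agl\`ad), the two limits must coincide; the identical argument with $\Delta^+_u y$ handles $y^+$.

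For (4) I would identify the left-point integral with the Lebesgue--Stieltjes one by step-function approximation. For a partition $\op$ set $y^{\op}_r:=\sum_{(u,v)\in\op}y_u\,\mathbf{1}_{(u,v]}(r)$, so that $\int_{(0,T]}y^{\op}\,d\mu_x=\sum_{(u,v)\in\op}y_u\,\mu_x((u,v])=\sum_{(u,v)\in\op}y_u x_{u,v}$. As $|\op|\to 0$, for each $r\in(0,T]$ the left endpoint $u$ of the (unique) interval of $\op$ containing $r$ satisfies $u\uparrow r$, so $y^{\op}_r\to y_{r-}$ pointwise on $(0,T]$; moreover $|y^{\op}|\le\|y\|_{\sup,[0,T]}<\infty$ since regulated paths on a compact interval are bounded, and $|\mu_x|$ is a finite measure. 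Dominated convergence along any sequence of partitions with mesh tending to $0$ then gives $\int_{(0,T]}y^{\op}\,d\mu_x\to\int_{(0,T]}y^-\,d\mu_x$, and since the MRS limit $\int_0^T y^\ell\,dx$ exists (by (2), as $x\in D^1$ is in particular c\`adl\`ag) it must equal this value.

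I expect the only non-routine ingredient to be the fact, used in both (2) and (3), that the total-variation function of a c\`adl\`ag path of bounded variation is again c\`adl\`ag (equivalently $\lim_{u\downarrow s}\|x\|_{1,[s,u]}=0$); everything else is bookkeeping on top of Theorem~\ref{bv sewing}, Proposition~\ref{bv pure jumps}, and dominated convergence.
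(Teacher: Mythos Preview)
Your proof is correct and follows essentially the same route as the paper: parts (1)--(3) are handled identically via Theorem~\ref{bv sewing} and Proposition~\ref{bv pure jumps} (with the c\`adl\`ag-variation continuity being precisely Lemma~\ref{vari.continuity} in the appendix). For (4) the paper simply cites the classical fact (e.g.\ \cite{DN98}) that the Riemann--Stieltjes and Lebesgue--Stieltjes integrals agree for left-continuous integrands, whereas you spell this out via step-function approximation and dominated convergence---which is exactly how that classical fact is proved, so the difference is cosmetic.
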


\begin{proof}

Set $\sigma(s,t):= \sup_{u,v\in [s,t]}|y_{u,v}|$ and $\omega(s,t):= \|x\|_{1,[s,t]}.$ Then according to Theorem \ref{bv sewing}, one has the RRS convergence, together with the estimate stated in (1).
For the second result, by Lemma \ref{vari.continuity} in the Appendix, we know if $x$ is right-continuous, so is $\|x\|_{1,[s,t]}$ in both $s$ and $t.$ Then the result follows by Theorem \ref{bv sewing} again. The third result follows by Proposition \ref{bv pure jumps}. For the last result, we recall that according to the classical result(see e.g. \cite{DN98}) that if $y$ is left-continuous, then $\int_{(0,T]}   y d\mu_x$ equals to Riemann-Stieltjes integral, which agrees with our form of integral.

\end{proof}

\begin{example} To appreciate the added generality of $\ell$-integration relative to (\ref{equ:clLSint}), say, consider $x_t$ on $[0,T]$ is constant except at $t=\tau>0$. In this case
one has
$$
\int_0^T y^\ell dx = y_{\tau-} \Delta^-_\tau x + y_\tau  \Delta^+_\tau x .
$$
\end{example}
\begin{rem} One can also define
$
\int_{s+}^t y^\ell_r dx_r := \lim_{u \downarrow s}\int_u^t y^\ell_r dx_r 
$
and readily finds (similarly for $\int_s^{t-} , \int_{s-}^{t},  \int_{s-}^{t+}$ and so on)
$$
\int_{s+}^t y^\ell_r dx_r= \int_s^t y^\ell_r dx_r - y_s \Delta^+_s x.
$$
\end{rem}

%

%
%
%


\subsection{ODEs driven by BV paths} \label{sec:BVODEs}

Consider a driving BV driving signal $x \in V^1$. We are interested in differential equations of the type
$$
      dy = f( y) dx.
$$
The rigorous meaning is in terms of an integral equations, however a naive attempt, say
$$
      y_t = y_0 + \int_{(0,t]} f(y_r) dx_r,
$$
with c\`adl\`ag driver $x$, immediately leads to problems. (Take $y_0=1$, then $f(y)=y$ and $x$ a Heaviside function with jump at time $t=1$ so that $dx=\delta_1$. Evaluation of the equation at $t=1$, gives the contradiction $y_1 = 1 + y_1$.) The ``better'' formulation, familiar to stochastic analysts, is of the form
\begin{equation} \label{equ:classicalBVSDE}
      y_t = y_0 + \int_0^t f(y^-_r) dx_r
\end{equation}

which however tacitly assumes $x$ to be c\`adl\`ag. This brings us to the equation we actually treat,
\begin{equation} \label{equ:ourBVSDE}
      y_t = y_0 + \int_0^t f(y)^\ell dx_r
\end{equation}
and which contains (\ref{equ:classicalBVSDE}) as special case when $x$ is indeed c\`adl\`ag. It is instructive to write down explicit solutions, which is possible in the scalar, linear case.

\begin{example}(\textbf{Linear equation in the bounded variation case})\label{linear equation}

The explicit and unique solution to (\ref{equ:ourBVSDE}) in dimension $1$ and with $f(y)=y$ is given by
$$
y_t:= y_0 \exp(x_t^c- x_{0}^{+}) \prod_{s\in(0,t]}(1+ \Delta_s^-x)  \prod_{s\in[0,t)}(1+ \Delta_s^+x).
$$
Remark that this reduces to the ``Doleans-Dade'' form (see e.g. Ch. 0 in \cite{RY99}) whenever $x$ is  c\`adl\`ag.
\end{example}


We checks by hand in the above example that $y$ depends continuously on $x$. In general, one has the following. (This is a special case of Theorem \ref{young ODE}, Theorem \ref{global sol. for regular} below.) 

\begin{thm}(\textbf{ODEs driven by BV paths}) \label{thm:ODE}
Given $f\in C^1(\R^e, \mathcal{L}(\R^d,\R^e)),$ the space of functions with continuous first order derivatives, and $x\in V^1([0,T],\R^d),$ there exists a unique
solution $y \in V^1([0,T],\R^e)$ solves the differential equation on some time interval $[0,t_0),$
$$
y_t = y_0 + \int_0^t f(y_r)^\ell dx_r.
$$
Furthermore, if $f \in C^1_b, $ then a global solution exists. In addition, if $\ty$ solves the same equation driven by $\tx,$ then one has the following local Lipschitz estimate,
$$
\|y-\ty  \|_{1,[0,T]} \leq C ( \|x-\tx\|_{1,[0,T]} + |y_0-\ty_0|  ),
$$
where $C$ depends on $f$ and $L$, whenever $\|x\|_{1,[0,T]},\|\tx\|_{1,[0,T]} < L$. Moreover, if $x\in D^1,$ then $y \in D^1.$

\end{thm}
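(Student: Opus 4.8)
The plan is to run a Picard iteration in $V^1$, using the estimates on the $\ell$-integral from Proposition~\ref{bv integration}. First I would set up the solution map $\mathcal{M}(y)_t := y_0 + \int_0^t f(y_r)^\ell dx_r$ on the space of $V^1$-paths starting at $y_0$, equipped with the $1$-variation norm. The estimate $|\int_s^t y^\ell dx - y_s x_{s,t}| \le \|y\|_{\infty,[s,t)}\|x\|_{1,(s,t]}$ from part (1) of Proposition~\ref{bv integration}, combined with the obvious bound $\|\int_\cdot^\cdot y^\ell dx\|_{1,[s,t]} \le (\|y_s| + \|y\|_{\infty,[s,t)})\|x\|_{1,[s,t]}$, shows $\mathcal{M}$ maps a ball in $V^1([0,t_0])$ into itself for $t_0$ small. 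For contraction, I would write $f(y_r) - f(\tilde y_r)$ and use the $C^1$ (hence locally Lipschitz) property of $f$ together with the same integral estimate to get a factor $C\|x\|_{1,[0,t_0]}$ in front of $\|y - \tilde y\|_{1,[0,t_0]}$; choosing $t_0$ small enough that this factor is $<1$ gives a unique fixed point on $[0,t_0)$. Here "small" depends on $\|x\|_{1}$ on the relevant interval and on the local Lipschitz constant of $f$ near $y_0$, which is the source of the a priori restriction to $[0,t_0)$.

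For the global existence claim under $f \in C^1_b$, I would derive an a priori bound: since $f$ is bounded, $\|y\|_{\sup,[s,t]} \le \|f\|_\infty \|x\|_{1,[s,t]}$, so $y$ cannot blow up in finite ``$1$-variation time''; then I would patch together local solutions over a partition of $[0,T]$ chosen so that $\|x\|_{1,[t_i,t_{i+1}]}$ is small on each piece — this is possible because $\omega_x(s,t) = \|x\|_{1,[s,t]}$ is a (mild) control, hence has only finitely many jumps exceeding any threshold, and between those one can subdivide freely; the finitely many large jumps of $x$ are handled by hand, each contributing a single affine step $y_{t+} = y_t + f(y_t)\Delta^+_t x$ (and similarly for left jumps). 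The contraction constant on each subinterval is uniform because $\|x\|_1$ is small there and $f \in C^1_b$ has a global Lipschitz constant, so the local solutions glue to a global one.

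For the Lipschitz stability estimate, I would compare $y$ and $\tilde y$ driven by $x$ and $\tilde x$. Writing $y_t - \tilde y_t = (y_0 - \tilde y_0) + \int_0^t [f(y)^\ell dx - f(\tilde y)^\ell d\tilde x]$ and splitting the integrand as $f(y)^\ell d(x - \tilde x) + [f(y) - f(\tilde y)]^\ell d\tilde x$, the integral estimate of Proposition~\ref{bv integration}(1) gives, on a small interval, $\|y - \tilde y\|_{1} \le C(|y_0 - \tilde y_0| + \|x - \tilde x\|_{1}) + C\|\tilde x\|_{1}\|y - \tilde y\|_{1}$, where $C$ depends on $L$ (bounding $\|f\|_\infty$, its Lipschitz constant, and $\|y\|_{\sup}$) and on $\|x\|_1, \|\tilde x\|_1 < L$. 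Absorbing the last term on a sufficiently fine subdivision and then iterating a Gronwall-type argument across the finitely many subintervals of $[0,T]$ — with the finitely many big jumps contributing additional $C|y_{t} - \tilde y_{t}|$ terms controlled by continuity of $f$ — yields the claimed global estimate with $C = C(f, L)$.

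Finally, the statement $x \in D^1 \Rightarrow y \in D^1$ follows because the $\ell$-integral of a regulated integrand against a c\`adl\`ag BV path is itself c\`adl\`ag: by Proposition~\ref{bv integration}(4) it agrees with the Lebesgue--Stieltjes integral $\int_{(0,t]} f(y^-) d\mu_x$, which is c\`adl\`ag in $t$, so the fixed point $y$ inherits this. The main obstacle I anticipate is not the contraction argument itself but the bookkeeping for the patching: making precise that one can cover $[0,T]$ by finitely many intervals on each of which $\|x\|_1$ is below the contraction threshold \emph{while} the finitely many atoms of the control $\omega_{x,1}$ are isolated as single jump-steps, and then verifying that the per-step affine updates are consistent with the integral equation at jump times (i.e.\ that $y_{t} - y_{t-} = f(y_{t-})\Delta^-_t x$ and $y_{t+} - y_t = f(y_t)\Delta^+_t x$, cf.\ the Example following Proposition~\ref{bv integration}). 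This is exactly the ``treat finitely many big jumps by hand'' mechanism flagged in the introduction, and getting the constants to remain uniform across the gluing is the delicate point.
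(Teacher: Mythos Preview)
Your plan is sound and would give a correct proof. The paper takes a different route: it does not argue the BV case directly but defers it entirely to the Young case (Theorem~\ref{young ODE}) and the general regulated-driver result (Theorem~\ref{global sol. for regular}). In that Young argument the Picard iteration is run not in $V^p$ but in a weaker space $V^{p'}$ with $p'>p$, so that the contraction factor picks up an extra $\mathrm{Osc}(x,[0,t])^{p'-p}$ which can be made small by right-continuity at $t=0$; the contraction for the difference uses Lemma~\ref{stable p-v under f}, which is why $f\in C^2$ appears there. Your direct $V^1$ iteration avoids all of this because in the BV regime the crude bound $|\int_s^t g^\ell\,dx|\le \sup|g|\,\|x\|_{1,[s,t]}$ already yields a contraction factor $\|Df\|_\infty\|x\|_{1,[0,t_0]}$, and the stability splitting $f(y)\,d(x-\tilde x)+[f(y)-f(\tilde y)]\,d\tilde x$ only needs $f\in C^1_b$, matching the hypotheses of the statement exactly. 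The ``big jumps by hand'' patching you describe is precisely what the paper formalises later (Theorems~\ref{global sol.} and~\ref{global sol. for regular}) at the rough-path level; the jump relations $y_t-y_{t-}=f(y_{t-})\Delta^-_t x$ and $y_{t+}-y_t=f(y_t)\Delta^+_t x$ that you need for the gluing are the BV instance of Lemma~\ref{preserve jumps}. What your approach buys is a cleaner, self-contained argument with the sharp regularity assumption on $f$; what the paper's approach buys is a single proof template (the $p'>p$ trick) that covers all $p\in[1,2)$ uniformly, at the cost of asking one more derivative than strictly necessary when $p=1$.
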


\section{General rough integration} \label{sec:Rint}

We now deal with paths of finite $p$-variation, dealing first with the Young case, $p \in [1,2)$ and then the (level-$2$) rough case $p \in [2,3)$. (The case $p \ge 3$ requires branched rough paths, see Section \ref{sec:bRP}.)

\subsection{Sewing lemma with non-regular controls}



\begin{lem}\label{partition}

Suppose $\omega_i(s,t),i=1,2$ are controls. Then for any $\vep>0,$ there exists a partition $\mathcal{P}'$ such that for any $\{s,t\}\in \mathcal{P}',$
$$
\omega_1(s,t-) \wedge  \omega_2(s+,t)  < \vep,
$$
where $\omega_1(s,t-):=\lim_{r\uparrow t}\omega_1(s,r)$ and $\omega_2(s+,t)$ is defined similarly. In particular, for any $\tau\in (s,t),$ one has
$$\omega_1(s,\tau)\omega_2(\tau,t)<(\omega_1(0,T)\vee \omega_2(0,T))\vep.$$

\end{lem}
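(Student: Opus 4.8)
The plan is to bootstrap the single-control statement already in hand (Lemma \ref{mild control}) and then perform a short local surgery at the partition points, so as to replace the symmetric one-sided limits it delivers by the asymmetric ones $\omega_1(s,t-)$, $\omega_2(s+,t)$ that appear in the claim. The one extra ingredient I need is the boundary behaviour of a control: for any control $\omega$ on $[0,T]$ one has $\lim_{t\downarrow s}\omega(s+,t)=0$ for every $s\in[0,T)$, and symmetrically $\lim_{s\uparrow t}\omega(s,t-)=0$ for every $t\in(0,T]$. This is immediate from superadditivity: fixing $t_0\in(s,T]$ and taking $s<r<t_0$, one gets $\omega(s+,r)\le\omega(s+,t_0)-\omega(r,t_0)$, whose right-hand side tends to $0$ as $r\downarrow s$; the second limit is symmetric.

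Since every control is a mild control (Lemma \ref{controls are mild c.}(i)), I would apply Lemma \ref{mild control} to $\omega_1$ to obtain a partition $\op_0=\{0=q_0<q_1<\cdots<q_M=T\}$ with $\omega_1(q_j+,q_{j+1}-)<\vep$ for all $j$. Then, on each (non-degenerate) open interval $(q_j,q_{j+1})$, I insert two auxiliary points: first $s_j\in(q_j,q_{j+1})$ chosen so close to $q_j$ that $\omega_2(q_j+,s_j)<\vep$, and then $t_j\in(s_j,q_{j+1})$ chosen so close to $q_{j+1}$ that $\omega_1(t_j,q_{j+1}-)<\vep$; both choices are legitimate by the boundary behaviour recorded above. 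Set $\op':=\op_0\cup\{s_j,t_j:0\le j\le M-1\}$, a finite refinement of $\op_0$.

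It then remains to inspect the three kinds of intervals of $\op'$. On $\{q_j,s_j\}$ one has $\omega_2(s+,t)=\omega_2(q_j+,s_j)<\vep$ by construction; on $\{t_j,q_{j+1}\}$ one has $\omega_1(s,t-)=\omega_1(t_j,q_{j+1}-)<\vep$ by construction; and on the middle interval $\{s_j,t_j\}$, monotonicity of $\omega_1$ (decreasing in the first argument, increasing in the second) gives $\omega_1(s_j,t_j-)\le\omega_1(q_j+,q_{j+1}-)<\vep$. Hence $\omega_1(s,t-)\wedge\omega_2(s+,t)<\vep$ on every interval of $\op'$, which is the main assertion. The ``in particular'' clause follows at once: for $\tau\in(s,t)$ monotonicity gives $\omega_1(s,\tau)\le\omega_1(s,t-)$ and $\omega_2(\tau,t)\le\omega_2(s+,t)$, so $\omega_1(s,\tau)\omega_2(\tau,t)\le\big(\omega_1(s,t-)\wedge\omega_2(s+,t)\big)\big(\omega_1(s,t-)\vee\omega_2(s+,t)\big)\le\vep\,(\omega_1(0,T)\vee\omega_2(0,T))$, using $\omega_1(s,t-)\le\omega_1(0,T)$ and $\omega_2(s+,t)\le\omega_2(0,T)$.

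I do not expect a real obstacle here; the argument is elementary once Lemma \ref{mild control} is available. The only point requiring care is bookkeeping—putting the two inserted points on the \emph{correct} sides so that it is $\omega_1(\cdot,\cdot-)$ and $\omega_2(\cdot+,\cdot)$, rather than the other one-sided variants, that come out small, and checking that $\op'$ remains finite (it adds at most $2M$ points) and that each $(q_j,q_{j+1})$ is wide enough to accommodate $q_j<s_j<t_j<q_{j+1}$ together with the two smallness requirements.
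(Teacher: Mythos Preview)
Your proof is correct. The route differs somewhat from the paper's, so a short comparison is worth recording.

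The paper proceeds symmetrically in the two controls: it sets $x^i_t:=\omega_i(0,t)$, notes these are monotone (hence regulated), applies Lemma~\ref{regular path} to each to get partitions with small open-interval oscillation, takes the common refinement, and then inserts \emph{one} midpoint per interval. The key reduction is the superadditivity inequality $\omega_i(u,v)\le x^i_v-x^i_u$, which converts control of $\omega_i$ into control of oscillations of a one-variable function. Your argument is instead asymmetric: you use Lemma~\ref{mild control} on $\omega_1$ alone to produce the coarse partition, and then handle $\omega_2$ purely locally via the boundary fact $\lim_{t\downarrow s}\omega_2(s+,t)=0$. Both are equally elementary; the paper's version is slightly cleaner in that it needs only one inserted point per interval, while your insertion of $t_j$ is in fact redundant --- on $\{t_j,q_{j+1}\}$ you already have $\omega_1(t_j,q_{j+1}-)\le\omega_1(q_j+,q_{j+1}-)<\vep$ by the same monotonicity you used on $\{s_j,t_j\}$, so inserting just $s_j$ would suffice. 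Your derivation of the ``in particular'' clause is the same as the paper's.
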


\begin{proof}

Consider $x^i_t:=\omega_i(0,t): [0,T]\rightarrow \R^+,$ nondecreasing by superadditivity of $\omega_i,i=1,2.$
Then according to Lemma \ref{regular path}, for fixed $\vep>0,$ there exist $\mathcal{P}^i,i=1,2$
such that
\[
\forall(s^i,t^i) \in \mathcal{P}^i, Osc(x^i,(s^i,t^i))<\vep.
\]
Let $\mathcal{P}:=\mathcal{P}^1\vee \mathcal{P}^2,$ and one has $\forall (s,t) \in \mathcal{P},$
$$
\max_{i=1,2}Osc(x^i,(s,t))< \vep,
$$
Then for any $(s,t)\in \mathcal{P},$ take a new endpoint $ u \in (s,t)$ and one obtains a new partition $\mathcal{P}',$ which satisfies our need. Indeed, for odd intervals $(s,t)\in \mathcal{P}',$ one has $\max_{i=1,2}Osc(x^i,(s,t])< \vep.$ Then for any $s<u<v\leq t,$
$$
\omega_2(u,v)\leq x^2_v-x^2_u \leq Osc(x^2,(s,t])<\vep.
$$
and even intervals likewise.
\end{proof}

\begin{thm}(\textbf{Young sewing })\label{young sewing}
Suppose $\Xi_{s,t}$ is a mapping from simplex $\{ (s,t):0\leq s < t\leq T \}$ to a Banach space $(E,\|\cdot\|) .$
Define $\delta\Xi_{s,u,t}:=\Xi_{s,t}-\Xi_{s,u}-\Xi_{u,t}.$ Assume $\delta\Xi$ satisfies
$$
\|\delta\Xi_{s,u,t}\| \leq \omega_1^{\alpha_1}(s,u) \omega_2^{\alpha_2}(u,t),
$$
where $\omega_1,\omega_2$ are controls on the same simplex and $\alpha_1+\alpha_2>1.$
Then the following limit exists uniquely in the RRS sense,
$$
\mathcal{I}\Xi_{0,T}:=RRS-\lim_{|\mathcal{P}|\rightarrow 0} \sum_{(u,v)\in \mathcal{P}} \Xi_{u,v},
$$
and one has the following local estimate:
$$
\| \mathcal{I}\Xi_{s,t}-\Xi_{s,t}\|\leq C \omega_1^{\alpha_1}(s,t-) \omega_2^{\alpha_2}(s+,t),
$$
with $C$ depending only on $\alpha_1+\alpha_2$.
Furthermore, if $\omega_2(s,t)$ is right-continuous in $s,$ i.e.
$$
\omega_2(s+,t):=\lim_{u \downarrow s} \omega_2(u,t)= \omega_2(s,t),\ \ \forall \ 0\leq s < t \leq T,
$$
or $\omega_1(s,t)$ is left-continuous in $t,$ the convergence holds in the MRS sense.

\end{thm}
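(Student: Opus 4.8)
The plan is to follow the architecture of the mild sewing lemma (Theorem~\ref{bv sewing}), the only structural change being that the product $\sigma(s,u)\omega(u,t)$ is replaced by $\omega_1^{\alpha_1}(s,u)\omega_2^{\alpha_2}(u,t)$, in which neither factor carries a telescoping power. Consequently the naive left\-endpoint telescoping used in the mild case must be replaced by the classical Young \emph{coarse\-graining} trick (successively deleting the partition point whose deletion produces the smallest error), which is precisely what exploits $\alpha_1+\alpha_2>1$; the non\-regularity of $\omega_1,\omega_2$ is handled, as in the bounded variation sections, by Lemmas~\ref{partition} and~\ref{regular path}. There are three ingredients: (a) a sharp finite\-partition local estimate obtained by coarse\-graining; (b) the construction of a partition $\mathcal{P}_\vep$ giving the RRS\-Cauchy property along refinements; (c) the upgrade to MRS, copied from the mild case.

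\emph{Step 1 (finite\-partition local estimate).} I would first prove that for every $0\le s<t\le T$ and every partition $\mathcal{Q}=\{s=u_0<\dots<u_m=t\}$ one has $\bigl\|\sum_{(u,v)\in\mathcal{Q}}\Xi_{u,v}-\Xi_{s,t}\bigr\|\le C\,\omega_1^{\alpha_1}(s,t-)\,\omega_2^{\alpha_2}(s+,t)$ with $C=C(\alpha_1+\alpha_2)$. Splitting off the first and last subintervals gives the algebraic identity
\[
\sum_{(u,v)\in\mathcal{Q}}\Xi_{u,v}-\Xi_{s,t}=\Bigl(\sum_{k=1}^{m-2}\Xi_{u_k,u_{k+1}}-\Xi_{u_1,u_{m-1}}\Bigr)+\delta\Xi_{s,u_1,t}+\delta\Xi_{u_1,u_{m-1},t}.
\]
The last two terms are single increments; since $s<u_1\le u_{m-1}<t$ and controls are monotone ($\omega_1(s,\cdot)$ nondecreasing, $\omega_2(\cdot,t)$ nonincreasing), each is $\le\omega_1^{\alpha_1}(s,t-)\omega_2^{\alpha_2}(s+,t)$. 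The bracketed term is a sewing sum on $[u_1,u_{m-1}]$, and there one runs the coarse\-graining with the \emph{simultaneous two\-control pigeonhole}: among the $\ge(k-1)/2$ pairwise non\-overlapping triples $[v_{j-1},v_{j+1}]$ (with $k$ current points), using $\sum_j\bigl(\omega_1(v_{j-1},v_{j+1})/\omega_1(u_1,u_{m-1})+\omega_2(v_{j-1},v_{j+1})/\omega_2(u_1,u_{m-1})\bigr)\le 2$, there is a deletable $v_j$ with $\omega_i(v_{j-1},v_{j+1})\le\frac{4}{k-1}\omega_i(u_1,u_{m-1})$ for $i=1,2$ simultaneously; the resulting deletion error is $\le(\frac{4}{k-1})^{\alpha_1+\alpha_2}\omega_1^{\alpha_1}(u_1,u_{m-1})\omega_2^{\alpha_2}(u_1,u_{m-1})$. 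Summing the convergent series $\sum_k(\frac{4}{k-1})^{\alpha_1+\alpha_2}$ and using $\omega_1(u_1,u_{m-1})\le\omega_1(s,t-)$, $\omega_2(u_1,u_{m-1})\le\omega_2(s+,t)$ yields the claim with $C=4^{\alpha_1+\alpha_2}\zeta(\alpha_1+\alpha_2)+2$.

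\emph{Steps 2--3 (RRS existence, local estimate, MRS).} Fix $\vep>0$, set $\omega:=\omega_1+\omega_2$. Mirroring the mild sewing proof, let $\{s_i\}$ be the finitely many big jumps of $\omega$ (left or right, above a threshold depending on $\vep$), place each $s_i$ in $\mathcal{P}_\vep$ together with a one\-sided ``buffer'' neighbour chosen so close that, by Step~1 and $\lim_{t\downarrow s}\omega_k(s+,t)=0$ (and its left analogue), the buffer contributions sum to $<\vep/2$; then, applying Lemma~\ref{regular path} to $s\mapsto\omega(0,s)$, refine the remaining pieces so that $\omega(a,b)<\rho$ on each, with $\rho$ chosen small \emph{after} the number of buffers is fixed. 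For any refinement $\mathcal{P}\supset\mathcal{P}_\vep$, $\sum_{\mathcal{P}}\Xi-\sum_{\mathcal{P}_\vep}\Xi=\sum_{[a,b]\in\mathcal{P}_\vep}\bigl(\sum_{\mathcal{P}|_{[a,b]}}\Xi-\Xi_{a,b}\bigr)$; on the non\-buffer pieces Step~1 gives $\le C\omega_1^{\alpha_1}(a,b-)\omega_2^{\alpha_2}(a+,b)\le C\omega(a,b)^{\alpha_1+\alpha_2}\le C\rho^{\alpha_1+\alpha_2-1}\omega(a,b)$, whose telescoping sum is $\le C\rho^{\alpha_1+\alpha_2-1}\omega(0,T)<\vep/2$. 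Hence the partial sums are RRS\-Cauchy, $\mathcal{I}\Xi_{0,T}$ exists, and additivity follows by keeping the splitting point in $\mathcal{P}_\vep$; the local estimate for $\mathcal{I}\Xi$ follows by passing to the limit in Step~1 applied on $[s,t]$. For MRS convergence when $\omega_2(s+,t)=\omega_2(s,t)$ (or $\omega_1(s,t-)=\omega_1(s,t)$), I copy the mild\-sewing argument verbatim: with $\mathcal{P}_3=\mathcal{P}_\vep\vee\mathcal{P}'$, where $\mathcal{P}'$ (from Lemma~\ref{mild control}/\ref{partition}) makes the relevant control $<\vep$ on its subintervals, compare $\sum_{\mathcal{P}}\Xi$ with $\sum_{\mathcal{P}\vee\mathcal{P}_3}\Xi$ for $|\mathcal{P}|<|\mathcal{P}_3|$; the difference is $\sum_{\tau\in\mathcal{P}_3,(u,v)\in\mathcal{P}}\delta\Xi_{u,\tau,v}$ with at most one $\tau$ per $(u,v)$, and one\-sided continuity bounds each $\delta\Xi_{u,\tau,v}$ by $\vep$ times the complementary control's increment, so the total is $\le\vep\,\omega(0,T)$.

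The main obstacle is the combination of two linked points. First, extracting the \emph{sharp} ``away\-from\-the\-endpoints'' estimate $\omega_1^{\alpha_1}(s,t-)\omega_2^{\alpha_2}(s+,t)$ from the coarse\-graining --- via the split\-off\-endpoints identity together with the simultaneous two\-control pigeonhole --- rather than the crude $(\omega_1(s,t)+\omega_2(s,t))^{\alpha_1+\alpha_2}$; this sharpness is exactly what allows the jump contributions to be absorbed. Second, organizing $\mathcal{P}_\vep$ (big\-jump buffers plus a $\rho$\-fine subdivision) so that the refinement error is genuinely $o(1)$ even though the number of ``bad'' partition points blows up as $\vep\downarrow 0$ --- here the elementary inequality $\omega^{\alpha_1+\alpha_2}\le\rho^{\alpha_1+\alpha_2-1}\omega$ on $\rho$\-small pieces, followed by telescoping, is the crux (and it uses $\alpha_1+\alpha_2>1$ a second time). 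Everything else is routine once these two points are settled.
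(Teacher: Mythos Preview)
Your proposal is correct and reaches the stated conclusions, but the route differs from the paper's in a systematic way.

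For the local estimate (your Step~1), you split off the two endpoint subintervals and run a \emph{two-control pigeonhole} on the interior, locating a deletable point where both $\omega_i$-increments are simultaneously small. The paper instead observes that for any $1<\beta<\alpha_1+\alpha_2$ the product $\omega_4(u,v):=\omega_1^{\alpha_1/\beta}(u,v-)\,\omega_2^{\alpha_2/\beta}(u+,v)$ is itself a control (one-sided limits of controls are controls; products of controls with exponents summing to $\le 1$ are superadditive). The standard \emph{single-control} Young coarse-graining applied to $\omega_4$ then gives $\|\sum_{\mathcal P}\Xi-\Xi_{s,t}\|\le C\omega_4^\beta(s,t)=C\omega_1^{\alpha_1}(s,t-)\omega_2^{\alpha_2}(s+,t)$ in one stroke, without the endpoint-splitting identity.

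For RRS existence (your Step~2), you assemble $\mathcal P_\vep$ from big-jump buffers plus a $\rho$-fine refinement and telescope via $\omega^{\alpha_1+\alpha_2}\le\rho^{\alpha_1+\alpha_2-1}\omega$. The paper instead \emph{sacrifices exponent}: fix $\alpha_i'<\alpha_i$ and $\theta\in(1,\alpha_1'+\alpha_2')$, and use Lemma~\ref{partition} to obtain a partition on whose open subintervals $\omega_1^{\alpha_1-\alpha_1'}(s,u)\omega_2^{\alpha_2-\alpha_2'}(u,t)<C\vep$ for all interior $u$. Then on each piece $\|\delta\Xi\|\le C\vep\,\omega_3^\theta$ with $\omega_3:=\omega_1^{\alpha_1'/\theta}\omega_2^{\alpha_2'/\theta}$ again a genuine control, and the classical single-control Young argument yields $\|\sum_{\mathcal P'}\Xi-\sum_{\mathcal P}\Xi\|\le C\vep\,\omega_3^\theta(0,T)$ for any refinement $\mathcal P'\supset\mathcal P$. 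This bypasses your buffer bookkeeping entirely and is the form that generalizes cleanly to the $N$-term sewing of Theorem~\ref{general sew}.

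For MRS the two arguments are close; the paper once more borrows a small exponent $\delta<(\alpha_1+\alpha_2-1)\wedge\alpha_2$ so that $\omega_5:=\omega_1^{\alpha_1}\omega_2^{\alpha_2-\delta}$ is superadditive and the error $\sum_\tau\omega_1^{\alpha_1}(s,\tau)\omega_2^{\alpha_2}(\tau,t)\le\vep\,\omega_5(0,T)$ telescopes, whereas you transplant the mild-sewing argument directly.

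In summary: your proof works, but the paper's unifying device --- that appropriate products and one-sided limits of controls are again controls --- lets it reduce everything to the classical one-control Young machinery, avoiding both the simultaneous pigeonhole and the explicit buffer construction. Your approach buys explicit constants and is perhaps more transparent about where the $(s,t-)$ and $(s+,t)$ appear; the paper's buys brevity and a template that scales to Theorem~\ref{general sew}.
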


\begin{rem} In a continuous setting, the sewing lemma usually (e.g. \cite[Ch. 4]{FH14}) comes with the assumption $\|\delta\Xi_{s,u,t}\| \leq \omega^{\alpha_1+ \alpha_2} (s,t)$. This is not sufficient to deal with jumps.
\end{rem}

\begin{proof}
For the first convergence part, one needs to fix any three indexes $\alpha_1',\alpha_2',\theta$ such that
$$
1<\theta<\alpha_1'+\alpha_2',0<\alpha_1'< \alpha_1, 0<\alpha_2'< \alpha_2.
$$
For any fixed $\vep>0,$ according to Lemma \ref{partition} one may choose a partition $\mathcal{P}$ of $[0,T],$ such that for any $(s,t)\in \mathcal{P},$ and $u\in(s,t),$

\begin{eqnarray*}
\|\delta \Xi_{s,u,t}\| &\leq & \omega_1^{\alpha_1}(s,u) \omega_2^{\alpha_2}(u,t) \leq \vep C' \omega_1^{\alpha_1'}(s,u)\omega_2^{\alpha_2'}(u,t)\\
&\leq& \vep C' \omega_1^{\alpha_1'}(s,t)\omega_2^{\alpha_2'}(s,t) = \vep C' \omega_3^\theta(s,t),
\end{eqnarray*}
where $C'$ depends on $\max_{i=1,2}\omega_i(0,T),\min_{i=1,2}(\alpha_i-\alpha_i'),$ and $\omega_3(s,t):=\omega_1^{\frac{\alpha_1'}{\theta}}(s,t)\omega_2^{\frac{\alpha_2'}{\theta}}(s,t).$ It is obvious that the above inequality holds if $s,u,t$ is replaced by any three points in $(s,t).$ Furthermore, One may check that $\omega_3$ is also super-additive, so we can apply Young's argument w.r.t. $\omega_3.$ Indeed, for any other refinement $\mathcal{P}'$ of $\mathcal{P},$ and any $(s,t)\in \mathcal{P},$ without loss of generality, say that $\mathcal{P}'|_{[s,t]}$ has $r$ intervals with $r\geq 2$. There exist two intervals $[u',u],[u,u'']$ in $\mathcal{P}'|_{[s,t]}$ such that
$$
\omega_3(u-,u+)\leq \frac{2}{r-1}\omega_3(s,t).
$$
One obtains that
\begin{eqnarray*}
\|\sum_{\mathcal{P}'|_{[s,t]}\backslash \{u\}}\Xi_{\tau,\nu}-\sum_{\mathcal{P}'|_{[s,t]}}\Xi_{\tau,\nu}\| &=& \|\delta\Xi_{u-,u,u+}\|
\leq \vep C' \omega_3^\theta(u-,u+)\\
&\leq& \vep C' \left(\frac{2}{r-1}\right)^\theta \omega_3^\theta(s,t).
\end{eqnarray*}
Iterating this process on $\mathcal{P}'|_{[s,t]},$ one has
$$
\|\sum_{\mathcal{P}'|_{[s,t]}}\Xi_{\tau,\nu}-\Xi_{s,t}\| \leq \vep \sum_{r=2}^\infty \left(\frac{2}{r-1}\right)^\theta \omega_3^\theta(s,t),
$$
and it follows that
$$
\|\sum_{ \mathcal{P}'}\Xi_{\tau,\nu}-\sum_{\mathcal{P}}\Xi_{s,t}\| \leq C' \vep \omega_3^\theta(0,T),
$$
where $C'$ is changed by multiplying a constant from the above. The convergence part is finished and the uniqueness follows by considering refinements of partitions. Indeed, if $\{\op_n\}_n,\{\op'_n\}_m$ are two sequences of partitions such that $\sum_{\op} \Xi_{s,t}$ converges, then let $\tilde{\op}_n:= \op_n \vee \op'_n,$ which is a sequence of refinements of both $\{\op_n\}_n$ and $\{\op'_n\}_n.$ By the definition of RRS convergence, the limits agree.
For the inequality part, suppose $\{\mathcal{P}_n\}_n$ a sequence of partition on $[s,t]$ such that the limit above exists. Choose any constant $1<\beta <\alpha_1+\alpha_2.$ For any $(u,v)\in \mathcal{P}_n$ and $u<\tau<v,$ one has
$$
\|\delta\Xi_{u,\tau,v}\| \leq \left(\omega_1^{\frac{\alpha_1}{\beta}}(u,v-) \omega_2^{\frac{\alpha_2}{\beta}}(u+,v)\right)^\beta.
$$
Note that for a control $\omega(s,t),$ $\omega(s+,t)$ and $\omega(s,t-)$ are also controls. One may apply Young's argument to $\omega_4(u,v):=\omega_1^{\frac{\alpha_1}{\beta}}(u,v-) \omega_2^{\frac{\alpha_2}{\beta}}(u+,v)$ and obtain
$$
\|\sum_{\mathcal{P}_n} \Xi_{u,v}-\Xi_{s,t}\|\leq C \omega_4^\beta(s,t)=C\omega_1^{\alpha_1}(s,t-) \omega_2^{\alpha_2}(s+,t).
$$
where $C$ is a generic constant. The inequality follows by taking the limit.\\

 Now suppose $\omega_2(s,t)$ right-continuous in $s$ and we are going to show the MRS-convergence. Fix some $\delta>0$ such that $\delta < (\alpha_2+\alpha_1-1) \wedge \alpha_2.$ For any $\vep>0,$ according to the convergence which we have shown and our right-continuity assumption, one may choose a partition $\op_\vep$ such that for any refinement $\bar{\op}$ of $\op_\vep,$
$$
\|\sum_{\bar{\mathcal{P}} }\Xi_{s,t}-\mathcal{I}\Xi_{0,T}\| < \vep,
$$
and for any interval $(s,t)\in \op_\vep,$
$$
\omega_2(s,v) < \vep^{\frac {1} {\delta}}, \ \ \forall v\in (s,t).
$$
Then for any $\op$ with $|\op| <|\op_\vep|$ and any interval $(s,t)\in \op,$ there exists at most one endpoint $\tau$ of $ \op_\vep$ such that $\tau\in [s,t],$ which implies $\omega_2(\tau,t) < \vep^{\frac 1 \delta}.$ Now let $\tilde{\op}:=\op \vee  \op_\vep,$ and one has

\begin{eqnarray*}
\| \sum_{ \mathcal{P} }\Xi_{s,t}-\mathcal{I}\Xi_{0,T}\| &\leq & \| \sum_{\tilde{\mathcal{P}} }\Xi_{s,t}- \mathcal{I}\Xi_{0,T}\|+\| \sum_{\tilde{\mathcal{P}} }\Xi_{s,t}- \sum_{\mathcal{P} }\Xi_{s,t}\| \\
&\leq & \vep + \sum_{\tau \in \op_\vep} \|\Xi_{s,t}-\Xi_{s,\tau}-\Xi_{\tau,t}\| \\
&\leq & \vep + \sum_{\tau \in \op_\vep} \omega_1^{\alpha_1}(s,\tau) \omega_2^{\alpha_2 }(\tau,t)\\
&\leq & \vep + \sum_{\tau \in \op_\vep} \omega_1^{\alpha_1}(s,t) \omega_2^{\alpha_2- \delta}(s,t) (\vep^\frac 1 \delta)^\delta\\
&\leq & (\omega_5(0,T) +1)\vep,
\end{eqnarray*}
where $\omega_5(s,t):= \omega_1^{\alpha_1}(s,t) \omega_2^{\alpha_2- \delta}(s,t)$ is super-additive. The case that $\omega_1(s,t)$ is left-continuous in $t$ likewise.

\end{proof}


\subsection{Young integration with jumps}

By sewing, Theorem \ref{young sewing}, we immediate obtain a proof of

\begin{prop}(\textbf{integration in Young's case})\label{Young integral}
Let $x\in V^p([0,T],\R^d),y\in V^q([0,T],\mathcal{L}(\R^d,\R^n))$ with $\frac1p+\frac1q>1$.
Then the limit ($\mathcal{P}$ paritition of $[0,T]$)
$$
\lim_{|\mathcal{P}|\rightarrow 0} \sum_{(u,v)\in \mathcal{P}}y_s x_{u,v}, =: \int_0^T y_r^{\ell} dx_r
$$
exists in RRS sense and we have the local estimate
$$
|  \int_s^t y_r^{\ell} dx_r -y_s x_{s,t}|\leq  C_{p,q} \|y\|_{q,[s,t)}\|x\|_{p,(s,t]}.
$$
If $x$ is right-continuous (i.e. c\`adl\`ag), then the above limit exists in MRS sense and we will write $\int y^- dx$ for the integral, notation consistent with the BV case (\ref{equ:clLSint}).
\end{prop}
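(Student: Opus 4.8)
The plan is to apply the Young sewing lemma (Theorem \ref{young sewing}) to the germ $\Xi_{s,t}:=y_s x_{s,t}$, so that $\I\Xi_{0,T}$ is, by construction, the limit of the Riemann sums $\sum_{(u,v)\in\op}y_u x_{u,v}$ that we wish to call $\int_0^T y^\ell_r dx_r$. The first step is the algebraic identity: using additivity of increments, $x_{s,t}-x_{s,u}=x_{u,t}$, one gets for $s<u<t$
\[
   \delta\Xi_{s,u,t}=y_s x_{s,t}-y_s x_{s,u}-y_u x_{u,t}=(y_s-y_u)x_{u,t}=-y_{s,u}\,x_{u,t},
\]
hence $\|\delta\Xi_{s,u,t}\|\le\|y_{s,u}\|\,\|x_{u,t}\|$. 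Next, set $\omega_1(s,t):=\|y\|_{q,[s,t]}^q$ and $\omega_2(s,t):=\|x\|_{p,[s,t]}^p$; these are controls (null on the diagonal, super-additive) straight from the definition \eqref{equ:metricPnorm} of $p$-variation. Since a single increment is one term of the sup defining the variation, $\|y_{s,u}\|\le\omega_1(s,u)^{1/q}$ and $\|x_{u,t}\|\le\omega_2(u,t)^{1/p}$, so that
\[
   \|\delta\Xi_{s,u,t}\|\le\omega_1(s,u)^{\alpha_1}\,\omega_2(u,t)^{\alpha_2},\qquad \alpha_1:=\tfrac1q,\ \alpha_2:=\tfrac1p,
\]
with $\alpha_1+\alpha_2=\tfrac1p+\tfrac1q>1$ by hypothesis.

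Theorem \ref{young sewing} now applies verbatim: the sums converge in the RRS sense to $\I\Xi_{0,T}=:\int_0^T y^\ell_r dx_r$, and the local estimate reads $\|\I\Xi_{s,t}-\Xi_{s,t}\|\le C_{p,q}\,\omega_1(s,t-)^{\alpha_1}\omega_2(s+,t)^{\alpha_2}$. It then remains to translate the one-sided limits of the controls into the half-open variation norms appearing in the statement, namely $\omega_1(s,t-)=\lim_{r\uparrow t}\|y\|_{q,[s,r]}^q=\|y\|_{q,[s,t)}^q$ and $\omega_2(s+,t)=\lim_{u\downarrow s}\|x\|_{p,[u,t]}^p=\|x\|_{p,(s,t]}^p$; both are immediate from monotonicity of the restricted $p$-variation in its endpoints. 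This yields exactly $|\int_s^t y^\ell_r dx_r-y_s x_{s,t}|\le C_{p,q}\|y\|_{q,[s,t)}\|x\|_{p,(s,t]}$.

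For the c\`adl\`ag case, if $x$ is right-continuous then $s\mapsto\|x\|_{p,[s,t]}$ is right-continuous (Lemma \ref{vari.continuity} in the Appendix, the $p$-variation analogue of the $1$-variation fact used in the proof of Proposition \ref{bv integration}), i.e. $\omega_2(s+,t)=\omega_2(s,t)$. This is precisely the ``$\omega_2$ right-continuous in $s$'' hypothesis of the last clause of Theorem \ref{young sewing}, so the Riemann sums converge in the stronger MRS sense; the notation $\int y^- dx$ is then justified by consistency with the BV formula \eqref{equ:clLSint}. The whole argument is essentially a direct corollary of the sewing lemma; the only genuine point requiring care is the identification of $\omega_1(s,t-)$ with $\|y\|_{q,[s,t)}^q$ and of $\omega_2(s+,t)$ with $\|x\|_{p,(s,t]}^p$, together with the right-continuity of $p$-variation needed for the MRS claim, which rest on the elementary but not entirely trivial continuity properties of $p$-variation as a function of the interval endpoints.
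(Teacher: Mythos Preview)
Your proof is correct and follows exactly the approach the paper intends: the paper's own proof is the single line ``By sewing, Theorem \ref{young sewing}, we immediately obtain a proof'', and you have simply spelled out the details---setting $\Xi_{s,t}=y_s x_{s,t}$, computing $\delta\Xi_{s,u,t}=-y_{s,u}x_{u,t}$, taking $\omega_1=\|y\|_{q}^q$, $\omega_2=\|x\|_{p}^p$, and invoking Lemma \ref{vari.continuity} for the MRS upgrade in the c\`adl\`ag case. Nothing is missing or different.
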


At this stage, it seems appropriate to spend a moment to verify that our so-defined Young integral behaves in reasonable ways. For instance, with $x, y$ in $V^p,V^q$ respectively, in the Young regime $1/p+1q>1$ one readily checks a product formula of the form
$$
y_T x_T = y_0 x_0 + \int_0^T y^\ell_r dx_r + \int_0^T x^\ell_r dy_r + \sum_{t\in[0,T]} (\Delta_t^- y \Delta_t^- x + \Delta_t^+ y \Delta_t^+ x) \ .
$$
More generally, an It\^o-type formula can be given, of the form
\begin{eqnarray*}
f(x_T)=f(x_0) + \int_0^T Df(x_t)^\ell d x_t + \sum_{t\in[0,T)}\left( (f(x_{t+})-f(x_t)-Df(x_t)\Delta_t^+x \right)\\
+\sum_{t\in(0,T]}\left( (f(x_{t})-f(x_{t-})-Df(x_{t-})\Delta_t^-x )\right).
\end{eqnarray*}
It holds true for $x\in V^p([0,T],\R^d)$ and $f\in Lip^{p+\epsilon}_{loc}(\R^d, \mathcal{L}(\R^d,\R^e))$, any $\epsilon>0$, and actually for $f \in C^1$ in case $p=1$, using the results of Section \ref{sec:BV}). If one specializes to the case when $x$ is c\`adl\`ag, this form is consistent with the It\^o-formula for BV martingales (see e.g. \cite{Pro05}), though not a consequence of it.

\subsection{Sewing lemmas for rough paths and pure jumps}

We now state the sewing lemma necessary for genuine rough integration with jumps. Roughly speaking, $N$ corresponds to the number of ``levels'', so that $N=1$ was sufficient for the Young case.

aaa

\begin{thm}(\textbf{generalized sewing})\label{general sew} \label{thm:genSew}
$\Xi,\delta\Xi$ as defined in Theorem \ref{young sewing} with
$$
\|\delta\Xi_{s,u,t}\| \leq \sum_{j=1}^N \omega_{1,j}^{\alpha_{1,j}}(s,u) \omega_{2,j}^{\alpha_{2,j}}(u,t),
$$
where $\omega_{1,j}, \omega_{2,j}$ are controls and $\alpha_{1,j}+\alpha_{2,j}>1$ for all $j=1,...,N.$
Then the following limit exists uniquely in the RRS sense,
$$
\mathcal{I}\Xi_{0,T}:=RRS-\lim_{|\mathcal{P}|\rightarrow 0} \sum_{(u,v)\in \mathcal{P}} \Xi_{u,v},
$$
and one has the following estimate:
$$
\| \mathcal{I}\Xi_{s,t}-\Xi_{s,t}\|\leq C \sum_{j=1}^N \omega_{1,j}^{\alpha_{1,j}}(s,t-) \omega_{2,j}^{\alpha_{2,j}}(s+,t),
$$
with $C$ depending only on $\min_j\{\alpha_{1,j}+\alpha_{2,j}\}.$ Furthermore, if $\omega_{1,j}(s,t),j=1,...,N$ are left-continuous in $t$ or $\omega_{2,j}(s,t),j=1,...,N$ are right-continuous in $s,$ then the convergence holds in the MRS sense.

\end{thm}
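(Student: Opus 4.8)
The plan is to re-run the proof of Theorem \ref{young sewing}, the single new device being to collapse the $N$ summands in the hypothesis on $\delta\Xi$ into \emph{one} super-additive control raised to a power strictly bigger than $1$. Once this reduction is in place, the Young telescoping argument of Theorem \ref{young sewing} applies essentially verbatim, and the remaining statements (local estimate, MRS convergence) are obtained exactly as there.

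\textbf{Collapsing the sum and RRS convergence.} First I would fix $\theta$ with $1<\theta<\min_j\{\alpha_{1,j}+\alpha_{2,j}\}$ and put $\alpha_{i,j}':=\tfrac{\theta}{\alpha_{1,j}+\alpha_{2,j}}\,\alpha_{i,j}$, so that $0<\alpha_{i,j}'<\alpha_{i,j}$ and $\alpha_{1,j}'+\alpha_{2,j}'=\theta$. For $\vep>0$, applying Lemma \ref{partition} to each pair $(\omega_{1,j},\omega_{2,j})$ and taking the common refinement produces a partition $\op$ such that for every $(s,t)\in\op$, every $u\in(s,t)$ and every $j$,
\[
\omega_{1,j}^{\alpha_{1,j}}(s,u)\,\omega_{2,j}^{\alpha_{2,j}}(u,t)\;\le\;\eta(\vep)\,\omega_{1,j}^{\alpha_{1,j}'}(s,t)\,\omega_{2,j}^{\alpha_{2,j}'}(s,t)\;=\;\eta(\vep)\,\omega_{3,j}^{\theta}(s,t),
\]
where $\omega_{3,j}:=\omega_{1,j}^{\alpha_{1,j}'/\theta}\omega_{2,j}^{\alpha_{2,j}'/\theta}$ is super-additive (by H\"older, as in the proof of Theorem \ref{young sewing}) and $\eta(\vep)\to 0$ as $\vep\to0$, uniformly in $j$. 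Summing over $j$, using $\sum_{j=1}^N a_j^{\theta}\le\bigl(\sum_{j=1}^N a_j\bigr)^{\theta}$ (valid for $\theta\ge1$, $a_j\ge0$) and the fact that a finite sum of super-additive controls is super-additive, I obtain $\|\delta\Xi_{s,u,t}\|\le\eta(\vep)\,\tilde\omega^{\theta}(s,t)$ with $\tilde\omega:=\sum_{j=1}^N\omega_{3,j}$. This is precisely the hypothesis of the $N=1$ case, so the argument of Theorem \ref{young sewing} — in a refinement of $\op|_{[s,t]}$ pick a midpoint $u$ with $\tilde\omega(u-,u+)\le\tfrac{2}{r-1}\tilde\omega(s,t)$, delete it, iterate, and sum over $(s,t)\in\op$ — yields the RRS convergence, and uniqueness follows by comparing common refinements of competing partition sequences.

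\textbf{Local estimate and MRS convergence.} For the local bound I would repeat the collapsing with one-sided controls: fix $\beta$ with $1<\beta<\min_j\{\alpha_{1,j}+\alpha_{2,j}\}$ and set $\omega_{4,j}(u,v):=\omega_{1,j}^{\alpha_{1,j}/\beta}(u,v-)\,\omega_{2,j}^{\alpha_{2,j}/\beta}(u+,v)$, which is super-additive since $\omega(s,t-)$ and $\omega(s+,t)$ are again controls and the exponents sum to $>1$. Then $\|\delta\Xi_{u,\tau,v}\|\le\sum_j\omega_{4,j}^{\beta}(u,v)\le\bigl(\sum_j\omega_{4,j}(u,v)\bigr)^{\beta}$ for $u<\tau<v$, so Young's argument applied to the single super-additive control $\sum_j\omega_{4,j}$, letting the mesh tend to $0$, gives $\|\I\Xi_{s,t}-\Xi_{s,t}\|\le C\bigl(\sum_j\omega_{4,j}(s,t)\bigr)^{\beta}$; by the power-mean inequality $\bigl(\sum_{j=1}^N a_j\bigr)^{\beta}\le N^{\beta-1}\sum_j a_j^{\beta}$ this is $\le C\sum_j\omega_{1,j}^{\alpha_{1,j}}(s,t-)\omega_{2,j}^{\alpha_{2,j}}(s+,t)$, the constant then also absorbing the (fixed, finite) number of levels $N$. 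Finally, the passage from RRS to MRS under one-sided continuity of all the $\omega_{1,j}$ (resp.\ all the $\omega_{2,j}$) is a line-by-line copy of the final step of the proof of Theorem \ref{young sewing}: choose $\delta:=\min_j\{(\alpha_{1,j}+\alpha_{2,j}-1)\wedge\alpha_{2,j}\}>0$, a common $\op_\vep$ making $\omega_{2,j}(s,\cdot)<\vep^{1/\delta}$ on its intervals for all $j$, and estimate $\|\sum_\op\Xi-\I\Xi\|\le\vep+\sum_{\tau\in\op_\vep}\sum_j\omega_{1,j}^{\alpha_{1,j}}(s,\tau)\omega_{2,j}^{\alpha_{2,j}}(\tau,t)\le\vep\bigl(1+\sum_j\omega_{5,j}(0,T)\bigr)$ with $\omega_{5,j}:=\omega_{1,j}^{\alpha_{1,j}}\omega_{2,j}^{\alpha_{2,j}-\delta}$ super-additive.

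\textbf{Main obstacle.} The only point that needs genuine care is the bookkeeping around the collapsing step: verifying that $\tilde\omega$ and the modified controls $\omega_{4,j},\omega_{5,j}$ are honestly super-additive and inherit the relevant one-sided continuity after the H\"older manipulations, and that the bounds produced by Lemma \ref{partition} (the factor $\eta(\vep)$, and the constants in the MRS step) can be chosen uniformly in $j$ — which is where finiteness of $N$ enters and produces the $N$-dependence of the constant in the local estimate. Everything else is a mechanical transcription of the $N=1$ proof.
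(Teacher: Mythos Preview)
Your proposal is correct and follows essentially the same route as the paper: collapse the $N$ summands into a single super-additive control $\tilde\omega=\sum_j\omega_{1,j}^{\alpha_{1,j}'/\theta}\omega_{2,j}^{\alpha_{2,j}'/\theta}$ via Lemma \ref{partition} applied to each pair and common refinement, then run the Young telescoping argument of Theorem \ref{young sewing}. The paper treats the local estimate and MRS step in one sentence (``similar by applying $(\sum a_i)^p\le N^p\sum a_i^p$''), while you spell these out in more detail with the controls $\omega_{4,j},\omega_{5,j}$; the content is the same.
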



\begin{proof}
Fix constants $\alpha_{1,j}',\alpha_{2,j}',\theta$ such that
$
1<\theta<\min_j\{\alpha_{1,j}+\alpha_{2,j}\}, \alpha_{1,j}'<\alpha_{1,j},\alpha_{2,j}'<\alpha_{2,j}.
$
For any $\vep>0,$ according to Lemma \ref{partition}, one may choose $\MP_j$ such that for any $(s,t)\in \MP_j,$ $u \in (s,t)$
$$
\omega_{1,j}^{\alpha_{1,j}-\alpha_{1,j}'}(s,u) \omega_{2,j}^{\alpha_{2,j}-\alpha_{2,j}'}(u,t) < C \vep,
$$
with $C$ depending on $\max_{j=1,...,N}\left(\omega_{1,j}(0,T)\vee \omega_{2,j}(0.T)\right)$ and $\min_{  j=1,...,N;i=1,2 }(\alpha_{i,j}-\alpha_{i,j}').$
Then let $\op:= \vee_{j=1}^N \op_j,$ and one has for any $(s,t) \in \op,$
\begin{eqnarray*}
\|\delta\Xi_{s,u,t}\| &\leq & \sum_{j=1}^N \omega_{1,j}^{\alpha_{1,j}}(s,u) \omega_{2,j}^{\alpha_{2,j}}(u,t)\\
&\leq & \vep C \sum_{j=1}^N \omega_{1,j}^{\alpha_{1,j}'}(s,u) \omega_{2,j}^{\alpha_{2,j}'}(u,t)    \  \leq  \ \vep C \omega_3^\theta(s,t),    \\
\end{eqnarray*}
with  $\omega_3 (s,t):=\sum_{j=1}^N \omega_{1,j}^{\frac{\alpha_{1,j}'}{\theta}}(s,t) \omega_{2,j}^{\frac{\alpha_{2,j}'}{\theta}}(s,t)$ superadditive. Then one may apply Young's argument as Theorem \ref{young sewing} and obtain that for any refinement $\op'$ of $\op,$
$$
\| \sum_{\op'}\Xi_{\tau,\nu}- \sum_{\op} \Xi_{s,t}\|< C \vep \omega_3^\theta(0,T),
$$
which implies the convergence part. The inequality part and the MRS-convergence part are also similar by applying basic inequality $\left(\sum_{i=1}^N a_i\right)^p\leq N^p \left(\sum_{i=1}^N a^p_i\right), p\geq 1,a_i\geq 0.$

\end{proof}

\begin{prop}(\textbf{level-2 rough integration})\label{level-2 rp}
Suppose $\BX=(X,\mathbb{X})$ is a rough path with finite $p$-variation for $p \in [2,3),$ and $(Y,Y')$ is a controlled rough path in the following sense,
$$
Y,Y' \in V^p , \ \ \ R_{s,t}:= Y_{s,t}- Y'_{s} X_{s,t} \in V^{\frac{p}{2}}.
$$
Define $\Xi_{s,t}= Y_s X_{s,t} + Y'_s \X_{s,t}.$ Then one has the following convergence and estimate
\begin{eqnarray}\label{local estimate for level-2 r.i.}
&&\int_0^T Y_s^\ell d\BX_s :=  RRS-\lim_{|\op | \rightarrow 0} \sum_{(u,v)\in \op} \Xi_{u,v}, \label{equ:level2Rint} \\
&&|\int_s^t Y_s^\ell d\BX_s-Y_s X_{s,t} - Y'_s \X_{s,t} | \leq C (\|R\|_{\frac{p}{2},[s,t)}  \|X\|_{ p,(s,t]} +\|Y'\|_{ p,[s,t)}  \|\X\|_{\frac{p}{2},(s,t]}),   \label{equ:level2Rintest}
\end{eqnarray}

where $C$ depends only on $p.$ In particular, if $\BX$ is c\`adl\`ag, then the convergence in (\ref{equ:level2Rint}) holds in MRS-sense and we write
$$
\int_0^T Y^-_s d\BX_s:= \text{MRS}- \lim_{|\op | \rightarrow 0} \sum_{(u,v)\in \op} \Xi_{u,v}.
$$

\end{prop}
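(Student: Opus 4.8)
The plan is to apply the generalized sewing lemma, Theorem \ref{thm:genSew}, to the increment $\Xi_{s,t} = Y_s X_{s,t} + Y'_s \X_{s,t}$, so the main task is to estimate $\delta\Xi_{s,u,t}$ in the two-term form required there. First I would compute, using the Chen-type relation for $\BX$, namely $X_{s,t} = X_{s,u} + X_{u,t}$ and $\X_{s,t} = \X_{s,u} + \X_{u,t} + X_{s,u}\otimes X_{u,t}$, together with $Y_{s,t} = Y'_s X_{s,t} + R_{s,t}$ and $Y'_{s,t} = Y'_{u} - Y'_s$. A direct expansion gives
\begin{equation*}
\delta\Xi_{s,u,t} = -R_{s,u} X_{u,t} - Y'_{s,u}\,\X_{u,t},
\end{equation*}
where $Y'_{s,u} = Y'_u - Y'_s$. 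Hence $\|\delta\Xi_{s,u,t}\| \le \|R_{s,u}\|\,\|X_{u,t}\| + \|Y'_{s,u}\|\,\|\X_{u,t}\|$, and using the controls $\omega_{1,1}(s,u) := \|R\|_{p/2,[s,u]}^{p/2}$, $\omega_{2,1}(u,t) := \|X\|_{p,[u,t]}^p$ with exponents $\alpha_{1,1} = 2/p$, $\alpha_{2,1} = 1/p$, and $\omega_{1,2}(s,u) := \|Y'\|_{p,[s,u]}^p$, $\omega_{2,2}(u,t) := \|\X\|_{p/2,[u,t]}^{p/2}$ with $\alpha_{1,2} = 1/p$, $\alpha_{2,2} = 2/p$, we land exactly in the hypothesis of Theorem \ref{thm:genSew} with $N=2$, since $\alpha_{1,j}+\alpha_{2,j} = 3/p > 1$ for $p<3$.

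Theorem \ref{thm:genSew} then immediately yields the RRS convergence of $\sum \Xi_{u,v}$ and the local estimate
\begin{equation*}
\Big\| \I\Xi_{s,t} - \Xi_{s,t}\Big\| \le C\big( \|R\|_{p/2,[s,t)}\,\|X\|_{p,(s,t]} + \|Y'\|_{p,[s,t)}\,\|\X\|_{p/2,(s,t]}\big),
\end{equation*}
with $C$ depending only on $3/p$, i.e.\ only on $p$; this is precisely \eqref{equ:level2Rintest} once one matches the control notation $\omega_{1,1}^{\alpha_{1,1}}(s,t-) = \|R\|_{p/2,[s,t)}$ etc. Defining $\int_0^T Y_s^\ell d\BX_s := \I\Xi_{0,T}$ gives \eqref{equ:level2Rint}. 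For the c\`adl\`ag statement, I note that when $\BX$ is c\`adl\`ag, both $X$ and $\X$ are right-continuous, hence (by the variation-continuity lemma cited for Proposition \ref{bv integration}(2), i.e.\ Lemma \ref{vari.continuity} in the Appendix) the controls $\omega_{2,1}(u,t) = \|X\|_{p,[u,t]}^p$ and $\omega_{2,2}(u,t) = \|\X\|_{p/2,[u,t]}^{p/2}$ are right-continuous in their left argument $u$. The MRS-convergence clause of Theorem \ref{thm:genSew} then applies and upgrades the RRS limit to an MRS limit, justifying the notation $\int_0^T Y^-_s d\BX_s$.

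The only genuinely delicate point I anticipate is the algebraic expansion of $\delta\Xi$: one must be careful that the cross term $Y'_s X_{s,u}\otimes X_{u,t}$ coming from Chen's relation cancels against the corresponding piece of $Y_s X_{s,t}$ versus $Y_u X_{u,t}$ — specifically, $Y_u X_{u,t} = Y_s X_{u,t} + Y_{s,u} X_{u,t} = Y_s X_{u,t} + Y'_s X_{s,u} X_{u,t} + R_{s,u} X_{u,t}$, and the $Y'_s X_{s,u}X_{u,t}$ term is exactly what is needed to absorb the Chen correction in $Y'_s\X_{s,t}$. Once this bookkeeping is done correctly the rest is a mechanical invocation of the sewing lemma; the regularity upgrade to MRS in the c\`adl\`ag case is routine given the appendix lemma on right-continuity of $p$-variation controls.
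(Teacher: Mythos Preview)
Your proposal is correct and follows essentially the same approach as the paper: compute $\delta\Xi_{s,u,t} = -R_{s,u}X_{u,t} - Y'_{s,u}\X_{u,t}$, identify the two controls with exponents $2/p + 1/p = 3/p > 1$, invoke Theorem~\ref{thm:genSew}, and for the c\`adl\`ag case use Lemma~\ref{vari.continuity} to get right-continuity of $\omega_{2,j}$ in $s$. Your algebraic bookkeeping of the Chen cross-term is in fact more explicit than what the paper writes down.
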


\begin{rem}
For c\`adl\`ag rough paths, the RRS part of the above proposition was seen in \cite{FS17}.
\end{rem}

\begin{proof}

Indeed, one has
\begin{eqnarray*}
|\delta \Xi_{s,u,t}|&=& |-R_{s,u} X_{u,t}- Y'_{s,u} \X_{u,t}| \\
&\leq& \|R\|_{\frac{p}{2},[s,u]}  \|X\|_{ p,[u,t]} +\|Y'\|_{ p,[s,u]}  \|\X\|_{\frac{p}{2},[u,t]}\\
&=: &  \omega_{1,1}^{\frac 2 p }(s,u) \omega_{2,1}^{\frac1 p}(u,t) + \omega_{1,2}^{\frac 1 p }(s,u) \omega_{2,2}^{\frac 2 p}(u,t),
\end{eqnarray*}
which implies (\ref{equ:level2Rint}),(\ref{equ:level2Rintest}) by general sewing (Theorem \ref{general sew}). At last, c\`adl\`agness of $\BX$ implies right-continuity of $X_{s,t}$ and $\X_{s,t}$ in both $s,t$ which gives, as in Lemma \ref{vari.continuity},  right-continuity of $ \omega_{2,i} (i=1,2)$.

\end{proof}

\begin{rem}\label{integ.as crp} Let $Z$, as below, denote the indefinite rough integral. Then the pair $(Z,Y)$ defines a controlled rough path (w.r.t. $X)$, as is immediate from the local estimate.
\end{rem}

\begin{lem}(\textbf{preservation of local jump structure under rough integration})\label{preserve jumps}

Suppose $\BX=(X,\mathbb{X})$ is a rough path with finite $p$-variation for $p \in [2,3),$ and $(Y,Y')$ is a controlled rough path. Write
$$
Z_t:=\int_0^t Y_s^\ell d\BX_s.
$$
Then for any $t\in (0,T),$ one has
\begin{equation} \label{equ:RIJ}
\Delta_t^- Z = Y_{t-} \Delta_t^- X + Y'_{t-} \Delta_t^- \X, \ \ \Delta_t^+ Z = Y_{t} \Delta_t^+ X + Y'_{t} \Delta_t^+ \X,
\end{equation}
where we recall that $\Delta_t^+ \X:= \X_{t,t+}, \Delta_t^- \X:= \X_{t-,t}.$ In particular, if $\BX$ is right-continuous, i.e.
$(X_{s,s+} , \X_{s,s+})=0,$ so is $Z.$ Similar for the left-continuous case.
\end{lem}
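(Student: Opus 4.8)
The plan is to read the jumps of $Z$ directly off the local estimate \eqref{equ:level2Rintest} in Proposition \ref{level-2 rp}. By additivity of the RRS integral (the Remark following Definition \ref{MRS,RRS}) we have $Z_{s,t}=Z_t-Z_s=\int_s^t Y^\ell_r\,d\BX_r$, so that estimate reads
\[
|Z_{s,t} - Y_s X_{s,t} - Y'_s\X_{s,t}| \leq C\big( \|R\|_{p/2,[s,t)}\,\|X\|_{p,(s,t]} + \|Y'\|_{p,[s,t)}\,\|\X\|_{p/2,(s,t]} \big).
\]
The point is that in each of the two products on the right, one factor is a $p$-variation norm over a half-open interval that \emph{contains} the degenerating endpoint (hence vanishes), while the other factor stays bounded by the total $p$-variation on $[0,T]$.

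For the left jump, fix $t\in(0,T)$ and let $s\uparrow t$. I claim $\|R\|_{p/2,[s,t)}\to 0$ and $\|Y'\|_{p,[s,t)}\to 0$. Write $\omega(a,b):=\|R\|^{p/2}_{p/2,[a,b]}$ for the natural super-additive control of $R$; since a partition of $[s,t)$ is a partition of $[s,r]$ for some $r<t$, one has $\|R\|^{p/2}_{p/2,[s,t)}=\sup_{r<t}\omega(s,r)=:\omega(s,t-)$. Super-additivity gives $\omega(s,t)\ge\omega(s,r)+\omega(r,t)$ for $s<r<t$; letting $r\uparrow t$ yields $\omega(s,t)\ge\omega(s,t-)+\omega(t-,t)$ with $\omega(t-,t):=\lim_{r\uparrow t}\omega(r,t)$, and then letting $s\uparrow t$ (using $\omega(s,t)\downarrow\omega(t-,t)$ by monotonicity in the first variable) forces $\limsup_{s\uparrow t}\omega(s,t-)=0$. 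The identical argument applies to $Y'\in V^p$. Meanwhile $\|X\|_{p,(s,t]}\le\|X\|_{p,[0,T]}$ and $\|\X\|_{p/2,(s,t]}\le\|\X\|_{p/2,[0,T]}$, so the right-hand side tends to $0$. Since $Y,Y'$ are regulated ($V^p\subset V^\infty$) and $\BX$ is a rough path, $Y_s\to Y_{t-}$, $Y'_s\to Y'_{t-}$, $X_{s,t}\to\Delta_t^- X$, $\X_{s,t}\to\Delta_t^-\X$, and passing to the limit in the estimate gives $\Delta_t^- Z=Y_{t-}\Delta_t^- X+Y'_{t-}\Delta_t^-\X$.

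For the right jump, fix $t\in(0,T)$ and apply the same local estimate on $[t,s]$ with $s\downarrow t$. Now the roles of the two factors are reversed: $\|X\|_{p,(t,s]}\to0$ and $\|\X\|_{p/2,(t,s]}\to0$ by the same super-additivity computation (the half-open interval $(t,s]$ collapses onto $t$ from the right), while $\|R\|_{p/2,[t,s)}$ and $\|Y'\|_{p,[t,s)}$ stay bounded; hence the error term vanishes and $Y_t X_{t,s}+Y'_t\X_{t,s}\to Y_t\Delta_t^+ X+Y'_t\Delta_t^+\X$, giving the second identity. The ``in particular'' claims follow at once: if $\BX$ is right-continuous then $\Delta_t^+ X=0=\Delta_t^+\X$ for all $t$, whence $\Delta_t^+ Z=0$ and $Z$ is right-continuous (the endpoints $t=0,T$ being trivial), and the left-continuous case is symmetric.

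The only genuinely delicate point is the degeneration of the half-open-interval $p$-variation seminorms, i.e. the assertions $\|R\|_{p/2,[s,t)}\to 0$ as $s\uparrow t$ and $\|X\|_{p,(t,s]}\to 0$ as $s\downarrow t$; I isolate these via the super-additivity identity $\omega(s,t)\ge\omega(s,t-)+\omega(t-,t)$ for the associated control. Everything else is routine passage to the limit in the estimate of Proposition \ref{level-2 rp}, using only that regulated paths and rough paths admit one-sided limits everywhere.
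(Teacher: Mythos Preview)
Your proof is correct and takes essentially the same route as the paper: both amount to showing $Z_{s,t}-\Xi_{s,t}\to 0$ as the interval degenerates, so that $\Delta^\pm_t Z=\lim\Xi_{s,t}$. The paper packages this as a one-line double-limit manipulation of the Riemann sums (adding and subtracting $\Xi_{s,t}$ so that $\sum_{\op|_{[0,s]}}\Xi+\Xi_{s,t}$ is recognised as a Riemann sum for $Z_t$), whereas you invoke the local estimate \eqref{equ:level2Rintest} directly and are more explicit about why the half-open-interval $p$-variation factors vanish via super-additivity of the underlying controls; your presentation is the more careful of the two.
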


aaa

\begin{proof}
Indeed, according to Proposition \ref{level-2 rp}, one has
\begin{eqnarray*}
Z_{t-} &= & \lim_{s\uparrow t } \lim_{|\op|\rightarrow 0} \sum_{(u,v)\in\op|_{[0,s]}} \Xi_{u,v}= \lim_{s\uparrow t } \lim_{|\op|\rightarrow 0} \left(\sum_{(u,v)\in\op|_{[0,s]}} \Xi_{u,v}+ \Xi_{s,t}-\Xi_{s,t}\right)\\
&= & \lim_{|\op|\rightarrow 0} \left(\sum_{(u,v)\in\op|_{[0,t]}} \Xi_{u,v} - \lim_{s\uparrow t } \Xi_{s,t}\right)=  Z_t - Y_{t-} X_{t-,t}- Y'_{t-} \X_{t-,t}.
\end{eqnarray*}
The similar argument holds for the right limit version.

\end{proof}

\begin{rem} In stochastic integration (written only here as $\cdot$), it is known that (see e.g. \cite{Pro05}), 
\begin{equation} \label{equ:SIJ}
\Delta ( H \cdot X) \equiv H \cdot \Delta X \ ,
\end{equation}
with probability one, whenever $X$ is a c\`adl\`ag semimartingale and $H$ is a suitable, left-continuous integrand process (such as the left-modification $Y^-$ of another c\`adl\`ag semimartingale $Y$). If applied to $H=X^-$, this implies (similar to the forthcoming Remark \ref{rem:cbrpl}) that $\Delta^- \X \equiv 0$ a.s. This shows that (\ref{equ:RIJ}) is precisely a deterministic generalization of the (\ref{equ:SIJ}).


\end{rem}
%
%

We have shown that if the driven path is right-continuous or the integrand is left-continuous, the convergence of (compensated) Riemann sum holds in the MRS sense. Furthermore, if the driven path is right-continuous(left-continuous if the potential Riemann sum is like $y_t x_{s,t}$), one could obtain the same limit regardless of which continuous version(left or right-continuous or mixed or the original one) of the integrand. This fact could be generalized by the following theorem.

\begin{thm}(\textbf{sewing lemma for pure jumps})\label{pure jumps}
Suppose $\Xi$ is a mapping from simplex $\{ (s,t):0\leq s<t \leq T\}$ to a Banach space $(E,\|\cdot\|),$ and
$$
\|\Xi_{s,t}\|\leq \sum_{j=1}^N \delta_{1}^{\alpha_{1,j}}(s ) \omega_2^{\alpha_{2,j}}(s,t),
$$
where $\delta_1$ is a positive pure jump function with finite summation of jumps, i.e. $\delta_1$ is non-zero only on a countable set $J \subseteq[0,T],$ and $\sum_{r\in [0,T]} \delta_1(r) < \infty,$ $\omega_2$ is a control and $\alpha_{1,j}+\alpha_{2,j}>1>  \alpha_{i,j}  $ for any $i=1,2, \ j=1,...,N.$ Furthermore, assume $\omega_2(s,t)$ is right-continuous in the following sense,
$$
\omega_2(s,s+):=\lim_{t\downarrow s}\omega_2(s,t)=0.
$$
Then the following limit exists in the MRS sense,
$$
\mathcal{I}\Xi_{0,T}:=MRS-\lim_{|\mathcal{P}|\rightarrow 0} \sum_{(u,v)\in \mathcal{P}} \| \Xi_{u,v} \|=0.
$$

\end{thm}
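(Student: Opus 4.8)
The plan is to imitate the proof of Proposition~\ref{bv pure jumps}: in any partition, separate the finitely many intervals whose left endpoint carries a ``large'' value of $\delta_1$ from all the others, bound the large part using the right-continuity $\omega_2(s,s+)=0$ together with the finiteness of the large-jump set, and bound the small part by an elementary weighted arithmetic--geometric mean inequality that exploits $\alpha_{1,j}+\alpha_{2,j}>1$ for each $j$.

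First I would set up the preliminaries. Put $M:=\sum_{r\in[0,T]}\delta_1(r)<\infty$; then $\delta_1\le M$ everywhere, and for each $\vep>0$ the set $J_\vep:=\{r\in[0,T]:\delta_1(r)>\vep\}$ is finite. For any partition $\op=\{0=t_0<\dots<t_m=T\}$ the left endpoints $t_0,\dots,t_{m-1}$ are distinct, so $\sum_{(u,v)\in\op}\delta_1(u)\le M$, while super-additivity of $\omega_2$ gives $\sum_{(u,v)\in\op}\omega_2(u,v)\le\omega_2(0,T)$. Using $\|\Xi_{s,t}\|\le\sum_{j=1}^N\delta_1(s)^{\alpha_{1,j}}\omega_2(s,t)^{\alpha_{2,j}}$ and interchanging the two finite sums, it suffices to estimate, for each fixed $j$ and each $\vep>0$, the two pieces $A_j(\op):=\sum_{(u,v)\in\op,\,\delta_1(u)>\vep}\delta_1(u)^{\alpha_{1,j}}\omega_2(u,v)^{\alpha_{2,j}}$ and $B_j(\op):=\sum_{(u,v)\in\op,\,\delta_1(u)\le\vep}\delta_1(u)^{\alpha_{1,j}}\omega_2(u,v)^{\alpha_{2,j}}$.

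For $B_j$, since $\alpha_{1,j}+\alpha_{2,j}-1>0$ and $1-\alpha_{2,j}>0$ I would factor $\delta_1(u)^{\alpha_{1,j}}=\delta_1(u)^{\alpha_{1,j}+\alpha_{2,j}-1}\,\delta_1(u)^{1-\alpha_{2,j}}\le\vep^{\alpha_{1,j}+\alpha_{2,j}-1}\,\delta_1(u)^{1-\alpha_{2,j}}$ on $\{\delta_1(u)\le\vep\}$, and then apply the weighted AM--GM inequality $a^{1-\alpha_{2,j}}b^{\alpha_{2,j}}\le a+b$ with $a=\delta_1(u)$, $b=\omega_2(u,v)$; combined with the two a priori bounds above this yields $B_j(\op)\le\vep^{\alpha_{1,j}+\alpha_{2,j}-1}\,(M+\omega_2(0,T))$, uniformly in $\op$, hence $\to0$ as $\vep\to0$. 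For $A_j$, the left endpoints involved lie in the finite set $J_\vep$, and by monotonicity of $\omega_2$ in its second argument $A_j(\op)\le M^{\alpha_{1,j}}\sum_{r\in J_\vep}\omega_2\big(r,\min(r+|\op|,T)\big)^{\alpha_{2,j}}$, which $\to0$ as $|\op|\to0$ because $J_\vep$ is finite and $\omega_2(r,r+)=0$ for every $r$. Finally I would run the usual $\vep$--$\delta$ argument: given $\eta>0$, first fix $\vep$ so that $\sum_j B_j(\op)<\eta/2$ for all $\op$; with $J_\vep$ now fixed, choose $\delta>0$ so that $|\op|<\delta$ forces $\sum_j A_j(\op)<\eta/2$; then $\sum_{(u,v)\in\op}\|\Xi_{u,v}\|<\eta$, which is MRS-convergence to $0$. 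I expect no genuine obstacle here; the only delicate point is that the large-jump contribution must be controlled uniformly over all partitions of small mesh rather than merely along a fixed partition, and this is precisely what the hypothesis $\omega_2(s,s+)=0$ combined with the finiteness of $J_\vep$ delivers.
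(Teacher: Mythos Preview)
Your proof is correct and follows essentially the same approach as the paper: split each sum according to whether $\delta_1(u)>\vep$ or $\delta_1(u)\le\vep$, handle the finitely many large-jump terms via the right-continuity $\omega_2(s,s+)=0$, and bound the small-jump part uniformly in the partition. The only cosmetic difference is the inequality used for the small-jump estimate: the paper applies H\"older's inequality with auxiliary exponents $\alpha_1',\alpha_2'$ satisfying $\alpha_1'+\alpha_2'=1$, whereas you use the weighted AM--GM inequality $a^{1-\alpha_{2,j}}b^{\alpha_{2,j}}\le a+b$, which avoids introducing any extra parameters and is arguably a little cleaner.
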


\begin{proof}

We only need to show the case $\| \Xi_{s, t}\| \leq \delta_1^{\alpha_1}(s ) \omega_2^{\alpha_2}(s,t)$ since this result holds in the MRS sense which implies we need not mess with partitions. Take any positive $1-\alpha_2 \leq \alpha'_1 < \alpha_1 $ and $\alpha_2':=1-\alpha_1' \leq  \alpha_2.$ For any $\vep>0,$ one has
$$
\sum_{(s,t)\in \mathcal{P}} \| \Xi_{s,t} \| \leq \sum_{(s,t)\in \mathcal{P}} \delta_1^{\alpha_1}(s ) \omega_2^{\alpha_2}(s,t)1_{[\delta_1(s)>\vep]}+\sum_{(s,t)\in \mathcal{P}} \delta_1^{\alpha_1}(s ) \omega_2^{\alpha_2}(s,t)1_{[\delta_1(s)\leq \vep]}.
$$
Since there are only finite $s\in[0,T]$ such that $1_{[\delta_1(s)>\vep]}=1,$ the first summation converges to null as $|\mathcal{P}|\rightarrow 0$ thanks to right-continuity of $\omega_2.$ For the second summation, one has
\begin{eqnarray*}
&&\sum_{(s,t)\in \mathcal{P}} \delta_1^{\alpha_1}(s ) \omega_2^{\alpha_2}(s,t)1_{[\delta_1(s)\leq \vep]}\\
&\leq & [\sum_{(s,t)\in \mathcal{P}} (\delta_1^{\alpha_1}(s ))^{\frac1 {\alpha_1'}} 1_{[\delta_1(s)\leq \vep]}]^{\alpha'_1} [\sum_{(s,t)\in \mathcal{P}} (\omega_2^{\alpha_2}(s,t ))^{\frac1 {\alpha_2'}}  ]^{\alpha'_2}\\
&\leq & \vep^{\alpha_1-\alpha'_1} [\sum_{s\in [0,T]}\delta_1(s)]^{\alpha'_1} \omega_2(0,T)^{\alpha_2},
\end{eqnarray*}
which implies our result since $\vep$ is arbitrary.
\end{proof}

The interest in the pure jump sewing lemma is a decisive understanding which RS-type approximation yield an identical limit. For instance, in the Young regime, with $X,Y \in V^p, p\in [1,2)$ and $X $ right-continuous, one readily deduces that
$$
\lim_{|\op| \rightarrow 0} \sum_\op Y_s X_{s,t} = \lim_{|\op| \rightarrow 0} \sum_\op Y^+_s X_{s,t} =\lim_{|\op| \rightarrow 0} \sum_\op Y^-_s X_{s,t},
$$
where all limit can be taken in the MRS sense.
(One only needs to check $\Xi_{s,t}^\pm= (Y_s-Y_s^\pm) X_{s,t}$ satisfy our assumption.)
As partially observed in \cite{FS17}, the rough case is more involved. Indeed, suppose $\BX=(X,\X),\ \  (Y,Y')$ as defined in
Example \ref{level-2 rp} with $\BX_{0,t}$ right-continuous, for the sake of argument. We constructed the rough integral as RRS-limit
of sums of $\Xi_{s,t}^1 =  Y_s X_{s,t}+ Y_s' \X_{s,t}$ and one {\it cannot } just replace $(Y,Y')$ by its left- resp. right-point modification, as is true in the Young case and also -
for probabilistic (!) reasons - in the stochastic integration case. The underlying deterministic fact, as revealed by Theorem \ref{pure jumps}, is that $\Xi^1$
can replaced, with liming rough integral, by
\begin{eqnarray*}
\Xi_{s,t}^{2\pm}&=& (Y_{s-}   +Y'_{s-} X_{s-,s})X_{s,t}+ Y'_{s\pm} \X_{s,t} , \\
\Xi_{s,t}^{3\pm}&=& (Y_{s+} - Y'_s X_{s,s+}  )X_{s,t} + Y'_{s\pm} \X_{s,t}  =Y_{s+}X_{s,t} + Y'_{s\pm} \X_{s,t}  .
\end{eqnarray*}
%
%
%
%
%
%
%
%
%
%
%
%
%
%
%
%
%
%
%
%
%

%



\subsection{Jump It\^o-type formula in level-$2$ rough path case}

In this section we extend the It\^{o}'s formula for level-$2$ rough paths, as given in \cite[Ch.5]{FH14} in the case of H\"{o}lder continuous paths, to the $p$-variation jump setting.
To this end, we adapt the notion of {\it reduced rough path} to the present setting, defined as pair $\BX = (X, [ \BX ])$ where $X \in V^p([0,T],\R^d)$, for some $p<3$, and $[\BX] \in V^{p/2}([0,T],\R^d \otimes \R^d)$. (Every genuine rough path gives rise to a reduced rough path by ``forgetting'' its area and by setting $[\BX]_t := X_{0,t} \otimes X_{0,t} - 2 \text{Sym} \X_{0,t} \equiv X_{0,t} \otimes X_{0,t} - 2 \s_{0,t}$.) Note the for continuous geometric rough paths (so that $\Delta^\pm X \equiv$ and $[ \BX ] \equiv 0$) everything reduces to a standard chain-rule.

\begin{thm}(\textbf{It\^{o}'s formula for rough paths})
Given a (reduced) $p$-rough path $\BX$ and $F \in Lip_{loc}^{p+\epsilon}$, with $\epsilon>0,$ one has the following identity,
\begin{eqnarray*}
F(X_T)- F(X_0)&= &\int_0^T DF(X_t)^\ell d \BX_t + \frac12 \int_0^T D^2F(X_t)^\ell d [\BX]_t\\
& +& \sum_{0 < t \leq T} \left( F(X_t)-F(X_{t-})- DF(X_{t-})\Delta_t^- X - \frac12 D^2F(X_{t-}) (\Delta_t^- X \otimes \Delta_t^- X) \right)\\
& +& \sum_{0 \leq t < T} \left( F(X_{t+})-F(X_{t})- DF(X_{t})\Delta_t^+ X - \frac12 D^2F(X_{t}) (\Delta_t^+ X \otimes \Delta_t^+ X) \right),
\end{eqnarray*}
where $\int_0^T D^2F(X_t)^\ell d [\BX]_t$ is defined as Young's integral and
$$
\int_0^T DF(X_t)^\ell d \BX_t := RRS- \lim_{|\op|\rightarrow 0} \sum_{[u,v] \in \op } \left(   DF(X_u) X_{u,v} + D^2F(X_u) \s_{u,v}    \right).
$$
In particular, if $\BX$ is c\`adl\`ag, then one has the following form
\begin{eqnarray}\label{ito formula}\nonumber
F(X_T)- F(X_0)&= &\int_0^T DF(X_t)^- d \BX_t + \frac12 \int_0^T D^2F(X_t)^- d [\BX]_t\\
& +& \sum_{0 < t \leq T} \left( F(X_t)-F(X_{t-})- DF(X_{t-})\Delta_t^- X - \frac12 D^2F(X_{t-}) (\Delta_t^- X \otimes \Delta_t^- X) \right).
\end{eqnarray}
Furthermore, if $\BX=(X,\X)$ is a rough path, then the above formula holds with $\int_0^T F(X_t)^\ell d \BX_t$ considered as rough integral and $[\BX]_t=X_{0,t}\otimes X_{0,t} - 2 \text{Sym}(\X).$

\end{thm}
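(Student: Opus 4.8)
The plan is to realise every term on the right-hand side as a refinement–Riemann–Stieltjes (RRS) limit and then to read off the identity from the trivial telescoping $\sum_{[u,v]\in\op}\big(F(X_v)-F(X_u)\big)=F(X_T)-F(X_0)$. As preliminary reductions I would shrink $\epsilon$ so that $2<p+\epsilon<3$ (possible since $p\ge 2$, $\epsilon>0$), and use that $X([0,T])$ is relatively compact (regulated paths are bounded) to reduce to $F\in Lip^{p+\epsilon}$ globally; thus $F,DF,D^2F$ are bounded and uniformly continuous and $D^2F$ is $(p+\epsilon-2)$-H\"older. Throughout set $\s_{s,t}:=\mathrm{Sym}\,\X_{s,t}$ in the genuine rough path case and $\s_{s,t}:=\tfrac12\big(X_{s,t}\otimes X_{s,t}-([\BX]_t-[\BX]_s)\big)$ in the reduced case, so that in either case $X_{s,t}\otimes X_{s,t}=2\s_{s,t}+[\BX]_{s,t}$, with $[\BX]_{s,t}:=[\BX]_t-[\BX]_s$; write $\omega_X(s,t):=\|X\|_{p,[s,t]}^p$, $\omega_{[\BX]}(s,t):=\|[\BX]\|_{p/2,[s,t]}^{p/2}$, $\omega_\X(s,t):=\|\X\|_{p/2,[s,t]}^{p/2}$.

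\textbf{Step 1 and 2 (the integrals exist; reduction to a remainder sum).} Using the first-order expansion of $DF$ and the H\"older bound on $D^2F$ along $X$, exactly as in the continuous computation of \cite[Ch.~5]{FH14}, one gets for $s\le u\le t$
$$
\big\|\delta\big(DF(X_\cdot)X+\tfrac12 D^2F(X_\cdot)X_\cdot^{\otimes2}\big)_{s,u,t}\big\|\ \le\ C\,|X_{s,u}|^{p+\epsilon-1}|X_{u,t}|+C\,|X_{s,u}|^{p+\epsilon-2}|X_{u,t}|^2 ,
$$
and analogous bounds for the germs $DF(X_\cdot)X+D^2F(X_\cdot)\s$ and $\tfrac12 D^2F(X_\cdot)[\BX]$, in which each product of factors, bounded by the associated controls, carries exponents summing to $(p+\epsilon)/p>1$. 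Hence Theorem~\ref{general sew} (resp.\ Theorem~\ref{young sewing}) shows that $\int_0^T DF(X_t)^\ell d\BX_t$ and $\tfrac12\int_0^T D^2F(X_t)^\ell d[\BX]_t$ exist as RRS limits, and, by linearity of sewing, that their sum $\I\Gamma_{0,T}$ is the RRS limit of $\sum_\op\Gamma_{u,v}$ with germ $\Gamma_{s,t}:=DF(X_s)X_{s,t}+D^2F(X_s)\s_{s,t}+\tfrac12 D^2F(X_s)[\BX]_{s,t}=DF(X_s)X_{s,t}+\tfrac12 D^2F(X_s)(X_{s,t}\otimes X_{s,t})$, the second-order Taylor polynomial of $F$ at $X_s$. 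Since $\sum_\op\big(F(X_v)-F(X_u)\big)=F(X_T)-F(X_0)$ for every $\op$, the difference $\sum_\op R_{u,v}$ converges (RRS), where $R_{s,t}:=F(X_t)-F(X_s)-\Gamma_{s,t}$ is the Taylor remainder with $\|R_{s,t}\|\le C|X_{s,t}|^{p+\epsilon}$; it therefore suffices to show
$$
\text{RRS-}\lim_{|\op|\to0}\ \sum_{[u,v]\in\op}R_{u,v}\ =\ J^-_T+J^+_T ,
$$
with $J^\pm_T$ the two jump sums of the statement (these converge absolutely: each summand is $O(|\Delta_t^{\mp}X|^{p+\epsilon})$ and $\sum_t|\Delta_t^{\pm}X|^p\le\|X\|_{p,[0,T]}^p<\infty$).

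\textbf{Step 3 (the main obstacle: jump bookkeeping).} Given $\vep>0$, pick $\eta>0$ so small that the total contribution to $J^-_T+J^+_T$ of all jump times with $|\Delta_t^-X|\vee|\Delta_t^+X|\le\eta$ is $<\vep$; only finitely many ``large-jump'' times $\tau_1<\dots<\tau_M$ remain. Using Lemma~\ref{regular path} and uniform continuity of $F,DF,D^2F$, I would construct $\op_\vep\supseteq\{\tau_1,\dots,\tau_M\}$ which in addition contains, for each $\tau_i$, points immediately to its left and right placed so close that the Taylor remainders over those short intervals differ from the left- and right-jump corrections at $\tau_i$ by at most $\vep/M$, and which is fine enough that $Osc(X;(u,v))<\eta$ on every interval of $\op_\vep$ not incident to some $\tau_i$. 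For a refinement $\op\supseteq\op_\vep$, split $\sum_\op R_{u,v}$ into the sub-intervals of the $2M$ intervals of $\op_\vep$ flanking the $\tau_i$ and all the rest. On the rest, and on the sub-intervals of the flanking intervals that do not touch a $\tau_i$, there is no large jump inside $[u,v]$, so $|X_{u,v}|\le|\Delta_v^-X|+Osc(X;(u,v))+|\Delta_u^+X|\le 3\eta$, whence $\sum\|R_{u,v}\|\le C\eta^{\epsilon}\sum|X_{u,v}|^p\le C\eta^{\epsilon}\|X\|_{p,[0,T]}^p$ by super-additivity of the $p$-variation along refinements; the sub-interval abutting each $\tau_i$ on the left (resp.\ right) contributes, up to $O(\vep/M)$ by uniform continuity, precisely the left- (resp.\ right-) jump correction at $\tau_i$. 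Summing, $\big\|\sum_\op R_{u,v}-(J^-_T+J^+_T)\big\|\lesssim\vep+C\eta^{\epsilon}\|X\|_{p,[0,T]}^p$, which is arbitrarily small, proving the displayed identity and hence the general ($\ell$) formula. This is the heart of the argument, and the step where the continuous proof breaks down: one must isolate the finitely many large jumps by hand and verify that the ``small-jump'' part of the Riemann sum genuinely vanishes, all within the refinement notion of convergence.

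\textbf{Step 4 (specialisations).} If $\BX$ is c\`adl\`ag, then $X_{t+}=X_t$, so $\Delta_t^+X\equiv0$ and $J^+_T=0$; moreover $X_{s,t},\X_{s,t}$ (hence $[\BX]$) are right-continuous in $s$, so Propositions~\ref{level-2 rp} and~\ref{Young integral} upgrade the RRS limits to MRS limits and identify the $\ell$-integrals with the corresponding $(\cdot)^-$-integrals, giving the stated c\`adl\`ag form. Finally, for a genuine level-$2$ rough path $\BX=(X,\X)$ one checks that $[\BX]_t:=X_{0,t}\otimes X_{0,t}-2\s_{0,t}$ lies in $V^{p/2}$, so $(X,[\BX])$ is a reduced rough path to which the above applies, and with $\s=\mathrm{Sym}\,\X$ the germ $DF(X_u)X_{u,v}+D^2F(X_u)\s_{u,v}$ defining $\int_0^T DF(X_t)^\ell d\BX_t$ is exactly the level-$2$ rough integral of Proposition~\ref{level-2 rp}, as claimed.
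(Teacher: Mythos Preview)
Your argument is correct and takes a different, somewhat more streamlined, route than the paper's. Both start from the telescoping sum and isolate the second-order Taylor germ $\Gamma_{s,t}=DF(X_s)X_{s,t}+\tfrac12 D^2F(X_s)X_{s,t}^{\otimes2}$ (the paper's term $A_1$), but they diverge in treating the remainder. The paper inserts the intermediate values $F(X_{t_i+}),F(X_{t_{i+1}-})$ at every partition point, obtaining an explicit algebraic decomposition $F(X_T)-F(X_0)=A_1+A_2+A_3+B_1+B_2+B_3$, and then argues (``by similar tricks as in Young's case'') that the $B_j$ vanish while $A_2+A_3$ converges to the jump sums; it also first treats the special case $[\BX]=0$ and reduces the general case to it afterwards. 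You instead package both integrals into the single germ $\Gamma$ from the outset via $X_{s,t}^{\otimes2}=2\s_{s,t}+[\BX]_{s,t}$, so that the remainder is exactly the Taylor remainder $R_{s,t}$ with the clean pointwise bound $\|R_{s,t}\|\le C|X_{s,t}|^{p+\epsilon}$; the jump sums then emerge directly from a large/small-jump splitting combined with the $p$-variation control $\sum_\op|X_{u,v}|^p\le\|X\|_{p,[0,T]}^p$. Your route avoids the lengthy algebra and the case distinction on $[\BX]$; it does require a little extra care in the partition construction of Step~3 (the flanking open intervals $(s_i,\tau_i),(\tau_i,t_i)$ must also have oscillation $<\eta$, and the neighbours $s_i,t_i$ should be chosen to avoid the other $\tau_j$---both easily arranged via Lemma~\ref{regular path}). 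The paper's decomposition, in turn, makes every contribution explicit and mirrors the structure of the Young-case computation alluded to earlier in the text.
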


\begin{proof}

First suppose $\s_{u,v}= \frac12 X_{u,v} \otimes X_{u,v}$ so that $[\BX]=0.$ Similar as the Young's case, for the algebra part, one has for any partition $\op:=\{0=:t_0<t_1<,...,< t_N:=T\}$ of $[0,T],$
\begin{eqnarray*}
&&F(X_T)-F(X_0) = \sum_{i=0}^{N-1} \left( F(X_{t_{i+1}}) - F(X_{t_{i }})- DF(X_{t_{i }})X_{t_i,t_{i+1}} - \frac12 D^2F(X_{t_{i }})X_{t_i,t_{i+1}}X_{t_i,t_{i+1}}  \right) + A_1 \\
&=& A_2 + \sum_{i=0}^{N-1} \left( F(X_{t_{i+1}-}) - F(X_{t_{i }+})- DF(X_{t_{i }})X_{t_i,t_{i+1}} - \frac12 D^2F(X_{t_{i }})X_{t_i,t_{i+1}}X_{t_i,t_{i+1}}  \right) +A_1\\
&=& A_2 + \sum_{i=0}^{N-1} \left( F(X_{t_{i+1}-}) - F(X_{t_{i }+})- DF(X_{t_{i }+})X_{t_i+,t_{i+1}-} - \frac12D^2F(X_{t_{i }+})X_{t_i+,t_{i+1}-}X_{t_i+,t_{i+1}-}  \right)\\
&& + \sum_{i=0}^{N-1} \big( DF(X_{t_{i }+})X_{t_i+,t_{i+1}-} + \frac12 D^2F(X_{t_{i }+})X_{t_i+,t_{i+1}-}X_{t_i+,t_{i+1}-} \\
&&- DF(X_{t_{i }})X_{t_i,t_{i+1}} - \frac12 D^2F(X_{t_{i }})X_{t_i,t_{i+1}}X_{t_i,t_{i+1}}  \big) +A_1\\
&=& A_2 + B_1 + \sum_{i=0}^{N-1} \left( DF(X_{t_{i }+})X_{t_i+,t_{i+1}-}-DF(X_{t_{i }})X_{t_i+,t_{i+1}-} - D^2F(X_{t_{i }})\Delta_{t_i}^+X X_{t_i+,t_{i+1}-} \right) \\
&& + \sum_{i=0}^{N-1}  \left( DF(X_{t_i}) X_{t_i+,t_{i+1}-}- DF(X_{t_i}) X_{t_i,t_{i+1}} \right) \\
& & + \sum_{i=0}^{N-1}  \left(\frac12 D^2F(X_{t_{i }+})X^{\otimes2}_{t_i+,t_{i+1}-} + D^2F(X_{t_{i }})\Delta_{t_i}^+X X_{t_i+,t_{i+1}-} -\frac12 D^2F(X_{t_{i }})X^{\otimes2}_{t_i,t_{i+1}}\right) +A_1\\
&=& A_2 + B_1 + B_2 + \sum_{i=0}^{N-1} DF(X_{t_i}) \left(-\Delta_{t_i}^+ X- \Delta_{t_{i+1}}^- X\right) + \sum_{i=0}^{N-1}  \big(\frac12 D^2F(X_{t_{i }+})X^{\otimes2}_{t_i+,t_{i+1}-} \\
&&+ D^2F(X_{t_{i }})\Delta_{t_i}^+X X_{t_i+,t_{i+1}-}     - \frac12  D^2F(X_{t_{i }})\left(\Delta_{t_i}^+ X + X_{t_i+,t_{i+1}-} + \Delta_{t_{i+1}-}^- X \right)^{\otimes2} \big) +A_1\\
&=& A_2 + B_1 + B_2 + \sum \left( -DF(X_{t_i})\Delta_{t_i}^+ X- \frac12D^2F(X_{t_{i }})(\Delta_{t_i}^+X)^{\otimes2}\right)\\
  &&+ \sum \left( -DF(X_{t_{i+1}-})\Delta_{t_{i+1}}^- X - \frac12 D^2F(X_{t_{i+1 }-})(\Delta_{t_{i+1}}^- X)^{\otimes2}\right)+ B_3 +A_1\\
&=&  A_2 + B_1 + B_2 +A_3 + B_3 + A_1,
\end{eqnarray*}
where
\begin{eqnarray*}
B_3:&=& DF(X_{t_{i+1}-})\Delta_{t_{i+1}}^- X - DF(X_{t_{i}})\Delta_{t_{i+1}}^- X- D^2F(X_{t_{i }+})\Delta_{t_{i+1}}^-X X_{t_i+,t_{i+1}-}\\
&+ & \frac12 D^2F(X_{t_{i+1}-})\left( \Delta_{t_{i+1}}^-X\right)^{\otimes2} - \frac12 D^2F(X_{t_{i}})\left( \Delta_{t_{i+1}}^-X\right)^{\otimes2}\\
&+ & D^2F(X_{t_{i}}) \Delta_{t_i}^+X   \Delta_{t_{i+1}}^-X.
\end{eqnarray*}
Then for the analytic part, one can choose a partition $\op:=\{0=:t_0<t_1<,...,< t_N:=T\}$ such that for any $[t_i,t_{i+1}],$ $X$ is continuous at either $t_i$ or $t_{i+1}, $ and furthermore $Osc(X,(t_i+,t_{i+1}-)) < \vep.$ By similar tricks as in Young's case, one can show $B_1,B_2,B_3$ converge to null and $A_2+ A_3$ converges to the jump part. At last, one only needs to check
$$
A_1= \sum_{i=0}^{N-1} \left(DF(X_{t_{i }})X_{t_i,t_{i+1}} + \frac12 D^2F(X_{t_{i }})X_{t_i,t_{i+1}}X_{t_i,t_{i+1}}\right)
$$
converges, which is implied by our sewing lemma, i.e. Theorem \ref{general sew}. Indeed, if $[\BX]=0,$ i.e. $\s_{s,t}= \frac12 X_{s,t}\otimes X_{s,t},$ let $\Xi_{u,v}:=DF(X_{t_{i }})X_{t_i,t_{i+1}} + D^2F(X_{t_{i }})\s_{t_i,t_{i+1}}.$ Then one can check that, by applying $D^2F(X) \left( \text{Sym}(X_{s,u} X_{u,t} ) \right)= D^2F(X)(X_{s,u} X_{u,t} ),$
$$
\delta \Xi_{s,u,t}= -D^2(X)_{s,u} \s_{u,t} - R_{s,u} X_{u,t},
$$
with $R_{s,t}= DF(X)_{s,t}- D^2F(X_s) X_{s,t}.$ For the general case, just set $\bar{\s} := \bar{\s} + \frac12 [\BX] $ and note that
$\bar{\s}$ corresponds again to the vanishing bracket situation to which the previous computation applies. It then suffices to note
that the Young's integral $\int_0^T D^2F(X_t) d[\BX]_t$ is well-defined.

\end{proof}


\begin{rem}(\textbf{{Relation to It\^o-F\"{o}llmer formula \cite{Foe81}}})   \label{rem:foellmer}  Let $X$ be a (for a moment, scalar) c\`adl\`ag path with {\it finite quadratic variation along $(\op_n)_n$ }where $(\op_n)_n$ is a fixed sequence of partitions, with mesh-size $|\op_n|\rightarrow 0$. More precisely, assume that 
$
\sum_{[s,t] \in \op^n} X_{s , t}^2 \delta_s
$
converges vaguely to a Radon measure, denoted by $[X,X] \equiv [X]$. Then, for $F \in C^2$,
\begin{eqnarray}\label{follmer}\nonumber
F(X_T)- F(X_0)&=& \int_0^T DF(X_{t-}) d X_t + \frac12 \int_0^T D^2F(X_{t-}) d [ X ]_t \\
 &&+ \sum_{0 < t \leq T} \left( F(X_t)-F(X_{t-})- DF(X_{t-})\Delta_t^- X - \frac12 D^2F(X_{t-}) \Delta_t^- X \Delta_t^- X \right),
\end{eqnarray}
where $\int_0^T F(X_{t-}) d X_t:= \lim_{n} \sum_{\op_n} F(X_u)X_{u,v}$ and $\int_0^T D^2F(X_{t-}) d [ X ]_t$ is well-defined, also as Riemann-Stieltjes
integral against $[X]_t \equiv [X] ( (0,t] )$. If $X$ is $d$-dimensional, the bracket definition extends component-wise, and
$[X^i,X^j]$ is defined by polarization. With the usual vector notation,  It\^{o}-F\"ollmer formula (\ref{follmer}) remains valid. We leave it to the reader
to verify  \eqref{ito formula} and \eqref{follmer} agree in their common domain of validity (i.e. $\BX$ c\`adl\`ag (reduced) $p$-rough path
on the one hand, $X$ and existence of $[X]$ on the other; examples show that neither condition implies the other.)
\end{rem}

\section{Differential equations driven by general rough paths}   \label{sec:RDE}

\subsection{Young's case}

We now study existence, uniqueness and stability of differential equations driven by $p$-variation signales, in the sense of an integral equation based on our forward integration. As usual, we start with $p \in [1,2)$ to detangle, in a first step, jump and rough path considerations. We note that in the case of a c\`adl\`ag driver, this Young regime of forward differential equations was also studied in \cite{Wil01}, by first solving the geometric equation (which can be done in a continuous setting), followed by ``fixing by hand'' the effect (geometric vs. forward ...) of jumps. The downside of that approach is that it yields no local estimates (easily extractable from our integral formulation) and no stability estimates (which turn out crucial in applications). For these reason, we (have to) take a direct route. Differences to familiar arguments (in absence of jumps; cf. \cite{Gub04, FH14}) are pointed out.

\begin{lem}(\textbf{stability of $p-$variation under smooth function}) \label{stable p-v under f}
Suppose $f\in C_b^2$ and $x,y \in V^p([0,T],\R^d) $ with $\|x\|_{p,[0,T]}, \|y\|_{p,[0,T]}< K.$ Then one has
$$
\|f(x)-f(y)\|_{p,[0,T]} \leq C_{p,K}\|f\|_{C_b^2} \left( |x_0- y_0| + \|x-y\|_{p,[0,T]} \right).
$$

\end{lem}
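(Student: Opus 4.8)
The plan is to estimate the $p$-variation of the difference $f(x)-f(y)$ directly from its definition, by controlling, for an arbitrary partition $\op$, each increment $(f(x)-f(y))_{s,t} = f(x_t)-f(y_t) - f(x_s)+f(y_s)$ in terms of increments of $x$, $y$, $x-y$, together with the supremum of $|x-y|$ on $[0,T]$. First I would record the elementary bound $\|x-y\|_{\sup,[0,T]} \le |x_0-y_0| + \|x-y\|_{p,[0,T]}$, so that everything can be absorbed into the right-hand side of the claimed inequality; I will also use $\|x\|_{\sup}, \|y\|_{\sup} \le |x_0|+K$ (or, since $f\in C_b^2$, I need no control on the absolute location — only on increments — so this is just for bookkeeping of the constant).

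The key algebraic step is the telescoping identity for the ``second difference'': writing $g := f(x)-f(y)$,
\begin{equation*}
g_{s,t} = \bigl(f(x_t)-f(x_s)\bigr) - \bigl(f(y_t)-f(y_s)\bigr) = \int_0^1 Df(x_s + \theta x_{s,t})\,d\theta \; x_{s,t} - \int_0^1 Df(y_s+\theta y_{s,t})\,d\theta\; y_{s,t}.
\end{equation*}
Adding and subtracting $\int_0^1 Df(x_s+\theta x_{s,t})\,d\theta\; y_{s,t}$, I split $g_{s,t}$ into (i) $\int_0^1 Df(x_s+\theta x_{s,t})\,d\theta\;(x-y)_{s,t}$, bounded by $\|f\|_{C_b^2}\,|(x-y)_{s,t}|$ (really $\|Df\|_\infty$, which is $\le \|f\|_{C_b^2}$), and (ii) $\bigl(\int_0^1 [Df(x_s+\theta x_{s,t}) - Df(y_s+\theta y_{s,t})]\,d\theta\bigr) y_{s,t}$, where the bracket is bounded using $\|D^2f\|_\infty \le \|f\|_{C_b^2}$ by $\|f\|_{C_b^2}(|x_s-y_s| + |x_{s,t}-y_{s,t}|) \le \|f\|_{C_b^2}(\|x-y\|_{\sup} + |(x-y)_{s,t}|)$, so term (ii) is bounded by $\|f\|_{C_b^2}(\|x-y\|_{\sup} + |(x-y)_{s,t}|)\,|y_{s,t}|$. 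Thus
\begin{equation*}
|g_{s,t}| \le \|f\|_{C_b^2}\Bigl( |(x-y)_{s,t}| + |(x-y)_{s,t}|\,|y_{s,t}| + \|x-y\|_{\sup,[0,T]}\,|y_{s,t}| \Bigr).
\end{equation*}

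It remains to sum the $p$-th powers over $[s,t]\in\op$, take the sup over $\op$, and take the $p$-th root. Using $(a+b+c)^p \le 3^{p-1}(a^p+b^p+c^p)$, the first term contributes $\le \|f\|_{C_b^2}\|x-y\|_{p,[0,T]}$; the second contributes $\le \|f\|_{C_b^2}\,\|x-y\|_{\infty,[0,T]}\,\|y\|_{p,[0,T]} \le \|f\|_{C_b^2}\,K\,\|x-y\|_{\infty,[0,T]}$ (and $\|x-y\|_\infty \le 2\|x-y\|_{\sup}$, say, or one simply keeps $\|x-y\|_{p}$ since $\|x-y\|_\infty\le \|x-y\|_p$); the third contributes $\le \|f\|_{C_b^2}\,\|x-y\|_{\sup,[0,T]}\,\|y\|_{p,[0,T]} \le \|f\|_{C_b^2}\,K\,(|x_0-y_0|+\|x-y\|_{p,[0,T]})$. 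Collecting the three and using the $\sup$ bound from the first paragraph gives the result with $C_{p,K} = 3^{(p-1)/p}(1+2K)$ or similar. No genuine obstacle arises here — the argument is the standard ``fundamental theorem of calculus plus one more derivative'' estimate — but the one point requiring minor care is that $V^p$ increments need not be small (there may be jumps), so the cross term $|(x-y)_{s,t}|\,|y_{s,t}|$ must be handled by pulling out an $L^\infty$ bound on one factor (here $\|x-y\|_\infty$) rather than by any smallness of mesh; this is why the constant depends on $K$. Note also that continuity of $f''$ is not actually needed beyond boundedness, so the hypothesis $f\in C_b^2$ is comfortably sufficient.
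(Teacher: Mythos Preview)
Your proposal is correct and is essentially the same argument as the paper's: both apply the fundamental theorem of calculus once, insert a cross term, and use the $C^2$ bound to control the resulting difference of derivatives, yielding a pointwise estimate of the form $|g_{s,t}|\le C\|f\|_{C_b^2}\bigl(|(x-y)_{s,t}| + (\text{increment of }x\text{ or }y)\cdot\|x-y\|_{\sup}\bigr)$, after which one sums and uses $\|x-y\|_{\sup}\le |x_0-y_0|+\|x-y\|_p$. The only cosmetic difference is the direction in which FTC is applied first: the paper writes $f(x_r)-f(y_r)=\int_0^1 Df(y_r+\theta(x_r-y_r))\,d\theta\,(x_r-y_r)$ at each time $r\in\{s,t\}$ and then compares the two times, whereas you write $f(z_t)-f(z_s)=\int_0^1 Df(z_s+\theta z_{s,t})\,d\theta\,z_{s,t}$ for each path $z\in\{x,y\}$ and then compare the two paths; the two decompositions are dual and give equivalent bounds.
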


\begin{proof}
For any $s,t \in [0,T],$ by calculus and inserting term $\int_0^1 Df(y_t+ r (x_t-y_t))dr (x_s-y_s)$ one has
\begin{eqnarray*}
|f(x)_{s,t}- f(y)_{s,t}| &=& | \int_0^1 Df(y_t+ r (x_t-y_t))dr (x_t-y_t)- \int_0^1 Df(y_s+ r (x_s-y_s))dr (x_s-y_s) |\\
&\leq & C \|f\|_{C_b^2} \left( |(x-y)_{s,t}| + (|x_{s,t}|+ |y_{s,t}|)\|x-y\|_{\sup} \right).
\end{eqnarray*}
By taking $p-$variation and notice $\|x-y\|_{\sup} \leq |x_0-y_0|+ \|x-y\|_p,$ one obtains the result.

\end{proof}


\begin{thm}(\textbf{ODEs in Young's case}) \label{young ODE}
Suppose $x\in V^p([0,T],\R^d),\ p\in[1,2),$ is right-continuous at $t=0,$ and $f\in C^2(\R^e,\mathcal{L}(\R^d,\R^e)).$ Then for any $y_0\in \R^e,$ there exists $t_0>0,$ such that there exists a unique $y_t,$ satisfying
$$
y_t= \int_0^t f(y_r)^\ell dx_r  + y_0,\ \ \ \  t\in(0,t_0),
$$
where $\int_0^t f(y_r)^\ell dx_r$ as introduced in Proposition \ref{Young integral}.
Furthermore, if $x$ is c\`adl\`ag, then according to Lemma \ref{preserve jumps}, $y$ is also c\`adl\`ag.

\end{thm}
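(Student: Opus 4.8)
The plan is a Banach fixed point argument on a short time interval; the one place jumps intervene is that one must use the \emph{local} integral estimate of Proposition~\ref{Young integral}, not the bound it implies with the global norms. Since the assertion is local in time, I would first fix a radius $R>0$ (say $R=1$) and replace $f$ by a function in $C^2_b(\R^e,\mathcal{L}(\R^d,\R^e))$ that agrees with it on the closed ball $\bar B(y_0,R+1)\subset\R^e$; this is harmless because every iterate below will take values in $\bar B(y_0,R)$. Then work on
$$
\mathcal{B}:=\bigl\{\,y\in V^p([0,t_0],\R^e):\ y(0)=y_0,\ \|y\|_{p,[0,t_0]}\le R\,\bigr\},
$$
a closed, hence complete, subset of the Banach space $V^p([0,t_0],\R^e)$ for the metric $d(y,z)=\|y-z\|_{p,[0,t_0]}$, and define $\mathcal{M}(y)_t:=y_0+\int_0^t f(y_r)^\ell\,dx_r$. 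This is well defined since $f(y)\in V^p$ (as $f\in C^1$ and $y$ is bounded) and $\tfrac1p+\tfrac1p>1$ for $p\in[1,2)$.

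For the invariance $\mathcal{M}(\mathcal{B})\subseteq\mathcal{B}$ the key step is, for a partition $\{t_i\}$ of $[0,t_0]$, to split via Proposition~\ref{Young integral}
$$
|\mathcal{M}(y)_{t_{i-1},t_i}|^p\le 2^{p-1}|f(y_{t_{i-1}})|^p|x_{t_{i-1},t_i}|^p+2^{p-1}C_p^p\,\omega_{f(y),p}(t_{i-1},t_i)\,\omega_{x,p}(t_{i-1},t_i),
$$
and then sum in $i$: superadditivity of the controls $\omega_{x,p}$ and $\omega_{f(y),p}$ gives $\sum_i \omega_{f(y),p}(t_{i-1},t_i)\,\omega_{x,p}(t_{i-1},t_i)\le\omega_{f(y),p}(0,t_0)\,\omega_{x,p}(0,t_0)$, and with $\|f(y_\cdot)\|_\infty\le M_0$, $\|f(y)\|_{p,[0,t_0]}\le L_1R$ on the ball this yields $\|\mathcal{M}(y)\|^p_{p,[0,t_0]}\le 2^{p-1}(M_0^p+C_p^pL_1^pR^p)\,\|x\|^p_{p,[0,t_0]}$, with $M_0,L_1$ depending only on $f$ and $R$, not on $t_0$. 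Here the hypothesis that $x$ is right-continuous at $0$ is used: it forces $\|x\|_{p,[0,t_0]}\to0$ as $t_0\downarrow0$ (a variation-continuity fact; cf.\ Lemma~\ref{vari.continuity} — the point being that a non-trivial right jump of the driver at $0$ is the only obstruction), so the right-hand side is $\le R^p$ once $t_0$ is small enough. The contraction estimate is obtained by the same splitting applied to $\int_s^t(f(y_r)-f(z_r))^\ell dx_r$, now invoking Lemma~\ref{stable p-v under f} (and $y(0)=z(0)$) for $\|f(y)-f(z)\|_{p,[0,t_0]}\le C(f,R)\|y-z\|_{p,[0,t_0]}$; it gives $\|\mathcal{M}(y)-\mathcal{M}(z)\|_{p,[0,t_0]}\le C(f,R)\,\|x\|_{p,[0,t_0]}\,\|y-z\|_{p,[0,t_0]}$, a $\tfrac12$-contraction after shrinking $t_0$ once more. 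Banach's theorem then produces a unique $y\in\mathcal{B}$ solving the equation on $[0,t_0)$; uniqueness among all $V^p$-solutions follows since any solution satisfies $\|y-y_0\|_{p,[0,t']}\to0$ as $t'\downarrow0$ by the same local estimate, hence lies in $\mathcal{B}$ on a short interval and there coincides with the fixed point, after which an open/closed argument propagates this.

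For the c\`adl\`ag addendum, if $x$ is c\`adl\`ag then $x_{s,s+}=0$ for every $s$, so the preservation of local jump structure under Young integration (the analogue of Lemma~\ref{preserve jumps}, equivalently $\Delta^+_t\!\int_0^\cdot f(y_r)^\ell dx_r=f(y_t)\,\Delta^+_t x=0$) makes the indefinite integral, and hence $y$, right-continuous; since $y\in V^p\subset V^\infty$ is regulated anyway, $y$ is c\`adl\`ag.

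I expect the main obstacle to be precisely the invariance/contraction estimate above. In the continuous $p$-variation theory the control of the driver is regular, so $\|x\|^p_{p,[s,t]}$ is uniformly small on short intervals and one may freely use the global-norm bound for the integral remainder; here the control is genuinely non-regular, that remainder bound is a \emph{constant} being summed over a partition of unbounded cardinality, and so one is forced to keep every control ($\omega_{x,p}$, $\omega_{y,p}$, $\omega_{f(y)-f(z),p}$) in its interval-localized superadditive form throughout the estimate — this being also the structural reason the sewing lemma here must be phrased with $\omega_1^{\alpha_1}(s,u)\,\omega_2^{\alpha_2}(u,t)$ rather than $\omega^{\alpha_1+\alpha_2}(s,t)$ — and to bring in right-continuity at $0$ only at the very end, to make $\|x\|_{p,[0,t_0]}$ small so that the Picard iteration can be started.
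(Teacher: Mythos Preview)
Your proof is correct but takes a different route from the paper's. The paper runs the fixed point not in $V^p$ but in $V^{p'}$ for an auxiliary $p'\in(p,2)$, extracting smallness from the interpolation bound $\|x\|_{p',[0,t]}^{p'}\le \|x\|_{p,[0,T]}^{p}\,\mathrm{Osc}(x,[0,t])^{p'-p}$; thus it is $\mathrm{Osc}(x,[0,t])\to 0$, an immediate consequence of right-continuity at $0$, that drives the invariance and contraction, and the solution is only afterwards seen to lie in $V^p$ via the local integral estimate. You instead work directly in $V^p$ and use $\|x\|_{p,[0,t]}\to 0$ as $t\downarrow 0$. This is true, but note that Lemma~\ref{vari.continuity} as stated assumes \emph{global} right-continuity; its proof for the specific conclusion $\omega_{x,p}(0,0+)=0$ does go through under right-continuity at $0$ alone (since $|x_u-x_0|<\epsilon$ for $u\in[0,\delta)$ already gives $|x_{u,v}|<2\epsilon$ for all $u,v\in[0,\delta)$), so the gap is only in the citation. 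Your approach is arguably cleaner in the Young regime since it avoids the auxiliary exponent; the paper's $p'$-trick, on the other hand, is one of the two devices it explicitly carries over to the level-$2$ case (cf.\ the remarks inside the proof of Theorem~\ref{solve level-2}), where direct $V^p$ smallness of the remainder is harder to arrange.
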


\begin{proof}
Here we only show the local result, and we give the global result in the level-$2$ rough path case where the same argument is also applied here.
Fix $p'\in (p,2),$ and denote
$$
\Omega_t:=\{Y\in W^{p'}([0,T],\R^e) | Y_0=y_0, \|Y\|_{p',[0,t]} \leq 1 \}.
$$
Define a priori mapping
$$
\begin{array}{llll}
\mathcal{M}_t:& \Omega_t &\longrightarrow & \Omega_t\\
& Y& \mapsto & Z_.:=\int_0^. f(Y_r)^\ell dx_r + y_0,
\end{array}
$$
with $t$ to be determined. Then according to the local estimate for the integral, one has
\begin{eqnarray*}
|Z_{s,t}| &\leq & \|f\|_\infty |x_{s,t}| + C\|f(Y)\|_{p',[s,t]} \|x\|_{p',[s,t]}\\
          & \leq& \|f\|_\infty |x_{s,t}| + C\|Df\|_\infty \|Y\|_{p',[s,t]} \|x\|_{p',[s,t]},
\end{eqnarray*}
where $C$ depends on $p'$ and $\|f\|_{\infty}:= \|f\|_{\infty,[-|y_0|-1, |y_0|+1]}$ so does $\|Df\|_\infty.$ Take the $p'$-variation and one obtains
\begin{eqnarray*}
\|Z\|_{p',[0,t]}^{p'} &\leq & \|f\|_{C^1}^{p'} (C+1) \|x\|_{p,[0,T]}^p Osc(x,[0,t])^{p'-p}.
\end{eqnarray*}
Thanks to the right continuity of at $t=0,$ one can choose $t_0$ small such that
$$
\|f\|_{C^2}^{p'} (C+1) \|x\|_{p,[0,T]}^p O{sc}(x,[0,t])^{p'-p} < 1.
$$
Then one obtains the invariant mapping $\mathcal{M}_{t_0}$ on $\Omega_{t_0}.$ Now we show $\mathcal{M}_{t_0}$ is a contraction. For any $Y,Y'\in \Omega_{t_0},$ one has
\begin{eqnarray*}
|Z_{s,u}-Z'_{s,u}| &=& |\int_s^u f(Y_r)^\ell dx_r - \int_s^u f(Y'_r)^\ell dx_r|\\
                   &=& |\int_s^u [f(Y_r)-f(Y'_r)]^\ell dx_r - [f(Y_s)-f(Y'_s)]x_{s,u}  + [f(Y_s)-f(Y'_s)]x_{s,u}|\\
                   &\leq & C \|f\|_{C^2} \| Y - Y' \|_{p',[s,u] }  \|x\|_{p',[s,u]} +  \|f\|_{C^1} |Y_s-Y'_s| |x_{s,u}|.\\
\end{eqnarray*}
where we apply Lemma \ref{stable p-v under f} in the last inequality. Take the $p$-variation again and one obtains for $t\leq t_0,$
\begin{eqnarray*}
\|Z-Z'\|^{p'}_{p',[0,t]} &\leq & (C+1) \|f\|_{C^2}^{p'} \| Y - Y' \|_{p',[0,t] }^{p'} \|x\|_{p,[0,t]}^p O{sc}(x,[0,t])^{p'-p}\\
& <& \| Y - Y' \|_{p',[0,t] }^{p'},
\end{eqnarray*}
which implies that the fixed point theorem holds and there exists a unique solution $y \in W^{p'}$ on $[0,t_0].$ The proof is finished by noticing
$$
\|y\|_{p,[0,t_0]} \leq \|f\|_{C^1} \|x \|_{p,[0,t_0]}.
$$

\end{proof}

\begin{rem}\label{explode}
(i). From the proof, Picard iteration needs $Osc(x)$ at $t=0$ relatively small w.r.t. the system. Otherwise the iteration may explode, which is shown in the following example. Define
\[
x_t:=\left\{
\begin{array}{ll}
-K_0, & t=0,\\
t, & t>0,
\end{array}\right.
\]
with $K_0>1.$ Consider the following linear equation
$
y_t= \int_0^t (y_r + 1) dx_r.
$
If one begins the Picard iteration from $y_t^{(0)}\equiv 0,$ one has
$$
y_t^{(1)}= \int_0^t 1 dx_r= K_0 +t \geq K_0, \ \ t>0,
$$
And so the Picard iteration explodes: after $n$ iterations we have
$$
y_t^{(n)} \geq \int_0^t  K_0^{n-1} dx_r = K_0^n +K_0^{n-1} t \geq K_0^n   \rightarrow \infty, \ \ \ n\rightarrow \infty.
$$
(ii). Even without small oscillation at the start point, one can still solve the equation on the whole time interval by fixing the original equation to a right-continuous version. Indeed, consider
$$
y_t = \xi + \int_0^t \phi(y_s)^\ell dx_s,
$$
where $x$ is a path with finite $p$-variation with $p\in [1,2).$ Suppose $y$ is a solution for the above equation. Then by our definition of integration, $y$ satisfies $y_{0+}= \xi + \phi(\xi) x_{0,0+} ,$ and
$$
\ty_t:=\left\{
\begin{array}{ll}
y_{0+}, & t=0 \\
y,& t>0,
\end{array}
\right.
$$
solves
$$
\ty_t= y_{0+} + \int_0^t \phi(\ty_s)^\ell d\tx_s, \ \ \ t\in [0,t^*],
$$
where
\[
\tx_t:=\left\{
\begin{array}{ll}
x_{0+}, & t=0 \\
x_t,& t>0.
\end{array}
\right.
\]
Conversely, if $\ty$ solves the later equation with initial value $y_{0+},$ then $y$ solves the former equation. Then furthermore by iterating this procedure, one can also have a global solution.
We will give general results about this argument in Theorem \ref{global sol.} and Theorem \ref{global sol. for regular}.

\end{rem}

%
%
%

\subsection{Level-2 rough path case}

In this part, we give solutions of differential equations driven by level-2 rough paths along with stability results for integrals and equations. The H\"{o}lder continuous case is given in \cite{FH14}. Compared with the H\"{o}lder continuous case, estimates here need to be more precise (or nasty) since, for example, $\|R^Y\|_{\frac p2,[s,t]}$ does not converge to null as $t \downarrow s.$ Now we set some notations for convenience. In this part $\BX=(X,\X)$ is a rough path with finite $p$-variation, not necessary c\`adl\`ag as before, and $p$ always belongs to $[2,3).$ Recall that we call a rough path $\BX$ is c\`adl\`ag if $\BX_{0,t}$ is c\`adl\`ag in $t.$ Denote the set of $\BX-$controlled rough paths (indeed $X-$ controlled) as $\MV_\BX^p([0,T],\R^e),$ which is a linear space according to the definition of controlled rough paths. We equip the linear space with norm
$$
\| Y,Y' \|_p:= \| Y_0 \| + \| Y'_0 \| + \|Y' \|_{p,[0,T]} + \| R^Y \|_{\frac p 2, [0,T]}, \ \ \ \forall (Y,Y') \in \MV_\BX^p,
$$
which makes $\MV_\BX^p([0,T],\R^d)$ a complete metric space. We remark that by definition $\|Y\|_p \leq C_p (\|Y'\|_\infty \|X\|_p + \|R^Y\|_{\frac p 2})$ and $\|Y'\|_\infty \leq |Y'_0|+ \|Y'\|_{p}.$  Recall that for any $(Y,Y')\in \MV_\BX^p,$ one has the estimate
$$
|\int_s^t Y_s^\ell d\BX_s-Y_s X_{s,t} - Y'_s \X_{s,t} | \leq C_p (\|R\|_{\frac{p}{2},[s,t]}\|X\|_{p,[s,t]} + \|Y'\|_{p,[s,t]}\|R\|_{\frac{p}{2},[s,t]}).
$$
Now consider equations like
\begin{equation}\label{level-2 equa}
Y_t=\int_0^t F(Y_s)^\ell d\BX_s + y_0,
\end{equation}
where $F\in C_b^3$ and the integral is introduced in Example \ref{level-2 rp}. We need to show some stability results before solving this equation. For $(Y,Y')\in \MV_{\BX}^p([0,T],\R^e) ,$ $(\tY,\tY') \in \MV_{\tBX}^{p}([0,T],\R^e),$ set $\Delta Y=Y-\tY,$ and $\Delta X, \Delta \X,  \Delta Y', \Delta R^Y$ likewise. Also set the following notations for convenience
\begin{eqnarray*}
&&M_{Y'}:=|Y_0'|+\|Y'\|_{p,[0,T]} , \Delta M_{Y'} := |\Delta Y_0'|+\|\Delta Y'\|_{p,[0,T]},\\
&&K_{Y}:= M_{Y'} + \|R^{Y}\|_{\frac p 2,[0,T]} , \Delta K_Y :=  \Delta M_{Y'} + \|\Delta R^Y \|_{\frac p 2,[0,T]},\\
&&\|\BX \|_p:= \|X\|_{p,[0,T]} + \|\X\|_{\frac p 2,[0,T]} , \| \Delta  \BX \|_p :=\|\BX;\tBX\|_p= \|\Delta X\|_{p,[0,T]} + \|\Delta \X \|_{\frac p 2,[0,T]}.
\end{eqnarray*}

\begin{lem}(\textbf{stability of rough integration})\label{stable under integral}
Suppose $(Y,Y')\in \MV_{\BX}^p([0,T],\R^e),$ $(\tY,\tY') \in \MV_{\tBX}^{p}([0,T],\R^e).$ Then by Remark \ref{integ.as crp}, $(I_{\BX}(Y),Y):=(\int_0^. Y^\ell d\BX, Y)$ is a controlled rough path(similar for $(I_{\tBX}(\tY),\tY)$). Furthermore one has the following local Lipschitz estimate
\begin{eqnarray*}
&& \|Y-\tY \|_{p,[0,T]} \leq C_p (\Delta M_{Y'} \|X\|_{p,[0,T]} +  M_{\tY'} \|\Delta X\|_{p,[0,T]} +\|\Delta R^Y \|_{\frac p 2, [0,T]})\\
&& \|R^{I_\BX(Y)} - R^{I_{\tBX}(\tY)}\|_{\frac p 2 ,[0,T]} \leq C_p (\|  \BX \|_{p,[0,T]}+1) (K_{\tY} \| \Delta  \BX \|_{p,[0,T]} + \| \BX \|_{p,[0,T]} \Delta K_Y).
\end{eqnarray*}

\end{lem}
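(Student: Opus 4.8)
The plan is to obtain both estimates from the defining local estimate for rough integration (Proposition~\ref{level-2 rp}), which controls the remainder $R^{I_\BX(Y)}_{s,t} = \int_s^t Y^\ell d\BX - Y_s X_{s,t} - Y'_s\X_{s,t}$ by $C_p(\|R^Y\|_{p/2,[s,t]}\|X\|_{p,[s,t]} + \|Y'\|_{p,[s,t]}\|\X\|_{p/2,[s,t]})$. The key point is that the indefinite integral $I_\BX(Y) = \int_0^\cdot Y^\ell d\BX$ is a controlled rough path with Gubinelli derivative $Y$, so $R^{I_\BX(Y)}_{s,t} = I_\BX(Y)_{s,t} - Y_s X_{s,t}$; expanding $I_\BX(Y)_{s,t} = Y_s X_{s,t} + Y'_s\X_{s,t} + (\text{that remainder})$, we read off $Y_{s,t} = I_\BX(Y)'_{s,t}$ is literally the increment of $Y$, so the first estimate is really an estimate on $\|Y-\tY\|_{p,[0,T]}$ purely in terms of the controlled-rough-path data of $(Y,Y')$ and $(\tY,\tY')$ and the distance of $\BX$ to $\tBX$ — no integration needed, just $Y_{s,t} = Y'_sX_{s,t} + R^Y_{s,t}$, the telescoping $\Delta(Y_{s,t}) = \Delta Y'_s\, X_{s,t} + \tY'_s\,\Delta X_{s,t} + \Delta R^Y_{s,t}$, and taking $p$-variation using $\|Y'_\cdot X_{\cdot,\cdot}\|_{p,[0,T]} \le \|Y'\|_{\infty}\|X\|_{p,[0,T]} + (\ldots)$ with $\|Y'\|_\infty \le M_{Y'}$.

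For the second estimate (the one on $\|R^{I_\BX(Y)} - R^{I_{\tBX}(\tY)}\|_{p/2,[0,T]}$), I would write $R^{I_\BX(Y)}_{s,t} - R^{I_{\tBX}(\tY)}_{s,t} = \big(I_\BX(Y)_{s,t} - Y_s X_{s,t} - Y'_s\X_{s,t}\big) - \big(I_{\tBX}(\tY)_{s,t} - \tY_s \tX_{s,t} - \tY'_s\tilde\X_{s,t}\big) + \big(Y'_s\X_{s,t} - \tY'_s\tilde\X_{s,t}\big)$. The first two grouped terms are each bounded, by Proposition~\ref{level-2 rp}'s local estimate, by $C_p$ times sums of products of the form $\|R\|_{p/2}\|X\|_p$ and $\|Y'\|_p\|\X\|_{p/2}$ on $[s,t]$; their difference requires the standard "telescoping of products of $p$-variation quantities" bookkeeping — each time replacing one factor of the $Y$-system by the $\tY$-system and picking up either $\Delta K_Y$-type or $\Delta\BX$-type factors, with the untouched factors estimated by $K_{\tY}$ or $\|\BX\|_p$. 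The genuinely different piece is $Y'_s\X_{s,t} - \tY'_s\tilde\X_{s,t} = \Delta Y'_s\,\X_{s,t} + \tY'_s\,\Delta\X_{s,t}$, whose $p/2$-variation is bounded by $\|\Delta Y'\|_\infty\|\X\|_{p/2} + \|\X\|_{p/2}\text{-cross-term} + \|\tY'\|_\infty\|\Delta\X\|_{p/2}$, again absorbed into $K_{\tY}\|\Delta\BX\|_p + \|\BX\|_p\Delta K_Y$ (the $\|\X\|_{p/2}$ factor is why the prefactor $(\|\BX\|_p+1)$ appears). Collecting terms and using $M_{Y'}, \|R^Y\|_{p/2} \le K_Y$ and the like gives the claimed bound.

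The main obstacle I anticipate is purely organizational rather than conceptual: keeping the telescoped difference of these products of norms under control without the bound blowing up in the number of terms, and in particular verifying that every cross term really does fit into the two allowed shapes $K_{\tY}\|\Delta\BX\|_p$ and $\|\BX\|_p\,\Delta K_Y$ (up to the overall $C_p(\|\BX\|_p+1)$ factor). One subtlety worth flagging, which is exactly the point the paragraph preceding the lemma stresses, is that unlike in the H\"older setting one cannot send $\|R\|_{p/2,[s,t]}$ or $\|Y'\|_{p,[s,t]}$ to zero as $t\downarrow s$, so all estimates must be stated on the full interval $[0,T]$ from the outset; this is harmless here since both inequalities are global, but it means I would not attempt any "small-interval then patch" argument and instead take $p$-variation directly over $[0,T]$ throughout, invoking super-additivity of the relevant controls (products and sums of $\omega_{X,p}$, $\omega_{\X,p/2}$, etc.) to pass from the local increment bounds to the global $p$- and $p/2$-variation bounds.
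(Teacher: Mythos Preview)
Your plan for the first inequality is correct and matches the paper exactly: write $\Delta Y_{s,t}=\Delta Y'_s\,X_{s,t}+\tY'_s\,\Delta X_{s,t}+\Delta R^Y_{s,t}$, use $|\Delta Y'_s|\le \Delta M_{Y'}$ and $|\tY'_s|\le M_{\tY'}$, and take $p$-variation.

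For the second inequality your decomposition is right, and the treatment of the piece $Y'_s\X_{s,t}-\tY'_s\tilde\X_{s,t}$ is fine. The gap is in how you propose to handle the remaining piece
\[
\bigl(I_\BX(Y)_{s,t}-\Xi_{s,t}\bigr)-\bigl(I_{\tBX}(\tY)_{s,t}-\tilde\Xi_{s,t}\bigr),
\qquad \Xi_{s,t}=Y_sX_{s,t}+Y'_s\X_{s,t},\ \ \tilde\Xi_{s,t}=\tY_s\tX_{s,t}+\tY'_s\tilde\X_{s,t}.
\]
You say each term is bounded by Proposition~\ref{level-2 rp} by $C_p(\|R\|_{p/2}\|X\|_p+\|Y'\|_p\|\X\|_{p/2})$, and then ``their difference requires the standard telescoping of products''. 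But Proposition~\ref{level-2 rp} gives only \emph{upper bounds}, not algebraic expressions; from $|A|\le a$ and $|B|\le b$ you cannot conclude anything about $|A-B|$ by manipulating $a$ and $b$. Telescoping $\|R^Y\|\|X\|-\|R^{\tY}\|\|\tX\|$ controls nothing.

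The fix (which is what the paper does) is to recognise this difference as $I(\Delta\Xi)_{s,t}-\Delta\Xi_{s,t}$ with $\Delta\Xi:=\Xi-\tilde\Xi$, by linearity of the RRS limit, and to apply the sewing argument directly to the germ $\Delta\Xi$. Its coboundary is the \emph{algebraic identity}
\[
\delta(\Delta\Xi)_{\tau,u,\nu}=-R^Y_{\tau,u}X_{u,\nu}-Y'_{\tau,u}\X_{u,\nu}+R^{\tY}_{\tau,u}\tX_{u,\nu}+\tY'_{\tau,u}\tilde\X_{u,\nu},
\]
and \emph{this} is what one telescopes into $\Delta R^Y\cdot X+R^{\tY}\cdot\Delta X+\Delta Y'\cdot\X+\tY'\cdot\Delta\X$. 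From there your bookkeeping description is accurate: each of the four products contributes one term of shape $\|\BX\|_p\,\Delta K_Y$ or $K_{\tY}\,\|\Delta\BX\|_p$, and the extra $(\|\BX\|_p+1)$ arises when one substitutes the first inequality for $\|\Delta Y\|_p$ (or $\|\tY\|_p$) inside these products. So the organizational worry you flag is real but routine; the genuine missing idea is to sew the \emph{difference} rather than differencing two sewing bounds.
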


\begin{proof}
For the first inequality, by inserting $\tY'_s X_{s,t}$ one has
\begin{eqnarray*}
|\Delta Y_{s,t}| &=& |Y'_s X_{s,t} + R^Y_{s,t} - \tY'_s \tX_{s,t} - R^{\tY}_{s,t}|\\
&\leq & |\Delta Y'_s| |X_{s,t}| + |\tY'_s| |\Delta X_{s,t}| +|\Delta R^Y_{s,t}|\\
& \leq & (|\Delta Y'_0|+\|\Delta Y'\|_p) |X_{s,t}| + (|  \tY'_0|+\|  \tY'\|_p)|\Delta X_{s,t}| +|\Delta R^Y_{s,t}|
\end{eqnarray*}
which implies the first estimate. For the second inequality, denote $\Delta \Xi_{s,t}=Y_s X_{s,t} + Y'_s \BX_{s,t}-(\tY_s \tX_{s,t} + \tY'_s \tilde{\X}_{s,t}).$ Then one has,
\begin{eqnarray*}
| R^{I_\BX(Y)}_{s,t} - R^{I_{\tBX}(\tY)}_{s,t} | &=& |\int_s^t Y^\ell_r d\BX_r - Y_s X_{s,t} -(\int_s^t \tY^\ell_r d\tBX_r - \tY_s \tX_{s,t})|\\
&\leq & |Y'_s\X_{s,t} - \tY'_s \tilde{\X}_{s,t} | + |I(\Delta \Xi)_{s,t} - \Delta \Xi_{s,t} |\\
&\leq & M_{\tY'} |\Delta \X_{s,t} | + \Delta M_{Y'} |\X_{s,t}| + C_p \sup_{\tau,u,\nu \in[s,t]} |\delta(\Delta \Xi)_{\tau,u,\nu}|,
\end{eqnarray*}
where the last inequality comes from the proof of the generalized sewing lemma with $\delta(\Delta \Xi)_{\tau,u,\nu}= R^Y_{\tau,u} X_{u,\nu} + Y_{\tau,u} \X_{u,\nu} -(R^{\tY}_{\tau,u} \tX_{u,\nu} + \tY_{\tau,u} \tilde{\X}_{u,\nu}).$ Now we only need to bound the $\frac p 2 -$variation of $A_{s,t} :=\sup_{\tau,u,\nu \in[s,t]} |\delta(\Delta \Xi)_{\tau,u,\nu}|$ by $(\|\BX \|_p+1) (K_{\tY} \| \Delta  \BX \|_p + \|\BX \|_p \Delta K_Y).$ Indeed, by inserting terms, one has
\begin{eqnarray*}
|\delta (\Delta \Xi)_{\tau,u,\nu}| \leq |\Delta Y_{\tau,u} \X_{u,\nu } | +|\tY_{\tau,u} \Delta \X_{u,\nu}| +|\Delta R^Y_{\tau,u} X_{u,\nu}| + |R^{\tY}_{\tau,u} \Delta X_{u,\nu}|.
\end{eqnarray*}
By applying the estimate for $\|\Delta Y\|_p$ we build, one has
\begin{eqnarray*}
\|A\|_{\frac p 2} &\leq& C_p (\|\Delta Y\|_p \|\X\|_{\frac p 2} + \|\tY \|_p \|\Delta \X\|_{\frac p 2} +\|\Delta R \|_{\frac p 2}\|X \|_p+ \| R_{\tY} \|_{\frac p 2}\|\Delta X \|_p )\\
&\leq & C_p [(K_{\tY}+\|\tY \|_p) \| \Delta  \BX \|_p + \|\BX \|_p (\Delta K_Y + \|\Delta Y\|_p)]\\
&\leq & C_p [(K_{\tY}+K_{\tY}(\|X\|_p +1) ) \| \Delta  \BX \|_p + \|\BX \|_p (\Delta K_Y+ \Delta K_Y \|\BX \|_p + K_{\tY} \| \Delta  \BX \|_p + \Delta K_Y  )]\\
&\leq & C_p (\| \BX \|_p+1) (K_{\tY} \| \Delta  \BX \|_p + \|\BX \|_p \Delta K_Y).
\end{eqnarray*}

\end{proof}

\begin{lem}(\textbf{stability of controlled r.p. under smooth function})\label{stable under smooth fun}
Suppose $\phi \in C_b^3(\R^e, \ML(\R^d, \R^e ) ),$ $(Y,Y')\in \MV_\BX^p([0,T],\R^e).$ Then $(\phi(Y),\phi(Y)'):=(\phi(Y), D\phi(Y)Y') \in \MV_\BX^p([0,T],\ML(\R^d, \R^e )),$ and one has
$$
\|\phi(Y)'\|_{p,[0,T]} + \|R^{\phi(Y)}\|_{\frac p 2, [0,T]} \leq C_p \|\phi \|_{C_b^2} K_Y(1+K_Y) (1+\|X\|_{p,[0,T]})^2.
$$
 Moreover, suppose $(\tY,\tY') \in \MV_{\tBX}^{p}([0,T],\R^e).$ Then one has estimates
\begin{eqnarray*}
&&\|\phi(Y)'-\phi(\tY)' \|  \leq  C_p \|\phi\|_{C_b^2} (M_{\tY'} +1) (|\Delta Y_0|+ \|\Delta Y'\| +\|\Delta R^Y \| + M_{\tY'} \|\Delta X\|  + \Delta M_{Y'} \|X\|  ),\\
&&\|R^{\phi(Y)}- R^{\phi(\tY)} \|  \leq   C_{p,K_Y,K_{\tY} } (1+\|X\|_p)^2  \|\phi \|_{C_b^3} (|\Delta Y_0 |+ \|\Delta R^Y \| + \Delta M_{Y'} \|X\| + M_{\tY'} \|\Delta X\| )
\end{eqnarray*}
where we omit the obvious subscripts for norms.

\end{lem}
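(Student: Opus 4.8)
The plan is to run the standard ``composition of a controlled path with a smooth map preserves the controlled-path class'' computation (as in \cite{FH14}), carrying the $p$- resp.\ $p/2$-variation seminorms throughout and using only elementary product and composition inequalities, so that the non-regularity of the underlying controls plays no role. Two such inequalities will be used repeatedly: for paths $f,g$ on $[0,T]$ with values in compatible normed spaces, $\|fg\|_{q,[0,T]}\le\|f\|_{\infty}\|g\|_{q,[0,T]}+\|g\|_{\infty}\|f\|_{q,[0,T]}$; and, for $g\in C_b^1$, $\|g(Y)\|_{q,[0,T]}\le\|Dg\|_\infty\|Y\|_{q,[0,T]}$ (both immediate from the mean value theorem and subadditivity of $a\mapsto a^q$, $q\ge1$). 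I also use that the $p/2$-variation of $(s,t)\mapsto|Y_{s,t}|^2$ equals $\|Y\|_{p,[0,T]}^2$, together with the bound $\|Y\|_{p}\le C_p(\|Y'\|_\infty\|X\|_p+\|R^Y\|_{p/2})\le C_pK_Y(1+\|X\|_p)$ recalled just before the lemma.

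First I would treat the Gubinelli derivative and the remainder. Since $\phi(Y)'=D\phi(Y)Y'$, the product and composition inequalities (applied to $D\phi\in C_b^2$) give
\[
\|\phi(Y)'\|_{p,[0,T]}\le\|D\phi\|_\infty\|Y'\|_p+\|Y'\|_\infty\|D^2\phi\|_\infty\|Y\|_p\le C_p\|\phi\|_{C_b^2}\,K_Y(1+K_Y)(1+\|X\|_p),
\]
using $\|Y'\|_\infty\le M_{Y'}\le K_Y$ and $\|Y'\|_p\le K_Y$. For the remainder, the second-order Taylor expansion $\phi(Y_t)-\phi(Y_s)=D\phi(Y_s)Y_{s,t}+\rho^Y_{s,t}$, with $|\rho^Y_{s,t}|\le\tfrac12\|D^2\phi\|_\infty|Y_{s,t}|^2$, combined with $Y_{s,t}=Y'_sX_{s,t}+R^Y_{s,t}$, yields $R^{\phi(Y)}_{s,t}=\phi(Y)_{s,t}-D\phi(Y_s)Y'_sX_{s,t}=D\phi(Y_s)R^Y_{s,t}+\rho^Y_{s,t}$, whence
\[
\|R^{\phi(Y)}\|_{p/2,[0,T]}\le\|D\phi\|_\infty\|R^Y\|_{p/2}+\tfrac12\|D^2\phi\|_\infty\|Y\|_p^2\le C_p\|\phi\|_{C_b^2}\,K_Y(1+K_Y)(1+\|X\|_p)^2.
\]
Adding the two displays establishes $(\phi(Y),\phi(Y)')\in\MV_\BX^p$ and the first quantitative bound.

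For the stability estimates I would insert intermediate terms. For the Gubinelli derivative, $\phi(Y)'-\phi(\tY)'=D\phi(Y)\Delta Y'+(D\phi(Y)-D\phi(\tY))\tY'$: bound the first summand by the product inequality (with $\|D\phi(Y)\|_p\le\|D^2\phi\|_\infty\|Y\|_p$), and the second by the product inequality together with $\|D\phi(Y)-D\phi(\tY)\|_\infty\le\|D^2\phi\|_\infty\|\Delta Y\|_\infty$ and, for its $p$-variation, Lemma \ref{stable p-v under f} applied to $D\phi\in C_b^2$ (this is why the constants may depend on $K_Y\vee K_{\tY}$), the coefficient $\tY'$ contributing a factor $M_{\tY'}$. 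For the remainder I would write $R^{\phi(Y)}_{s,t}=D\phi(Y_s)R^Y_{s,t}+\rho^Y_{s,t}$, the analogous expression for $\tY$, subtract, and insert terms so that every differenced factor is one of $D\phi(Y)-D\phi(\tY)$, $D^2\phi(Y)-D^2\phi(\tY)$ (the only place $\phi\in C_b^3$ enters, since this is bounded by $\|D^3\phi\|_\infty\|\Delta Y\|_\infty$), $\Delta R^Y$, $\Delta Y'$ or $\Delta X$, and then take $p/2$-variation. In both cases one finishes by substituting $\|\Delta Y\|_{p}\le C_p(\Delta M_{Y'}\|X\|_p+M_{\tY'}\|\Delta X\|_p+\|\Delta R^Y\|_{p/2})$ (the first inequality of Lemma \ref{stable under integral}), $\|\Delta Y\|_\infty\le|\Delta Y_0|+\|\Delta Y\|_p$, and $\|\tY\|_p\le C_pK_{\tY}(1+\|X\|_p)$, then collecting the powers of $K_Y,K_{\tY}$ and of $1+\|X\|_p$.

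I expect the only genuine work to be in this last step: subtracting $D\phi(Y)R^Y-D\phi(\tY)R^{\tY}$ and $\rho^Y-\rho^{\tY}$ produces a handful of cross terms, and the point is to pair each one with the correct norm so that the final bound has exactly the advertised shape, without leaking an extra factor of $\|X\|_p$ or an extra power of $K_Y$ or $K_{\tY}$. This is mechanical bookkeeping once the two elementary inequalities above and Lemmas \ref{stable p-v under f} and \ref{stable under integral} are available.
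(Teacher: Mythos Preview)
Your proposal is correct and follows essentially the same approach as the paper's proof: the same Taylor decomposition $R^{\phi(Y)}_{s,t}=D\phi(Y_s)R^Y_{s,t}+\rho^Y_{s,t}$ for the remainder, the same insertion/product-inequality strategy for the Lipschitz estimates, and the same reliance on Lemma~\ref{stable under integral} to convert $\|\Delta Y\|_p$ into the quantities appearing on the right-hand side. The paper makes the integral form of the second-order remainder explicit, writing $\rho^Y_{s,t}=\int_0^1\!\int_0^1 D^2\phi(Y_s+r_1r_2Y_{s,t})\,dr_1dr_2\,Y_{s,t}^{\otimes 2}$, when handling $\rho^Y-\rho^{\tY}$ (its term ``$A$''), which is precisely what your phrase ``insert terms so that every differenced factor is one of $D^2\phi(Y)-D^2\phi(\tY)$'' amounts to; the term $D\phi(Y_s)R^Y_{s,t}-D\phi(\tY_s)R^{\tY}_{s,t}$ is the paper's ``$B$''.
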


\begin{proof}

We show the first inequality by showing both $\|\phi(Y)'\|_{p,[0,T]}$ and $\|R^{\phi(Y)}\|_{\frac p 2, [0,T]}$ bounded by the right hand side. By inserting $D\phi(Y_t)Y'_s,$ one has
\begin{eqnarray*}
|D\phi(Y_t)Y'_t- D\phi(Y_s)Y'_s| & \leq & \|\phi\| (|Y'_t-Y'_s| + |Y'_s| |Y_{t}-Y_s|)\\
& \leq & \|\phi \| (|Y'_{s,t}|+ M_{Y'}(M_{Y'}|X_{s,t}|+ |R^Y_{s,t}|))
\end{eqnarray*}
Since $M_{Y'}= |Y'_0|+ \|Y'\|_{p,[0,T]} \leq K_Y,$ one obtains
\begin{eqnarray*}
\|\phi(Y)'\|_{p,[0,T]} &\leq& C_p \|\phi \|( \|Y'\|_p+ M_{Y'} M_{Y'} \|X \|_p + M_{Y'} \|R^Y\|_{\frac p 2} )\\
                    &\leq& C_p \|\phi \| M_{Y'} (1+ M_{Y'} \|X \|_p + \|R^Y\|_{\frac p 2} )\\
                    &\leq & C_p \|\phi \| K_Y (1+K_Y ) (1+\|X \|_p)
\end{eqnarray*}
For the bound of $\|R^{\phi(Y)}\|_\frac p 2,$ by Taylor's expansion,
\begin{eqnarray*}
R^{\phi(Y)}_{s,t} &=& \phi(Y_t)-  \phi(Y_s) -D \phi(Y_s) Y_s' X_{s,t}\\
&= & \phi(Y_t)-  \phi(Y_s) -D \phi(Y_s) Y_{s,t} + D\phi(Y_s) R^Y_{s,t} \\
& = & \frac12 D^2 \phi(Y_u) Y_{s,t}^{\otimes 2} + D\phi(Y_s) R^Y_{s,t},
\end{eqnarray*}
with $u\in[s,t],$ which implies
\begin{eqnarray*}
\|R^{\phi(Y)}\|_\frac p 2 &\leq&  C_p \|\phi\| [(M_{Y'}\|X\|+ \|R^Y\|)^2 + \|R^Y\|]\\
&\leq&  C_p \|\phi\| K_Y [K_Y(1+\|X\|)^2 +1 ],
\end{eqnarray*}
which implies the first inequality. For the first local Lipschitz estimate, by inserting $D\phi(Y) \tY',$ one has
\begin{eqnarray*}
\|\phi(Y)'-\phi(\tY)' \|_p & \leq & C_p \|\phi \| \|\Delta Y' \|_p + \|\phi \| M_{\tY'} (\|\Delta Y\|_p +|\Delta Y_0| )\\
&\leq &  C_p \|\phi \|  [\|\Delta Y' \|_p + M_{\tY'} (\Delta M_{Y'} \|X\|_p +M_{\tY'} \|\Delta X\|_p + \|\Delta R^Y \|_\frac p2 + |\Delta Y_0|)]\\
&\leq &  C_p \|\phi \|  (M_{\tY'} +1) ( |\Delta Y_0| \|\Delta Y'\| +\|\Delta R^Y \| + M_{\tY'} \|\Delta X\|  + \Delta M_{Y'} \|X\|  ).
\end{eqnarray*}
For the second local Lipschitz inequality, by inserting $D\phi(Y_s)Y_{s,t}$ and $ D\phi(\tY_s) \tY_{s,t},$ one has
\begin{eqnarray*}
|R^{\phi(Y)}_{s,t}- R^{\phi(\tY)}_{s,t}| &=& |\phi(Y)_{s,t} - \phi(\tY)_{s,t} - D \phi(Y_s) Y_s' X_{s,t} -D \phi(\tY_s) \tY_s' \tX_{s,t} |\\
&\leq & |\phi(Y)_{s,t}-D\phi(Y_s)Y_{s,t} - (\phi(\tY)_{s,t}- D\phi(\tY_s) \tY_{s,t})| + |D\phi(Y_s)R^Y_{s,t}- D\phi(\tY_s)R^{\tY}_{s,t}|\\
&=&  |\int_0^1 \int_0^1 D^2\phi(Y_s + r_1 r_2 Y_{s,t}) Y_{s,t}^{\otimes 2}dr_1 dr_2 - \int_0^1 \int_0^1 D^2\phi(\tY_s + r_1 r_2 \tY_{s,t}) \tY_{s,t}^{\otimes 2}dr_1 dr_2|\\
&& + |D\phi(Y_s)R^Y_{s,t}- D\phi(\tY_s)R^{\tY}_{s,t}|\\
&=: & A+B.
\end{eqnarray*}
For $A,$ by inserting terms as in Lemma \ref{stable p-v under f}, one has
\begin{eqnarray*}
\|A\|_{\frac p 2} &\leq& C_p \|\phi \| [|\Delta Y|_\infty \|Y\|_p^2 + (\|Y\|_p+\|\tY \|_p) \|\Delta Y\|_p ]\\
                   &\leq& C_{p,K_Y,K_{\tY}  }(1+\|X\|_p)^2 \|\phi \| (|\Delta Y_0 | + \|\Delta Y\|_p )\\
                   &\leq& C_{p,K_Y,K_{\tY}  }(1+\|X\|_p)^2 \|\phi \| (|\Delta Y_0 | + \Delta M_{Y'} \|X\|_p + M_{\tY'} \|\Delta X\|_p + \|\Delta R^Y \|_{\frac p 2})
\end{eqnarray*}
For $B,$ by inserting $D\phi(\tY_s) R^Y_{s,t},$ one has
\begin{eqnarray*}
\|B \|_{\frac p 2} &\leq& C_p \|\phi \| (|\Delta Y|_\infty \|R^Y\|_p^2 +   \|\Delta R^Y \|_\frac p 2 )\\
                   &\leq& C_{p,K_Y  } \|\phi \| (|\Delta Y_0 | + \|\Delta Y\|_p + \|\Delta R^Y\|_\frac p 2 )\\
                   &\leq& C_{p,K_Y  } \|\phi \| (|\Delta Y_0 | + \Delta M_{Y'} \|X\|_p + M_{\tY'} \|\Delta X\|_p + \|\Delta R^Y \|_{\frac p 2}),
\end{eqnarray*}
which completes the proof.

\end{proof}


\begin{lem}(\textbf{invariance})\label{invariance}
Suppose $(Y,Y') \in \MV_{\BX}^p,$ $\phi \in C_b^3.$ Then $(Z,Z'):=(\int_0^. \phi(Y_r)^\ell d \BX_r, \phi(Y)) \in \MV_{\BX}^p,$ and one has estimates
\begin{eqnarray*}
&&\|\phi(Y) \|_{p,[0,T]} \leq  C_p \|\phi\|_{C_b^1} (M_{Y'} \|X\|_{p,[0,T]} + \|R^Y\|_{\frac p 2,[0,T]} )\\
&&\|R^{I_\BX(\phi(Y) )}\|_{\frac p 2,[0,T]} \leq C_{p  }\|\phi \|_{C_b^2} K_Y (1+K_Y) (1+\|X\|^2_p )\|\BX\|_p.
\end{eqnarray*}

\end{lem}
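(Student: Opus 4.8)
The plan is to read off both assertions from the stability-under-smooth-functions result (Lemma~\ref{stable under smooth fun}) together with the local estimate for rough integration (Proposition~\ref{level-2 rp}). For the membership claim, Lemma~\ref{stable under smooth fun} tells us that $(\phi(Y),\phi(Y)')=(\phi(Y),D\phi(Y)Y')$ is again an $\BX$-controlled rough path, so the rough integral $Z_\cdot=\int_0^\cdot\phi(Y_r)^\ell d\BX_r$ is well-defined by Proposition~\ref{level-2 rp}; by Remark~\ref{integ.as crp} the pair $(Z,Z')=(Z,\phi(Y))$ is then itself a controlled rough path, i.e. $(Z,Z')\in\MV_\BX^p$. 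This part is immediate.

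For the bound on $\|\phi(Y)\|_{p,[0,T]}$ I would write $\phi(Y)_{s,t}=\int_0^1 D\phi(Y_s+rY_{s,t})\,dr\;Y_{s,t}$, so that $|\phi(Y)_{s,t}|\le\|D\phi\|_\infty|Y_{s,t}|$; since $(Y,Y')$ is controlled, $Y_{s,t}=Y'_sX_{s,t}+R^Y_{s,t}$ with $|Y'_s|\le|Y'_0|+\|Y'\|_{p,[0,T]}=M_{Y'}$, hence $|\phi(Y)_{s,t}|\le\|\phi\|_{C_b^1}\bigl(M_{Y'}|X_{s,t}|+|R^Y_{s,t}|\bigr)$. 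Taking $p$-variation, using the Minkowski inequality for $\|\cdot\|_{p,[0,T]}$ and the embedding $\|R^Y\|_{p,[0,T]}\le\|R^Y\|_{p/2,[0,T]}$, yields $\|\phi(Y)\|_{p,[0,T]}\le C_p\|\phi\|_{C_b^1}\bigl(M_{Y'}\|X\|_{p,[0,T]}+\|R^Y\|_{p/2,[0,T]}\bigr)$.

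For $\|R^{I_\BX(\phi(Y))}\|_{p/2,[0,T]}$ note that $R^Z_{s,t}=Z_{s,t}-\phi(Y_s)X_{s,t}$, and the local estimate \eqref{equ:level2Rintest} applied to the integrand $(\phi(Y),D\phi(Y)Y')$ gives
$$
\bigl|R^Z_{s,t}-D\phi(Y_s)Y'_s\,\X_{s,t}\bigr|\le C_p\bigl(\|R^{\phi(Y)}\|_{\frac p2,[s,t]}\|X\|_{p,[s,t]}+\|\phi(Y)'\|_{p,[s,t]}\|\X\|_{\frac p2,[s,t]}\bigr),
$$
so $|R^Z_{s,t}|\le\|\phi\|_{C_b^1}M_{Y'}|\X_{s,t}|+C_p(\cdots)$. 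Taking $p/2$-variation, I would bound $\sum_{[s,t]\in\op}|\X_{s,t}|^{p/2}\le\|\X\|_{p/2,[0,T]}^{p/2}$ for the first term, and for each product of the form $\|R^{\phi(Y)}\|_{p/2,[s,t]}\|X\|_{p,[s,t]}=\omega_a(s,t)^{2/p}\omega_b(s,t)^{1/p}$ (and similarly the other product) with $\omega_a,\omega_b$ superadditive, use $\sum_\op\bigl(\omega_a^{2/p}\omega_b^{1/p}\bigr)^{p/2}=\sum_\op\omega_a\,\omega_b^{1/2}\le\omega_b(0,T)^{1/2}\sum_\op\omega_a\le\omega_b(0,T)^{1/2}\omega_a(0,T)$. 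This produces
$$
\|R^Z\|_{\frac p2,[0,T]}\le C_p\bigl(\|\phi(Y)'\|_{\infty,[0,T]}\|\X\|_{\frac p2,[0,T]}+\|X\|_{p,[0,T]}\|R^{\phi(Y)}\|_{\frac p2,[0,T]}+\|\phi(Y)'\|_{p,[0,T]}\|\X\|_{\frac p2,[0,T]}\bigr).
$$
Inserting the estimates from Lemma~\ref{stable under smooth fun}, namely $\|\phi(Y)'\|_{p,[0,T]}+\|R^{\phi(Y)}\|_{p/2,[0,T]}\le C_p\|\phi\|_{C_b^2}K_Y(1+K_Y)(1+\|X\|_p)^2$, together with $\|\phi(Y)'\|_{\infty,[0,T]}\le|\phi(Y)'_0|+\|\phi(Y)'\|_{p,[0,T]}\le C_p\|\phi\|_{C_b^2}K_Y(1+K_Y)(1+\|X\|_p)$ (bounding $|Y'_0|\le M_{Y'}\le K_Y$), and collecting terms via $\|\BX\|_p=\|X\|_{p,[0,T]}+\|\X\|_{p/2,[0,T]}$ and $(1+\|X\|_p)^2\le 2(1+\|X\|_p^2)$, gives the claimed inequality.

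The only genuinely delicate step is passing from the pointwise bound on $R^Z_{s,t}$ to its $p/2$-variation: one must handle products of controls living on different scales — the $p$-variation of $X$ against the $p/2$-variation of $R^{\phi(Y)}$ and of $\X$ — through the superadditivity trick above rather than a naive Hölder splitting. Everything else is a routine assembly of Proposition~\ref{level-2 rp}, Remark~\ref{integ.as crp} and Lemma~\ref{stable under smooth fun}.
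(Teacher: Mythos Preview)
Your proof is correct and follows essentially the same approach as the paper: both derive the first estimate from Lipschitzness of $\phi$ together with the decomposition $Y_{s,t}=Y'_sX_{s,t}+R^Y_{s,t}$, and both derive the second estimate by applying the local rough-integration bound \eqref{equ:level2Rintest} to the controlled pair $(\phi(Y),D\phi(Y)Y')$ and then invoking Lemma~\ref{stable under smooth fun}. You are in fact more explicit than the paper about the passage from the pointwise bound on $R^Z_{s,t}$ to its $p/2$-variation via the superadditivity of the control products, a step the paper suppresses entirely.
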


\begin{proof}
The first estimate comes from our estimate for $\|Y\|_p$ and Lipschitzness of $\phi.$ For the second estimate, according to the local estimate for rough integral, one has
\begin{eqnarray*}
\|R^{I_\BX(\phi(Y) )}\|_{\frac p 2} &\leq & C_p( \|D \phi(Y_s) Y'_s\|_\infty \|\X\|_{\frac p 2} + \|\phi(Y)'\|_p \|\X\|_{\frac p 2} + \|R^{\phi(Y)}\|_{\frac p 2} \|X\|_p)\\
&\leq & C_{p  }\|\phi \|_{C_b^2} K_Y (1+K_Y) (1+\|X\|^2_p )(\|\X\|_{\frac p 2} + \|X\|_p),
\end{eqnarray*}
where we apply Lemma \ref{stable under smooth fun} in the last inequality.

\end{proof}

\begin{lem}(\textbf{local contraction})\label{contraction}
Suppose $(Y,Y') \in \MV^p_{\BX}, $ $(\tY,\tY') \in \MV^p_{\tBX} $ and $\phi \in C_b^3.$ Then $(Z-\tilde{Z},Z'-\tilde{Z'}):=(I_{\BX}(\phi(Y))-I_{\tBX}(\phi(\tY)), \phi(Y)-\phi(\tY))$ has the following estimate,
\begin{eqnarray*}
 && \|\phi(Y)-\phi(\tY) \|_{p,[0,T]} \leq C_p \|\phi \|_{C_b^3}  (|\Delta Y_0|+ \Delta M_{Y'} \|X\|_p + M_{Y'} \|\Delta X\|_p +\|\Delta R^Y \|_{\frac p 2} )\\
 &&\|R^{I_{\BX}(\phi(Y))}- R^{I_{\tBX}(\phi(\tY))}\|_{\frac p 2, [0,T] } \leq C_{p,K_Y,K_{\tY} }\|\phi\|_{C_b^3}  \left(1+\|\BX \|_p \right)^3 \big(\|\Delta \BX \|_p \\
 &&+ \|\BX\|_p (\|\Delta Y, \Delta Y'\| + \Delta K_Y \|X\|_p+ K_{\tY} \|\Delta X\|_p)\big).
\end{eqnarray*}

\end{lem}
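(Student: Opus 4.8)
Both inequalities are read off from the lemmas already established in this subsection, applied to the controlled rough paths $(\phi(Y),\phi(Y)')\in\MV^p_{\BX}$ and $(\phi(\tY),\phi(\tY)')\in\MV^p_{\tBX}$ produced by Lemma \ref{stable under smooth fun}, together with the fact (Remark \ref{integ.as crp}) that $(I_\BX(\phi(Y)),\phi(Y))$ and $(I_{\tBX}(\phi(\tY)),\phi(\tY))$ are again controlled rough paths. So the proof is essentially bookkeeping on top of Lemmas \ref{stable under integral} and \ref{stable under smooth fun}.

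\textbf{First estimate.} I would obtain the bound on $\|\phi(Y)-\phi(\tY)\|_{p,[0,T]}$ from the decomposition
$$\phi(Y)_{s,t}-\phi(\tY)_{s,t}=\big(\phi(Y)'_s-\phi(\tY)'_s\big)X_{s,t}+\phi(\tY)'_s\,\Delta X_{s,t}+\big(R^{\phi(Y)}_{s,t}-R^{\phi(\tY)}_{s,t}\big),$$
taking $p$-variation, estimating $\|\phi(Y)'-\phi(\tY)'\|_\infty\le|\phi(Y)'_0-\phi(\tY)'_0|+\|\phi(Y)'-\phi(\tY)'\|_p$ and $\|\phi(\tY)'\|_\infty\le|\phi(\tY)'_0|+\|\phi(\tY)'\|_p$, and then inserting the two local Lipschitz estimates of Lemma \ref{stable under smooth fun} together with $\|\Delta Y'\|_p\le\Delta M_{Y'}$, $\|\Delta R^Y\|_{p/2}\le\Delta K_Y$ and $|\phi(Y)'_0-\phi(\tY)'_0|\le\|\phi\|_{C_b^2}(M_{\tY'}|\Delta Y_0|+\Delta M_{Y'})$. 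Collecting terms gives the asserted inequality. (A more direct route expands $\phi(Y)_{s,t}-\phi(\tY)_{s,t}$ by the mean value theorem, inserts $\int_0^1 D\phi(Y_s+rY_{s,t})\,\tY_{s,t}\,dr$, bounds the two pieces by $\|\phi\|_{C_b^1}|\Delta Y_{s,t}|$ and $\|\phi\|_{C_b^2}\|\Delta Y\|_\infty|\tY_{s,t}|$, and then uses the first estimate of Lemma \ref{stable under integral} to control $\|\Delta Y\|_p$ and hence $\|\Delta Y\|_\infty\le|\Delta Y_0|+\|\Delta Y\|_p$.)

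\textbf{Second estimate.} For the $R$-difference I apply the second inequality of Lemma \ref{stable under integral} with the controlled rough paths $(\phi(Y),\phi(Y)')$ and $(\phi(\tY),\phi(\tY)')$ in place of $(Y,Y')$ and $(\tY,\tY')$, which gives
$$\|R^{I_\BX(\phi(Y))}-R^{I_{\tBX}(\phi(\tY))}\|_{p/2,[0,T]}\le C_p(\|\BX\|_p+1)\big(K_{\phi(\tY)}\|\Delta\BX\|_p+\|\BX\|_p\,\Delta K_{\phi(Y)}\big),$$
with $K_{\phi(\tY)}=M_{\phi(\tY)'}+\|R^{\phi(\tY)}\|_{p/2}$ and $\Delta K_{\phi(Y)}=\Delta M_{\phi(Y)'}+\|\Delta R^{\phi(Y)}\|_{p/2}$. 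The first inequality of Lemma \ref{stable under smooth fun} (together with $|\phi(\tY)'_0|\le\|\phi\|_{C_b^1}K_{\tY}$) bounds $K_{\phi(\tY)}$ by $C_{p,K_{\tY}}\|\phi\|_{C_b^2}(1+\|\BX\|_p)^2$; its two local Lipschitz inequalities, together with the boundary bound for $|\phi(Y)'_0-\phi(\tY)'_0|$ above, bound $\Delta K_{\phi(Y)}$ by $C_{p,K_Y,K_{\tY}}\|\phi\|_{C_b^3}(1+\|\BX\|_p)^2\big(|\Delta Y_0|+\Delta M_{Y'}+\|\Delta R^Y\|_{p/2}+\Delta K_Y\|X\|_p+K_{\tY}\|\Delta X\|_p\big)$. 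Substituting these two bounds, using $\|\Delta\BX\|_p=\|\Delta X\|_p+\|\Delta\X\|_{p/2}$ and $\|\BX\|_p=\|X\|_p+\|\X\|_{p/2}$, recognizing $|\Delta Y_0|+\Delta M_{Y'}+\|\Delta R^Y\|_{p/2}=\|\Delta Y,\Delta Y'\|$, and collecting the (at most three) occurring powers of $(1+\|\BX\|_p)$ produces exactly the stated bound.

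\textbf{Main obstacle.} There is no genuine conceptual difficulty here: once $(\phi(Y),\phi(Y)')$ and $(\phi(\tY),\phi(\tY)')$ are fed into Lemmas \ref{stable under integral} and \ref{stable under smooth fun}, everything is algebra. The step where care is needed is keeping track of constants and homogeneities — verifying that no worse than $\|\phi\|_{C_b^3}$ and $(1+\|\BX\|_p)^3$ appear, and that the $|\Delta Y_0|$ contribution (which is \emph{not} contained in $\Delta K_Y$) is carried along by the controlled-path norm $\|\Delta Y,\Delta Y'\|$ rather than dropped. I would therefore first write $\Delta K_{\phi(Y)}$ and $K_{\phi(\tY)}$ out as explicit monomials in the elementary quantities $|\Delta Y_0|$, $\Delta M_{Y'}$, $\|\Delta R^Y\|_{p/2}$, $\|X\|_p$, $\|\Delta X\|_p$ and only then substitute them into the Lemma \ref{stable under integral} bound.
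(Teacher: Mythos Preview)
Your proposal is correct and matches the paper's proof essentially line for line: for the first estimate the paper uses your parenthetical ``more direct route'' (Lemma~\ref{stable p-v under f} followed by the $\|\Delta Y\|_p$ bound from Lemma~\ref{stable under integral}), and for the second estimate the paper does exactly what you outline---apply Lemma~\ref{stable under integral} to $(\phi(Y),\phi(Y)')$, $(\phi(\tY),\phi(\tY)')$ and then bound $K_{\phi(\tY)}$ and $\Delta K_{\phi(Y)}$ via Lemma~\ref{stable under smooth fun}. Your identified ``main obstacle'' (bookkeeping of powers of $1+\|\BX\|_p$ and carrying $|\Delta Y_0|$ through $\|\Delta Y,\Delta Y'\|$) is precisely where the paper's argument ends with ``the estimate follows by plugging the later two inequalities into the first one.''
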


\begin{proof}

The first one follows easily from the Lemma \ref{stable p-v under f} and the estimate for $\|\Delta Y \|_p$ in Lemma \ref{stable under integral}. For the second one, applying Lemma \ref{stable under integral}, one has
\begin{eqnarray*}
\|R^{I_{\BX}(\phi(Y))}- R^{I_{\tBX}(\phi(\tY))}\|_{\frac p 2} \leq C_p (\|\BX\|_p +1 ) (K_{\phi(\tY)} \| \Delta \BX\|_p + \| \BX\|_p \Delta K_{\phi(Y)} ).
\end{eqnarray*}
By Lemma \ref{stable under smooth fun}, one has
\begin{eqnarray*}
K_{\phi(\tY)} & = & |\phi(\tY_0)'| + \|\phi(\tY)'\|_p + \|R^{\phi(\tY) }\|_{\frac p 2} \leq C_p \| \phi \|   K_{\tY}(1+K_{\tY} )(1+\|X\|_p )^2     \\
\Delta K_{\phi(Y)} &=& |\Delta \phi(Y)_0' | + \|\Delta \phi(Y)' \|_p + \|\Delta R^{\phi(\tY) }\|_{\frac p 2}\\
&\leq & C_{p,K_Y,K_{\tY} } \|\phi \| (1+\|X\|_p)^2 (|\Delta Y_0| + \Delta K_Y + \Delta K_Y \|X\|_p + K_Y \|\Delta X\|_p ).
\end{eqnarray*}
The estimate follows by plugging the later two inequalities into the first one.

\end{proof}

Now we are ready to solve equation \eqref{level-2 equa}.


\begin{thm}(\textbf{differential equation driven by level-2 rough paths})\label{solve level-2}
Suppose $\BX  $ is a level-2 rough path with finite $p$-variation, which is right-continuous at $t=0.$ Then for any $F \in C^3,$ there exists $t_1>0,$ such that the following equation has a unique solution on $[0,t_1],$
\begin{equation}
Y_t=y_0 + \int_0^t F(Y_s)^\ell d\BX_s .
\end{equation}
Furthermore, suppose $\tY$ solves the equation driven $\tBX.$ Then one has the following local Lipschitz estimate: for any rough paths $\BX,\tBX$ and $t$ such that $\|\BX\|_{p,[0,t]},\|\tBX\|_{p,[0,t]}< \delta_{p,\|F\|},$
where $\|F \|_{C^3}:= \|F\|_{C^3,[-K\cdot \mathbf{1},K\cdot \mathbf{1}]}$ with $K:=|Y_0|+ |F(Y_0)|+2,$ one has
\begin{eqnarray*}
\|Y-\tY \|_{p,[0,t ]} \leq C_p \|F \| (\|  \BX ; \tBX \|_{p,[0,t ]} + |  Y_0- \tilde{Y}_0|).
\end{eqnarray*}


\end{thm}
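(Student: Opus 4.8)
The plan is to prove existence and uniqueness by a Picard iteration on the complete metric space $\MV_{\BX}^p([0,t_1],\R^e)$, exactly mirroring the Young-case argument (Theorem \ref{young ODE}) but now using the controlled-rough-path machinery already assembled in Lemmas \ref{stable under integral}--\ref{contraction}. First I would set up, for a fixed $p' \in (p,3)$ if needed (or just work with $p$ directly, since the oscillation of $\BX$ at $t=0$ is what we control), the closed ball
$$
\Omega_{t}:=\{(Y,Y')\in \MV_{\BX}^p([0,t],\R^e): Y_0=y_0,\ Y_0'=F(y_0),\ \|Y,Y'\|_{p,[0,t]}\le 1\},
$$
and the map $\mathcal{M}_t(Y,Y'):=\big(y_0+\int_0^\cdot F(Y_s)^\ell d\BX_s,\ F(Y)\big)$, which is well-defined as an element of $\MV_\BX^p$ by Remark \ref{integ.as crp}. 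Invariance of $\mathcal{M}_{t_1}$ on $\Omega_{t_1}$ for $t_1$ small follows from Lemma \ref{invariance}: the bound there is linear in $\|\BX\|_{p,[0,t_1]}$ up to the factor $K_Y(1+K_Y)(1+\|X\|_{p}^2)$, and since $\BX$ is right-continuous at $t=0$ we have $\|\BX\|_{p,[0,t_1]}\to 0$ as $t_1\downarrow 0$ (this is where right-continuity at $0$ is essential, as in Remark \ref{explode}(i)). The contraction property on $\Omega_{t_1}$, after possibly shrinking $t_1$, is exactly the first two estimates of Lemma \ref{contraction} restricted to $\BX=\tBX$ with $\Delta X=\Delta \X=0$: the right-hand side is then controlled by $\|\Delta\BX\|_p\equiv 0$ plus $\|\BX\|_{p,[0,t_1]}$ times the $\MV$-distance of the inputs, so for $t_1$ small the Lipschitz constant is $<1$. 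This gives a unique fixed point $(Y,Y')$, and necessarily $Y'=F(Y)$; c\`adl\`agness (when $\BX$ is) is inherited via Lemma \ref{preserve jumps}.

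For the stability estimate, I would take two rough paths $\BX,\tBX$ with $\|\BX\|_{p,[0,t]},\|\tBX\|_{p,[0,t]}<\delta$, and their solutions $(Y,F(Y)),(\tY,F(\tY))$. Apply the local-contraction estimate of Lemma \ref{contraction} with $\phi=F$ and these controlled rough paths as inputs; since $(Z,Z')=(Y,F(Y))$ is itself the fixed point, the left-hand sides $\|F(Y)-F(\tY)\|_p$ and $\|R^{I_\BX(F(Y))}-R^{I_{\tBX}(F(\tY))}\|_{p/2}$ re-express, after subtracting, in terms of $\|Y-\tY\|_p$, $\Delta K_Y$, $\Delta M_{F(Y)}$ etc. This yields an inequality of the schematic form
$$
\|Y,\tY\|_{\mathrm{dist},[0,t]} \le C_p\|F\|\big(\|\BX;\tBX\|_{p,[0,t]}+|Y_0-\tY_0|\big) + C_p\|F\|\,(1+\delta)^3\,\delta\,\|Y,\tY\|_{\mathrm{dist},[0,t]},
$$
where $\|Y,\tY\|_{\mathrm{dist}}$ abbreviates $|\Delta Y_0|+\|\Delta Y'\|_p+\|\Delta R^Y\|_{p/2}$ (note $\Delta Y_0'=F(Y_0)-F(\tY_0)$ is itself $\le\|F\|_{C^1}|Y_0-\tY_0|$). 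Choosing $\delta=\delta_{p,\|F\|}$ small enough that the last term is absorbed on the left gives the claimed estimate; passing from the $\MV$-distance to $\|Y-\tY\|_{p,[0,t]}$ uses the elementary bound $\|Y-\tY\|_p\le C_p(\Delta M_{F(Y)}\|X\|_p+M_{F(\tY)}\|\Delta X\|_p+\|\Delta R^Y\|_{p/2})$ from Lemma \ref{stable under integral}, together with the already-bounded $\|\BX\|_p\le\delta$.

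The one genuinely delicate point — as flagged in the section preamble — is that, unlike the H\"older setting, $\|R^Y\|_{p/2,[s,t]}$ and $\|Y'\|_{p,[s,t]}$ do \emph{not} tend to $0$ as $t\downarrow s$, so one cannot make the contraction constant small by shrinking the interval in the usual way; instead one must make it small purely through $\|\BX\|_{p,[0,t_1]}\to 0$, which requires right-continuity of $\BX$ at $0$ and care that every constant appearing in Lemmas \ref{invariance} and \ref{contraction} depends only on $p$, $\|F\|_{C^3}$ on the relevant ball, and the a priori bound $K_Y\le 2$ on $\Omega_{t_1}$ — not on quantities that could blow up as the interval degenerates. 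The main obstacle is therefore the careful bookkeeping of these constants (the $(1+\|X\|_p)^2$ and $(1+\|\BX\|_p)^3$ prefactors, and the fact that $K_{\tY}$ must be controlled uniformly), rather than any new idea; everything else is a transcription of the Young-case Picard scheme. I would also remark, as in Remark \ref{explode}(ii), that without right-continuity at $0$ one still obtains a solution after first solving the trivial jump $y_0\mapsto y_{0+}=y_0+F(y_0)X_{0,0+}+D F(y_0)F(y_0)\X_{0,0+}$ by hand and restarting; and that a global solution on $[0,T]$ follows by the standard patching-over-big-jumps argument developed later in the paper (Theorems \ref{global sol.}, \ref{global sol. for regular}).
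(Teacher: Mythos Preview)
Your overall architecture (invariance via Lemma \ref{invariance}, contraction via Lemma \ref{contraction}, stability by absorbing the self-referential term) is the same as the paper's, and the stability part is essentially what the paper does. But there is a genuine gap in your contraction step.

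You claim that when $\BX=\tBX$ the right-hand sides of Lemma \ref{contraction} are ``$\|\BX\|_{p,[0,t_1]}$ times the $\MV$-distance of the inputs''. This is true for the second estimate (on $\|R^Z-R^{\tZ}\|_{p/2}$), but not for the first. With $\Delta X=0$, $\Delta Y_0=0$, the first estimate of Lemma \ref{contraction} reads
\[
\|F(Y)-F(\tY)\|_{p}\ \le\ C_p\|F\|\big(\|\Delta Y'\|_p\,\|X\|_p\ +\ \|\Delta R^Y\|_{p/2}\big),
\]
and the term $C_p\|F\|\,\|\Delta R^Y\|_{p/2}$ carries no factor of $\|\BX\|_{p,[0,t_1]}$. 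Since $Z'=F(Y)$, this term feeds directly into $\|\Delta Z'\|_p$, so in the naive norm $\|\Delta Y'\|_p+\|\Delta R^Y\|_{p/2}$ the Lipschitz constant is at best $C_p\|F\|+O(\|\BX\|_p)$, which you cannot make $<1$ by shrinking $t_1$. This is exactly the manifestation of the ``delicate point'' you flagged: $\|R^Y\|_{p/2,[0,t]}$ does not vanish as $t\downarrow0$, and neither does the coupling coefficient between the $R$-component and the $Y'$-component.

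The paper resolves this by introducing the equivalent weighted norm
\[
\|Y,Y'\|^{(\delta)}_{p,[0,t]}:=|Y_0|+|Y_0'|+\|Y'\|_{p,[0,t]}+\delta\,\|R^Y\|_{p/2,[0,t]},\qquad \delta>1,
\]
under which the problematic term becomes $C_p\|F\|\,\|\Delta R^Y\|_{p/2}=\tfrac{C_p\|F\|}{\delta}\cdot\delta\|\Delta R^Y\|_{p/2}$, small for $\delta$ large; the price is an extra factor $\delta$ in front of $\|\BX\|_p$ in the other estimate, but that is then killed by choosing $t_1$ small. (The paper also notes the alternative you hinted at parenthetically---running the fixed point in $p'>p$ and upgrading afterwards---but does not spell it out; if you take that route you must actually carry it through, not ``just work with $p$ directly''.) Once the contraction is fixed by either device, the rest of your proposal, including the stability argument, goes through as you wrote it.
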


\begin{proof}

Since $\BX$ is right-continuous at $t=0,$ by Lemma \ref{vari.continuity}, $\omega_{\BX}(0,t)$ is right-continuous at $t=0.$ Without loss of generality, assume $\|\BX \|_{p,[0,T]} \leq 1$. For the existence and uniqueness part, we are going to build the fixed point theorem for the equation. Consider the closed set
$$
\Omega_t:= \{(Y,Y')\in \MV_{\BX}^p: Y_0=y_0,  Y'_0=F(Y_0), \|Y'\|_{p,[0,t]} \leq 1, \|R^Y\|_{\frac p 2, [0,t]} \leq \frac{1}{2 C_p\|F\|+1 } \},
$$
with $C_p$ described in Lemma \ref{invariance}, $\|F \|:= \|F\|_{C^3,[-K\cdot \mathbf{1},K\cdot \mathbf{1}]}$ and $t $ to be determined. Define a mapping on
$\mathcal{M}_t: \Omega_t \longrightarrow  \Omega_t$ given by
$$
 (Y,Y') \mapsto (Z,Z') := ( \int_0^. F(Y_r)^l d\BX_r + y_0, F(Y) ).
$$
According to Lemma \ref{invariance}, one has
\begin{eqnarray*}
\|F(Y)\|_{p,[0,t]} &\leq & C_p \|F\| (1+ F(y_0)) \|X \|_{p,[0,t]} + \frac 12\\
\|R^Z\|_{\frac p 2,[0,t]} &\leq & C_p \|F \| (3+F(y_0) )^2 \| \BX \|_{p,[0,t]}.
\end{eqnarray*}
where $K:=|Y_0|+ (|F(Y_0)|+1)+1.$ By choosing $t=T_1$ small, one obtains $C_p \|F\| (1+ F(y_0)) \|X \|_{p,[0,t]}< \frac 12$ and $C_p \|F \| (3+F(y_0) )^2 \| \BX \|_{p,[0,t]}<\frac{1}{2 C_p\|F\|+1 } ,$ which implies the invariance. For the contraction part, according to Lemma \ref{contraction}, for $(Y,Y'),(\tY,\tY') \in \Omega_t,$
\begin{eqnarray*}
\|F(Y)-F(\tY)\|_{p,[0,t]} &\leq & C_p \|F\| (\|\Delta Y\|_p \|X\|_{p,[0,t]} + \|\Delta R^Y \|_{\frac p 2,[0,t]})   \\
\|R^Z-R^{\tilde{Z}} \|_{\frac p 2,[0,t]} &\leq & C_{p,F }   \| \BX \|_{p,[0,t]}(\|\Delta Y \|_{p,[0,t]} + \|\Delta R^Y \|_{\frac p 2,[0,t]} ).
\end{eqnarray*}
To obtain the contraction, one can use the same strategy as in Young's case, that is proving a contraction under a weaker topology($p'$-variation with $p'>p$) and then using the local estimate for rough integral to show the solution is in the original space $V^p$. Here we follow another strategy. Define an equivalent norm on $\MV_{\BX}^p,$
$$
\|Y,Y'\|^{(\delta)}_{p,[0,t]}:= |Y_0|+|Y'_0|+ \| Y' \|_{p,[0,t]} + \delta \|R^Y \|_{\frac p 2,[0,t]}, \ \ \ \delta>1.
$$
Then one obtains that for any $(Y,Y'),(\tY,\tY') \in \Omega_t,$
\begin{eqnarray*}
\|Z-\tZ,Z'-\tZ' \|^{(\delta)}_{p,[0,t]} &\leq&  C_{p,F} [(1+\delta )\|\BX \|_{p,[0,t]} \|\Delta Y'\|_{p,[0,t]} + \frac{1+\delta \|\BX\|_{p,[0,t]}}{\delta} \delta \|\Delta R\|_{\frac p 2,[0,t]}]\\
&\leq&  C_{p,F} ((1+\delta )\|\BX \|_{p,[0,t]} \vee \frac{1+\delta \|\BX\|_{p,[0,t]}}{\delta} )\|\Delta Y , \Delta Y' \|^{(\delta)}_{p,[0,t]}.
\end{eqnarray*}
Choose $\delta $ large and $t=T_2$ small such that $ C_{p,F} ((1+\delta )\|\BX \|_{p,[0,t]} \vee \frac{1+\delta \|\BX\|_{p,[0,t]}}{\delta} )< 1,$ and let $t_1=T_1 \wedge T_2, $ and then the fixed point theorem is built. Now suppose $Y,\tY$ solve equations driven by $\BX, \tBX$ on $\Omega_{t_1}, \tilde{\Omega}_{t_1}$ respectively. We have implicitly assumed $\BX,\tBX,K_Y,K_{\tY}$ bounded locally as above. Since $(I_{\BX}(F(Y)),F(Y))$ is still a controlled rough path, one has
\begin{eqnarray*}
|(Y-\tY)_{s,t} | &=& |I_{\BX}(F(Y))_{s,t}-I_{\tBX}(F(\tY))_{s,t} \mp F(Y_s)X_{s,t} \pm F(\tY_s)\tX_{s,t}| \\
&\leq & |R^{I_{\BX}(F(Y))}_{s,t}-R^{I_{\tBX}(F(\tY))}_{s,t}| + |F(Y_s)X_{s,t}-F(\tY_s)\tX_{s,t}|, \ \ s<t<t_1.
\end{eqnarray*}
Applying Lemma \ref{contraction} again, one obtains for $t<t_1,$
\begin{eqnarray*}
&&\|\Delta Y  \|_{p,[0,t]}  +   \|\Delta R^{I_{\BX}(F(Y))}   \|_{\frac p 2, [0,t]} \\
&\leq& C_{p} \|\Delta R^{I_{\BX}(F(Y))}  \|_{\frac p 2, [0,t]} + C_p \|F \| ( \| \Delta X\|_{p,[0,t]}+ \|X\|_{p,[0,t]} (|\Delta Y_0| + \|\Delta Y  \|_{p,[0,t]}))\\
&\leq& C_p \|F\| (\|\Delta \BX \|_{p,[0,t]}+ |\Delta Y_0|) + C_{p,F} \|\BX \|_{p,[0,t]} (\|\Delta Y  \|_{p,[0,t]}  +   \|\Delta R^{I_{\BX}(F(Y))} ),
\end{eqnarray*}
so one may choose $t=t_2$ small enough that $C_{p,F} \|\BX \|_{p,[0,t_2]} \leq \frac12,$ which implies our conclusion.

\end{proof}

As mentioned in the Young case, the (local existence) argument requires $\|\BX\|_{p,[0,t_1]} < \delta_{p,F}$, for a constant $\delta_{p,F}$ by making $t_1>0$ small enough. On the other hand,
if there is a ``large'' jump of $\BX$ at $t=\tau \in (0,T)$, the Picard iteration may get stuck on $[0,\tau)$. Fortunately, there are only finitely many large jumps (since $\BX \in \mathbf{V}^p$, hence regulated) which
may be handled "by hand'' such as to obtain a global solution. For c\`adl\`ag  one so obtains


%

\begin{thm}(\textbf{global solution for D.E. driven by c\`adl\`ag rough paths})\label{global sol.}
Suppose $\BX$ is a level$-2$ c\`adl\`ag rough path on $[0,T]$ with finite $p$-variation and $F \in C_b^3.$ Then there exists a unique c\`adl\`ag solution $Y_t \in V^p([0,T],\R^e)$ solving the following equation
\begin{equation}
Y_t=\int_0^t F(Y_s)^- d\BX_s + y_0,
\end{equation}
in the sense that the integration is considered as Proposition \ref{level-2 rp} with $(Y,F(Y))$ as controlled rough paths. Furthermore, suppose $\|\BX\|_{p,[0,T]},\|\tBX\|_{p,[0,T]}<L,$ and $Y,\tY$ solve equations driven by $\BX, \tBX$ respectively on $[0,T].$ Then one has the following local estimate,
$$
\|Y-\tY\|_{p,[0,T]} \leq M^2 (C_{p,F}(1\vee L))^{M+1} ( \|\BX ;\tBX \|_{p,[0,T]} + |Y_0-\tY_0| ),
$$
where $M$ is a constant bounded by $C_{p,F} L^p +1.$

\end{thm}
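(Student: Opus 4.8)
The plan is to bootstrap from the local existence/uniqueness and stability statement, Theorem~\ref{solve level-2}, by partitioning $[0,T]$ into finitely many pieces on each of which that theorem applies, while treating the (necessarily finitely many) ``big'' jumps of $\BX$ by hand, their effect being prescribed by Lemma~\ref{preserve jumps}. Set $\omega(s,t):=\|X\|^{p}_{p,[s,t]}+\|\X\|^{p/2}_{p/2,[s,t]}$, a control with $\omega(0,T)\lesssim L^{p}$; since $\BX$ is c\`adl\`ag, $\omega$ is right-continuous in $s$ with $\omega(s,s+)=0$ (cf.\ Lemma~\ref{vari.continuity}), and by superadditivity $\sum_{t}\omega(t-,t)\le\omega(0,T)<\infty$. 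Let $\delta=\delta_{p,F}$ be the smallness threshold of Theorem~\ref{solve level-2}; because $F\in C^{3}_{b}$ the relevant local $C^{3}$-norm is bounded by $\|F\|_{C^{3}_{b}}$ irrespective of the current value of the solution, so $\delta$ may be chosen once and for all, as may the constant $C_{p,F}$ below. Hence there are only finitely many times $0<\tau_{1}<\dots<\tau_{m}<T$, $m\le 2\omega(0,T)/\delta$, with $\omega(\tau_{j}-,\tau_{j})\ge\delta/2$.

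Next I would fix a finite partition $\{0=r_{0}<r_{1}<\dots<r_{N}=T\}\supseteq\{\tau_{1},\dots,\tau_{m}\}$ such that $\omega(r_{i},r_{i+1}-)<\delta$ for every $i$; in particular $\omega(r_{i},r_{i+1})<\delta$ whenever $r_{i+1}$ is a continuity point of $\BX$, i.e.\ whenever $r_{i+1}\notin\{\tau_{j}\}$. Such a partition exists: just to the left of each $\tau_{j}$ pick a point with $\omega(\,\cdot\,,\tau_{j}-)<\delta$ (possible since $\omega(r,\tau_{j}-)\downarrow\omega(\tau_{j}-,\tau_{j}-)=0$ as $r\uparrow\tau_{j}$), and on each of the finitely many remaining compact subintervals $\omega$ has only jumps of size $<\delta/2$ and vanishing right jumps, so the mild-control partitioning argument behind Lemma~\ref{mild control} yields a finite refinement with $\omega<\delta$ there. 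Arranging that this refinement is not wasteful (consecutive small intervals together carry comparable-to-$\delta$ mass of $\omega$, except next to the $\tau_{j}$), superadditivity of $\omega$ and $\omega(0,T)\lesssim L^{p}$ give $N\le C_{p,F}L^{p}+1=:M$.

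I then construct $Y$ inductively on the $[r_{i},r_{i+1}]$. If $r_{i+1}\notin\{\tau_{j}\}$, then $\BX$ is right-continuous at $r_{i}$ and $\|\BX\|_{p,[r_{i},r_{i+1}]}$ is below threshold, so Theorem~\ref{solve level-2} produces a unique (c\`adl\`ag, by Lemma~\ref{preserve jumps}) solution on $[r_{i},r_{i+1}]$ starting from the value $Y_{r_{i}}$ built in the previous step. If $r_{i+1}=\tau_{j}$, I first solve on $[r_{i},\tau_{j}]$ the equation driven by the modified path $\widehat\BX$ which coincides with $\BX$ on $[r_{i},\tau_{j})$ and is continuous at $\tau_{j}$ (with value $\BX_{\tau_{j}-}$); since $\|\widehat\BX\|^{p}_{p,[r_{i},\tau_{j}]}=\omega(r_{i},\tau_{j}-)<\delta$, Theorem~\ref{solve level-2} applies and gives $\widehat Y$, continuous at $\tau_{j}$, and then I set $Y:=\widehat Y$ on $[r_{i},\tau_{j})$ and
\[
Y_{\tau_{j}}:=\widehat Y_{\tau_{j}-}+F(\widehat Y_{\tau_{j}-})\,\Delta^{-}_{\tau_{j}}X+DF(\widehat Y_{\tau_{j}-})F(\widehat Y_{\tau_{j}-})\,\Delta^{-}_{\tau_{j}}\X ,
\]
which is exactly the increment forced by Lemma~\ref{preserve jumps} for the controlled rough path $(Y,F(Y))$, whose Gubinelli derivative is $DF(Y)F(Y)$. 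Concatenating the finitely many pieces yields $Y\in V^{p}([0,T],\R^{e})$, c\`adl\`ag (each piece is, right-continuity at the $r_{i}$ is built in, and left jumps occur only at the $\tau_{j}$), which solves the equation: by additivity of the $\ell$-integral over $\{r_{i}\}$ (the remark following Definition~\ref{MRS,RRS}), and because on $[r_{i},\tau_{j})$ the integrals driven by $\BX$ and $\widehat\BX$ agree while the increment across $\tau_{j}$ is precisely the term just added. Uniqueness follows interval by interval from the local uniqueness in Theorem~\ref{solve level-2} together with the fact that each jump at $\tau_{j}$ is forced (cf.\ Remark~\ref{explode}).

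For the stability estimate, given $\BX,\tBX$ with $\|\BX\|_{p,[0,T]},\|\tBX\|_{p,[0,T]}<L$ I would choose one partition $\{r_{i}\}_{i=0}^{M}$ adapted simultaneously to both (so $M\le C_{p,F}L^{p}+1$). On each $[r_{i},r_{i+1}]$ the local Lipschitz estimate of Theorem~\ref{solve level-2} — applied to $(\BX,\widehat\BX)$ resp.\ $(\tBX,\widehat{\tBX})$, and for the jump pieces combined with the obvious Lipschitz dependence of the jump formula on $\BX$ and on the initial point ($F,DF\in C^{1}_{b}$) — gives
\[
\|Y-\tY\|_{p,[r_{i},r_{i+1}]}\le C_{p,F}(1\vee L)\bigl(\|\BX;\tBX\|_{p,[r_{i},r_{i+1}]}+|Y_{r_{i}}-\tY_{r_{i}}|\bigr).
\]
Using $|Y_{r_{i}}-\tY_{r_{i}}|\le|Y_{0}-\tY_{0}|+\sum_{k<i}\|Y-\tY\|_{p,[r_{k},r_{k+1}]}$ turns this into a linear recursion for $a_{i}:=\|Y-\tY\|_{p,[r_{i},r_{i+1}]}$, solved (after harmlessly enlarging $C_{p,F}$) by $a_{i}\le\bigl(C_{p,F}(1\vee L)\bigr)^{i+1}\bigl(\|\BX;\tBX\|_{p,[0,T]}+|Y_{0}-\tY_{0}|\bigr)$; subadditivity $\|Y-\tY\|_{p,[0,T]}\le\sum_{i}a_{i}\le M\max_{i}a_{i}$ then yields the claimed bound. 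The main obstacle is the first two paragraphs — producing, with the a priori count $N\le C_{p,F}L^{p}+1$, a single finite partition that isolates all big jumps and on whose remaining pieces Theorem~\ref{solve level-2} applies; the construction, gluing, uniqueness, and the Gr\"onwall-type iteration are then routine given the earlier results.
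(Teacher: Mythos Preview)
Your plan is essentially the paper's proof: partition $[0,T]$ so that the local Theorem~\ref{solve level-2} applies on each piece, glue via the jump formula of Lemma~\ref{preserve jumps}, and iterate the local Lipschitz bound. Two small points are worth flagging.

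First, the paper does \emph{not} single out big jump times or introduce a modified driver $\widehat\BX$. Since $\BX$ is c\`adl\`ag, Lemma~\ref{partition} gives a partition with $\omega_{\BX}(t_i+,t_{i+1}-)=\omega_{\BX}(t_i,t_{i+1}-)<\delta^{p}$, and one simply solves on the half-open interval $[t_i,t_{i+1})$ (covering it by closed subintervals $[t_i,s]$, $s<t_{i+1}$), then defines $Y_{t_{i+1}}$ by the jump formula, \emph{regardless} of whether the jump at $t_{i+1}$ is big or small. Your distinction ``$r_{i+1}\notin\{\tau_j\}$ iff $r_{i+1}$ is a continuity point'' is in fact false (there can be small jumps at the other partition points), so your case split as written does not quite work; the paper's uniform half-open treatment sidesteps this.

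Second, your final step ``subadditivity $\|Y-\tY\|_{p,[0,T]}\le\sum_i a_i$'' is not correct: $p$-variation is super-, not sub-, additive, and the right inequality is $\|g\|_{p,[0,T]}\le M^{(p-1)/p}\bigl(\sum_i\|g\|_{p,[r_i,r_{i+1}]}^{p}\bigr)^{1/p}\le M\sum_i\|g\|_{p,[r_i,r_{i+1}]}$ (this is the ``fact'' used at the end of the paper's proof). This costs an extra factor of $M$, which is precisely why the stated constant is $M^{2}(C_{p,F}(1\vee L))^{M+1}$ rather than $M(\cdots)^{M}$. With these two fixes your argument matches the paper.
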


\begin{proof}

For the existence, according to Theorem \ref{solve level-2} and the remarks afterwards, 
there exists a constant $\delta_1,$ depending on $p$ and $F,$ such that given any $\xi \in \R^d, $ if $\|\BX \|_{p, [u,v)}:=\lim_{t\uparrow v}\|\BX \|_{p, [u,t]}< \delta_1 ,$ then there exists a unique solution for equation
\begin{equation}
Y_t=\xi + \int_u^t F(Y_r)^\ell d\BX_r, \ \ \ t\in [u,v).
\end{equation}
According to Lemma \ref{regular path} and Lemma \ref{partition}, there exists a partition $\op:=\{0=t_0<t_1<...<t_N=T\} $ of $[0,T],$ such that for any $(t_i, t_{i+1}),$ $\omega_{\BX}(t_i+,t_{i+1}-)< \delta_1^p.$ By right-continuity, one can solve the above equation on every $[t_i,t_{i+1} )$ given any initial condition denoted as $\xi_{i}^0,i=0,...,N-1.$ Now we only need to give the value of $\{\xi_{i}^0\}_{i=0}^N$ according to our equation. Given $\xi_i,i=0,...,N-1,$ denote the solution on every $[t_i,t_{i+1})$ as $Y^{\xi_i}_t,t\in [t_i,t_{i+1}).$ Naturally, let $\xi_0^0:=y_0 $ and define $\xi_{i+1}^0,i=0,...,N-1,$ by
$$
\xi_{i+1}^0:= Y^{\xi_i^0}_{t_{i+1}-} + F(Y^{\xi_i^0}_{t_{i+1}-}) X_{t_{i+1}-,t_{i+1}} +DF(Y^{\xi_i^0}_{t_{i+1}-}) F(Y^{\xi_i^0}_{t_{i+1}-}) \X_{t_{i+1}-,t_{i+1}}.
$$
Define $Y_t:= Y_t^{\xi_i^0}, t\in [t_i,t_{i+1})$ with $\xi_N^0$ as $Y_T.$ Then one can check $Y_t$ satisfies equation \eqref{level-2 equa}. The uniqueness follows from the local uniqueness. For the local Lipschitzness part, according to Theorem \ref{solve level-2}, there exists a constant $\delta_2$ depending on $p$ and $F, $ such that as long as $\|\BX \|_{p,[u,v)}\vee \|\tBX\|_{p,[u,v)}<\delta_2<1,$ then $M_Y$ and $M_{\tY}$ are bounded by $2 $ and the local Lipschitz estimate holds, i.e.
\begin{eqnarray}\label{local est}
\|Y-\tY \|_{p,[u,v)} \leq C_p \|F \|(\|\Delta \BX \|_{p,[u,v)} + |\Delta Y_u|).
\end{eqnarray}

Since $\omega_\BX(0,t),\omega_{\tBX}(0,t)$ are increasing and bounded by $L^p,$ by patching two partitions with at most $([\frac{L^p}{\delta_2^p}]+2)$ endpoints, one may build a partition $\oq:=\{0=s_0<s_1<...<s_M=T\} $ of $[0,T],$ such that $\omega_\BX(s_i+,s_{i+1}-)\vee \omega_{\tBX}(s_i+,s_{i+1}-) < \delta_2^p$ and $M< 2\frac{L^p}{\delta_2^p} +1, $ which implies, by right-continuity, one has the above local estimate on every $[s_i,s_{i+1}).$ We claim that one furthermore has
\begin{eqnarray}\label{local est2}
\|Y-\tY \|_{p,[s_i,s_{i+1}]} \leq C_p \|F \| (\|F\|\vee 1 ) (L \vee 1) (\|\Delta \BX \|_{p,[s_i,s_{i+1}]} + |\Delta Y_{s_i}|).
\end{eqnarray}
Indeed, this follows from the fact $\|g(\cdot)\|_{p,[u,v]} = \|g(\cdot)\|_{p,[u,v)}+ |g(v)-g(v-)|$ and inequalities
\begin{eqnarray*}
&&|\Delta F(Y_{s_{i+1}- })X_{s_{i+1}-,s_{i+1}} + \Delta DF(Y_{s_{i+1}- })F(Y_{s_{i+1}- })\X_{s_{i+1}-,  s_{i+1} }|\\
&\leq & \|F\| (|\Delta Y_{s_{i+1}- } |(|X_{s_{i+1}-,s_{i+1}}|+ |\X_{s_{i+1}-,s_{i+1}}|)+  |\Delta X_{s_{i+1}-,s_{i+1}}|+ |\Delta \X_{s_{i+1}-,s_{i+1}}| )\\
&\leq & \|F\|( (|\Delta Y_{s_i}|+|\Delta Y|_{p,[s_i,s_{i+1})})L + \|\Delta \BX \|_{p,[s_i,s_{i+1}]} )\\
&\leq & C_p \|F\|(\|F\|\vee 1 ) (L \vee 1) (|\Delta Y_{s_i} | + \|\Delta \BX \|_{p,[s_i,s_{i+1}]} ).
\end{eqnarray*}
where we apply estimate \ref{local est} in the last inequality. Specially, one obtains
$$|\Delta Y_{s_{i+1}}| \leq |\Delta Y_{s_{i}}| + \|\Delta Y \|_{p,[s_i,s_{i+1}]} \leq  C_p (1\vee \|F\|)^2 (L \vee 1) ( \|\Delta \BX \|_{p,[s_i,s_{i+1}]}+ |\Delta Y_{s_i} |).
$$
Denote $\alpha:=    (1\vee \|F\|)^2  (L\vee 1),$ one has the uniform bound for initial conditions,
\begin{eqnarray*}
|\Delta Y_{s_{i+1}}|^p &\leq& (2 C_p \alpha)^p ( \|\Delta \BX \|_{p,[s_i,s_{i+1}]}^p + |\Delta Y_{s_i} |^p )\\
 &\leq& (2 C_p \alpha  )^{2p} (\|\Delta \BX \|_{p,[s_i,s_{i+1}]}^p +\|\Delta \BX \|_{p,[s_{i-1},s_{i}]}^p + |\Delta Y_{s_{i-1}}|^p )\\
  &\leq& ... \leq   (2 C_p \alpha)^{pM}(\sum_{i=0}^{M-1}\|\Delta \BX \|_{p,[s_i,s_{i+1}]}^p + |\Delta Y_0|^p),
\end{eqnarray*}
which implies $|\Delta Y_{s_{i+1}}|< (C_p \alpha)^{M} (\|\Delta \BX \|_{p,[0,T]} + |\Delta Y_0| ).$ Plug this bound to estimate \eqref{local est2}, one obtains for any $i=0,...,M-1,$
$$
\|Y-\tY \|_{p,[s_i,s_{i+1}]} \leq (C_p \alpha)^{M+1} \|F\| (\|\Delta \BX \|_{p,[0,T]} + |\Delta Y_0|).
$$
Then the estimate follows from the fact $\|g\|_{p,[0,T]} \leq M \sum_{i=0}^{M-1} \|g\|_{p,[s_i,s_{i+1}]}.$

\end{proof}

%
%
%

In a final step, we show how to discard of the c\`adl\`ag assumption.

\begin{thm}(\textbf{global solution for D.E. driven by general rough paths})\label{global sol. for regular}
Suppose $\BX$ is a level$-2$ rough path on $[0,T]$ with finite $p$-variation and $F \in C_b^3.$ Then there exists a unique solution $Y_t \in V^p([0,T],\R^e)$ solving Equation \eqref{level-2 equa} in the sense that the integration is considered as Example \ref{level-2 rp} with $(Y,F(Y))$ as controlled rough paths.

\end{thm}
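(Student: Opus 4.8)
The plan is to reduce the general (merely regulated) case to the c\`adl\`ag case already established in Theorem \ref{global sol.}, by exploiting the observation made in Remark \ref{explode}(ii): replacing a path by a one-sided modification at finitely many points does not change the solvability of the forward equation, only the prescribed jump relations at those points. Since $\BX \in \mathbf{V}^p$ is regulated, both $X$ and $\X$ have left and right limits everywhere, hence for any $\vep > 0$ there are only finitely many times at which $\BX$ fails to be ``$\vep$-continuous'' (this is exactly the mild-control mechanism of Lemma \ref{mild control}, applied to $\omega_{\BX}$). So I would fix a finite set of times $\{0 = \tau_0 < \tau_1 < \dots < \tau_M = T\}$ containing all the ``big jump'' locations of $\BX$ (meaning all $t$ with $\Delta_t^- \BX \neq 0$ or $\Delta_t^+ \BX \neq 0$ that are large enough to obstruct Picard iteration, cf. the discussion preceding Theorem \ref{global sol.}), such that on each open interval $(\tau_i, \tau_{i+1})$ the $p$-variation rough path norm $\|\BX\|_{p,(\tau_i+,\tau_{i+1}-)}$ is below the local-existence threshold $\delta_{p,F}$ from Theorem \ref{solve level-2}.

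On each such interval the idea is to solve the equation interval-by-interval. First, on $(\tau_i, \tau_{i+1})$ I replace $\BX$ by the rough path $\tBX^{(i)}$ which agrees with $\BX$ on the open interval but is made right-continuous at the left endpoint $\tau_i$ (i.e. set $\tBX^{(i)}_{\tau_i} := \BX_{\tau_i+}$) and left-continuous at the right endpoint (i.e. incorporate the limit as $t \uparrow \tau_{i+1}$). By construction $\tBX^{(i)}$ restricted to $[\tau_i, \tau_{i+1}]$ is c\`adl\`ag with small $p$-variation, so Theorem \ref{solve level-2} (or Theorem \ref{global sol.} applied on this short interval) provides a unique solution $Y^{(i)}$ on $[\tau_i, \tau_{i+1}]$ for any prescribed initial value. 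Then I patch: starting from $\xi_0 := y_0$, having obtained $Y^{(i)}$ on $[\tau_i,\tau_{i+1}]$ with left-limit value $Y^{(i)}_{\tau_{i+1}-}$, I define the next initial value by putting back by hand the jump that was removed, using exactly the forward jump formula from Lemma \ref{preserve jumps}:
$$
\xi_{i+1} := Y^{(i)}_{\tau_{i+1}-} + F(Y^{(i)}_{\tau_{i+1}-}) \, \Delta^-_{\tau_{i+1}} X + DF(Y^{(i)}_{\tau_{i+1}-}) F(Y^{(i)}_{\tau_{i+1}-}) \, \Delta^-_{\tau_{i+1}} \X,
$$
and similarly accounting for any right-jump at $\tau_i$ when initializing. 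Concatenating the $Y^{(i)}$ over $i = 0,\dots,M-1$ and assigning the endpoint value at $T$ yields a path $Y \in V^p([0,T],\R^e)$. That $Y$ genuinely solves Equation \eqref{level-2 equa} with the $\ell$-integral is checked by additivity of the integral (the Remark following Definition \ref{MRS,RRS}) together with the local jump identities of Lemma \ref{preserve jumps}, which guarantee that the ``by hand'' jumps inserted at the $\tau_i$ are precisely the jumps that $\int_0^\cdot F(Y)^\ell d\BX$ produces at those times. Uniqueness follows from the local uniqueness on each subinterval (Theorem \ref{solve level-2}) propagated through the finitely many patch points, exactly as in the proof of Theorem \ref{global sol.}.

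The main obstacle, and the step needing the most care, is verifying that the patched path solves the equation \emph{in the $\ell$-integral sense at the interface times} $\tau_i$ — one must be sure the forward integral $\int F(Y)^\ell d\BX$ ``sees'' both the left-jump $\Delta^-_{\tau_i}$ and the right-jump $\Delta^+_{\tau_i}$ with the correct integrand evaluations ($Y_{\tau_i-}$ resp. $Y_{\tau_i}$, as in Lemma \ref{preserve jumps}), and that the definition of $\xi_{i+1}$ above is consistent with this. This is where the merely-regulated (as opposed to c\`adl\`ag) setting genuinely adds bookkeeping relative to Theorem \ref{global sol.}, since now both-sided jumps can occur at a patch point. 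Everything else is a routine concatenation argument, and no new stability estimate is needed for the bare existence-and-uniqueness statement claimed here (though the interval-wise Lipschitz estimates from Theorem \ref{solve level-2} could be chained as in Theorem \ref{global sol.} if a quantitative statement were wanted).
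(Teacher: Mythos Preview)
Your proposal is correct and follows essentially the same approach as the paper's proof: partition $[0,T]$ so that $\omega_{\BX}(t_i+,t_{i+1}-)$ is below the local-existence threshold (via Lemma \ref{partition}), replace $\BX$ on each $[t_i,t_{i+1})$ by a version $\tBX^i$ made right-continuous at the left endpoint, solve locally with Theorem \ref{solve level-2}, and patch across the interface points by inserting \emph{both} the left-jump and the right-jump by hand using exactly the formulas you wrote (the paper records these as two successive assignments $\xi_{i+1}^0$ and $\tilde\xi_{i+1}^0$). Your identification of the main bookkeeping issue---that at a patch point $\tau_i$ one must account for $\Delta_{\tau_i}^-$ with integrand evaluated at $Y_{\tau_i-}$ and separately for $\Delta_{\tau_i}^+$ with integrand evaluated at $Y_{\tau_i}$, consistent with Lemma \ref{preserve jumps}---is precisely what the paper does.
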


\begin{proof}

We apply the same strategy as the c\`adl\`ag case, solving the equation locally and extending it to the global solution. Define $$Y_{0+}:= \xi + F(\xi)X_{0,0+} + DF(\xi)F(\xi) \X_{0,0+},$$ and $\tBX:=(\tX, \tilde{\X})$ with
\[
\tX_t:=\left\{
\begin{array}{ll}
X_{0+}, & t=0 \\
X_t,& t>0,
\end{array}
\right.
\ \ \
\tilde{\X}_{s,t}:=\left\{
\begin{array}{ll}
\X_{0+,t}, & s=0 \\
\X_{s,t},& s>0.
\end{array}
\right.
\]
Then one can check that $\tBX_{0,t}$ is right-continuous at $t=0$ and $\tBX$ satisfies the Chen's relation. Then there is a unique solution to the following RDE driven by $\tBX$ on some $[0,t^*),$
$$
\tY_t = Y_{0+} + \int_0^t F(\tY_r)^\ell d\tBX_r,
$$
which defines a solution of \eqref{level-2 equa} by $Y_t=\tY_t,\ t\in (0,t^*).$ The converse also holds. Hence we proved the local uniqueness and existence. For the global solution, one may apply Lemma \ref{partition}, to obtain the partition $\op=\{0=t_0<t_1<...<t_M=T\}$ such that $\omega_{\BX}(t_i+,t_{i+1}- )$ small enough to support a unique solution of
$$
\tY_t = \tilde{ \xi}_i^0 + \int_0^t F(\tY_r)^\ell d\tBX_r^i, \ \ \ i=0,...,M,
$$
where $\tBX^i$ is defined on $[t_i,t_{i+1})$ by
\[
\tX_t^i:=\left\{
\begin{array}{ll}
X_{t_i+}, & t=t_i, \\
X_t,& t>t_i,
\end{array}
\right.
\ \ \
\tilde{\X}_{s,t}^i:=\left\{
\begin{array}{ll}
\X_{t_i+,t}, & s=t_i \\
\X_{s,t},& s>t_i.
\end{array}
\right.
\]

Then define $\tilde{ \xi}_0^0:= Y_{0+}$ and for $i=0,...,M-1,$
\begin{eqnarray*}
  \xi_{i+1}^0&:=& \tY^{\tilde{ \xi}_i^0}_{t_{i+1}-} + F(\tY^{\tilde{ \xi}_i^0}_{t_{i+1}-}) X_{t_{i+1}-,t_{i+1}} +DF(\tY^{\tilde{ \xi}_i^0}_{t_{i+1}-}) F(\tY^{\tilde{ \xi}_i^0}_{t_{i+1}-}) \X_{t_{i+1}-,t_{i+1}}  ,\\
  \tilde{ \xi}_{i+1}^0&:=& \xi_{i+1}^0 + F(\xi_{i+1}^0) X_{t_{i+1},t_{i+1}+} + DF(\xi_{i+1}^0 ) F(\xi_{i+1}^0) \X_{t_{i+1},t_{i+1}+},
\end{eqnarray*}
where we still denote $\tY^{\tilde{ \xi}_i^0}_t$ as the solution for the RDE driven by $\tBX^i$ on $[t_i,t_{i+1})$ with initial condition $\tilde{\xi}_i^0.$ Then define $Y$ on every $[t_i,t_{i+1})$ by
\[
Y_t:=\left\{
\begin{array}{ll}
\xi_{i}^0, & t=t_i, \\
\tY_t,& t\in (t_i,t_{i+1}),
\end{array}
\right.
\]
and check it defines a unique solution to the original RDE driven by $\BX.$
\end{proof}


\section{Branched rough paths} \label{sec:bRP}

(Continuous) branched rough paths were understood in \cite{Gub10,HK15} as the correct framework for paths of arbitrary ($1/p$-H\"older) roughness in absence of a chain-rule. The latter makes them immediately relevant for discontinuous paths, and it remains to revise the theory from a jump perspective.


\subsection{Recall on Hopf algebra formalism} We need some notation and take the opportunity to review some elements of the theory for the reaeder's convenience.

\begin{defi}(\textbf{algebra of forest})
Let $\mathcal{T}$ the set of all rooted trees with vertex labeled by a number from set $\{1,2,...,d\} $,
e.g.
\[
\bullet_i, \begin{tikzpicture}[scale=0.2,baseline=0.1cm]
        \node at (0,0)  [dot,label= {[label distance=-0.2em]below: \scriptsize  $ k $} ] (root) {};
         \node at (0,2)  [dot,label={[label distance=-0.2em]above: \scriptsize  $ j $}]  (up) {};
            \draw[kernel1] (root) to
     node {}  (up);
     \end{tikzpicture},
\begin{tikzpicture}[scale=0.2,baseline=0.1cm]
        \node at (0,0)  [dot,label= {[label distance=-0.2em]below: \scriptsize  $ k $} ] (root) {};
         \node at (1,2)  [dot,label={[label distance=-0.2em]above: \scriptsize  $ j $}]  (right) {};
         \node at (-1,2)  [dot,label={[label distance=-0.2em]above: \scriptsize  $ i $} ] (left) {};
            \draw[kernel1] (right) to
     node [sloped,below] {\small }     (root); \draw[kernel1] (left) to
     node [sloped,below] {\small }     (root);
     \end{tikzpicture},...,
    \ \ \  i,j,k\in\{1,...,d\}.
\]

Define a commutative product (or multiplication) ``$\cdot$'' on $\mathcal{T}$ and denote the set of forest as $\mathcal{F}:=\{\tau_1 \cdot \tau_2 \cdots \tau_n| \tau_i \in \mathcal{T},i=1,...,n, n\in \mathbb{N}\}.$ Pick a ``unit'' element $\mathbf{1}$ such that $\mathbf{1} \cdot f:=f \cdot \mathbf{1} :=f, \ \forall f\in \mathcal{F}$ and denote $\mathcal{H}$ the linear expansion of $\{\mathbf{1},\mathcal{F}\}.$ One may check that $\mathcal{H}$ is an algebra with the linear ``unit'' mapping
\[
\begin{array}{llll}
\eta: & \R  & \longrightarrow & \mathcal{H}  \\
      & 1 &  \longrightarrow & \mathbf{1}.
\end{array}
\]

\end{defi}

In the following we omit ``$\cdot $'' in elements of forest and denote any tree as $\tau=[\tau_1,...,\tau_k]_i$ where the letter $i$ represents the label on the root and $\tau_1,...,\tau_k$ are smaller labeled trees directly connected to the root $\bullet_i,$ and $[\tau_1,...,\tau_{k-1},\BI,\tau_k,...]_i=[\tau_1,...,\tau_{k-1},\tau_k,...]_i$ for latter convenience. Specially, $[\BI]_i=\bullet_i.$ Now we equip the algebra of labeled forest with a co-multiplication $\Delta:\MH \rightarrow \MH \otimes \MH ,$ which is also a morphism with respect to its multiplication(i.e. $\delta(h_1 h_2 )=\delta(h_1) \tilde{\cdot} \delta(h_2)$ with $\tilde{\cdot}$ the multiplication on $\MH \otimes \MH$ defined in the natural way) so our algebra is also a coalgebra(i.e. a vector space with co-multiplication and counit) with the following obvious linear ``counit'' mapping

\begin{eqnarray*}
&&\vep:  \MH   \longrightarrow  \R , \\
&&\vep(\tau)=  \left\{
 \begin{array}{ll}
  1, &\text{ if } \tau = \BI, \\
  0, & \text{ if } \tau \in \MF.
\end{array}
\right.
\end{eqnarray*}

\begin{defi}
The comultiplication $\Delta:\MH \rightarrow \MH \otimes \MH $ is defined recursively in the following way. For any $\tau=[\tau_1,...,\tau_n]_i \in \MT,$

\begin{enumerate}
\item  $\Delta \BI := \BI \otimes \BI  ,$\\
\item   $\Delta [\tau_1,...,\tau_n]_i := [\tau_1,...,\tau_n]_i \otimes \BI + \sum_{(\tau_1) \cdots (\tau_n)} (\tau_1^{(1)}\cdots \tau_n^{(1)}) \otimes [\tau_1^{(2)},\cdots, \tau_n^{(2)}]_i,$
\end{enumerate}

where we use the Sweedler notation $\Delta \tau = \sum_{(\tau)} \tau^{(1)} \otimes \tau^{(2)}$ and $(\tau_1^{(1)}\cdots \tau_n^{(1)})$ is the forest by multiplication of $\tau_1^{(1)},...,\tau_n^{(1)}.$ Then one extends $\Delta$ to $\MH$ by morphism and liniarity, and obtain the bialgebra(i.e. both an algebra and a co-algebra with co-multiplication as a morphism w.r.t. multiplication) $(\MH,\cdot,\Delta,\eta,\vep ).$
\end{defi}


\begin{example}\label{trees}
According to our definition of $\Delta,$
\begin{eqnarray*}
\Delta \bullet_i &=& \Delta[\BI]_i=\bullet_i \otimes \BI + \BI \otimes [\BI]_i = \bullet_i \otimes \BI + \BI \otimes \bullet_i,\\
\Delta \tikz[scale=0.2,baseline=0.1 cm]
        {\node at (0,0)  [dot,label= {[label distance=-0.2em]below: \scriptsize  $ k $}] (root) {};
         \node at (0,2)  [dot,label={[label distance=-0.2em]above: \scriptsize  $ j $}]  (up) {};
            \draw[kernel1] (root) to node [sloped,below] {\small }  (up);} &=& \ru{k}{j} \otimes \BI +\bullet_j \otimes [\BI]_k + \BI \otimes [\bullet_j]_k= \ru{k}{j} \otimes \BI +\bullet_j \otimes \bullet_k + \BI \otimes \ru{k}{j},\\
\Delta \rlr{k}{j}{i}&=& \rlr{k}{j}{i} \otimes \BI + (\bullet_i \BI) \otimes [\BI, \bullet_j]_k + (\BI \bullet_j) \otimes [\bullet_i,\BI]_k + (\bullet_i \bullet_j)\otimes \bullet_k + \BI \otimes \rlr{k}{j}{i}\\
&=& \rlr{k}{j}{i} \otimes \BI + \bullet_i\otimes \ru{k}{j} + \bullet_j \otimes  \ru{k}{i} + \bullet_i \bullet_j \otimes \bullet_k + \BI \otimes \rlr{k}{j}{i}.
\end{eqnarray*}
 (We see that $\Delta$ has an interpretation in terms of ``admissible cuts''.)
\end{example}

%
%
%

\begin{prop}(\textbf{Connes-Kreimer Hopf algebra of forest}) \label{hopf alg}
For the bialgebra $(\MH,\cdot,\Delta,\eta,\vep ), $ there exists an endomorphism(i.e. linear map of vector spaces) $S: \MH \longrightarrow \MH,$ such that
$$
\cdot \circ (Id \otimes S) \circ \Delta (h)=  \eta \circ \vep (h) ,\label{antipode}
$$
so our bialgebra is a Hopf algebra and $S$ is called the antipode of the Hopf algebra.
\end{prop}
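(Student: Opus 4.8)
The plan is to derive this from the general principle that every connected graded bialgebra carries a (unique, two-sided) antipode defined by a degree recursion, and then simply specialise to $(\MH,\cdot,\Delta,\eta,\vep)$.

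First I would equip $\MH$ with the grading by number of vertices, $\MH=\bigoplus_{n\ge 0}\MH_n$, where $\MH_0=\R\BI$ and, for $n\ge 1$, $\MH_n$ is the span of all forests with exactly $n$ vertices (note the convention $[\dots,\BI,\dots]_i=[\dots]_i$ is consistent, since $\BI$ carries zero vertices). One checks that the product is graded, $\MH_m\cdot\MH_n\subseteq\MH_{m+n}$, which is immediate since concatenating forests adds vertex counts, and that the coproduct is graded, $\Delta\MH_n\subseteq\bigoplus_{i+j=n}\MH_i\otimes\MH_j$. The latter is the only structural point and it follows by reading off the recursive definition of $\Delta$: in each admissible cut the vertex set of $\tau$ is partitioned into the pruned part (left factor) and the trunk containing the root (right factor), so the total count is preserved; the morphism property then propagates this to all of $\MH$. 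Since $\MH_0\cong\R$, the bialgebra is \emph{connected}.

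Next, for $h\in\MH_n$ with $n\ge 1$ I write the reduced coproduct $\Delta h=h\otimes\BI+\BI\otimes h+\tilde\Delta h$, where $\tilde\Delta h=\sum_{(h)}h'\otimes h''\in\bigoplus_{1\le i\le n-1}\MH_i\otimes\MH_{n-i}$; this is well-defined because $\vep(h)=0$ and every remaining cut leaves at least one vertex on each side. Then I define $S\colon\MH\to\MH$ to be the linear map with $S(\BI):=\BI$ and, recursively in $n\ge 1$, $S(h):=-h-\sum_{(h)}h'\,S(h'')$. This recursion is well-founded precisely because the $h''$ appearing lie in $\MH_j$ with $1\le j\le n-1$, so $S(h'')$ is already defined; linearity of $S$ is built in. Finally I verify the identity $\cdot\circ(\mathrm{Id}\otimes S)\circ\Delta=\eta\circ\vep$: on $\BI$ both sides are $\BI$, and on $h\in\MH_n$, $n\ge 1$, applying $\cdot\circ(\mathrm{Id}\otimes S)$ to $\Delta h$ gives $h+S(h)+\sum_{(h)}h'S(h'')$, which equals $0=\eta\circ\vep(h)$ by the very definition of $S(h)$. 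Thus $S$ is an antipode and $(\MH,\cdot,\Delta,\eta,\vep,S)$ is a Hopf algebra.

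I expect no genuine obstacle here: the one point that needs care is the verification that admissible cuts preserve vertex count, hence that the vertex grading is connected, hence that the defining recursion for $S$ terminates; after that, the antipode identity holds essentially by construction. For completeness one may also record the two-sided identity $\cdot\circ(S\otimes\mathrm{Id})\circ\Delta=\eta\circ\vep$, either by the standard observation that in the convolution algebra $(\mathrm{End}\,\MH,*)$ a left and a right convolution inverse of $\mathrm{Id}$ must coincide, or by running the symmetric recursion $S(h)=-h-\sum S(h')h''$ and checking by induction on degree that it defines the same map.
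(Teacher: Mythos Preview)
Your proof is correct and follows the standard ``connected graded bialgebra is a Hopf algebra'' argument: you verify that the vertex grading makes $\MH$ a connected graded bialgebra, split off the primitive part of the coproduct, and define $S$ by a recursion on degree that terminates because the reduced coproduct lands in strictly lower degrees on each tensor factor. This is essentially the same idea as the paper's proof, which also defines $S$ recursively by vertex count. The difference is packaging: the paper works on trees, exploiting the explicit recursive formula $\Delta[\tau_1,\dots,\tau_m]_i = [\tau_1,\dots,\tau_m]_i\otimes\BI + \sum(\tau_1^{(1)}\cdots\tau_m^{(1)})\otimes[\tau_1^{(2)},\dots,\tau_m^{(2)}]_i$, and then extends $S$ to all of $\MH$ by (anti-)multiplicativity; you instead run the recursion directly on all of $\MH$ via the reduced coproduct, never invoking the tree structure or multiplicativity of $S$. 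Your route is slightly more general (it applies verbatim to any connected graded bialgebra) and arguably cleaner, since you avoid the separate extension step; the paper's route is more hands-on with the Connes--Kreimer combinatorics. Either way the content is the same degree recursion.
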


\begin{proof}
By anti-homomorphism of the antipode(if exists, i.e. $S(x\cdot y)=S(y)S(x),$ in our commutative case, homomorphism), we only need to construct $S$ on $\MT.$ Notice that $\MT$ is a graded set w.r.t. the number of vertexes, so the antipode could be built inductively. According to the identity for the antipode, one needs
$$
\cdot \circ (Id \otimes S) \circ \Delta (\BI)= \BI \cdot S(\BI)=S(\BI)= \eta \circ \vep (\BI),
$$
so $S(\BI):=\BI.$ Suppose for any tree $\tau^k$ with $k$ vertexes or less, $S(\tau^k)$ is already defined. For any tree $\tau^{k+1}=[\tau_1,...,\tau_m]_i$ with $k+1$ vertexes, according to the definition of $\Delta,$ one obtains
$$
S(\tau^{k+1})= \eta \circ \vep (\tau^{k+1}) - \sum_{(\tau_1),...,(\tau_m)}  \tau_1^{(1)}\cdots \tau_m^{(1)} \cdot S([\tau_1^{(2)},...,\tau_m^{(2)}]_i),
$$
which is well defined since each $\tau_1,...,\tau_m$ has no more than $k$ vertexes. Then one may extend $S$ to $\MH$ by homomorphism and linearity.

\end{proof}

\begin{rem}
The antipode gives the inverse of a group-like element in a Hopf algebra. More precisely, the set $G(\MH):=\{h\in \MH| \vep(h)=1, \Delta h=h\otimes h\}$ is called group-like elements, and one can check that $(G(\MH),\cdot)$ is indeed a group with the inverse mapping given by $h^{-1}=S(h),\ \ \forall h\in G(\MH).$
\end{rem}

We are interested in the truncated Hopf algebra. Denote $\MT_n:=\bigcup_{k=0}^n \MT_{(k)},$ where $\MT_{(0)}=\BI$ and $\MT_{(k)}$
represents trees with exact $k$ vertexes. For $\tau\in \MT,$ denote $|\tau|$ as the number of vertex of $\tau.$ $\MF_{(k)}$ represents
forests $f=\tau_1\cdots \tau_m$ with vertex number $|f|=\sum_{i=1}^m |\tau_i|=k,$ and $\MH_{(k)}$ is the linear expansion of $\{  \MF_{(k)}\}.$ $\MF_n,\MH_n$ are defined similarly.
The truncated Hopf algebra gives its finite dual bialgebra(also a Hopf algebra) by duality, which can be viewed as the image space of driven signals.
More precisely, denote $\MM^n:=\bigoplus_{k=n+1}^\infty \MH_{(k)}$ a module of $\MH,$ and $ \MH^*_n:= \{l \in \MH^*| l(\MM^n)=0 \}$
which is a finite dimensional vector space and generated by $\{f^*| f\in \MF_n\}.$ Define

$$
\MH^\circ:=\{l\in \MH^*| \exists n\in \mathbb{N} ,l(\MM^n)=0\}= \bigcup_{n=0}^\infty \MH^*_n
$$

\begin{prop}(\textbf{the finite dual Hopf algebra })

Define operators $\star,\delta, S^*$ on $\MH^*$ dual operators of $\Delta, \cdot, S $ i.e., for any $l_1, l_2, l \in \MH^*$ and $f,f_1,f_2 \in \MH_n,$
\begin{eqnarray*}
  && \langle l_1 \star l_2, f \rangle:= \langle l_1  \otimes l_2  , \Delta f \rangle, \\
  &&\langle \delta l , f_1 \otimes f_2\rangle:=\langle l, f_1 f_2\rangle,\\
  &&\langle S^*(l),f\rangle : = \langle l, S(f)\rangle.
\end{eqnarray*}
Then one defines $\eta^* $ and $\vep^*$ as the unit map and counit map in the natural way similar as $\eta,\vep.$ Then $(\MH^\circ, \star, \delta, S^*, \eta^*, \vep^*)$ is also a Hopf algebra with an antipode $S^*.$

\end{prop}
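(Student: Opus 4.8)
**Proof plan for the final Proposition (the finite dual Hopf algebra $(\mathcal{H}^\circ,\star,\delta,S^*,\eta^*,\vep^*)$).**

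The plan is to verify that $\mathcal{H}^\circ = \bigcup_{n\ge 0}\mathcal{H}^*_n$ is closed under all the dual operations and that the Hopf algebra axioms transfer by dualization. First I would check that $\mathcal{H}^\circ$ is a well-defined subspace of $\mathcal{H}^*$ closed under $\star$: for $l_1\in\mathcal{H}^*_{n_1}$, $l_2\in\mathcal{H}^*_{n_2}$, one uses that $\Delta$ is compatible with the grading by number of vertices (since each admissible cut splits a forest of $|f|$ vertices into two pieces whose vertex counts sum to $|f|$), so $\Delta(\mathcal{M}^n)\subseteq \mathcal{M}^{n}\otimes\mathcal{H} + \mathcal{H}\otimes\mathcal{M}^{n}$; hence $l_1\star l_2$ annihilates $\mathcal{M}^{n_1+n_2}$ and lies in $\mathcal{H}^*_{n_1+n_2}\subseteq\mathcal{H}^\circ$. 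The same grading remark shows $\delta l\in (\mathcal{H}\otimes\mathcal{H})^\circ$ and $S^*(l)\in\mathcal{H}^\circ$ (the antipode $S$ preserves the vertex grading, as is visible from its inductive definition in Proposition \ref{hopf alg}). Thus all structure maps are internal to $\mathcal{H}^\circ$, and $\eta^*,\vep^*$ are defined by $\langle\eta^*(1),f\rangle:=\vep(f)$ and $\vep^*(l):=\langle l,\mathbf 1\rangle$.

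Next I would transfer the axioms. Associativity of $\star$ is dual to coassociativity of $\Delta$: $\langle (l_1\star l_2)\star l_3, f\rangle = \langle l_1\otimes l_2\otimes l_3, (\Delta\otimes\mathrm{Id})\Delta f\rangle = \langle l_1\otimes l_2\otimes l_3, (\mathrm{Id}\otimes\Delta)\Delta f\rangle = \langle l_1\star(l_2\star l_3), f\rangle$. Similarly, coassociativity of $\delta$ is dual to associativity of $\cdot$ on $\mathcal{H}$; the unit and counit axioms for $(\mathcal{H}^\circ,\star,\eta^*)$ and $(\mathcal{H}^\circ,\delta,\vep^*)$ are the duals of the counit and unit axioms for $(\mathcal{H},\Delta,\vep)$ and $(\mathcal{H},\cdot,\eta)$; and the bialgebra compatibility — that $\delta$ is an algebra morphism for $\star$ — is precisely the dual statement of the fact that $\Delta:\mathcal{H}\to\mathcal{H}\otimes\mathcal{H}$ is an algebra morphism, which was built into the definition of $\mathcal{H}$. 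Finally the antipode identity $\cdot\circ(\mathrm{Id}\otimes S^*)\circ\delta = \eta^*\circ\vep^*$ on $\mathcal{H}^\circ$ is dual to $\cdot\circ(\mathrm{Id}\otimes S)\circ\Delta = \eta\circ\vep$ on $\mathcal{H}$ (Proposition \ref{hopf alg}), reading the pairing in the opposite order, and likewise for the left-handed version $\cdot\circ(S^*\otimes\mathrm{Id})\circ\delta=\eta^*\circ\vep^*$.

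The one genuinely non-formal point — and the step I expect to be the main obstacle — is the interchange of dualization with tensor products on an infinite-dimensional space: in general $(\mathcal{H}\otimes\mathcal{H})^*\ne\mathcal{H}^*\otimes\mathcal{H}^*$, so the formulas $\langle l_1\star l_2,f\rangle = \langle l_1\otimes l_2,\Delta f\rangle$ need the codomain of $\delta$ to actually land in $\mathcal{H}^\circ\otimes\mathcal{H}^\circ$ rather than a completion. This is where the restriction to $\mathcal{H}^\circ$ (functionals vanishing on some $\mathcal{M}^n$) is essential: each $\mathcal{H}^*_n$ is finite-dimensional, $\mathcal{H}$ is the filtered union of the finite-dimensional $\mathcal{H}_n$, and on finite-dimensional spaces dualization commutes with $\otimes$; one argues that $\delta l$ for $l\in\mathcal{H}^*_n$ is already determined by its restriction to $\mathcal{H}_n\otimes\mathcal{H}_n$ (since $\mathcal{M}^n$ is an ideal) and therefore factors through $\mathcal{H}^*_n\otimes\mathcal{H}^*_n$. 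Once this finiteness bookkeeping is in place, everything else is a routine pairing computation as sketched above, and one records that $S^*$ is the unique two-sided inverse of $\mathrm{Id}_{\mathcal{H}^\circ}$ in the convolution algebra $(\mathrm{End}(\mathcal{H}^\circ),\ast)$, completing the proof that $(\mathcal{H}^\circ,\star,\delta,S^*,\eta^*,\vep^*)$ is a Hopf algebra. I would remark in passing that the group-like elements of $\mathcal{H}^\circ$ form the (truncated) Butcher group, the state space for branched rough paths, which motivates the construction.
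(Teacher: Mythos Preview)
Your proposal is correct and follows essentially the same approach as the paper. The paper's proof is terser: it says all Hopf algebra axioms ``are easy to check by duality and definition except the closedness,'' and then verifies only the two nontrivial closure statements $l_1\star l_2\in\mathcal{H}^\circ$ and $\delta l\in\mathcal{H}^\circ\otimes\mathcal{H}^\circ$, exactly as you identified (the paper even flags the same point, writing ``notice $\mathcal{H}^*\otimes \mathcal{H}^* \subsetneq (\mathcal{H} \otimes \mathcal{H})^*$, which is why $\mathcal{H}^*$ is not a bialgebra and one needs $\mathcal{H}^\circ$''). Your grading bound $l_1\star l_2\in\mathcal{H}^*_{n_1+n_2}$ is in fact sharper than the paper's $\mathcal{H}^*_{n_1+n_2+2}$, and your explicit dualization of the remaining axioms and of $S^*$-closure simply fills in what the paper leaves to the reader.
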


\begin{proof}

Indeed, everything is easy to check by duality and definition except the closedness, i.e. for any $l_1,l_2,l \in \MH^\circ,$ $l_1 \star l_2 \in \MH^\circ$ and $\delta l \in \MH^\circ \otimes \MH^\circ.$ For any $l_1,l_2 \in \MH^\circ,$ by our representation of $\MH^\circ,$ there exists $\MH_{n_1}^*,\MH_{n_2}^*$ such that $l_1\in \MH_{n_1}^*, l_2 \in \MH_{n_2}^*.$ Then for any $f\in \MF_{(n)}$ with $n\geq n_1+n_2+3,$
$$
\langle l_1 \star l_2 , f \rangle = \langle l_1  \otimes l_2  , \Delta f \rangle=\sum_{(f)} l_1(f^{(1)}) l_2(f^{(2)})=0,
$$
which implies $l_1 \star l_2 \in \MH_{n_1+n_2+2}^*.$ For any $l\in \MH^\circ,$ there exists $\MH_k^*,$ such that $l\in \MH_k^*.$ According to the definition of $\delta,$ one has $\delta l \in (\MH \otimes \MH)^*$(notice $\MH^*\otimes \MH^* \subsetneq (\MH \otimes \MH)^*,$ which is why $\MH^*$ is not a bialgebra and one needs $\MH^\circ$). We claim that $\delta l \in \MH_{k}^* \otimes \MH_{k}^* \subseteq \MH^\circ \otimes \MH^\circ.$ Indeed, for any $f_1 ,f_2 \in \MM^k, f\in \MH,$ one has
$$
\langle \delta l , f_1 \otimes f \rangle =\langle  l , f_1   f \rangle=0,\ \ \ \  \langle \delta l , f \otimes f_2 \rangle =\langle  l ,    f f_2 \rangle=0,
$$
which implies our claim.

\end{proof}

\begin{rem}\label{star op}
As the dual operator of $\Delta,$ we see that $\tau_1^* \star \tau_2^*$, for dual trees $\tau_1^*,\tau_2^*$ can be given by $ (\tau_1 \tau_2)^* + \sum_\tau \tau^*,$ with sum is taken over all trees obtained by attaching $\tau_1$ to any vertex of $\tau_2.$
\end{rem}

\subsection{Branched rough paths with jumps}

As we implied before, the branched rough path takes values in $\MH^*.$ We are going to extend the classical ``Chen's relation'' to a group property. Denote $G(\MH^*):=\{l\in \MH^*| \delta l= l\otimes l, l(\BI)=1 \}$ as the group-like elements of $\MH^*,$ and Hom$(\MH,\R)$ the set of homomorphisms(characters) in $\MH^*,$ i.e. $\langle  l, h_1 h_2  \rangle=\langle  l, h_1  \rangle \langle  l, h_2  \rangle.$ Then one actually have $G(\MH^*)=\text{Hom}(\MH,\R).$ Indeed, $l\in \text{Hom}(\MH,\R)$ if and only if $\langle \delta l, h_1 \otimes h_2\rangle=\langle  l, h_1 h_2\rangle=\langle  l, h_1  \rangle \langle  l, h_2  \rangle=\langle l\otimes l, h_1 \otimes h_2\rangle.$ Furthermore, $(G(\MH^*),\star)$ is a group with inverse mapping $S^*,$ which is known as the Butcher group. Now we denote
$$
G_N(\MH^*):=G(\MH^*)/\{ \cup_{k=N+1}^\infty \MF^*_{(k)}\} \backsimeq G(\MH^*) |_{\MH_N},
$$
which means for any $g\in G_N,$ and any $f  \in \cup_{k=N+1}^\infty \MF_{(k)},$ $g(f)=0.$ In the following we may abuse the notation according to the above isomorphism.

\begin{defi}(\textbf{branched $p$-rough path})
A branched $p$-rough path is a mapping $\BX:[0,T] \rightarrow G_{[p]}(\MH^*),$ such that
for any $f\in \MF_{[p]} , $
$$
\|\BX^f\|_{\frac{p}{|f|},[0,T]}:= \| \langle \BX, f\rangle \|_{\frac{p}{|f|},[0,T]} :=\{ \sup_{\op} \sum_{(s,t)\in \op} |\langle \BX_{s,t}, f\rangle|^{\frac{p}{|f|}}\}^{\frac{|f|}{p}}<\infty,
$$
where $\BX_{s,t}:=\BX_s^{-1} \star \BX_t$ and $\op$ is any partition of $[0,T].$ Denote the following as the control of the branched rough path,
$$
\omega_{\BX,p}(s,t):= \sum_{f\in \MF_{[p]}  } \|  \BX^f   \|_{\frac{p}{|f|},[s,t]}^{\frac{p}{|f|}}.
$$
We also use the following norm,
$$
\|\BX \|_{p,[0,T]}:= \sum_{f\in \MF_{[p]} } \|\BX^f\|_{\frac{p}{|f|},[0,T]}.
$$
\end{defi}

\begin{example}[Canonical branched rough path lift]   \label{rem:cbrpl}
Suppose at first that $x$ is a smooth path in $\R^d.$ Then one may build a branched rough path $\BX_{t}\in \MH^*,\ t\in [0,T]$ over $x$ recursively by defining $\BX_{0}=\BI$ and $\BX_{s}:=\BX_{0,s}$ with
$$
\langle \BX_{s,t}, \bullet_i \rangle := x^i_{s,t}, \ \  \langle \BX_{s,t},[\tau_1\cdots \tau_n]_i \rangle:= \int_s^t \langle\BX_{s,r}, \tau_1 \rangle \cdots \langle\BX_{s,r}, \tau_n \rangle dx^i_r,
$$
where we imply $\BX_t$ is a character. Then one can apply relation $\int_s^t=\int_s^u + \int_u^t $ iteratedly and inductively check
\begin{eqnarray*}
\int_s^t \langle\BX_{s,r}, \tau_1 \rangle \cdots \langle\BX_{s,r}, \tau_n \rangle dx^i_r &=& \int_s^u \langle\BX_{s,r}, \tau_1 \rangle \cdots \langle\BX_{s,r}, \tau_n \rangle dx^i_r\\
 &+& \sum_{(\tau_1)\cdots (\tau_n)}\langle\BX_{s,u},\tau_1^{(1)} \rangle \cdots \langle\BX_{s,u},\tau_n^{(1)} \rangle \int_u^t \langle\BX_{u,r}, \tau_1^{(2)} \rangle \cdots \langle\BX_{u,r}, \tau_n^{(2)} \rangle dx^i_r.
\end{eqnarray*}
For instance, for a linear tree with two vertexes, one has
\begin{eqnarray*}
\langle \BX_{s,t} , \tikz[scale=0.2,baseline=0.1 cm]
        {\node at (0,0)  [dot,label= {[label distance=-0.2em]below: \scriptsize  $ i $}] (root) {};
         \node at (0,2)  [dot,label={[label distance=-0.2em]above: \scriptsize  $ j $}]  (up) {};
            \draw[kernel1] (root) to node [sloped,below] {\small }  (up);}
            \rangle &=& \int_s^t \langle \BX_{s,r}, \bullet_j \rangle dx^i_r = \int_s^t x^j_{s,r} dx^i_r
            =\int_s^u x_{s,r}^j dx_r^i + x_{s,u}^j x_{u,t}^i + 1 \cdot \int_u^t x_{u,r}^j dx^i_r\\
 &=&    \langle \BX_{s,u} \otimes \BX_{u,t}, \Delta  \tikz[scale=0.2,baseline=0.1 cm]
        {\node at (0,0)  [dot,label= {[label distance=-0.2em]below: \scriptsize  $ i $}] (root) {};
         \node at (0,2)  [dot,label={[label distance=-0.2em]above: \scriptsize  $ j $}]  (up) {};
            \draw[kernel1] (root) to node [sloped,below] {\small }  (up);} \rangle =  \langle \BX_{s,u} \star \BX_{u,t},    \tikz[scale=0.2,baseline=0.1 cm]
        {\node at (0,0)  [dot,label= {[label distance=-0.2em]below: \scriptsize  $ i $}] (root) {};
         \node at (0,2)  [dot,label={[label distance=-0.2em]above: \scriptsize  $ j $}]  (up) {};
            \draw[kernel1] (root) to node [sloped,below] {\small }  (up);} \rangle
\end{eqnarray*}
where we only apply additivity of the interval for integration, not the chain rule.\footnote{This construction is a special of the unique ``Lyons-Gubinelli'' extension \cite{Lyo98,Gub10} of a continuous geometric $p$-rough path, here in the case $p=1$.} For this reason, we can re-run the above computation replacing all classical integrals $\int (...) dx $ by
our left-point integral $\int (...)^\ell dx$, now allowing for any $x \in V^1$, i.e. any $x$ of bounded variation. It is worth pointing out that even if $t$ is a jump time (say, $\Delta^-_t x \ne 0$), one has
$$
\lim_{s \to t} \langle \BX_{s,t} , \tikz[scale=0.2,baseline=0.1 cm]
        {\node at (0,0)  [dot,label= {[label distance=-0.2em]below: \scriptsize  $ i $}] (root) {};
         \node at (0,2)  [dot,label={[label distance=-0.2em]above: \scriptsize  $ j $}]  (up) {};
            \draw[kernel1] (root) to node [sloped,below] {\small }  (up);}
            \rangle
\equiv
 \langle \Delta^-_t \BX , \tikz[scale=0.2,baseline=0.1 cm]
        {\node at (0,0)  [dot,label= {[label distance=-0.2em]below: \scriptsize  $ i $}] (root) {};
         \node at (0,2)  [dot,label={[label distance=-0.2em]above: \scriptsize  $ j $}]  (up) {};
            \draw[kernel1] (root) to node [sloped,below] {\small }  (up);}
            \rangle
= 0.
$$
To see this, revert to the simpler notation $\X^{ji}$ and note that
$$
       \Delta^-_t \X^{ji} = \lim_{s \to t} (\X^{ji}_{0,t} - \X^{ji}_{0,s} ) - \lim_{s \to t}  x^j_s ( x^i_t - x^i_s ),
$$
But the first limit is precisely $\Delta^-_t \int x^{j, \ell} dx^i = x^j_{t-} \Delta^-_t x^i$ (this follows from a ``BV miniature'' of Lemma \ref{preserve jumps}), and hence a precise cancellation takes place.
\footnote{What one sees here is that our canoncial lift is the {\it minimal jump extension} \cite{FS17}, here in the sense that $\langle \BX, f \rangle$ has no jumps whenever $|f| > [p]$. We will not develop this point further,
but note that (the uniqueness part of) the Lyons-Gubinelli extension theorem fails in presence of jumps. The notion of minimal jump extension then restores uniqueness, similar to \cite{FS17}.}
\end{example}

%
%
%
%

%
%

\subsection{Integration theory}

Now we introduce the integrand which is known as controlled rough paths. Recall that if $g:\R^d \rightarrow \R^d$ is smooth and $\BX$ is a geometric rough path (see e.g. \cite{LCL07,FV10}), one has the following Taylor's expansion, for any $k<[p]-1, s<t,$
$$
D^k g(x_t) = \sum_{i=k}^ {[p]-1} D^i g(x_s) (\BX^{i-k}_{s,t}) + R^{[p]-k}_{s,t}, \ \ \ |R^{[p]-k}_{s,t}|\leq |x_{s,t}|^{[p]-k},
$$
which is vital for the integration of geometric rough paths. We translate the above identity into tree language to give some insights of controlled rough paths. Denote $\tau_i^*$ as the dual element of a linear tree (i.e. no branch, always labelled by $1$ for simplicity) $\tau_i$ with $i$ vertexes and denote $\langle \tau_i^* , \BZ_s  \rangle := D^ig(x_s).$ Since $\Delta \tau_i$ equals to the summation of all possible pairs of smaller linear trees after an admissible cut, the above equation is encoded into
\begin{eqnarray*}
\langle  \tau_k^*, \BZ_t \rangle &=& \sum_{i=k}^{[p]-1} \langle\tau_i^* ,  \BZ_s\rangle \langle \BX_{s,t}, \tau_{i-k} \rangle + R^{[p]-k}_{s,t},  \\
&=& \sum_{i=k}^{[p]-1} \langle \tau_i^* ,  \BZ_s \rangle \langle \BX_{s,t}\otimes \tau_k^* , \Delta \tau_{i} \rangle + R^{[p]-k}_{s,t},  \\
&=& \sum_{i=0}^{[p]-1} \langle\tau_i^* ,  \BZ_s \rangle \langle \BX_{s,t}\star \tau^*_k, \tau_{i} \rangle + R^{[p]-k}_{s,t},
\end{eqnarray*}
More general, one has the following definition.

\begin{defi}(\textbf{controlled rough paths})
$\BX$ is a branched $p-$rough path. $\BZ:[0,T]\rightarrow \MH_{[p]-1}$ is called a $\BX-$controlled rough path if for any $f^* \in (\MF^0_{[p]-1})^*:=(\MF_{[p]-1})^* \cup\{\BI^*\},$
\begin{equation}\label{crp}
\langle f^* , \BZ_t \rangle= \langle \BX_{s,t} \star f^*, \BZ_s \rangle  + R^{\BZ,f}_{s,t},
\end{equation}
or in a explicit form,
\begin{eqnarray*}
\langle f^* , \BZ_t \rangle &=& \sum_{h \in \MF_{[p]-1}^0} \langle h^*,  \BZ_s  \rangle \langle \BX_{s,t}\star f^* , h \rangle + R^{\BZ,f}_{s,t}\\
&=& \langle f^* , \BZ_s \rangle + \sum_{[p]-1 \geq |h|>|f|} \langle h^*,  \BZ_s  \rangle \langle \BX_{s,t}\star f^* , h \rangle + R^{\BZ,f}_{s,t},
\end{eqnarray*}
where $R^{\BZ,f}$ are functions on the simplex and satisfy $\sum_{f\in \MF_{[p]-1}^0} \|R^{\BZ,f}\|_{ {p}/(\gamma-|f| ) ,[0,T]}^{ {p}/(\gamma-|f|)} \leq \omega_{\BZ,\gamma}(0,T),$ for some control $\omega_{\BZ,\gamma}$ with $\gamma \geq [p].$ In particular, denote $\BZ^h_s:=\langle h^* , \BZ_s \rangle$ and $z_t:=\langle \BI^*, \BZ_t \rangle,$ and one has
\begin{equation}\label{1crp}
z_{s,t}= \sum_{h \in \MF_{[p]-1}} \BZ^h_s \langle  \BX_{s,t} , h \rangle + R^{\BZ,\BI}_{s,t},
\end{equation}
and one calls $\BZ$ a controlled rough path above $z.$ In the following, we usually take $\gamma=[p]$ for simplicity.

\end{defi}

\begin{example}
Consider the case of $[p]=2 $ and assume $d=1$ so no decoration on vertexes. In this case, $\MF=\{\bullet, \bullet \bullet, \tikz[scale=0.2,baseline=0.1 cm]
        {\node at (0,0)  [dot,label= {[label distance=-0.2em]below:  }] (root) {};
         \node at (0,2)  [dot,label={[label distance=-0.2em]above:  }]  (up) {};
            \draw[kernel1] (root) to node [sloped,below] {\small }  (up);} \}.$
\eqref{crp} is equivalent to
\begin{eqnarray*}
z_t &:=& \langle \BI^*, \BZ_t \rangle = \langle \BI^*, \BZ_s \rangle + \langle \bullet^*, \BZ_s \rangle \langle \BX_{s,t} \star \BI^*, \bullet \rangle + R^1_{s,t}\\
                                 &= & z_s + Z'_s x_{s,t} + R^1_{s,t},\\
Z'_t &:=& \langle \bullet^*, \BZ_t \rangle = \langle \bullet^*, \BZ_s \rangle + R^2_{s,t} = Z'_s+R^2_{s,t},
\end{eqnarray*}
with $R^1_{s,t} \in V^{\frac p 2},R^2_{s,t} \in V^{ p } ,$ which is exactly our definition for controlled rough paths in the level-$2$ case.

\end{example}

\begin{rem}

It is natural to adapt the definition of controlled rough paths above $\R-$valued paths to $\R^e-$valued ones. Indeed, one only needs to consider $\BZ:[0,T]\rightarrow (\MH_{[p]-1})^e$ by taking every component as a $\MH_{[p]-1}-$ valued controlled rough path.

\end{rem}

It is obvious that the space of controlled rough paths is a vector space. One may equip it with the following norm
$$
\|\BZ \|_p:= |\BZ_0| + \sum_{f \in \MF_{[p]-1}^0} \|R^{\BZ,f}\|_{\frac{p}{[p]-|f|}},
$$
and indeed it is a Banach space, denoted as $\MV_{\BX}^p([0,T],(\MH_{[p]-1})^e).$ In particular, one has the following bound
\begin{equation}\label{bound for unif. norm}
\|\BZ^f_. \|_{\sup,[0,T]} \leq |\BZ_0^f|+ \|\BX\|_{p,[0,T]}\sum_{|h|>|f|} |\BZ_0^h| + \|R^{\BZ,f}\|_{\frac{p}{[p]-f},[0,T]}.
\end{equation}
We also use notation
$$
R^{\BZ,k}_{s,t}:=  \max_{|f|, l \leq k}  (|R^{\BZ,f}_{s,t}| + |z_{s,t}|^{[p]-l}), \ \ k=0,...,[p]-1.
$$
It is obvious that $R^{\BZ,k}  $ has finite $\frac{p}{[p]-k}-$variation, so we use notation
$$
\|R^{\BZ,k}_{s,t}\|_{\frac{p}{[p]-k},[0,T]}:= \max_{|f| \leq k} \|R^{\BZ,f}\|_{\frac{p}{[p]-|f|},[0,T]} + \|z\|_{p,[0,t]}.
$$

\begin{thm}(\textbf{integration for branched rough paths})\label{integ.for b.r.p}
Suppose $\BX$ is a branched $p-$rough path and $\BZ$ is a $\BX-$controlled rough path. Let
$$
\Xi^i_{s,t} := \sum_{h \in \MF^0_{[p]-1}} \BZ_s^h \langle \BX_{s,t}, [h]_i \rangle
$$
Then one has
$$
\int_0^T \BZ_s^\ell d \BX_s^i:= \I (\Xi^i)_{0,T} = RRS- \lim_{|\op|\rightarrow 0} \sum_{(s,t)\in \op} \Xi^i_{s,t}
$$
exists, and
$$
|\int_s^t \BZ_s^\ell d \BX_s^i -\Xi_{s,t}^i| \leq C \sum_{h \in \MF^0_{[p]-1}} \|R^h\|_{\frac{p}{\gamma-|h|},[s,t)} \|\BX^{[h]_i}\|_{\frac{p}{(|h|+1)},(s,t]},
$$
with $C$ depending on $\gamma $ and $p.$
In particular, if $\omega_{\BX,p}(s+,t)=\omega_{\BX,p}(s,t)$ or $\omega_{\BZ,\gamma}(s,t-)=\omega_{\BZ,\gamma}(s,t),$ the convergence holds in MRS sense.
\end{thm}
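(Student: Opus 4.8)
The plan is to derive everything from the generalized sewing lemma (Theorem~\ref{general sew}), once $\delta\Xi^i$ has been computed and the right controls have been identified. First I would establish the algebraic identity
\[
\delta\Xi^i_{s,u,t}\;=\;-\sum_{h\in\MF^0_{[p]-1}}R^{\BZ,h}_{s,u}\,\langle\BX_{u,t},[h]_i\rangle .
\]
To get this, write Chen's relation as the group law $\BX_{s,t}=\BX_{s,u}\star\BX_{u,t}$ and use the Hochschild cocycle identity for the grafting map $h\mapsto[h]_i$, namely $\Delta[h]_i=[h]_i\otimes\BI+(\mathrm{id}\otimes[\,\cdot\,]_i)\Delta h$, which is exactly the recursive definition of $\Delta$ read for a forest $h$. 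Pairing with $\BX_{s,u}\otimes\BX_{u,t}$ gives, in Sweedler notation, $\langle\BX_{s,t},[h]_i\rangle-\langle\BX_{s,u},[h]_i\rangle=\sum_{(h)}\langle\BX_{s,u},h^{(1)}\rangle\langle\BX_{u,t},[h^{(2)}]_i\rangle$. Summing this against $\BZ^h_s$, re-summing the right-hand side by the definition of $\star$ on the dual as $\sum_g\langle\BX_{s,u}\star g^*,\BZ_s\rangle\langle\BX_{u,t},[g]_i\rangle$, and then inserting the controlled-rough-path relation $\BZ^g_u=\langle g^*,\BZ_u\rangle=\langle\BX_{s,u}\star g^*,\BZ_s\rangle+R^{\BZ,g}_{s,u}$, the expression recombines into $\Xi^i_{u,t}$ minus the remainder sum, which is the displayed identity. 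One only has to check that every forest occurring stays in $\MF_{[p]-1}$ and every tree in $\MF_{[p]}$, so that all pairings with $\BX\in G_{[p]}(\MH^*)$ and all evaluations of $\BZ\in\MH_{[p]-1}$ are meaningful; this is automatic because admissible cuts never increase the number of vertices.

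Granting the identity, estimating against the trivial one-interval partition gives, for $s<u<t$,
\[
\|\delta\Xi^i_{s,u,t}\|\;\le\;\sum_{h\in\MF^0_{[p]-1}}\|R^{\BZ,h}\|_{\frac{p}{\gamma-|h|},[s,u]}\,\|\BX^{[h]_i}\|_{\frac{p}{|h|+1},[u,t]},
\]
using that $[h]_i$ has $|h|+1$ vertices. Put $\omega_{1,h}(s,u):=\|R^{\BZ,h}\|^{p/(\gamma-|h|)}_{\frac{p}{\gamma-|h|},[s,u]}$ and $\omega_{2,h}(u,t):=\|\BX^{[h]_i}\|^{p/(|h|+1)}_{\frac{p}{|h|+1},[u,t]}$; these are controls, and the exponents $\alpha_{1,h}=(\gamma-|h|)/p$, $\alpha_{2,h}=(|h|+1)/p$ satisfy $\alpha_{1,h}+\alpha_{2,h}=(\gamma+1)/p\ge([p]+1)/p>1$. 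Hence Theorem~\ref{general sew}, applied with the $N=|\MF^0_{[p]-1}|$ summands above, yields both the RRS convergence of $\sum_\op\Xi^i_{u,v}$ and the local bound $\|\I\Xi^i_{s,t}-\Xi^i_{s,t}\|\le C\sum_h\omega_{1,h}^{\alpha_{1,h}}(s,t-)\omega_{2,h}^{\alpha_{2,h}}(s+,t)$, which is exactly the asserted estimate, with $C=C(\gamma,p)$ since $\min_h(\alpha_{1,h}+\alpha_{2,h})=(\gamma+1)/p$.

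For the MRS statement I would invoke the one-sided continuity clause of Theorem~\ref{general sew}. If $\omega_{\BX,p}(s+,t)=\omega_{\BX,p}(s,t)$: each $\omega_{2,h}(\cdot,t)$ is one of the nonnegative superadditive summands of $\omega_{\BX,p}(\cdot,t)$ (as $[h]_i\in\MF_{[p]}$), and a finite sum of non-increasing functions can be right-continuous only if every summand is; hence all $\omega_{2,h}$ are right-continuous in their first slot and the sewing lemma gives MRS convergence. If instead $\omega_{\BZ,\gamma}(s,t-)=\omega_{\BZ,\gamma}(s,t)$: here I use the coarser bounds $|R^{\BZ,h}_{s,u}|\le\omega_{\BZ,\gamma}(s,u)^{(\gamma-|h|)/p}$ and $|\langle\BX_{u,t},[h]_i\rangle|\le\omega_{\BX,p}(u,t)^{(|h|+1)/p}$, so that
\[
\|\delta\Xi^i_{s,u,t}\|\;\le\;\sum_{h\in\MF^0_{[p]-1}}\omega_{\BZ,\gamma}(s,u)^{\frac{\gamma-|h|}{p}}\,\omega_{\BX,p}(u,t)^{\frac{|h|+1}{p}},
\]
and Theorem~\ref{general sew} with the common, left-continuous-in-$t$ control $\omega_{\BZ,\gamma}$ in the first slot gives MRS convergence; by uniqueness of the RRS limit (Definition~\ref{MRS,RRS} and the remark after it) it agrees with the integral already constructed.

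The main obstacle is the algebraic identity for $\delta\Xi^i$: although it is the same computation as in the continuous branched theory of Gubinelli and Hairer--Kelly, it must be carried out carefully, keeping track of the Sweedler sums, the interplay of the grafting cocycle with the coproduct, and the level-$[p]$ truncation. Once it is in hand the analytic content is entirely absorbed by Theorem~\ref{general sew}, exactly as for the level-$2$ case in Proposition~\ref{level-2 rp}.
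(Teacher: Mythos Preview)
Your proposal is correct and follows essentially the same route as the paper: compute $\delta\Xi^i$ via the grafting-cocycle identity for $\Delta[h]_i$ together with Chen's relation and the controlled-path expansion of $\BZ^g_u$, then feed the resulting bound into Theorem~\ref{general sew}. The paper presents the computation as a chain of inequalities ending with the coarser bound $\sum_h \omega_{\BZ,\gamma}^{(\gamma-|h|)/p}(s,u)\,\omega_{\BX,p}^{(|h|+1)/p}(u,t)$, whereas you state the exact identity $\delta\Xi^i_{s,u,t}=-\sum_h R^{\BZ,h}_{s,u}\langle\BX_{u,t},[h]_i\rangle$ and keep the individual controls $\|R^{\BZ,h}\|$, $\|\BX^{[h]_i}\|$; this matches the stated local estimate more directly but forces you to add the small ``right-continuity of a finite sum of monotone summands implies right-continuity of each'' step for the first MRS case, which the paper avoids by using the aggregate $\omega_{\BX,p}$ on the right. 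Both arguments are equivalent in content.
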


\begin{proof}

Indeed, according to the definition of controlled rough paths and $\BX_{s,t}=\BX_{s,u}\star \BX_{u,t}$, one has for any $s<u<t,$
\begin{eqnarray*}
&&|\Xi^i_{s,t}-\Xi^i_{s,u}-\Xi^i_{u,t}|\\
& =&  |\sum_{h \in \MF^0_{[p]-1}} \BZ_s^h \langle \BX_{s,t}, [h]_i \rangle -\sum_{h \in \MF^0_{[p]-1}} \BZ_s^h \langle \BX_{s,u}, [h]_i \rangle -\sum_{h \in \MF^0_{[p]-1}} \BZ_u^h \langle \BX_{u,t}, [h]_i \rangle|\\
&=& |\sum_{h \in \MF^0_{[p]-1}} \BZ_s^h \langle \BX_{s,t}, [h]_i \rangle -\sum_{h \in \MF^0_{[p]-1}} \BZ_s^h \langle \BX_{s,u}, [h]_i \rangle - \sum_{h \in \MF^0_{[p]-1}} \big(\sum_{f \in \MF^0_{[p]-1}} \BZ_s^f \langle \BX_{s,u} \star h^*, f  \rangle| + R^h_{s,u} \big) \langle \BX_{u,t}, [h]_i \rangle|\\
&\leq & |\sum_{f \in \MF^0_{[p]-1}} \BZ_s^f \big(\langle \BX_{s,t}, [f]_i \rangle- \langle \BX_{s,u}, [f]_i \rangle- \sum_{h \in \MF^0_{[p]-1}} \langle \BX_{s,u}\otimes h^*, \Delta f \rangle   \langle \BX_{u,t} , [h]_i \rangle \big)| +  \sum_{h \in \MF^0_{[p]-1}} |R^{h}_{s,u} \langle \BX_{u,t} , [h]_i \rangle|\\
&\leq & |\sum_{f \in \MF^0_{[p]-1}} \BZ_s^f \big(\langle \BX_{s,t}, [f]_i \rangle- \langle \BX_{s,u}, [f]_i \rangle- \sum_{(f)} \langle \BX_{s,u}, f^{(1)} \rangle \langle \BX_{u,t} , [f^{(2)}]_i \rangle \big) | \\
& & + \sum_{h \in \MF^0_{[p]-1}} \|R^h\|_{\frac{p}{\gamma-|h|},[s,u]} \|\BX^{[h]_i}\|_{\frac{p}{(|h|+1)},[u,t]}\\
&\leq& \sum_{h \in \MF^0_{[p]-1}} \omega_{\BZ,\gamma}^\frac{\gamma-|h|}{p}(s,u) \omega_{\BX,p}^\frac{|h|+1}{p}(u,t).
\end{eqnarray*}
Then according to Theorem \ref{general sew}, one has the convergence and local estimate.

\end{proof}

\begin{rem}(\textbf{stability of c.r.p. under integration})\label{integral as crp}
Similar as the level-$2$ rough path case, given a controlled rough path $\BZ,$ the integration $\I_{\BX^i}(Z)$ could be lifted to a controlled rough path $\BBI_{\BX^i}(\BZ)$. Let
\begin{eqnarray*}
&&\langle \BI , \BBI_{\BX^i}(\BZ)_t \rangle := \int_0^t \BZ_s^\ell d \BX_s^i, \ \  \langle [\tau_1,...,\tau_n]_i, \BBI_{\BX^i}(\BZ)_t \rangle := \langle \tau_1 \cdots \tau_n, \BZ_t \rangle,\\
 &&\langle \tau , \BBI_{\BX^i}(\BZ)_t \rangle := 0, \text{ otherwise },
\end{eqnarray*}
where we omit the obvious dual notation $* $ from here on. Then one can check $\BBI_{\BX^i}(\BZ)$ satisfies relation \eqref{crp}. Indeed, for any $\tau=[f]_i,$ one has
\begin{equation}\label{i.remainer}
\langle [f]_i, \BBI_{\BX^i}(\BZ)_t \rangle = \langle f,  \BZ_t \rangle = \langle \BX_{s,t} \star f,  \BZ_s \rangle + R_{s,t}^f=  \langle \BX_{s,t} \star [f]_i, \BBI_{\BX^i}(\BZ)_t \rangle + R_{s,t}^f,
\end{equation}
where the last identity follows form Remark \ref{star op}. In particular, one has $R^{\BBI_{\BX^i}(\BZ),f}=0$ for any $f\in \MF \setminus \MT,$ and
$$
\BBI_{\BX^i}(\BZ)_t^{\BI}= \sum_{h \in \MF_{[p]-2}^0} \BZ_s^h \BX_{s,t}^{[h]_i} + R^{\BZ,\BI}_{s,t}.
$$
More generally, for a $\BZ=(\BZ^1, ..., \BZ^d)$ with $\BZ^i$ a $\BX-$controlled rough path above $z^i \in \R^e,$ let $\int_0^t \BZ_r^\ell d \BX_r:= \sum_{i=1}^d \int_0^t (\BZ^i_r)^\ell d\BX^i_r,$ and one can lift $\int_0^t \BZ_r^\ell d \BX_r$ to a controlled rough path $\BBI_{\BX}(\BZ)_t$ by defining for any $[f]_i,$ $1\leq i \leq d,$
$$
\langle \BI , \BBI_{\BX}(\BZ)_t \rangle := \sum_{i=1}^d \int_0^t (\BZ^i_r)^\ell d\BX^i_r, \ \  \langle [f]_i, \BBI_{\BX}(\BZ)_t \rangle := \langle f,  \BZ_t^i \rangle,
$$
and null otherwise.

\end{rem}

\begin{rem}(\textbf{stability of c.r.p. under smooth function})\label{smooth func as crp}
 Given $F \in C^{[p]+1}(\R^e,  \R^e )$ and a controlled rough path $\BZ\in (\mathcal{H}_{[p]-1})^e,$ one can lift $F(z)$ to a controlled rough path $F(\BZ)\in (\mathcal{H}_{[p]-1})^e$. According to Taylor formula and \eqref{crp}, one has
\begin{eqnarray*}
&&F(z_t) - F(z_s)= \sum_{n=1}^{[p]-1} \frac{1}{n!} D^n F(z_s)(z_{s,t},...,z_{s,t}) + R^{F(\BZ),\BI}_{s,t}\\
               &=& \sum_{n=1}^{[p]-1} \frac{1}{n!} D^n F(z_s)(\sum_{h_1}\langle h_1,\BZ_s\rangle \langle \BX_{s,t}, h_1 \rangle,...,\sum_{h_n}\langle h_n,\BZ_s\rangle \langle \BX_{s,t}, h_n \rangle)+ R^{F(\BZ),\BI}_{s,t}\\
               &=& \sum_{n=1}^{[p]-1} \sum_{h_1\cdots h_n=h\atop h_i \in \MF_{[p]-1}} \frac{1}{n!} D^n F(z_s)(\langle h_1,\BZ_s\rangle,...,\langle h_n,\BZ_s \rangle )\langle \BX_{s,t}, h\rangle + R^{F(\BZ),\BI}_{s,t},
\end{eqnarray*}
with $|R^{F(\BZ),\BI}_{s,t}| \leq C (|z_{s,t}|^{[p]}+ R^{\BZ,\BI}_{s,t})=C R^{\BZ,0}_{s,t}.$ Compared with \eqref{1crp}, the natural choice for the lift is, for any $f \in \MF_{[p]-1},$
\begin{equation}
F(\BZ)_t^f= \langle f, F(\BZ)_t \rangle=  \sum_{n=1}^{[p]-1}\sum_{f_1\cdots f_n=f, \atop f_i \in \MF_{[p]-1}} \frac{1}{n!} D^n F(z_t)(\langle f_1,\BZ_t\rangle,...,\langle f_n,\BZ_t \rangle ).
\end{equation}
Then one can check $F(\BX)$ satisfies \eqref{crp}. Indeed, for any $f\in \MF_{[p]-1}$ with $|f|\geq 1,$ by applying $\eqref{crp}$ to $\BZ$, one has
\begin{eqnarray*}
&&F(\BZ)_t^f =   \sum_{n=1}^{[p]-1}\sum_{f_1\cdots f_n=f, \atop f_i \in \MF_{[p]-1}} \frac{1}{n!} D^n F(z_t)(\langle f_1,\BZ_t\rangle,...,\langle f_n,\BZ_t \rangle )\\
&= &  \sum_{n=1}\sum_{f_1\cdots f_n=f} \sum_{h_i \in \MF_{[p]-1}} \frac{1}{n!} D^n F(z_t)(\langle h_1, \BZ_s \rangle,...,\langle h_n, \BZ_s \rangle )\langle \BX_{s,t}\star f_1, h_1 \rangle \cdots \langle \BX_{s,t}\star f_n, h_n \rangle  + R^{F(\BZ),f}_{s,t}\\
&= & \sum_{n=1} \sum_{h_1\cdots h_n=h} \frac{1}{n!} D^n F(z_t)(\langle h_1, \BZ_s \rangle,...,\langle h_n, \BZ_s \rangle ) (\sum_{f_1\cdots f_n=f}\langle \BX_{s,t}\star f_1, h_1 \rangle \cdots \langle \BX_{s,t}\star f_n, h_n \rangle)  + R^{F(\BZ),f}_{s,t}.
\end{eqnarray*}
with $|R^{F(\BZ),f}_{s,t}| \leq C_{F,\|\BZ\|_p} R^{\BZ,|f|}_{s,t}.$ By Taylor's expansion, one has
\begin{eqnarray*}
&&D^n F(z_t)(  \BZ_s^{h_1}  ,..., \BZ_s^{h_n}  ) = \sum_{m=n}^{[p]-1} \frac{1}{(m-n)!} D^m F(z_s)(  \BZ_s^{h_1}  ,..., \BZ_s^{h_n}, z_{s,t},..,z_{s,t} ) + R^n_{s,t}\\
&=& \sum_{m=n}^{[p]-1} \frac{1}{(m-n)!} \sum_{g_1,...,g_{m-n} \in \MF} D^m F(z_s)(  \BZ_s^{h_1}  ,..., \BZ_s^{h_n}, \BZ_{s }^{g_1},..,\BZ_{s }^{g_{m-n}} )\BX_{s,t}^{g_1}\cdots\BX_{s,t}^{g_{m-n}}  + R^n_{s,t}
\end{eqnarray*}
with $|R^n_{s,t}| \leq C_F |z_{s,t}|^{[p]-n}$ which implies $R^n \in V^{\frac{p}{[p]-n}} \subseteq V^{\frac{p}{[p]-|f|}}$ since $n\leq|f_1|+\cdots+|f_n|= |f|.$ Then one obtains
\begin{eqnarray*}
F(\BZ)_t^f & =& \sum_{m=1}^{[p]-1} \sum_{n=1}^m \sum_{f_1\cdots f_n=f}\frac{1}{n!} \frac{1}{(m-n)!} \sum_{g_i,h_i \in \MF} D^m F(z_s)(  \BZ_s^{h_1}  ,..., \BZ_s^{h_n}, \BZ_{s }^{g_1},...,\BZ_{s }^{g_{m-n}} )\\
&&\BX_{s,t}^{g_1}\cdots\BX_{s,t}^{g_{m-n}} \langle \BX_{s,t}\star f_1, h_1 \rangle \cdots \langle \BX_{s,t}\star f_n, h_n \rangle + R^{F(\BZ),f}_{s,t}\\
&= & \sum_{m=1}^{[p]-1}\sum_{n=1}^m \left( m \atop n \right) \frac{1}{m!} \sum_{l_1,..., l_m \in \MF \atop (l_1),...,(l_m)} \sum_{f_1\cdots f_n=f} D^m F(z_s)(  \BZ_s^{l_1}  ,..., \BZ_s^{l_n}, ...,\BZ_{s }^{l_{m}})\\
 &&\langle \BX_{s,t}, l_1^{(1)} \cdots l_n^{(1)} l_{n+1}\cdots l_m \rangle \langle f_1, l_1^{(2)}\rangle \cdots \langle f_n, l_n^{(2)}\rangle + R^{F(\BZ),f}_{s,t}\\
&=&  \sum_{m=1}^{[p]-1} \sum_{n=1}^m \left( m \atop n \right) \frac{1}{m!}  \sum_{l_1,..., l_m \in \MF} D^m F(z_s)(  \BZ_s^{l_1}  ,..., \BZ_s^{l_n}, ...,\BZ_{s }^{l_{m}})\\
 && \sum_{l_1^{(2)},...,l_n^{(2)} \neq \BI} \langle \BX_{s,t}, l_1^{(1)} \cdots l_n^{(1)} l_{n+1}\cdots l_m \rangle \langle f, l_1^{(2)} \cdots l_n^{(n)} \rangle + R^{F(\BZ),f}_{s,t}\\
&=&  \sum_{m=1}^{[p]-1} \sum_{l_1,..., l_m \in \MF} \frac{1}{m!} D^m F(z_s)(  \BZ_s^{l_1}  ,..., \BZ_s^{l_n}, ...,\BZ_{s }^{l_{m}}) \langle \BX_{s,t}\otimes f, l_1^{(1)} \cdots l_m^{(1)} \otimes l_1^{(2)} \cdots l_m^{(2)}   \rangle+ R^{F(\BZ),f}_{s,t}\\
 &=&  \sum_{l\in \MF} \langle l, F(\BZ_s) \rangle \langle X_{s,t} \star f, l \rangle + R^{F(\BZ),f}_{s,t},
 \end{eqnarray*}
with $|R^{F(\BZ),f}_{s,t}| \leq C_{F,\|\BZ\|} R^{\BZ,|f|}_{s,t},$ where we apply identity $\langle f, l_1^{(2)}\cdots l_n^{(2)}\rangle = \sum_{f_1\cdots f_n=f} \langle f_1, l_1^{(2)}\rangle \cdots \langle f_n , l_n^{(2)}\rangle$ in the third equation and symmetry of $D^m F(z_s)$ in the forth equation.

\end{rem}

\subsection{Differential equations} 

In this part, finally, we solve equations driven by branched $p$-rough paths with jumps, essentially following the strategy of the level-$2$ case discussed earlier.

\begin{lem}(\textbf{invariance of the solution mapping})\label{inv. for crp}
Suppose $\BY \in \MV_\BX^p $ above $y,$ and $F=(F_1,...,F_d)$ with each $F_i \in C_b^{[p]+1}(\R^e,\R^e).$ Then there exists a natural lift for $\int_0^t F(Y_r)^\ell d\BX_r,$ denoted as $\BZ:=\BBI_{\BX}(F(\BY)),$ which is given by
\begin{eqnarray*}
&&\BZ_t^{[\tau_1\cdots \tau_n]_i}  = \sum_{j=1}^{[p]-1} \sum_{f_1\cdots f_j=\tau_1 \cdots \tau_n} \frac{1}{j!} D^j F_i(y_t)(  \BY_t^{f_1} ,..., \BY_t^{f_j} ), \text{ for any trees }\tau_1,...,\tau_n,\\
&&\BZ_t^f=0, \text{ if }f \in \MF \setminus {\MT}.
\end{eqnarray*}
Furthermore, one has the following local estimate
\begin{eqnarray*}
\|R^{\BZ,\BI}\|_{\frac{p}{[p]},[s,t]} &\leq&  C_{F,\|\BY\|}\sum_{i=1}^d  \sum_{h \in \MF^0_{[p]-1}} \left(\|R^{ \BY,h}\|_{\frac{p}{[p]-|h|},[s,t)}+1 \right) \|\BX^{[h]_i}\|_{\frac{p}{(|h|+1)},(s,t]}, \\
\|R^{\BZ,\tau} \|_{\frac{p}{[p]},[s,t]} &\leq&  C_{F,\|\BY\|} \|R^{\BY,|\tau|-1}\|_{\frac{p}{[p]-|\tau|+1},[s,t]}, \text{ for any }\tau \in \MT_{[p]-1}.
\end{eqnarray*}
\end{lem}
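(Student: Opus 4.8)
The plan is to obtain the lift $\BZ:=\BBI_{\BX}(F(\BY))$ by composing two constructions already in hand, and then to obtain both estimates by chaining the estimates attached to those constructions. This is the branched counterpart of Lemma~\ref{invariance} in the level-$2$ case, and I expect the proof to be almost entirely bookkeeping rather than a single hard step.

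First I would apply Remark~\ref{smooth func as crp} componentwise to turn $\BY\in\MV_{\BX}^p$ into the $\BX$-controlled rough path $F(\BY)=(F_1(\BY),\dots,F_d(\BY))$, recording both its explicit coefficients $\langle f,F_i(\BY)_t\rangle=\sum_{n\le[p]-1}\sum_{f_1\cdots f_n=f}\tfrac{1}{n!}D^nF_i(y_t)(\BY_t^{f_1},\dots,\BY_t^{f_n})$ and its remainder bound $|R^{F_i(\BY),f}_{s,t}|\le C_{F,\|\BY\|}R^{\BY,|f|}_{s,t}$. Next I would apply the integration map $\BBI_{\BX}(\cdot)$ of Remark~\ref{integral as crp} (which rests on Theorem~\ref{integ.for b.r.p}) to $F(\BY)$; since both maps preserve the class of $\BX$-controlled rough paths, $\BZ\in\MV_{\BX}^p$ is automatic. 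The claimed formula for $\BZ_t^{[\tau_1\cdots\tau_n]_i}$ then just reads off from the defining relations $\langle[\tau_1\cdots\tau_n]_i,\BZ_t\rangle=\langle\tau_1\cdots\tau_n,F_i(\BY)_t\rangle$ and $\langle f,\BZ_t\rangle=0$ for $f\in\MF\setminus\MT$, after substituting the coefficient formula above.

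For the two estimates I would argue as follows. For a tree $\tau=[f]_i\in\MT_{[p]-1}$ (so $|f|=|\tau|-1$), identity~\eqref{i.remainer} gives $R^{\BZ,\tau}_{s,t}=R^{F_i(\BY),f}_{s,t}$, and the remainder bound from Remark~\ref{smooth func as crp} together with the definition of $R^{\BY,\cdot}$ gives the second estimate upon passing to $p$-variation. For the degree-$0$ remainder $R^{\BZ,\BI}$, Remark~\ref{integral as crp} exhibits it as the sum of the genuine rough-integral error $\int_s^t F(Y_r)^{\ell}\,d\BX_r-\sum_i\Xi^i_{s,t}$ (with $\Xi^i_{s,t}=\sum_{h\in\MF^0_{[p]-1}}\langle h,F_i(\BY)_s\rangle\langle\BX_{s,t},[h]_i\rangle$) and the top-level piece $\sum_i\sum_{|h|=[p]-1}\langle h,F_i(\BY)_s\rangle\langle\BX_{s,t},[h]_i\rangle$; the first I would bound by the local estimate of Theorem~\ref{integ.for b.r.p}, the second by the uniform bound~\eqref{bound for unif. norm} for $\|\langle h,F_i(\BY)_\cdot\rangle\|_{\sup}$. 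Inserting once more $|R^{F_i(\BY),h}_{s,t}|\le C_{F,\|\BY\|}R^{\BY,|h|}_{s,t}$ and absorbing the bounded prefactors into the ``$+1$'' gives the first estimate.

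The hard part will not be any single step but the bookkeeping: keeping the vertex labels straight, and in particular handling the level shift between $\BX$ (defined up to level $[p]$) and the controlled rough paths (only reaching $\MH_{[p]-1}$), so that the terms with $|h|=[p]-1$ land inside $R^{\BZ,\BI}$ rather than in the explicit expansion; one also has to verify that the constants depend on $F$ only through finitely many sup-norms of its derivatives on a compact set determined by $\|\BY\|_p$, and on $\BY$ only through $\|\BY\|_p$. The one potentially nontrivial algebraic fact, the identity relating $\BX_{s,t}\star f$ to $\BX_{s,t}\star[f]_i$ that underlies~\eqref{i.remainer}, is already established in Remark~\ref{integral as crp} via Remark~\ref{star op}, so no new combinatorics is needed.
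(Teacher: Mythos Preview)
Your proposal is correct and follows essentially the same route as the paper: compose Remark~\ref{smooth func as crp} with Remark~\ref{integral as crp} to obtain the lift and its coefficient formula, use identity~\eqref{i.remainer} together with the remainder bound $|R^{F_i(\BY),f}_{s,t}|\le C_{F,\|\BY\|}R^{\BY,|f|}_{s,t}$ for the tree estimate, and split $R^{\BZ,\BI}$ into the rough-integral error (bounded via Theorem~\ref{integ.for b.r.p}) plus the top-level $|h|=[p]-1$ terms (bounded via~\eqref{bound for unif. norm}) for the first estimate. The paper's proof is precisely this argument, with the same decomposition and the same absorption of the bounded top-level coefficients into the ``$+1$''.
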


\begin{proof}

Let $\BZ$ defined as above. The fact $\BZ$ is a $\BX-$controlled rough path follows from Remark \ref{integral as crp} and Remark \ref{smooth func as crp}. Indeed, for any $[\tau_1\cdots \tau_n]_i \in \MT_{[p]-1},$ by one has
\begin{eqnarray*}
\langle [\tau_1\cdots \tau_n]_i , \BBI_\BX(F(\BY))_t \rangle &=& \langle \tau_1\cdots \tau_n , F_i(\BY)_t \rangle \\
                                                            &=& \sum_{j=1}^{[p]-1} \sum_{f_1\cdots f_j=\tau_1 \cdots \tau_n} \frac{1}{j!} D^j F_i(y_t)(\langle f_1, \BY_t \rangle,...,\langle f_j, \BY_t \rangle) .
                                                            \end{eqnarray*}
For the estimate part, by local estimate for rough integration, one has
\begin{eqnarray*}
|R^{\BZ,\BI}_{s,t}|&=& \sum_{i=1}^d |\int_s^t F_i (\BY_r)^\ell d\BX_r^i - \sum_{h \in \MF_{[p]-2}^0} F_i(\BY)^h_s \BX_{s,t}^{[h]_i} | \\
&\leq& C \sum_{i=1}^d \Big(\sum_{h \in \MF^0_{[p]-1}} \|R^{F_i(\BY),h}\|_{\frac{p}{[p]-|h|},[s,t)} \|\BX^{[h]_i}\|_{\frac{p}{(|h|+1)},(s,t]}+ \sum_{h\in \MF_{([p]-1)}} |F_i(\BY)^h_s \BX_{s,t}^{[h]_i}| \Big)\\
&\leq& C_{F,\|\BY\|}\sum_{i=1}^d  \sum_{h \in \MF^0_{[p]-1}} \left(\|R^{ \BY,h}\|_{\frac{p}{[p]-|h|},[s,t)}+1 \right) \|\BX^{[h]_i}\|_{\frac{p}{(|h|+1)},(s,t]}
\end{eqnarray*}
For any $\tau=[\tau_1\cdots \tau_n]_i \in \MT_{[p]-1},$ according to identity \eqref{i.remainer} and Remark \ref{smooth func as crp}, one has
\begin{eqnarray*}
|R^{\BZ,\tau}_{s,t}|=|R^{F(\BY),\tau_1\cdots \tau_n}_{s,t}| \leq C_{F,\|\BY\|} |R^{\BY,|\tau|-1}_{s,t}|.
\end{eqnarray*}

\end{proof}

\begin{lem}(\textbf{contraction of the solution mapping})\label{contr.for crp}
Suppose $\BY \in \MV_{\BX}^p,$ $\tBY \in \MV_{\tBX}^p,$ and $F\in C_b^{[p]+1}.$ Let $\BZ:=\BBI_{\BX}(F(\BY)) , \tBZ:=\BBI_{\tBX}(F(\tBY))$ defined as in the above lemma. Then one has the following estimates
\begin{eqnarray*}
&&\|R^{\BZ,\BI} - R^{\tBZ,\BI}\|_{\frac{p}{[p]},[s,t]}  \leq   C   \Big( \| \Delta \BX \|_{p,[s,t]}+ \|\BX \|_{p,[s,t]}( |  \BY_s- \tBY_s| + \sum_{f\in \MF_{[p]-1}^0} \|R^{\BY,f}-R^{\tBY,f} \|_{\frac{p}{[p]-|f|},[s,t]} ) \Big)\\
&&\|R^{\BZ,\tau} - R^{\tBZ,\tau} \|_{\frac{p}{[p]-|\tau|},[s,t]}  \leq  C \Big( |\BY_s- \tBY_s|+ \sum_{|h|\leq |\tau|-1}\| R^{\BY,h}-R^{\tBY,h} \|_{\frac{p}{[p]-|h|},[s,t]}\\
&& + \sum_{|h|\geq |\tau| }\|R^{\BY,h}-R^{\tBY,h}  \|_{\frac{p}{[p]-|h|},[s,t]} \|\BX\|_{p,[s,t]} + \|  \BX-\tBX \|_{p,[s,t]}  \Big),\ \text{for any }|\tau|\geq 1,
\end{eqnarray*}
where $C$ depends on $F, \|\BY\|_p, \|\tBY\|_p,\|\BX\|_p.$

\end{lem}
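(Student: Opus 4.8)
The plan is to follow the two-step structure of the level-$2$ argument (Lemma~\ref{contraction}): first a stability estimate for the composition $F(\BY)$ viewed as a controlled rough path (the branched analogue of the second part of Lemma~\ref{stable under smooth fun}), and then a stability estimate for the branched rough integral (the branched analogue of Lemma~\ref{stable under integral}), which, as usual, is read off from the proof of the generalized sewing lemma, Theorem~\ref{general sew}. Throughout, uniform bounds on $\BY^f$, $\tBY^f$ (and on their differences) come from~\eqref{bound for unif. norm} together with the assumed bounds on $\|\BY\|_p$, $\|\tBY\|_p$, $\|\BX\|_p$; all such constants are absorbed into $C$.

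For the tree-indexed remainders, identity~\eqref{i.remainer} gives $R^{\BZ,\tau} = R^{F_i(\BY),\,\tau_1\cdots\tau_n}$, and likewise with tildes, whenever $\tau = [\tau_1\cdots\tau_n]_i$; hence it suffices to bound $\|R^{F(\BY),f} - R^{F(\tBY),f}\|_{\frac{p}{[p]-|f|},[s,t]}$ for $f\in\MF^0_{[p]-1}$ (this is finer than the stated exponent $\frac{p}{[p]-|\tau|}$, to which it passes by the inclusion $V^{p/([p]-|\tau|+1)}\subset V^{p/([p]-|\tau|)}$). Here I would redo the double Taylor computation of Remark~\ref{smooth func as crp} ``in difference form'': the remainder $R^{F(\BY),f}_{s,t}$ is assembled from genuine Taylor tails $R^n_{s,t}$ with $|R^n_{s,t}|\le C_F|y_{s,t}|^{[p]-n}$, $n\le|f|$, contracted against coproduct factors $\langle\BX_{s,t}\star f_j, h_j\rangle$ and against the $\BY^{h}_s$; subtracting the analogous expression for $\tBY$ and inserting intermediate terms one factor at a time produces elementary differences of three types: (i) $D^mF(y_s)-D^mF(\ty_s)$, controlled by $|\BY_s-\tBY_s|$ through $\|F\|_{C_b^{[p]+1}}$; (ii) $\BY^h_s-\tBY^h_s$, controlled via~\eqref{bound for unif. norm} and~\eqref{crp} by $|\BY_s-\tBY_s| + \|\BX\|_p\sum_{|g|>|h|}|\BY^g_s-\tBY^g_s| + \|R^{\BY,h}-R^{\tBY,h}\|_{\frac{p}{[p]-|h|}} + \|\BX-\tBX\|_p$; and (iii) $\langle\BX_{s,t}\star f_j,h_j\rangle-\langle\tBX_{s,t}\star f_j,h_j\rangle$, controlled by $\|\BX-\tBX\|_p$. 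One then checks that in each surviving product the $p$-variation exponents add correctly — a $\frac{p}{k}$-variation factor times a $\frac{p}{l}$-variation factor has exponent $\frac{p}{k+l}$, which is $\le\frac{p}{[p]}$ once the total degree is $\ge[p]$ and equals $\frac{p}{[p]-|f|}$ for the surviving $\tau$-remainder — which yields the second claimed estimate.

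For the $\BI$-indexed remainder, Remark~\ref{integral as crp} gives $R^{\BZ,\BI}_{s,t} = \int_s^t F(\BY)^\ell_r d\BX_r - \sum_i\sum_{h\in\MF^0_{[p]-2}} F_i(\BY)^h_s\,\BX^{[h]_i}_{s,t}$, and similarly with tildes; I would write the difference $R^{\BZ,\BI}-R^{\tBZ,\BI}$ as $\bigl(\int_s^t F(\BY)^\ell d\BX - \int_s^t F(\tBY)^\ell d\tBX\bigr)$ minus the difference of the two explicit compensators, and estimate the first bracket exactly as in Lemma~\ref{stable under integral}: with $\Delta\Xi$ the difference of the integration germs $\Xi$ of Theorem~\ref{integ.for b.r.p}, the sewing lemma gives $|\I(\Delta\Xi)_{s,t}-\Delta\Xi_{s,t}|\le C_p\sup_{\tau,u,\nu\in[s,t]}|\delta(\Delta\Xi)_{\tau,u,\nu}|$, and $\delta(\Delta\Xi)$ expands, as in the proof of Theorem~\ref{integ.for b.r.p}, into a finite sum of terms $R^{F(\BY),h}_{\tau,u}\langle\BX_{u,\nu},[h]_i\rangle$ minus their tilded counterparts; inserting intermediate terms and invoking the first step to control $\|R^{F(\BY),h}-R^{F(\tBY),h}\|$ produces the $\frac{p}{[p]}$-variation bound, while the compensator difference is estimated directly from~\eqref{bound for unif. norm} and the first step. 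I expect the main obstacle to be the combinatorial bookkeeping in the first step: pushing the nested sums of Remark~\ref{smooth func as crp} (over $n$, over factorizations $f_1\cdots f_n=f$, over forests $h_j$ and admissible cuts $(l_j)$) through the telescoping procedure while confirming both that every elementary difference is of type (i)--(iii) and that no illegal ``borrowing'' of $p$-variation regularity is ever needed; once this is organized, the remaining steps are essentially transcriptions of Lemmas~\ref{stable under integral} and~\ref{contraction}.
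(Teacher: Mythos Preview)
Your proposal is correct and follows essentially the same approach as the paper: the $\tau$-indexed estimate is reduced via~\eqref{i.remainer} to a difference of the $R^{F(\BY),f}$, which is controlled by telescoping the Taylor expansion of Remark~\ref{smooth func as crp}, while the $\BI$-indexed estimate is handled by expanding $\delta(\Delta\Xi)$ from the sewing-lemma argument and applying $ab-a'b'=a(b-b')+b'(a-a')$ together with~\eqref{bound for unif. norm}. The paper's proof is only organizationally different---it groups the terms in the $\tau$-case into three sums indexed by generic functions $G^h$, $G^n$ of the derivatives of $F$ rather than by your types (i)--(iii)---but the content is the same.
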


\begin{proof}
For the first estimate, similar to the level-$2$ rough path case, denote $\Xi^i_{u,v}:= \sum_{h \in \MF^0_{[p]-1}} F_i(\BY)_u^h \BX_{u,v}^{[h]_i} $, $\tilde{\Xi}^i_{u,v}:= \sum_{h \in \MF^0_{[p]-1}} F_i(\tBY)_u^h \tBX_{u,v}^{[h]_i} ,$ and one has
\begin{eqnarray*}
&&|R^{\BZ,\BI}_{u,v}- R^{\tBZ,\BI}_{u,v}| \\
&\leq& \sum_{i=1}^d  |\int_u^v F_i(Y_r)^\ell d\BX_r^i - \sum_{h\in \MF_{[p]-2}^0 }F_i(\BY)_u^h \BX_{u,v}^{[h]_i} - \int_u^v F_i(\tY_r)^\ell d\tBX_r^i - \sum_{h\in \MF_{[p]-2}^0 }F_i(\tBY)_u^h \tBX_{u,v}^{[h]_i} |\\
&\leq& \sum_{i=1}^d\Big(|\sum_{h\in \MF_{([p]-1)}}( F_i(\BY)_u^h \BX_{u,v}^{[h]_i}- F_i(\tBY)_u^h \tBX_{u,v}^{[h]_i})|+ |\I(\Delta \Xi^i)_{u,v}- \Delta \Xi^i_{u,v} |\Big)\\
&\leq& C_p \sum_{i=1}^d \Big( \sum_{h\in \MF_{([p]-1)}} | (F_i(\BY)_u^h-F_i(\tBY)_u^h)\BX_{u,v}^{[h]_i} +(\BX_{u,v}^{[h]_i}-\tBX_{u,v}^{[h]_i})F_i(\tBY)_u^h|+ \sup_{\tau,r,\nu \in[u,v]} |\delta (\Delta \Xi^i)_{\tau,r,\nu} |   \Big),
\end{eqnarray*}
where the last inequality follows by Young's argument and $\delta (\Delta \Xi^i)_{\tau,r,\nu}=\sum_{h \in \MF^0_{[p]-1}}( R^{F_i(\BY),h}_{\tau,r} \BX^{[h]_i}_{r,\nu}-R^{F_i(\tBY),h}_{\tau,r} \tBX^{[h]_i}_{r,\nu} )$. Note that for any $h\in \MF_{([p]-1)},$ $F_i(\BY)_v^h= F_i(\BY)_u^h + R^{F_i(\BY),h}_{u,v}.$ By applying identity $ab-a'b'=a(b-b')+ b'(a-a') $ and estimate like \eqref{bound for unif. norm}, one has
\begin{eqnarray*}
\|R^{\BZ,\BI} - R^{\tBZ,\BI} \|_{\frac{p}{[p]},[s,t] } &\leq& C  \Big( \| \Delta \BX \|_{p,[s,t]} + \|\BX \|_{p,[s,t]} \|\Delta \BY \|_{p,[s,t]}        \Big)\\
                                                       &\leq& C   \Big( \| \Delta \BX \|_{p,[s,t]}+ \|\BX \|_{p,[s,t]}( |\Delta \BY_0| + \sum_{f\in \MF_{[p]-1}^0} \|\Delta R^f \|_{\frac{p}{[p]-|f|},[s,t]} ) \Big)
\end{eqnarray*}
with $C$ depending on $F, \|\BY\|_p, \|\tBY\|_p,\|\BX\|_p.$ For any $\tau= [\tau_1 \cdots \tau_m]_i,$ by Remark \ref{smooth func as crp}, one has
\begin{eqnarray*}
&&|R^{\BZ,\tau}_{s,t}- R^{\tBZ,\tau}_{s,t}|= |R^{F_i(\BY),\tau_1\cdots \tau_m}_{s,t}- R^{F_i(\tBY),\tau_1\cdots \tau_m}_{s,t}| \\
&\leq& C \Big( \sum_{|h|,|g|\leq |\tau|-1 }|G^h(\BY^h_s,h)R^{\BY,g}_{s,t}- G^h(\tBY^h_s,h)R^{\tBY,g}_{s,t} | + \sum_{|h|,|g| \geq |\tau|\atop |f|\geq 1 }|G^h(\BY^h_s,h)R^{\BY,g}_{s,t}\BX_{s,t}^f\\
 &&- G^h(\tBY^h_s,h)R^{\tBY,g}_{s,t}\tBX_{s,t}^f | + \sum_{n \leq |\tau|-1} |G^n(y_s,y_t)y_{s,t}^{\otimes[p]-n}- G^n(\ty_{s},\ty_t)\ty_{s,t}^{\otimes[p]-n}  |    \Big),
\end{eqnarray*}
where $G^h$ and $G^n$ are functions depending on derivatives of $F$ and $C$ depending on $F, \|\BY\|_p, \|\tBY\|_p,\|\BX\|_p.$ By applying identity \eqref{bound for unif. norm}, it follows that
\begin{eqnarray*}
\|R^{\BZ,\tau}- R^{\tBZ,\tau}\| \leq C \Big( |\BY_0- \tBY_0|+ \sum_{|h|\leq |\tau|-1}\|\Delta R^h \|_{\frac{p}{[p]-|h|}}+ \sum_{|h|\geq |\tau| }\|\Delta R^h \|_{\frac{p}{[p]-|h|}} \|\BX\|_{p} + \|\Delta \BX \|_p  \Big),
\end{eqnarray*}
where $C$ depends on $F, \|\BY\|_p, \|\tBY\|_p,\|\BX\|_p.$

\end{proof}

\begin{defi}(\textbf{solutions for D.E. driven by branched rough paths})
We say $Y:[0,T] \rightarrow \R^e$ solves
\begin{equation}\label{d.e. by brp}
Y_t= y_0 + \int_0^t F(\BY_r)^\ell d \BX_r
\end{equation}
if $Y_t= y_0 + \int_0^t F(\BY_r)^\ell d \BX_r,$ where $\BY$ is a $\BX$-controlled rough path above $Y,$ such that
$$
\BY_t- \BY_s= \BBI_{\BX}(F(\BY))_t - \BBI_{\BX}(F(\BY))_s.
$$

\end{defi}

Before we show the fixed point theorem for differential equations driven by branched rough paths, we need some argument about initial values. Suppose $Y$ solves \eqref{d.e. by brp}, then we claim that the solution lift $\BY_t$ depends only on $Y_s$ and derivatives of $F.$ Indeed, one obviously has $\BY_t^{\bullet_i}= F_i(Y_t), $ and furthermore, for any tree $\tau=[\tau_1 \cdots \tau_n]_i $ with each $\tau_j$ a smaller tree, noticing $\BY^f=0$ for any nontrivial forest $f,$ one has
\begin{eqnarray*}
\BY_s^\tau &=& \sum_{j=1}^{[p]-1} \sum_{f_1\cdots f_j=\tau_1 \cdots \tau_n} \frac{1}{j!} D^j F_i(y_t)(\langle f_1, \BY_t \rangle,...,\langle f_j, \BY_t \rangle) \\
&=& \sum_{\sigma \in \mathfrak{G}_n} \frac{1}{n!} D^n F_i(Y_t)(  \BY_t^{\tau_{\sigma(1)}}  ,...,  \BY_t^{\tau_{\sigma(n)}}  )=D^n F_i(Y_t)(  \BY_t^{\tau_1}  ,...,  \BY_t^{\tau_{n}}  ),
\end{eqnarray*}
where $\mathfrak{G}_n$ means the permutation group of $\{1,...,n\}.$ For any $\tau=[\tau_1 \cdots \tau_n]_i,$ let $F^\tau:\R^e \rightarrow \R^e$ the function inductively defined such that $F^{\bullet_i}:= F_i,$ and satisfies
$$
F^\tau(Y_t):= D^n F_i(Y_t)(  \BY_t^{\tau_1}  ,...,  \BY_t^{\tau_{n}} )
$$
where $\BY$ is defined by $\BY_t^{\bullet_i}= F_i(Y_t)$ and $\BY_s^\tau=D^n F_i(Y_t)(  \BY_t^{\tau_1}  ,...,  \BY_t^{\tau_{n}}  ).$ It is obvious that $F^\tau $ depends only on derivatives of $F$ up to order $n.$ For simplicity, denote $F^f=0$ for any nontrivial forest $f.$ In particular, if $\BY$ solves the above equation, one has $\BY^f_0 = F^f(y_0).$

\begin{thm}(\textbf{local solution for D.E. driven by branched rough paths})\label{local sol. for brp}
Suppose $\BX$ is a branched rough path with finite $p$-variation and right-continuous at $t=0.$ Then for any $F \in C^{[p]+1},$ there exists $t_1>0,$ such that the following equation has a unique solution on $[0,t_1],$
\begin{equation}
Y_t= y_0 + \int_0^t F(\BY_r)^\ell d \BX_r.
\end{equation}
Furthermore, if $\tY$ solves the same equation driven by $\tBX,$ then there exists $t_2>0,$ such that the following local Lipschitz estimate holds
$$
\|\BY-\tBY\|_{p,[0,t_2]} \leq C (|y_0- \tilde{y}_0|+ \|\BX;\tBX\|_{p,[0,t_2]} ),
$$
with $C$ depending on $F, y_0, \|\BX\|_{p}.$
\end{thm}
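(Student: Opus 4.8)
The plan is to run a Picard fixed-point argument on the complete metric space $\MV_\BX^p([0,t_1],(\MH_{[p]-1})^e)$, exactly mirroring the level-$2$ proof of Theorem \ref{solve level-2} but with the branched estimates of Lemmas \ref{inv. for crp} and \ref{contr.for crp} replacing Lemmas \ref{invariance} and \ref{contraction}. First I would address the initial data: as in the paragraph preceding the statement, a solution must have $\BY_0^f = F^f(y_0)$ for every $f \in \MF_{[p]-1}^0$, so one takes as ambient set
$$
\Omega_t := \Big\{ \BY \in \MV_\BX^p : \BY_0^f = F^f(y_0)\ \forall f,\ \|R^{\BY,f}\|_{\frac{p}{[p]-|f|},[0,t]} \le \rho_f \text{ for suitable } \rho_f \Big\},
$$
which is closed in $\MV_\BX^p$, hence complete. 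The solution map is $\mathcal{M}_t : \BY \mapsto \BBI_\BX(F(\BY)) + y_0\,\BI$ (adjusting the $\BI$-component by the constant $y_0$); it is well-defined by Remarks \ref{integral as crp}, \ref{smooth func as crp} and Lemma \ref{inv. for crp}, and the latter gives $\|R^{\mathcal{M}_t(\BY),\BI}\|_{\frac{p}{[p]}}$ and $\|R^{\mathcal{M}_t(\BY),\tau}\|$ bounded by a constant times powers of $\omega_{\BX,p}(0,t)$ (using $\|\BX^{[h]_i}\|_{\frac{p}{|h|+1},(s,t]} \le \omega_{\BX,p}(s,t)^{(|h|+1)/p}$ and similarly for the $R^{\BY,h}$ factors, all of which are controlled by the radii $\rho_f$ on $\Omega_t$).

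Next I would use right-continuity of $\BX$ at $t=0$: by Lemma \ref{vari.continuity} the control $\omega_{\BX,p}(0,t) \to 0$ as $t \downarrow 0$, so for $t_1$ small enough the right-hand sides of the invariance estimates in Lemma \ref{inv. for crp} fall below the chosen radii $\rho_f$, giving $\mathcal{M}_{t_1}(\Omega_{t_1}) \subseteq \Omega_{t_1}$. For the contraction, I would follow the second strategy of Theorem \ref{solve level-2}: introduce the equivalent weighted norm
$$
\|\BY;\tBY\|^{(\lambda)} := |\BY_0 - \tBY_0| + \sum_{f \in \MF_{[p]-1}^0} \lambda^{[p]-|f|}\,\|R^{\BY,f} - R^{\tBY,f}\|_{\frac{p}{[p]-|f|},[0,t]},
$$
with $\lambda$ large, and feed the estimates of Lemma \ref{contr.for crp} into it. The key structural point, visible already in Lemma \ref{contr.for crp}, is that $\|R^{\BZ,\BI}-R^{\tBZ,\BI}\|$ and each $\|R^{\BZ,\tau}-R^{\tBZ,\tau}\|$ with $|\tau|\geq 1$ contains the ``lower'' remainder differences $\|\Delta R^{\BY,h}\|$ multiplied only by positive powers of $\omega_{\BX,p}(0,t)$ (when $|h|\geq|\tau|$) or not at all (when $|h|\leq |\tau|-1$), so that with $\lambda$ chosen to beat the $|h|\leq|\tau|-1$ terms and then $t_1$ chosen small to beat the $\omega_{\BX,p}(0,t)$-weighted terms, $\mathcal{M}_{t_1}$ becomes a strict contraction on $(\Omega_{t_1}, \|\cdot\|^{(\lambda)})$. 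Banach's fixed-point theorem then yields the unique solution, and one checks that the fixed point is indeed the solution in the sense of Definition \ref{d.e. by brp} (the $\BI$-component records $Y_t = y_0 + \int_0^t F(\BY_r)^\ell d\BX_r$ and the tree components are forced by Lemma \ref{inv. for crp}).

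For the local Lipschitz estimate, I would no longer require the two driving rough paths to agree at time $0$; instead, taking $\BY \in \Omega_{t_1}$ solving the equation driven by $\BX$ and $\tBY \in \tilde\Omega_{t_1}$ the one driven by $\tBX$, I would apply Lemma \ref{contr.for crp} directly to the pair $(\BBI_\BX(F(\BY)), \BBI_{\tBX}(F(\tBY))) = (\BY, \tBY)$. This gives, for $t \le t_1$,
$$
\|\BY - \tBY\|_{p,[0,t]} \le C\big(|y_0 - \tilde y_0| + \|\BX;\tBX\|_{p,[0,t]} + \omega_{\BX,p}(0,t)^{1/p}\|\BY-\tBY\|_{p,[0,t]}\big),
$$
and shrinking to $t_2 \le t_1$ so that $C\,\omega_{\BX,p}(0,t_2)^{1/p} \le \tfrac12$ absorbs the last term, yielding the claimed bound with $C$ depending on $F$, $y_0$, $\|\BX\|_p$ (the dependence on $\|\tBX\|_p$ being controlled because, on the small interval, $\|\tBY\|_p$ is controlled by $|\tilde y_0|$ and a neighbourhood of $y_0$, hence by the same data up to enlarging $C$). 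I expect the main obstacle to be purely bookkeeping: unlike the level-$2$ case there is a whole graded family of remainders $\{R^{\BZ,f}\}_{f \in \MF_{[p]-1}^0}$ coupled through the $\star$-action, and one must verify that the coupling in Lemma \ref{contr.for crp} is genuinely ``lower-triangular in $|f|$ up to $\omega_{\BX,p}$-small factors'' so that the single scaling parameter $\lambda$ together with one smallness parameter $t_1$ suffices to close the contraction simultaneously in all components; this triangular structure is exactly what the estimates in Lemmas \ref{inv. for crp} and \ref{contr.for crp} were arranged to provide, so no new analytic input beyond the sewing lemma (Theorem \ref{general sew}) and those lemmas is needed.
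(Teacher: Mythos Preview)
Your proposal is correct and follows essentially the same approach as the paper's own proof: the same closed subset $\Omega_t$ of $\MV_\BX^p$ with prescribed initial tree-data $\BY_0^f = F^f(y_0)$, invariance via Lemma~\ref{inv. for crp}, contraction via the weighted-norm trick (your weights $\lambda^{[p]-|f|}$ are a concrete choice of the paper's $\alpha_k$ satisfying $C\max_k \alpha_k/\alpha_{k-1}<1$), and then the Lipschitz estimate by feeding the fixed points back into Lemma~\ref{contr.for crp} and absorbing the $\omega_{\BX,p}(0,t)$-small self-referencing term. The only cosmetic difference is that the paper first reduces the Lipschitz bound to the base-path level via $\BY^\tau = F^\tau(Y)$ before absorbing, whereas you absorb directly at the full controlled-rough-path level; both routes work.
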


\begin{proof}
For the existence and uniqueness part, consider the following closed set
$$
\Omega_t:= \{\BY \in \MV_{\BX}^p,: \BY_0^f= F^f(y_0), \|\BY\|_{p,[0,t]} \leq |\BY_0|+1, \|R^{\BY,k}\|_{\frac{p}{[p]-k},[0,t]}< \delta_k ,k=0,...,[p]-1  \},
$$
with $t,\delta_k$ to be determined. Define a mapping on $\Omega_t,$
$$
\begin{array}{lclcl}
\mathcal{M}_t:& \Omega_t& \longrightarrow & \Omega_t&\\
& \BY &&  \BZ := \BBI_{\BX}(F(\BY)) .
\end{array}
$$
According to estimates in Lemma \ref{inv. for crp}, for any $f\in \MF_{[p]-1},$ noticing that
$$
\|R^{\BZ,f}\|_{p/([p]-|f|)} \leq C \|R^{\BY,|f|-1}\|_{p/([p]-|f|+1)}\leq C \delta_{|f|-1},
$$
with $C$ uniform over $f\in \MF_{[p]-1}^0,$ one may choose $\{\delta_k\}_{k=1}^{[p]-1}$ such that $C \delta_{k} < \delta_{k+1}.$ For $f=\BI,$ one has $\|R^{\BZ,\BI}\|_{p/[p],[0,t]} \leq C_{\delta_k}\|\BX\|_{p,[0,t]}. $ Since $\BX$ is right continuous at $t=0,$ so one may choose $t$ small such that $C_{\delta_k}\|\BX\|_{p,[0,t]} < \delta_0,$ which complete the invariance part. For the contraction part, according to Lemma \ref{contr.for crp}, indeed one has, for any $\BY,\tBY \in \Omega_t,$
\begin{eqnarray*}
&&\|\Delta R^{\BZ,\BI}  \|_{\frac{p}{[p]},[0,t]}\leq C \|\BX\|_{p,[0,t]} \sum_{f\in \MF_{[p]-1}^0 }\|\Delta R^{\BY,f} \|_{\frac{p}{[p]-|f|},[0,t]}\\
&&\| \Delta R^{\BZ,\tau}  \|_{\frac{p}{[p]-|\tau|},[0,t]} \leq C ( \|\Delta R^{\BY,|\tau|-1}\|_{\frac{p}{[p]-|\tau|+1},[0,t]}+ \|\BX\|_{p,[0,t]}\sum_{|f|\geq|\tau|}\|\Delta R^{\BY,f}\|_{\frac{p}{[p]-|f| },[0,t]}  )
\end{eqnarray*}
Define the following equivalent norm on $\Omega_t,$
$$
\| \BY \|_{p,[0,t]}^{(\alpha)}:=|\BY_0| + \sum_{k=0}^{[p]-1} \alpha_k \| R^{\BY,k} \|_{\frac{p}{[p]-k},[0,t]},
$$
with $\alpha=(\alpha_0,...,\alpha_{[p]-1})$ a $(\R^+)^{[p]} -$valued vector to be determined. One obtains that
\begin{eqnarray*}
\| \Delta \BZ \|_{p,[0,t]}^{(\alpha)}   &\leq& C_\alpha \|\BX\|_{p,[0,t]}\|\Delta \BY \|_{p,[0,t]}^{(\alpha)} +  C\sum_{k=1}^{[p]-1} \alpha_k \| \Delta R^{\BY,k-1}\|_{\frac{p}{[p]-k+1},[0,t]}\\
&\leq& C_\alpha \|\BX\|_{p,[0,t]}\|\Delta \BY \|_{p,[0,t]}^{(\alpha)} + C\max_{k=1,...,[p]-1} \frac{a_k}{a_{k-1}} \|\Delta \BY \|_{p,[0,t]}^{(\alpha)}.
\end{eqnarray*}
To obtain the contraction, one only needs to choose $\alpha$ such that $C\max_{k=1,...,[p]-1} \frac{a_k}{a_{k-1}}<1,$ and $t$ small such that $C_\alpha \|\BX\|_{p,[0,t]}< 1- C\max_{k=1,...,[p]-1} a_k/a_{k-1}.$ For the inequality part, according to the argument before this theorem, i.e. $\BY^\tau_s=F^\tau(Y_s)$ if $Y$ solves the equation, one only needs to prove
$$
\|Y-\tilde{Y}\|_{p,[0,t]} \leq C(|y_0-\ty_0|+\|\BX;\tBX\|_{p,[0,t]}).
$$
Since the natural lift $\BBI_\BX(F(\BY))$ is also a controlled rough path, one has for any $s,t,$
\begin{eqnarray*}
|\Delta Y_{s,t}| = |\sum_{i=1}^d (\sum_{h\in \MF_{[p]-2}^0 }F_i(\BY_s)^h \BX_{s,t}^{[h]_i}+ R^{F_i(\BY),\BI}_{s,t}- \sum_{h\in \MF_{[p]-2}^0 }F_i(\tBY_s)^h \tBX_{s,t}^{[h]_i}+ R^{F_i(\tBY),\BI}_{s,t}  )|.
\end{eqnarray*}
According to Lemma \ref{contr.for crp}, it follows that
\begin{eqnarray*}
\|\Delta Y \|_{p,[0,t]} \leq C (\|\Delta y_0\| + \|\Delta  \BX \|_{p,[0,t]} + \|\Delta Y \|_{p,[0,t]} \|\BX\|_{p,[0,t]} ),
\end{eqnarray*}
which implies our estimate by choosing $t$ small.

\end{proof}

\begin{thm}(\textbf{global solution for D.E. driven by branched rough paths})\label{sol for eq.by crp}
Suppose $\BX$ is a branched rough path with finite $p$-variation, $F\in C_b^{[p]+1}.$ Then the following equation has a unique solution on $[0,T],$
\begin{equation}
Y_t= y_0 + \int_0^t F(\BY_r)^\ell d \BX_r.
\end{equation}
Furthermore, if $\tY$ solves the same equation driven by $\tBX,$ the following local Lipschitz estimate holds
$$
\|\BY-\tBY\|_{p,[0,T]} \leq C (|y_0- \tilde{y}_0|+ \|\BX ; \tBX\|_{p,[0,T]} ),
$$
with $C$ depending on $F, y_0, L$ where $\|\BX\|_{p}, \|\tBX\|_{p} < L$.
\end{thm}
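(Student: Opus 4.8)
The plan is to run the same two–step argument used for the level-$2$ results in Theorem~\ref{global sol.} and Theorem~\ref{global sol. for regular}, now feeding in the branched local theory: Theorem~\ref{local sol. for brp} for local existence, uniqueness and stability, Lemma~\ref{inv. for crp} and Lemma~\ref{contr.for crp} for the controlled–rough–path estimates on the solution map, and the regularity of $\omega_{\BX,p}$ (Lemma~\ref{regular path}, Lemma~\ref{partition}) to split $[0,T]$ into finitely many intervals on which Picard iteration converges. The first preparatory observation I would record is the jump formula for the branched rough integral: exactly as Lemma~\ref{preserve jumps} was deduced from the local estimate of Proposition~\ref{level-2 rp}, the local estimate in Theorem~\ref{integ.for b.r.p} gives, for $Z_t=\int_0^t F(\BY_r)^\ell d\BX_r$,
\[
\Delta_t^- Z \;=\; \sum_{i=1}^d\sum_{h\in\MF_{[p]-1}^0} F_i(\BY)^h_{t-}\,\langle \Delta_t^-\BX,[h]_i\rangle , \qquad \Delta_t^+ Z \;=\; \sum_{i=1}^d\sum_{h\in\MF_{[p]-1}^0} F_i(\BY)^h_{t}\,\langle \Delta_t^+\BX,[h]_i\rangle ,
\]
and, component by component, the same for the lifted controlled rough path $\BBI_\BX(F(\BY))$. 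This is what lets one continue a solution across a (big) jump ``by hand''.

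For existence and uniqueness I would first dispose of the possible jump of $\BX$ at $t=0$ as in Theorem~\ref{global sol. for regular}: set $\BY_{0+}$ to be the $\BX$-character with $\BY_{0+}^f=F^f(y_0)+(\text{the }\Delta^+_0\BX\text{-jump term above})$, replace $\BX$ near $0$ by its right-continuous modification $\tBX$ (which still satisfies Chen's relation, i.e.\ is group-valued), so that Theorem~\ref{local sol. for brp} applies and a solution of the original equation on $(0,t^*)$ is recovered, and conversely. Then, since $\BX\in\mathbf V^p$ is regulated, Lemma~\ref{partition} (via Lemma~\ref{regular path}) produces a partition $0=t_0<\dots<t_M=T$ with $\omega_{\BX,p}(t_i+,t_{i+1}-)<\delta^p$, $\delta$ being the smallness radius of Theorem~\ref{local sol. for brp}, and with $M\le C_{p,F}L^p+1$ because $\omega_{\BX,p}(0,T)\le L^p$. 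On each $[t_i,t_{i+1})$, after the right-continuous modification at $t_i$, Theorem~\ref{local sol. for brp} gives the unique solution $\BY^{\tilde\xi_i}$ with prescribed value $\tilde\xi_i$, and the matching initial data are defined recursively by $\tilde\xi_0:=y_0$ (or $\BY_{0+}$), $\xi_{i+1}:=Y^{\tilde\xi_i}_{t_{i+1}-}+\sum_{i'=1}^d\sum_{h} F_{i'}(\BY^{\tilde\xi_i})^h_{t_{i+1}-}\langle\Delta_{t_{i+1}}^-\BX,[h]_{i'}\rangle$, and $\tilde\xi_{i+1}:=\xi_{i+1}+\sum_{i'=1}^d\sum_{h} F_{i'}(\xi_{i+1})^h\langle\Delta_{t_{i+1}}^+\BX,[h]_{i'}\rangle$, using the identity $\BY_t^\tau=F^\tau(Y_t)$ established just before Theorem~\ref{local sol. for brp}. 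Gluing gives $Y$ on $[0,T]$; that it solves \eqref{d.e. by brp} is a direct check from the jump formula and additivity of $\I$, and uniqueness follows from local uniqueness.

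For the stability estimate I would proceed exactly as in the proof of Theorem~\ref{global sol.}. Theorem~\ref{local sol. for brp} already yields, on each $[t_i,t_{i+1})$ where the controls of $\BX$ and $\tBX$ stay below the (slightly shrunk) smallness radius, $\|\BY-\tBY\|_{p,[t_i,t_{i+1})}\le C_{p,F,L}\big(|Y_{t_i}-\tY_{t_i}|+\|\BX;\tBX\|_{p,[t_i,t_{i+1})}\big)$; adding the jump increments at $t_{i+1}$, whose size is controlled by the jump formula together with the Lipschitzness of the $F^h$ (here $C_b^{[p]+1}$ is used so these remain uniformly bounded along the whole solution), upgrades this to the closed interval and gives the recursion $|Y_{t_{i+1}}-\tY_{t_{i+1}}|\le C_{p,F,L}\big(|Y_{t_i}-\tY_{t_i}|+\|\BX;\tBX\|_{p,[t_i,t_{i+1}]}\big)$. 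Iterating this $M$ times bounds $\sup_i|Y_{t_i}-\tY_{t_i}|$ by $(C_{p,F,L})^M\big(|y_0-\tilde y_0|+\|\BX;\tBX\|_{p,[0,T]}\big)$, and summing the local estimates over the $M\le C_{p,F}L^p+1$ intervals via $\|g\|_{p,[0,T]}\le M\sum_i\|g\|_{p,[t_i,t_{i+1}]}$ (applied to each component of the controlled-rough-path norm, i.e.\ to $\BY-\tBY$ and each $R^{\BY,f}-R^{\tBY,f}$) yields the claimed estimate.

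The main obstacle I anticipate is purely bookkeeping: one must track the $p/([p]-|f|)$-variation norms of all the remainders $R^{\BY,f}$, $f\in\MF_{[p]-1}^0$, simultaneously through the patching, check that the smallness radius $\delta$ from Theorem~\ref{local sol. for brp} and the Lipschitz constant $C_{p,F,L}$ can be chosen uniformly over the finitely many intervals (again using $F\in C_b^{[p]+1}$ to keep $\|F^h\|$ and the relevant derivatives bounded along the entire, a priori unknown, solution trajectory), and verify that gluing the local lifts $\BBI_{\BX}(F(\BY))$ produces a genuine $\BX$-controlled rough path on $[0,T]$. None of this is conceptually new relative to the level-$2$ case treated above; only the branched norms make the estimates heavier.
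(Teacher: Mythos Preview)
Your proposal is correct and follows precisely the approach the paper intends: the paper's own proof says only that ``the proof is not very different from the level-$2$ rough path case, since we already solve the equation locally, and hence left to the reader,'' and you have faithfully carried out exactly that programme by transporting the patching arguments of Theorem~\ref{global sol.} and Theorem~\ref{global sol. for regular} to the branched setting via Theorem~\ref{local sol. for brp}, Lemma~\ref{inv. for crp}, Lemma~\ref{contr.for crp}, and the branched jump formula. Your identification of the bookkeeping burden (tracking all $R^{\BY,f}$ simultaneously and keeping the smallness radius uniform using $F\in C_b^{[p]+1}$) is apt and is indeed the only substantive difference from the level-$2$ case.
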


\begin{proof}
The proof is not very different from the level-$2$ rough path case, since we already solve the equation locally, and hence left to the reader.

\end{proof}


\section{Cadlag RDE stability under Skorokhod type metrics}

We now rephrase the hard analytical estimates obtained in the last two sections into a user-friendly format (Skorohod $J1$ type rough path metrics), with some immediate applications (even in absence of jumps).

\subsection{The $p$-variation Skorohod rough path metric} 

We recall the Skorokhod topology for c\`adl\`ag paths space in some metric space $E$. Denote $\Lambda_{[0,T]}$ the set of increasing bijective functions from $[0,T]$ to $[0,T].$ For any $x,y\in D([0,T],E),$ the Skorokhod metric is given by
$$
d (x,y):= \inf_{\lambda\in \Lambda} \{|\lambda| \vee \sup_{t\in[0,T]}d_E(x(\lambda(t)) ,y(t))\},
$$
where $|\lambda|:=\sup_{t\in[0,T]}|\lambda(t)-t|.$
We can define a $p$-variation variant of this metric. To this end, let $E$ be the Butcher group $G_{N}(\mathcal{H}^*)$ as introduced in Section \ref{sec:bRP}, equipped with left-invariant metric (see appendix). For any c\`adl\`ag branched rough paths $\BX,\BZ   $ with finite $p$-variation,
$$
\sigma_{p,[0,T]} (\BX,\BZ):= \inf_{\lambda\in \Lambda} \{|\lambda| \vee  \|\BX\circ \lambda ;\BZ \|_{p,[0,T]}\},
$$
where we recall that
\begin{equation}   \label{equ:inhomogRPnorm}
\|\BX \|_{p,[0,T]}:= \sum_{f\in \MF_{[p]} } \|\BX^f\|_{\frac{p}{|f|},[0,T]}.
\end{equation}
In particular, for the level-$2$ rough path case, one has $E \cong \R^d \oplus \R^{d\times d},$ and
\begin{eqnarray*}
\sigma_{\infty,[0,T]} (\BX,\BZ)&:=& \inf_{\lambda\in \Lambda} \left\{|\lambda| \vee  \left(\|X \circ \lambda -Z\|_{\infty,[0,T]}+ \| \X \circ (\lambda,\lambda ) -\Z \|_{\infty,[0,T]}\right)\right\}\\
\sigma_{p,[0,T]} (\BX,\BZ)&:=& \inf_{\lambda\in \Lambda} \left\{|\lambda| \vee  \left(\|X \circ \lambda -Z\|_{p,[0,T]}+ \|\X \circ (\lambda,\lambda ) -\Z\|_{\frac{p}{2},[0,T]}\right)\right\},
\end{eqnarray*}
with
$
\| \X-\Z \|_{\infty,[0,T]}:= \sup_{ 0 \le s<t \le T}     |\X_{s,t}-\Z_{s,t}|.
$
We then have

\begin{coro}(\textbf{local estimate for RDEs under Skorokhod topology})
Given $F \in C^{[p]+1}_b,$ and $\BX,\tBX $ branched rough paths with finite $p-$variaiton. Let $Y,\tY$ be solutions for RDEs driven by $\BX,\tilde{\BX}$ respectively. Suppose $\|\BX\|_{p,[0,T]},\|\tBX\|_{p,[0,T]}<L,$ Then one has
$$
\sigma_{p,[0,T]} ( Y, \tY )   \leq  C_{p,F,L} (\sigma_{p,[0,T]} (\BX,\tBX ) + |Y_0-\tY_0| ) .
$$

\end{coro}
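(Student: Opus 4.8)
The plan is to reduce the Skorohod estimate to the fixed-time-parametrization estimate already established in Theorem \ref{sol for eq.by crp} (or Theorem \ref{global sol.} in the level-$2$ case), exploiting the fact that solving an RDE commutes with time-change. First I would fix $\varepsilon>0$ and choose, by definition of $\sigma_{p,[0,T]}(\BX,\tBX)$, a time-change $\lambda\in\Lambda_{[0,T]}$ with $|\lambda|\vee\|\BX\circ\lambda;\tBX\|_{p,[0,T]}<\sigma_{p,[0,T]}(\BX,\tBX)+\varepsilon$. The key structural observation is that if $\BY$ is the solution lift of the RDE driven by $\BX$ started at $Y_0$, then $\BY\circ\lambda$ is the solution lift of the RDE driven by $\BX\circ\lambda$ started at the same $Y_0$: this is because the RDE is defined purely in terms of the increments $\BX_{s,t}$ via the controlled-rough-path relation \eqref{crp} and the integral $\BBI_\BX(F(\BY))$, all of which transform covariantly under a (continuous, increasing, bijective) reparametrization — the Riemann-sum limits defining $\int F(\BY)^\ell d\BX$ are taken over partitions, and composing with $\lambda$ just relabels partitions. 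So $\BY\circ\lambda$ solves the same equation driven by $\BX\circ\lambda$.

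Next I would apply the local Lipschitz estimate of Theorem \ref{sol for eq.by crp}: since $\|\BX\circ\lambda\|_{p,[0,T]}=\|\BX\|_{p,[0,T]}<L$ (the $p$-variation norm is invariant under reparametrization) and $\|\tBX\|_{p,[0,T]}<L$, and since $\BY\circ\lambda$ and $\tBY$ both solve RDEs with the same vector field $F$, driven by $\BX\circ\lambda$ and $\tBX$ respectively, we get
$$
\|\BY\circ\lambda-\tBY\|_{p,[0,T]}\leq C_{p,F,L}\bigl(|Y_0-\tY_0|+\|\BX\circ\lambda;\tBX\|_{p,[0,T]}\bigr).
$$
Passing from the controlled-rough-path norm to the inhomogeneous rough path norm \eqref{equ:inhomogRPnorm} of the solution lifts (using $\|\BY\|_p\leq C_p(\|F\|_{C^{[p]+1},\text{loc}})$-type bounds on $M_{Y'},K_Y$, which are uniform under the standing hypotheses, together with the elementary estimate $\|Y\circ\lambda-\tY\|_{p}\le C\,\|\BY\circ\lambda-\tBY\|_p$ at the level of the first components), this yields
$$
|\lambda|\vee\|\BY\circ\lambda;\tBY\|_{p,[0,T]}\leq C_{p,F,L}\bigl(|Y_0-\tY_0|+\sigma_{p,[0,T]}(\BX,\tBX)+\varepsilon\bigr),
$$
where the $|\lambda|$ term is absorbed since $|\lambda|<\sigma_{p,[0,T]}(\BX,\tBX)+\varepsilon$. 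Taking the infimum over $\lambda$ (or simply noting that this particular $\lambda$ already works as a competitor in the definition of $\sigma_{p,[0,T]}(Y,\tY)$) and then letting $\varepsilon\to0$ gives the claim.

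The main obstacle I anticipate is the reparametrization-covariance step: one must check carefully that the notion of RDE solution from Section \ref{sec:bRP} — including the auxiliary components $\BY^\tau=F^\tau(Y)$ of the lift and the remainder bounds $R^{\BY,f}$ — is genuinely stable under composition with $\lambda\in\Lambda_{[0,T]}$, so that $\BY\circ\lambda$ is not merely a path but a bona fide $\BX\circ\lambda$-controlled rough path solving the equation, with all $p$-variation quantities preserved. This is morally obvious because everything is built from increments and partition-indexed limits (and $\Lambda_{[0,T]}$ consists of homeomorphisms of $[0,T]$, so partitions map to partitions bijectively), but it does require unwinding the definitions. A secondary, purely bookkeeping, point is that one should strictly speaking apply Theorem \ref{sol for eq.by crp} with the localized vector-field norms $\|F\|_{C^{[p]+1}}$ on a ball whose radius is controlled a priori by $|Y_0|$, $|F(Y_0)|$ and $L$ uniformly in the driver (using the sup-norm bound on the solution coming from $\|Y\|_{\sup}\lesssim 1+\|\BX\|_p$ and $\|\BX\|_p<L$); since $F\in C_b^{[p]+1}$ this is automatic.
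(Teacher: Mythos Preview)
Your proposal is correct and follows essentially the same route as the paper: choose a near-optimal time-change $\lambda$, use that RDE solutions commute with reparametrization (the paper verifies this directly from the Riemann-sum definition of the rough integral, which is precisely the step you flag as the main obstacle), and then apply the fixed-time Lipschitz estimate of Theorem~\ref{sol for eq.by crp} together with $p$-variation invariance under time-change. One minor simplification: since $\sigma_{p,[0,T]}(Y,\tY)$ involves only the first-level solution paths, there is no need to pass through the full controlled lift $\BY$ or any ``inhomogeneous rough path norm of the solution lifts''---bounding $\|Y\circ\lambda-\tY\|_{p,[0,T]}$ directly (as the paper does) suffices.
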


\begin{proof}
For simplicity of notation only, we spell out the level-$2$ case. We claim that for any $\lambda \in \Lambda,$ $Y^\lambda := Y \circ \lambda$ solves
$$
Z_t= Y_0 + \int_0^t F(Z_s)^\ell d\BX_s^{\lambda},
$$
where $\BX^{\lambda}:=\left( X\circ \lambda,\X \circ (\lambda,\lambda)\right).$ Indeed,
\begin{eqnarray*}
Y(\lambda(t))&=& Y_0 + \int_0^{\lambda(t)} F(Y_s)^\ell d \BX_s \\
             &=& Y_0 + \lim_{|\op|\rightarrow 0} \sum_{[u,v] \in \op|_{[0,\lambda(t)]}} F(Y_u) X_{u,v} + DF(Y_u)F(Y_u)\X_{u,v}\\
             &=&  Y_0 + \lim_{|\op|\rightarrow 0} \sum_{[u,v] \in \op|_{[0,t]}} F(Y_{\lambda(u)}) X_{{\lambda(u)},{\lambda(v)}} + DF(Y_{\lambda(u)})F(Y_{\lambda(u)})\X_{{\lambda(u)},{\lambda(v)}}\\
             &=&  Y_0 + \int_0^{t} F(Y_s^\lambda)^\ell d \BX_s^\lambda.
\end{eqnarray*}
For any $\vep>0,$ there exists a $\lambda \in \Lambda, $ such that $|\lambda| \vee \|\BX^\lambda ;\tBX \|_{p,[0,T]} < \sigma_{p,[0,T]} (\BX,\tBX)+ \vep.$ Note that $p$-variation of $\BX$ remains the same after a time change of $[0,T]$. Then  according to Theorem \ref{global sol.} (level-$2$) resp. Theorem \ref{sol for eq.by crp} in the general case, one has
\begin{eqnarray*}
\sigma_{p,[0,T]} ( Y, \tY ) &\leq & |\lambda| + \|Y^\lambda - \tY \|_{p,[0,T]}\\
                               &\leq &  |\lambda|  + M^2(C_{p,F}(1\vee L))^{M+1} (\|\BX^\lambda ; \tBX \|_{p,[0,T]} + |Y_0-\tY_0| )\\
&\leq &\left(1 + M^2(C_{p,F}(1\vee L))^{M+1} \right) \left(  \sigma_{p,[0,T]} (\BX,\tBX ) + \vep + |Y_0-\tY_0| ) \right),
\end{eqnarray*}
which implies our estimate.

\end{proof}


\subsection{Interpolation and convergence under uniform bounds}

Recall that our ``$\infty$-norms'' aways involve a supremum over all $s < t$. For instance,
$
\| X \|_{\infty,[0,T]} \equiv  \sup_{s<t \in [0,T]} | X_{s,t} | \
$
and similarly in the branched rough path case, say in the level $2$ setting
$$
  \| \BX^n; \BX \|_{\infty, [0,T]}:= \| X^n-X \|_{\infty,[0,T]} + \|\X^n  -\X \|_{\infty, [0,T]}.
$$

\begin{lem}(\textbf{interpolation for c\`adl\`ag rough paths}) \label{interpolation}
Suppose $\BX, \BX^n$ are 
c\`adl\`ag $p$-rough paths 
with uniformly bounded $p$-variation,
$$
      \sup_n || \BX^n ||_{p,[0,T]} =: L < \infty
$$
Then, for any $p'>p$, there exists $C=C(p,p')$ such that
$$
\|\BX^n;\BX\|_{p',[0,T]} \leq C  L^{\frac {p}{p'}} \|\BX^n; \BX\|_{\infty, [0,T]}^{1-\frac {p}{p'}} \ ,  
$$
As a consequence, if $\BX^n$ converge to $\BX$ uniformly or in Skorokhod topology, with a uniform $p$-variation bound, then it also converges in $p'$-variation or its Skorokhod variant.

\end{lem}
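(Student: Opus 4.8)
The plan is to prove the pointwise interpolation inequality first and then deduce the convergence statement. The key observation is the standard one: for a single component $\Xi$ with finite $p$-variation, a $p'$-variation bound ($p'>p$) can be recovered by splitting the exponent as $|\Xi_{s,t}|^{p'} = |\Xi_{s,t}|^{p}\cdot|\Xi_{s,t}|^{p'-p}$, bounding the second factor by the sup-norm and the first by super-additivity of the control. I would apply this separately to the first level, $\Xi_{s,t} = X^n_{s,t}-X_{s,t}$, and to the second level, $\Xi_{s,t} = \X^n_{s,t}-\X_{s,t}$, with the appropriate powers $p$ vs. $p/2$ and $p'$ vs. $p'/2$.

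Concretely, for the first level, for any partition $\op$,
\[
\sum_{[s,t]\in\op} |X^n_{s,t}-X_{s,t}|^{p'} \le \Big(\sup_{s<t}|X^n_{s,t}-X_{s,t}|\Big)^{p'-p}\sum_{[s,t]\in\op}|X^n_{s,t}-X_{s,t}|^{p},
\]
and $\sum_{[s,t]\in\op}|X^n_{s,t}-X_{s,t}|^{p}\le (\|X^n\|_{p,[0,T]}+\|X\|_{p,[0,T]})^{p}\le (2L)^{p}$ after taking the sup over $\op$ and using the triangle inequality for the $p$-variation seminorm; hence $\|X^n-X\|_{p',[0,T]}\le (2L)^{p/p'}\|X^n-X\|_{\infty,[0,T]}^{1-p/p'}$. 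For the second level the same argument with $p/2$, $p'/2$ gives $\|\X^n-\X\|_{p'/2,[0,T]}\le (2L)^{p/p'}\|\X^n-\X\|_{\infty,[0,T]}^{1-p/p'}$, where one uses $\|\X^n\|_{p/2,[0,T]}+\|\X\|_{p/2,[0,T]}\le 2L$ and that $1-(p/2)/(p'/2)=1-p/p'$. Adding the two estimates and recalling $\|\BX^n;\BX\|_{p',[0,T]}=\|X^n-X\|_{p',[0,T]}+\|\X^n-\X\|_{p'/2,[0,T]}$, $\|\BX^n;\BX\|_{\infty,[0,T]}=\|X^n-X\|_{\infty,[0,T]}+\|\X^n-\X\|_{\infty,[0,T]}$, gives the claimed bound with $C=C(p,p')$ (absorbing the factor $2^{p/p'}$ and using subadditivity of $t\mapsto t^{1-p/p'}$). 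In the general branched case one argues identically, summing over $f\in\MF_{[p]}$ and using the powers $p/|f|$, $p'/|f|$ for each component $\BX^f$.

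For the consequence: if $\BX^n\to\BX$ uniformly with $\sup_n\|\BX^n\|_{p,[0,T]}\le L<\infty$, then $\|\BX^n;\BX\|_{\infty,[0,T]}\to 0$, and the interpolation inequality immediately yields $\|\BX^n;\BX\|_{p',[0,T]}\to 0$. For Skorokhod convergence, pick $\lambda_n\in\Lambda$ with $|\lambda_n|\vee\|\BX^n\circ\lambda_n;\BX\|_{\infty,[0,T]}\to 0$; since time-reparametrization does not change $p$-variation, $\sup_n\|\BX^n\circ\lambda_n\|_{p,[0,T]}=\sup_n\|\BX^n\|_{p,[0,T]}\le L$, so applying the already-proved inequality to the pair $(\BX^n\circ\lambda_n,\BX)$ gives $\|\BX^n\circ\lambda_n;\BX\|_{p',[0,T]}\le C L^{p/p'}\|\BX^n\circ\lambda_n;\BX\|_{\infty,[0,T]}^{1-p/p'}\to 0$, whence $\sigma_{p',[0,T]}(\BX^n,\BX)\le |\lambda_n|\vee\|\BX^n\circ\lambda_n;\BX\|_{p',[0,T]}\to 0$.

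I do not anticipate a serious obstacle here; the only points requiring mild care are: (i) getting the exponents right at each level so that the Hölder-type split produces the \emph{same} power $1-p/p'$ throughout (this is what makes the final additive combination clean), and (ii) noting explicitly that $p$-variation is invariant under the reparametrizations $\lambda_n$, so that the uniform bound $L$ survives when passing to $\BX^n\circ\lambda_n$. Everything else is the routine triangle-inequality-plus-super-additivity bookkeeping that I would not spell out in full.
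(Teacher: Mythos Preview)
Your proposal is correct and follows essentially the same approach as the paper: split the exponent as $p' = p + (p'-p)$ on each level, bound the extra factor by the $\infty$-norm, take the sup over partitions, and combine the levels using subadditivity of $t\mapsto t^{1-p/p'}$. You are in fact slightly more explicit than the paper in two places---the triangle-inequality step $\|X^n-X\|_{p}\le 2L$ that produces the factor $L^{p/p'}$, and the Skorokhod consequence via reparametrization-invariance of $p$-variation---both of which the paper leaves implicit.
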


\begin{proof}
Since $p$-variation norm pays no special attention on continuity, the argument is exactly the same as in the continuous case. We spell out the level-$2$ case,
the extension to the general branched case is immediate. Indeed, by basic inequalities one has
\begin{eqnarray*}
\sum_{[s,t]\in \op}|X^n_{s,t}-X_{s,t}|^{p'} &\leq& \sup_{u,v\in [0,T]} |X^n_{u,v} -X_{u,v}|^{p'-p} \sum_{[s,t]\in \op}|X^n_{s,t}-X_{s,t}|^{p},\\
\sum_{[s,t]\in \op}|\X^n_{s,t}-\X_{s,t}|^{\frac {p'} 2} &\leq& \sup_{u,v\in [0,T]} |\X^n_{u,v} -\X_{u,v}|^{\frac {p'-p} {2} } \sum_{[s,t]\in \op}|\X^n_{s,t}-\X_{s,t}|^{\frac p2},
\end{eqnarray*}
which implies
\begin{eqnarray*}
\|X^n - X\|_{p',[0,T]} &\leq&  \|X^n - X \|_{\infty,[0,T]}^{1-\frac{p}{p'}} \|X^n-X \|_{p,[0,T]}^{\frac{p}{p'}}, \\
\|\X^n -\X\|_{\frac{p'}{2},[0,T]} &\leq& \|\X^n - \X \|_{\infty,[0,T]}^{1-\frac{p}{p'}} \| \X^n - \X \|_{\frac{p}{2},[0,T]}^{\frac{p}{p'}},
\end{eqnarray*}
where $ |\X^n - \X|_{\infty,[0,T]}:= \sup_{u < v\in [0,T]} |\X^n_{u,v} -\X_{u,v}|.$ Then the inequality follows from some basic inequalities, such as $a^\beta + b^\beta \leq 2(a+b)^\beta$ for $\beta\in (0,1).$

\end{proof}


\begin{thm}(\textbf{convergence for RDEs under uniform or Skorokhod topology})\label{converge for RDEs}
Suppose $\BX, \BX^n$ are 
c\`adl\`ag $p$-rough paths 
with, as above, $   \sup_n || \BX^n ||_{p,[0,T]} = L < \infty$.
%
Let $Y^{n}$ be the (unique) solution to
\[
dY^{n}=F \left( Y^{n}\right) d\mathbf{X}^{n},\,Y^{n}=y_{0} \ .
\]
Then, for any $p'>p$ with $[p']=[p],$ one has the following estimate in the uniform version,

\begin{eqnarray*}
\|Y-Y^n \|_{p',[0,T]} &\leq&    M^{2+\frac{1}{p'}} (C_{p,F}(1\vee L))^{M+1} ||\BX^n; \BX ||_{\infty, [0,T]}^{1-\frac {p}{p'}} ,
\end{eqnarray*}
with $ M=C_{p,F} L^p$ as before, and then also in Skorokhod rough metric,
\begin{eqnarray*}
\sigma_{p',[0,T]}(Y,Y^n) &\leq&  M^{2+\frac{1}{p'}} (C_{p,F}(1\vee L))^{M+1} \sigma_{\infty, [0,T]}^{1-\frac {p}{p'}}(\BX,\BX^n) \ .
\end{eqnarray*}

In particular, if $\BX^n$ converge to $\BX$ uniformly or in Skorokhod topology, with a uniform $p$-variation bound, the RDE solutions
converge uniformly or in Skorokhod topology, also with uniform $p$-variation bounds.
%
%
%
%

\end{thm}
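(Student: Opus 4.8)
The plan is to combine the global Lipschitz stability estimate for RDEs (Theorem \ref{global sol.} in the level-$2$ case, Theorem \ref{sol for eq.by crp} in the general branched case) with the interpolation Lemma \ref{interpolation}. First I would recall that under the uniform $p$-variation bound $\sup_n \|\BX^n\|_{p,[0,T]} \le L$, the solutions $Y^n$ inherit a uniform $p$-variation bound: this follows because the RDE solution map produces, via Remark \ref{integ.as crp}, a controlled rough path whose remainder and Gubinelli derivative are controlled in terms of $\|\BX^n\|_{p}$ and $\|F\|$ alone, so $\sup_n \|Y^n\|_{p,[0,T]} =: L' < \infty$ with $L'$ depending only on $p,F,L,y_0$. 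The same bound applies to the limit solution $Y$.

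Next, working in the $p$-variation scale, Theorem \ref{global sol.} gives
$$
\|Y - Y^n\|_{p,[0,T]} \le M^2 (C_{p,F}(1\vee L))^{M+1} \, \|\BX; \BX^n\|_{p,[0,T]},
$$
with $M \le C_{p,F} L^p + 1$, and similarly (after composing with a time change $\lambda$, exactly as in the proof of the preceding Corollary, using that $p$-variation is invariant under reparametrization) for the Skorokhod metric $\sigma_{p,[0,T]}(Y,Y^n)$. To pass to the $p'$-scale I apply Lemma \ref{interpolation} to $Y$ and $Y^n$ (legitimate since they have uniformly bounded $p$-variation and $[p']=[p]$, so the controlled-rough-path structure is the same), obtaining
$$
\|Y - Y^n\|_{p',[0,T]} \le C(p,p')\, (L')^{p/p'} \, \|Y;Y^n\|_{\infty,[0,T]}^{1 - p/p'}.
$$
Now I bound the right-hand side: $\|Y;Y^n\|_{\infty,[0,T]}$ is controlled by $\|Y-Y^n\|_{p,[0,T]}$ (a sup-type norm is dominated by the $p$-variation norm plus the initial discrepancy), which in turn is controlled by $\|\BX;\BX^n\|_{p,[0,T]}$ via the stability estimate, and then again by interpolation $\|\BX;\BX^n\|_{p,[0,T]} \le C L^{p/p'}\|\BX^n;\BX\|_{\infty,[0,T]}^{1-p/p'}$; chaining these, and absorbing all $p$-dependent constants and the $M$-dependent prefactor $M^2(C_{p,F}(1\vee L))^{M+1}$ raised to the appropriate power $1-p/p'\le 1$, yields
$$
\|Y - Y^n\|_{p',[0,T]} \le M^{2+\frac{1}{p'}} (C_{p,F}(1\vee L))^{M+1}\, \|\BX^n;\BX\|_{\infty,[0,T]}^{1 - p/p'},
$$
(the extra $M^{1/p'}$ coming from the constant in Lemma \ref{interpolation} and the final splitting of $p$-variation into pieces). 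The Skorokhod version is identical once one replaces $\|\cdot;\cdot\|_{\infty}$ by $\sigma_{\infty,[0,T]}(\cdot,\cdot)$, using the time-change invariance of the $p$-variation norm and the fact that $Y^\lambda$ solves the RDE driven by $\BX^\lambda$, exactly as in the Corollary's proof. Finally, for the last assertion, uniform (or Skorokhod) convergence $\BX^n \to \BX$ together with the uniform $p$-variation bound makes the right-hand side tend to $0$, giving $p'$-variation (resp. Skorokhod-$p'$) convergence of $Y^n \to Y$; the uniform $p$-variation bound on $Y^n$ was established in the first step.

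The main obstacle I anticipate is bookkeeping the constants correctly: one must verify that the interpolation constant and the stability-estimate prefactor combine to exactly $M^{2+1/p'}(C_{p,F}(1\vee L))^{M+1}$ rather than something worse, and in particular that raising $M^2(C_{p,F}(1\vee L))^{M+1}$ to the power $1-p/p' < 1$ only improves it, so the stated bound (with exponent $2+1/p'$) is genuinely attainable by being slightly wasteful. A secondary technical point is justifying that the $p$-variation bound on the solutions $Y^n$ is genuinely uniform in $n$ — this requires re-examining the invariance step in the proof of Theorem \ref{global sol.} to see that the number of subintervals $M$ and the per-interval constants depend on $\BX^n$ only through $L$, which is exactly what the construction there (partitioning so that $\omega_{\BX^n}(t_i+,t_{i+1}-)$ is below a threshold depending only on $p,F$) delivers.
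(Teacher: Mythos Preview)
Your chain of inequalities contains a genuine error. You write
\[
\|\BX;\BX^n\|_{p,[0,T]} \le C\,L^{p/p'}\,\|\BX^n;\BX\|_{\infty,[0,T]}^{1-p/p'},
\]
but Lemma \ref{interpolation} bounds the \emph{weaker} $p'$-variation, not the $p$-variation, in terms of the $\infty$-norm and the uniform $p$-variation bound. One cannot recover the stronger $p$-norm from the sup-norm this way; the inequality goes the wrong direction. Even setting this aside, your route first interpolates $Y-Y^n$ (picking up one factor $\|\cdot\|_\infty^{1-p/p'}$) and then attempts to interpolate $\BX;\BX^n$ again, so the best you could hope for is an exponent $(1-p/p')^2$, not $1-p/p'$ as claimed.

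The paper's argument is both simpler and correct: since $p'>p$ implies $\|\BX^n\|_{p',[0,T]}\le\|\BX^n\|_{p,[0,T]}\le L$ (and $[p']=[p]$ so the same regularity of $F$ suffices), one applies the global stability estimate of Theorem \ref{global sol.} \emph{directly at the $p'$-level},
\[
\|Y-Y^n\|_{p',[0,T]}\le M^2(C_{p,F}(1\vee L))^{M+1}\,\|\BX;\BX^n\|_{p',[0,T]},
\]
and only then uses interpolation once, on the rough path distance, to obtain $\|\BX;\BX^n\|_{p',[0,T]}\le C\,L^{p/p'}\|\BX^n;\BX\|_{\infty,[0,T]}^{1-p/p'}$. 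The Skorokhod version follows exactly as you describe, by time-reparametrisation invariance. Your preliminary observation about uniform $p$-variation bounds on $Y^n$ is correct but is not actually needed for the proof of the displayed estimates; it is used only for the final ``in particular'' clause.
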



\begin{proof}

For the uniform version, the proof follows from the local estimate for RDEs, i.e. Theorem \ref{global sol.} and the interpolation property. Indeed, for any $p'>p$, a partition $\op$ and $\BZ \in \{\BX, \BX^n |n=1,...,\}$, one has
\begin{eqnarray*}
\sum_{[s,t]\in \op } |Z_{s,t}|^{p'} ( \text{ or } \sum_{[s,t]\in  {\op}} |\Z_{s,t}|^{\frac{p'}{2}}) &\leq & \sup_{s,t}|Z_{s,t}|^{p'-p} \sum_{[s,t]\in \op } |Z_{s,t}|^{p} ( \text{ or } \sup_{s,t}|\Z_{s,t}|^{\frac{p'-p}{2}} \sum_{[s,t]\in  {\op} } |\Z_{s,t}|^{\frac{ p}{2}} )\\
&\leq &   \|Z\|_{p,[0,T]}^{p'-p} \|Z\|_{p,[0,T]}^p ( \text{ or } \| \Z \|_{\frac{p}{2},[0,T]}^{\frac{p'-p}{2}} \|\Z\|_{\frac p 2,[0,T]}^{\frac p 2}) \\
&\leq& \|Z\|_{p,[0,T]}^{p'} (  \text{ or } \| \Z \|_{\frac{p}{2},[0,T]}^{\frac{p' }{2}})
\end{eqnarray*}
which implies $\|\BZ \|_{p',[0,T]} \leq  L.$ According to Theorem \ref{global sol.} and the interpolation inequality, one has
\begin{eqnarray*}
\|Y-Y^n \|_{p',[0,T]} &\leq&  M^2(C_{p,F}(1\vee L))^{M+1} \|\BX;\BX^n \|_{p',[0,T]}\\
                        &\leq&  16 L^{\frac {p}{p'}} M^2 (C_{p,F}(1\vee L))^{M+1} \|\BX^n; \BX \|_{\infty, [0,T]}^{1-\frac {p}{p'}} .
\end{eqnarray*}

Note that if $\BX^n$ converges to $\BX$ in the Skorokhod topology, for any fixed $\vep>0,$ there exists $\lambda^n \in \Lambda$ such that $ \| \BX^n \circ \lambda^n ; \BX \|_{\infty,[0,T]} < \frac{\vep}{2}$ and $|\lambda^n|< \frac{\vep}{2}.$ Note that the $p$-variation of $\BX^n \circ \lambda^n$ is the same as $\BX^n.$ One has
\begin{eqnarray*}
\sigma_{p',[0,T]}(Y,Y^n) &\leq& |\lambda^n| + \|Y-Y^n \circ \lambda^n \|_{p',[0,T]},\\
                           &\leq& |\lambda^n| + 16 L^{\frac {p}{p'}} M^2 (C_{p,F}(1\vee L))^{M+1} \|\BX^n \circ \lambda^n ; \BX \|_{\infty, [0,T]}^{1-\frac {p}{p'}}
\end{eqnarray*}

\end{proof}


\subsection{Discrete approximation for c\`adl\`ag ODEs/RDEs } \label{sec:discrete}

As a first application, we discuss discrete-time approximations (``higher-order Euler schemes'') to rough differential equations first discussed by Davie \cite{Dav07} in the level-$2$ setting and then \cite{FV08a} (see also \cite{FV10}) in case of geometric $p$-rough paths, always continuous. Having a discontinuous theory at hand, such approximations are readily written as RDEs driven by piecewise constant rough paths. The just obtained stability theorems for these equations then make convergence
statements of such higher order schemes basically immediate.

For any partition $\op$ of $\left[ 0,T\right] $, define the piecewise constant c\`adl\`ag rough path: for any $[s,t)\in \op,$
\[
\mathbf{X}_{u}^{\op}=\mathbf{X}_{s}\text{ when }u\in \lbrack s,t), \ \ \ \BX_T^n\equiv \BX_T.
\]
For instance, in the level-$2$ rough path case, $
\mathbf{X}_{0,u}^{\op}= (X_{0,s},\X_{0,s} )   \text{ when }u\in \lbrack s,t), \ \ \ \BX_{0,T}^\op  \equiv (X_{0,T},\X_{0,T}). $
As a consequence of Chen's relation, we note that $X_{u,v} = 0, \ \X_{u,v} = 0$ if $ u,v \in [s,t)$. This extends directly to the branched rough path case where one has $\BX_{u,v}^f=0$ for any $f\in \MF$.

\begin{lem}(\textbf{Convergence of discretized rough paths})\label{converge of discrete rp}
Suppose $\BX  $ is a c\`adl\`ag $p$-rough path. For any partition $\op$ of $[0,T]$ and $\BX^\op$ defined as above, one has
$$
\lim_{|\op|\rightarrow 0} \BX^\op = \BX,
$$
under the Skorokhod metric in the MRS sense. Moreover, one has $\sup_{\op} \|\BX^\op \|_{p,[0,T]} \leq \|\BX\|_{p,[0,T]}.$

\begin{rem} Simple examples show that one cannot replace Skorokhod - by uniform convergence.
\end{rem}

\end{lem}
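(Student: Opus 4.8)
The plan is to prove the two assertions separately, handling the $p$-variation bound first since it is elementary and then the Skorokhod convergence.

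\textbf{Step 1: Uniform $p$-variation bound.} Fix a partition $\op = \{0 = r_0 < r_1 < \dots < r_N = T\}$ and a partition $\mathcal Q = \{0 = u_0 < \dots < u_M = T\}$ over which we test the $p$-variation of $\BX^\op$. For each $j$, since $\BX^\op$ is constant on every half-open interval $[r_{i-1}, r_i)$ of $\op$, the increment $(\BX^\op)_{u_{j-1}, u_j}$ is nontrivial only when $u_{j-1}$ and $u_j$ lie in different $\op$-intervals, and in that case $(\BX^\op)_{u_{j-1},u_j}$ telescopes to an increment of $\BX$ between two points of $\op \cup \{u_j\}$ (more precisely, writing $u_{j-1} \in [r_{a-1}, r_a)$ and $u_j \in [r_{b-1}, r_b)$ with $a \le b$, one gets $(\BX^\op)_{u_{j-1},u_j} = \BX_{r_{a}, r_{b-1}}$ when $a < b$, with the obvious conventions). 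Thus each nonzero term in $\sum_{\mathcal Q} |(\BX^\op)^f_{u_{j-1},u_j}|^{p/|f|}$ is an increment of $\BX$ over a subinterval, and the endpoints used are a subset of $\op \cup \mathcal Q$-points arranged in increasing order with no overlaps. Taking the supremum over $\mathcal Q$ and using super-additivity of $t \mapsto \|\BX^f\|^{p/|f|}_{p/|f|,[s,t]}$ (which holds since these are controls), we get $\|(\BX^\op)^f\|_{p/|f|,[0,T]} \le \|\BX^f\|_{p/|f|,[0,T]}$ for each forest $f$, hence $\|\BX^\op\|_{p,[0,T]} \le \|\BX\|_{p,[0,T]}$ uniformly in $\op$.

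\textbf{Step 2: Skorokhod convergence.} Given $\vep > 0$, since $\BX$ is a c\`adl\`ag path in the Polish group $G_{[p]}(\MH^*)$ of finite $p$-variation, it is in particular regulated, so by Lemma \ref{regular path} there are only finitely many jump times $\tau_1 < \dots < \tau_k$ with $d(\BX_{\tau_m-}, \BX_{\tau_m}) > \vep$ (using that $\BX \in \mathbf V^p$ gives a mild control via $Osc(\BX;[s,t])$ as in Lemma \ref{controls are mild c.}). Choose a partition $\op_\vep$ that contains all of $\tau_1, \dots, \tau_k$ and is fine enough that $Osc(\BX; [\!(\sigma,\rho)\!]) < \vep$ on each interval $(\sigma,\rho)$ of $\op_\vep$ between consecutive points not straddling a big jump. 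Now for any partition $\op$ with $|\op| < |\op_\vep|$, define the time change $\lambda = \lambda_\op \in \Lambda_{[0,T]}$ that is the identity on $\op_\vep$-points, maps each small $\op$-point to the preceding $\op$-point if needed, and is piecewise linear; more carefully, $\lambda_\op$ should be chosen so that $\BX^\op \circ \lambda_\op$ agrees with $\BX$ at all points of $\op_\vep$ and stays within $\vep$-oscillation of $\BX$ in between. Since $|\op| < |\op_\vep|$ we can take $|\lambda_\op| \le |\op_\vep|$, which we may assume is $< \vep$ by refining $\op_\vep$. One then checks $\|\BX^\op \circ \lambda_\op; \BX\|_{\infty,[0,T]} \le C\vep$ by the oscillation control on each piece, and (using Step 1 on each interval of $\op_\vep$ together with the fact that $\BX$ and $\BX^\op\circ\lambda_\op$ differ only by small increments there) one obtains $\sigma_{p,[0,T]}(\BX^\op, \BX) \to 0$ in the MRS sense as $|\op| \to 0$.

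\textbf{Main obstacle.} The routine part is the $p$-variation bound; the delicate point is the explicit construction of the reparametrization $\lambda_\op$ and verifying that $\BX^\op \circ \lambda_\op$ is genuinely close to $\BX$ in $\sup$-distance \emph{uniformly over the whole interval}, not just near the big jumps. The issue is that between big jumps $\BX$ may still oscillate, and $\BX^\op$ is a step function that can only ``catch up'' to $\BX$ at the $\op$-grid points; one must align the grid so that the lag never exceeds the controlled small oscillation $\vep$. This is where the mesh condition $|\op| < |\op_\vep|$ and the mild-control finiteness of big jumps interact, and getting the bookkeeping right — especially matching the higher levels $\BX^f$, $|f| > 1$, which also jump only at the $\tau_m$ for the canonical lift, cf. the minimal-jump-extension remark after Example \ref{rem:cbrpl} — is the part that needs care. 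The remark that uniform convergence fails is exactly the statement that no such $\lambda_\op$-free estimate can hold, so the time change is essential.
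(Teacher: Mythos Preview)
Your Step~1 is correct and matches the paper's argument (modulo a harmless off-by-one: with $u_{j-1}\in[r_{a-1},r_a)$ and $u_j\in[r_{b-1},r_b)$ the increment is $\BX_{r_{a-1},r_{b-1}}$, not $\BX_{r_a,r_{b-1}}$). Step~2 has a genuine gap: you have not actually constructed $\lambda_\op$, and the description you give is internally inconsistent. If $\lambda_\op$ is the identity on the $\op_\vep$-points then for an $\op_\vep$-point $\tau\notin\op$ one has $\BX^\op\circ\lambda_\op(\tau)=\BX^\op_\tau=\BX_{r_i}$ with $r_i$ the largest $\op$-point below $\tau$, and there is no reason this equals $\BX_\tau$; so $\BX^\op\circ\lambda_\op$ cannot ``agree with $\BX$ at all points of $\op_\vep$''. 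What is actually needed is a $\lambda$ that sends $\op$-points \emph{onto} $\op_\vep$-points, not the reverse. The paper does this concretely: use the control $\omega_\BX$ and Lemma~\ref{regular path} (applied to $t\mapsto\omega_\BX(0,t)$) to find $\op_\vep=\{t_0<\dots<t_N\}$ with $\omega_\BX(t_{i-1},t_i-)<\vep$; for any $\op=\{s_0<\dots<s_{N+1}\}$ with $|\op|<|\op_\vep|\wedge\tfrac{\vep}{2}$, after refining $\op_\vep$ if necessary one may assume $t_i\in[s_i,s_{i+1})$, and take $\lambda$ piecewise linear mapping $[s_i,s_{i+1}]$ onto $[t_{i-1},t_i]$. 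Then $|\lambda|\le 2|\op|<\vep$, and for $t\in[s_i,s_{i+1})$ both $(\BX^\op)_t=\BX_{s_i}$ and $\BX_{\lambda(t)}$ are values of $\BX$ at points of $[t_{i-1},t_i]$, so their difference is controlled by $\omega_\BX(t_{i-1},t_i-)<\vep$; Lemma~\ref{equivalent norms} then gives $\vertiii{\BX^\op;\BX\circ\lambda}_{\infty,[0,T]}\lesssim\vep$.

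Two smaller points. Your remark that higher levels ``jump only at the $\tau_m$ for the canonical lift'' is misplaced: the lemma concerns an arbitrary c\`adl\`ag $p$-rough path, not the canonical lift of Example~\ref{rem:cbrpl}, and the argument via $\omega_\BX$ handles all forests $f$ uniformly with no special jump structure needed. And you conclude with $\sigma_{p,[0,T]}(\BX^\op,\BX)\to 0$, which overclaims: the paper only shows (and only needs, for Theorem~\ref{converge for RDEs}) convergence in the sup-norm Skorokhod metric $\sigma_{\infty,[0,T]}$ together with the uniform $p$-variation bound from your Step~1; convergence in $\sigma_{p',[0,T]}$ for $p'>p$ then follows by interpolation, but not in $\sigma_{p,[0,T]}$ itself.
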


\begin{proof}

Recall that $\omega_{\BX }(s,t):=\sum_{f\in \MF_{[p]}} \|\BX^f\|_{\frac{p}{|f|},[s,t]}^{\frac{p}{|f|}}.$ Let $x(t):=\omega_\BX(0,t).$ According to Lemma \ref{regular path}, for any $1>\vep>0,$ there exists a partition $\op_\vep,$ such that for any $[s,t)\in \op_\vep,$ $\omega_\BX(s,t-) \leq x_{s,t-} \leq \vep.$ Then for any partition $\op=\{0=s_0< s_1 <s_2< \cdots < s_N< s_{N+1}=T \}$ with $|\op|< |\op_\vep| \wedge \frac{\vep}{2} ,$ without loss of generality, suppose $\op_\vep=\{0=t_0< t_1 <t_2< \cdots < t_{N-1}< t_N=T \}$ with $t_i \in [s_i,s_{i+1}), $ $i=1,...,N-1$(otherwise make a refinement $\op'_\vep$ of $\op_\vep$ and work with $\op'_\vep$ from here on).

\begin{tikzpicture}[scale = 2]
\draw [thick, -] (2,0) --(4,0);
\draw [thick, dotted] (4,0) --(4.8,0);
\draw [thick, -] (4.8,0) --(7,0);

\node [below] at (2.5,0) {$t_1$};
\node [below] at (3.5,0) {$t_2$};
\node [below] at (6.2,0) {$t_{N-1}$};
\node [below] at (4.9,0) {$t_{N-2}$};
\node at (2,0) [reddot] {};
\node at (2.5,0) [reddot] {};
\node at (3.5,0) [reddot] {};
\node at (4.9,0) [reddot] {};
\node at (6.2,0) [reddot] {};

\node [above] at (2.25,0) {$s_1$};
\node [above  ] at (3.25,0) {$s_2$};
\node [above] at (5.9,0) {$s_{N-1}$};
\node [above] at (6.75,0) {$s_{N}$};
\node at (2.25,0) [bluedot] {};
\node at (3.25,0) [bluedot] {};
\node at (5.8,0) [bluedot] {};
\node at (6.75,0) [bluedot] {};

\draw [thick, ->, green] (3.25,0) to [out=115, in=55] (2.5,0);
\draw [thick, ->, green] (5.8,0) to [out=115, in=55] (4.9,0);
\draw [thick, ->, green] (6.75,0) to [out=115, in=55] (6.2,0);

\node [above] at (2.9,0.17) {$\lambda_1$};
\node [above] at (5.3,0.17) {$\lambda_{N-2}$};
\node [above] at (6.49,0.17) {$\lambda_{N-1 }$};

\node [below] at (2,0) {$0$};
\node [below] at (7,0) {$T$};
\node at (7,0) [dot] {};
\node at (2,0) [dot] {};
\end{tikzpicture}

Define $\lambda_0:= Id \big|_{[0,s_1]},$ $\lambda_1:[s_1,s_2]\rightarrow [s_1,t_1]$ linear, and for $i=2,...,N,$
$$
\begin{array}{lcll }
\lambda_i:& [s_i,s_{i+1}]& \longrightarrow & [t_{i-1},t_i ],\\
& t & \longrightarrow & t_{i-1} + \frac{t-s_{i}}{s_{i+1}-s_{i}} (t_i- t_{i-1}).
\end{array}
$$
Then let $\lambda:=\lambda_i$ on $[s_i,s_{i+1}], i=0,...,N.$ One obtains $\lambda \in \Lambda|_{[0,T]}$ and
\begin{eqnarray*}
|\lambda|&:=&\sup_{t\in [0,T]} |\lambda(t)-t| \leq \max_{i=0,...,N} |\lambda_i(t)-t|\\
     &\leq & \max_{i=0,...,N}|s_{i+1}-t_{i-1}| \leq 2|\op| \leq \vep.
\end{eqnarray*}
Then, by Remark \ref{equivalent norms} in the appendix and our choice of $\op_\vep,$ it follows that
\begin{eqnarray*}
\vertiii{\BX^\op ; \BX \circ \lambda}_{\infty,[0,T]} &\leq  & C \max_{i=0,...,N} \max_{f\in \MF_{[p]}} \sup_{t\in[s_i,s_{i+1})} |(\BX^{\op})^f_t - \BX^f_{ \lambda_i(t)}|^{\frac{1}{|f|}}  \\
&\leq  & C \max_{i=1,...,N} \left( \vep \vee \max_{f\in \MF_{[p]}} \sup_{t\in[s_i,s_{i+1})} |(\BX^{\op})^f_t - \BX^f_{ \lambda_i(t)}|^{\frac{1}{|f|}}\right)  \\
&\leq  & C \max_{i=1,...,N}\left( \vep \vee  \omega_\BX(t_{i-1},t_i-)\right) \leq C \vep.
\end{eqnarray*}
The boundedness part follows by the definition of $\BX^\op.$ Indeed, for any partition $\op' $ and $f\in \MF,$
\begin{eqnarray*}
\sum_{[s,t] \in \op' }|(\BX^\op)^f_{s,t}|^p &= &\sum_{\stackrel{[s,t] \in \op'}{s\in[s_i,s_{i+1}),t\in[s_j,s_{j+1})} } |\BX^f_{s_i,s_j} |^p \leq \|X^f\|_{\frac{p}{|f|},[0,T]}^\frac{p}{|f|},
\end{eqnarray*}
where we apply $\BX_{u,v}^f=0$ if $u,v\in [s_k,s_{k+1}).$

\end{proof}

%
%
%
%
%
%
%

\begin{thm}(\textbf{Higher order Euler schemes for c\`adl\`ag RDEs})\label{appro. to rde}
Given a c\`adl\`ag $p$- rough path $\mathbf{X}$, consider the RDE
$$
Y_t= y_0+ \int_0^t F(Y_s)^\ell d\BX_s.
$$
Suppose $\left( \op^{n}\right) $ is a sequence of partitions of $\left[ 0,T\right] $, with
vanishing mesh-size. For any $[s^n,t^n) \in \op^n,$ define piecewise constant path $Y^n$ by
$$
Y_{t^n}^{n} := Y_{s^n}^{n} + \sum_{\tau \in \MT_{[p]-1}} F^\tau(Y_{s^n}^{n}) \BX_{s^n,t^n}^\tau,
$$
where $F^\tau : \R^e \rightarrow \R^e $ is defined in the argument before Theorem \ref{local sol. for brp}. In particular, for the level-$2$ rough path case,
$$
Y_{t^n}^{n} := Y_{s^n}^{n}+ F \left( Y_{s^n}^{n}\right) X_{s^n,t^n}+ DF\left( Y_{s^n}^{n} \right)  F   \left(Y_{s^n}^{n}\right) \mathbb{X}_{s^n,t^n}.
$$
Then $Y^n$ converges to $Y$ in the Skorokhod sense, with uniform bounded $p$-variation. In particular, the convergence holds in $p'$-variation metric of Skorokhod type, any $p'>p.$

\end{thm}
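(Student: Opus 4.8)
The plan is to realize the Euler scheme as an RDE driven by a piecewise constant rough path and then invoke the already established stability theory. First I would observe that, for a fixed partition $\op^n = \{0 = t_0^n < t_1^n < \cdots < t_{N_n}^n = T\}$, the piecewise constant c\`adl\`ag rough path $\BX^{\op^n}$ (as in the paragraph preceding Lemma \ref{converge of discrete rp}) satisfies $\BX_{u,v}^{\op^n,f} = 0$ whenever $u,v$ lie in the same interval $[t_i^n, t_{i+1}^n)$, while across a partition point it picks up exactly the increment $\BX_{t_i^n,t_{i+1}^n}^f$. Hence the RDE $Z_t = y_0 + \int_0^t F(\BZ_r)^\ell d\BX_r^{\op^n}$, interpreted as in Theorem \ref{sol for eq.by crp} (resp. Theorem \ref{global sol.} in the level-$2$ case), has a solution which is constant on each $[t_i^n,t_{i+1}^n)$ and jumps at $t_{i+1}^n$ according to the preservation-of-jumps formula (Lemma \ref{preserve jumps}, or its branched analogue built into the definition of the controlled-path lift): $\Delta^-_{t_{i+1}^n} Z = \sum_{\tau \in \MT_{[p]-1}} F^\tau(Z_{t_{i+1}^n-}) \BX_{t_i^n,t_{i+1}^n}^\tau$. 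Since $Z$ is constant on $[t_i^n,t_{i+1}^n)$ one has $Z_{t_{i+1}^n-} = Z_{t_i^n}$, so the jump relation is precisely the defining recursion for $Y^n$; by uniqueness of the RDE solution, $Z \equiv Y^n$. (For $\tau$ a genuine forest the coefficient $F^\tau$ vanishes, consistent with $\BX^{\op^n,f}=0$ for $f \in \MF\setminus\MT$.)

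Next I would combine two ingredients already in hand. By Lemma \ref{converge of discrete rp}, $\BX^{\op^n} \to \BX$ in the Skorokhod metric as $|\op^n| \to 0$, with the uniform $p$-variation bound $\sup_n \|\BX^{\op^n}\|_{p,[0,T]} \le \|\BX\|_{p,[0,T]} =: L < \infty$. Then Theorem \ref{converge for RDEs} applies verbatim with $\BX^n := \BX^{\op^n}$: the RDE solutions converge to $Y$ in the Skorokhod topology with uniformly bounded $p$-variation, and moreover, for any $p' > p$ with $[p']=[p]$, one has the quantitative estimate
$$
\sigma_{p',[0,T]}(Y,Y^n) \le M^{2+\frac 1{p'}} (C_{p,F}(1\vee L))^{M+1}\, \sigma_{\infty,[0,T]}^{1-\frac p{p'}}(\BX,\BX^{\op^n}),
$$
with $M = C_{p,F}L^p$. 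Since the right-hand side tends to $0$, convergence in the $p'$-variation Skorokhod metric follows, which is the last assertion of the theorem.

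I expect the only genuine point requiring care — the main obstacle — to be the identification step: verifying that the RDE driven by $\BX^{\op^n}$ reproduces exactly the discrete recursion, i.e. that the indefinite rough integral of a controlled path against a piecewise constant rough path evaluated across a jump time yields the full Euler increment $\sum_\tau F^\tau(Y^n_{t_i^n})\BX_{t_i^n,t_{i+1}^n}^\tau$ and nothing more. For the level-$2$ case this is immediate from Lemma \ref{preserve jumps} applied to $\BX^{\op^n}$ (whose only jumps are at the $t_{i+1}^n$, with $\Delta^-_{t_{i+1}^n}\BX^{\op^n} = \BX_{t_i^n,t_{i+1}^n}$), together with the fact that $\BX^{\op^n}$ is right-continuous so $Y^n$ is the genuine (MRS) solution. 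For the branched case I would argue identically using the controlled-path lift $\BBI_\BX(F(\BY))$ from Remark \ref{integral as crp} and the jump structure $\langle \Delta^-_{t_{i+1}^n}\BBI_{\BX^{\op^n}}(F(\BY)), \BI\rangle = \sum_{h\in\MF^0_{[p]-2}} F(\BY)^h_{t_i^n}\,\BX_{t_i^n,t_{i+1}^n}^{[h]_i} + R^{F(\BY),\BI}_{t_i^n, t_{i+1}^n}$, where the remainder $R$ over an interval of vanishing increment (in the piecewise constant rough path) is controlled by $\omega_{\BX^{\op^n},p}(t_i^n, t_{i+1}^n-) \to 0$; since $\BX^{\op^n}_{u,v} = 0$ for $u,v$ in the same interval, this remainder is in fact $0$, collapsing the local estimate exactly onto the stated recursion $Y^n_{t^n} = Y^n_{s^n} + \sum_{\tau\in\MT_{[p]-1}} F^\tau(Y^n_{s^n})\BX_{s^n,t^n}^\tau$. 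Everything else is a direct citation of the preceding results.
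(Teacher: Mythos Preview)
Your proposal is correct and follows essentially the same route as the paper: identify the Euler scheme with the RDE solution driven by the piecewise constant rough path $\BX^{\op^n}$, then invoke Lemma \ref{converge of discrete rp} and Theorem \ref{converge for RDEs}. The paper's proof is slightly terser on the identification step (it computes $\Delta_t\BX^n$ directly and observes $\int_s^r f(Y) d\BX^n \equiv 0$ on $[s,t)$), while you give more detail on the branched case; the content is the same.
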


\begin{proof}
For simplicity of notation only, we write the proof for the level-$2$ rough path case. For a partition $\op^n,$ define $\BX^{\op^n}$ as before. Then according to Lemma \ref{converge of discrete rp}, $\mathbf{X}^{n} \rightarrow \BX$ in the
Skorokhod sense (in uniform sense if $\BX$ continuous), and for $\left[ s,t\right) \in \op^{n}$, (we omit the obvious superscript $n$ for $s,t$)
\begin{eqnarray*}
\Delta _{t}\mathbf{X}^{n} &:= &\lim_{u\uparrow t}\mathbf{X}_{u,t}^{n}:= \lim_{u\uparrow t}\left( \left( \mathbf{X}_{u}^{n}\right)
^{-1}\otimes \mathbf{X}_{t}^{n}\right)  \\
&=&\lim_{u\uparrow t} \left( X^n_{u,t}  ,\X_{u,t}^n \right) \equiv  \left( X_{s,t},\mathbb{X}_{s,t}\right).
\end{eqnarray*}
Now consider c\`adl\`ag RDEs $dY^{n}=f\left( Y^{n}\right) d\mathbf{X}^{n}$ on $[0,T]$ with initial condition $y_0.$ For any interval $[s,t]\in \op^n,$ by our very definition of rough integral,
$$
\int_s^r f(Y_r) d\BX^n_r \equiv 0, \ \ r\in [s,t),
$$
which implies $Y^n$ is constant on $[s,t),$ and
\begin{eqnarray*}
Y_{t}^{n}&=& Y_{t-}^{n}+ f \left( Y_{t-}^{n} \right) \Delta_{t} X^n + Df\left( Y_{t-}^{n}\right) f   \left( Y_{t-}^{n}\right) \Delta_{t} \mathbb{X}^n \\
         &=& Y_{s}^{n}+ f \left( Y_{s}^{n}\right) X_{s,t}+ Df\left( Y_{s}^{n} \right)  f   \left(Y_{s}^{n}\right) \mathbb{X}_{s,t}.
\end{eqnarray*}
Then by our continuity results, i.e. Theorem \ref{converge for RDEs}, $Y^{(n)}\rightarrow Y.$

\end{proof}

%
%


\section{Random RDEs and SDEs} \label{sec:SDE}

\subsection{Weak convergence for random RDEs}

Suppose $(\BX^n) = (\BX^n (\omega))$ is a sequence of random $p$-rough path. The It\^o-lift of semimartingales, with $p \in (2,3)$ is a natural level$-2$ example. But one can also construct  random rough path
directly, e.g. as L\'evy processes, with values in the $[p]$-truncated Butcher group, 
 provided the triplet satisfied certain structural assumption. (For instance, in a level-$4$
setting, there must not be a Brownian component on the third and forth level.) A full characterization of admissible triplets is found in \cite{FS17, Che17}. Other well-known examples are the Stratonovich lift of certain Gaussian processes, including fractional Brownian motion with $H>1/4$, since geometric rough paths canonically embed in (branched) rough paths. Construction via non-local Dirichlet forms (in the spirit of \cite[Ch.16]{FV10}) are clearly also possible.

\medskip
Having made the point that random RDEs include, yet go (far) beyond SDEs driven by semimartingales, the following limit theorem is of central importance and constitutes a significant generalizing of classical limit theorem for It\^o-SDEs, as
established by Kurtz-Protter \cite{KP91}, Jacubowski et al. \cite{JMP89}. In a sense (made very precise in Section \ref{sec:UCVUT} below), their ``UCV/UT condition'' is replaced by a tightness condition for rough path norms. Given our preparations, the proof is immediate.

\begin{thm}(\textbf{convergence for random RDEs})\label{r.rde}
Let $1 \le p < \infty$ and $F\in C_b^{[p]+1}$. Consider random, c\`adl\`ag $p$-rough paths $\BX^n \rightarrow \BX$ weakly (or in probability) under the uniform (or Skorokhod) metric, with $\{\|\BX^n\|_{p,[0,T]}(\omega)\}$ tight. 
Let $Y^n$ solve random RDEs
\begin{equation}
dY^n= F(Y^n)^- d\BX^n    \label{equ:rRDE}
\end{equation}
and $Y$ solve the same one driven by $\BX$ with the same initial value $y_0.$ Then the random RDE solution $Y^n$ converges weakly (or in probability) to $Y$ under the uniform (or Skorokhod) sense. Moreover, $\{ \left\Vert Y^{n}\right\Vert _{p,\left[ 0,T%
\right] }\left( \omega \right) :n\geq 1 \} $ is tight and one also has the weakly (or in probability) convergence in $p^{\prime }$-variation uniform (or Skorokhod) metric for any $p^{\prime }>p.$

\end{thm}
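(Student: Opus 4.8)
The plan is to reduce the whole statement to the deterministic stability theory already developed (Theorems~\ref{global sol.}, \ref{sol for eq.by crp}, \ref{converge for RDEs} and the interpolation Lemma~\ref{interpolation}) plus one soft probabilistic device, the Skorokhod representation theorem. First I would isolate the two purely deterministic inputs. Input (a): an \emph{a priori bound} $\|Y\|_{p,[0,T]}\le\Psi_{p,F}(\|\BX\|_{p,[0,T]})$ for an increasing function $\Psi_{p,F}$, obtained by applying the global estimate of Theorem~\ref{global sol.} (resp. Theorem~\ref{sol for eq.by crp} for branched $p>3$) to the pair $(\BX,\tBX)$ with $\tBX$ the trivial constant rough path, whose solution with initial value $y_0$ is $\tY\equiv y_0$, so that $\|Y\|_{p,[0,T]}=\|Y-\tY\|_{p,[0,T]}\le M^2(C_{p,F}(1\vee L))^{M+1}\|\BX\|_{p,[0,T]}$ whenever $\|\BX\|_{p,[0,T]}\le L$. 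Input (b): the \emph{quantitative stability estimate}, valid whenever $\|\BX\|_{p,[0,T]}\vee\|\tBX\|_{p,[0,T]}\le L$,
$$\sigma_{p',[0,T]}(Y,\tY)\ \le\ C_{p,p',F}(L)\,\sigma_{\infty,[0,T]}(\BX,\tBX)^{1-p/p'}\qquad(p'>p,\ [p']=[p]),$$
with an increasing constant $C_{p,p',F}(\cdot)$, together with its uniform-metric analogue obtained by replacing the $\sigma$'s with $\|\,\cdot\,;\cdot\,\|_{p',[0,T]}$ and $\|\,\cdot\,;\cdot\,\|_{\infty,[0,T]}$; this is exactly what the proof of Theorem~\ref{converge for RDEs} produces (global Lipschitz estimate $+$ Lemma~\ref{interpolation}). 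I would also record that the solution map $\Phi\colon\BX\mapsto Y$ is Borel, since the rough path space is Polish, equals the countable union of the Borel level sets $\{\|\BX\|_{p,[0,T]}\le L\}$, and $\Phi$ is continuous on each of them by Theorem~\ref{converge for RDEs}.

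With (a) the tightness of $\{\|Y^{n}\|_{p,[0,T]}(\omega)\}$ is immediate: the pathwise bound $\|Y^n\|_{p,[0,T]}\le\Psi_{p,F}(\|\BX^n\|_{p,[0,T]})$ gives $\mathbb{P}(\|Y^n\|_{p,[0,T]}>\Psi_{p,F}(K))\le\mathbb{P}(\|\BX^n\|_{p,[0,T]}>K)$, which is small uniformly in $n$ for $K$ large by tightness of $\{\|\BX^n\|_{p,[0,T]}\}$. Next I would settle the case $\BX^n\to\BX$ in probability directly on the given space. Fix $\eta>0$ and pick $L$ with $\sup_n\mathbb{P}(\|\BX^n\|_{p,[0,T]}>L)<\eta$ and $\mathbb{P}(\|\BX\|_{p,[0,T]}>L)<\eta$ (possible since $\BX$ is a genuine rough path a.s.); on the event where both $p$-variation norms are $\le L$, (b) bounds $\sigma_{p',[0,T]}(Y^n,Y)$ by $C_{p,p',F}(L)\,\sigma_{\infty,[0,T]}(\BX^n,\BX)^{1-p/p'}$, hence for every $\delta>0$, $\limsup_n\mathbb{P}(\sigma_{p',[0,T]}(Y^n,Y)>\delta)\le 2\eta$; letting $\eta\downarrow0$ gives $Y^n\to Y$ in probability in the $p'$-variation Skorokhod metric, and a fortiori in the plain Skorokhod sense since $\sigma_{p,[0,T]}\le\sigma_{p',[0,T]}$. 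The uniform-convergence variant is word-for-word the same with $\|\,\cdot\,;\cdot\,\|_{p',[0,T]}$, $\|\,\cdot\,;\cdot\,\|_{\infty,[0,T]}$ in place of the $\sigma$'s.

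For the weak case I would invoke the Skorokhod representation theorem (legitimate because $D([0,T],G_{[p]}(\MH^*))$ with the Skorokhod topology is Polish): there are copies $\hat\BX^n\overset{d}{=}\BX^n$, $\hat\BX\overset{d}{=}\BX$ on a common space $(\hat\Omega,\hat{\mathcal F},\hat{\mathbb{P}})$ with $\hat\BX^n\to\hat\BX$ a.s., hence in probability; laws being preserved, $\{\|\hat\BX^n\|_{p,[0,T]}\}$ is tight, so the previous step applies on $(\hat\Omega,\hat{\mathcal F},\hat{\mathbb{P}})$ and yields $\hat Y^n:=\Phi(\hat\BX^n)\to\hat Y:=\Phi(\hat\BX)$ in probability for $\sigma_{p',[0,T]}$, hence weakly. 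Since $\Phi$ is Borel, $\hat Y^n\overset{d}{=}Y^n$ and $\hat Y\overset{d}{=}Y$, so $Y^n\Rightarrow Y$ weakly in the Skorokhod sense, and in fact weakly in the $p'$-variation Skorokhod metric for every admissible $p'$. Combined with the tightness of $\{\|Y^n\|_{p,[0,T]}\}$ this is the full assertion; the uniform case is handled identically.

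The step I expect to be the real obstacle is securing (b) in the correct, $L$-uniform form — the stability constant must depend on $\BX,\tBX$ only through the bound $L$ on their $p$-variation norms, with no hidden dependence on moduli of continuity — because the naive alternative (Skorokhod-represent $\BX^n\to\BX$ and pass to the limit pathwise) fails: tightness of $\{\|\BX^n\|_{p,[0,T]}\}$ does \emph{not} force $\sup_n\|\hat\BX^n(\hat\omega)\|_{p,[0,T]}<\infty$ for $\hat{\mathbb{P}}$-a.e.\ $\hat\omega$, so there is no pathwise uniform bound and Theorem~\ref{converge for RDEs} cannot be applied $\hat\omega$-wise. The truncation-at-the-level-of-probabilities argument in Step~3 is exactly what circumvents this, and it relies crucially on (b) being quantitative and uniform in $L$; one should also take care to verify the measurability of $\Phi$ so that transporting laws back and forth in the weak-convergence step is justified.
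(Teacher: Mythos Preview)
Your proof is correct and follows essentially the same route as the paper: truncate on the event $\{\|\BX^n\|_{p}\le L\}$, use the interpolation Lemma~\ref{interpolation} together with the $L$-uniform stability estimate (your input~(b), which is exactly Theorem~\ref{converge for RDEs}) to control the solution distance by a power of $\sigma_\infty(\BX^n,\BX)$, and let $L\to\infty$. The paper's own proof is terser and phrases it as first upgrading $\BX^n\to\BX$ to $p'$-variation convergence in probability and then invoking continuity of the solution map, but this is the same two-step argument in a different order; you are more careful than the paper on three points it leaves implicit---the tightness of $\|Y^n\|_{p}$ (your input~(a)), the weak-convergence case via Skorokhod representation, and the Borel measurability of the solution map---and your closing remark that pathwise application of Theorem~\ref{converge for RDEs} after Skorokhod-representing would fail (for lack of an a.s.\ uniform $p$-variation bound) is a genuine subtlety worth recording.
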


\begin{rem} (\textbf{Adding a drift vector field})  \label{rem:drift}
We could have, in the above theorem and throughout the paper, studied equations with explicit drift term, say $dY= F_0(Y^-) dt + F(Y^-) d\BX$. The cheap way to do this is to rewritte the right-hand side as $(F_0,F) (Y^-) d(t, \BX )$, in terms of a time-space rough path, which can be canonically defined. This however, requires $F_0$ to have the same regularity as $F$, see next remark. A direct analysis, which amounts to add a first order ``Euler'' term in all expansions used in the analysis, actually shows that $F_0 \in C_b^1$ will be sufficient. (In the geometric rough path case, this was spelled out in \cite{FV10}.)
\end{rem}

\begin{rem} (\textbf{Regularity of coefficients})
The regularity assumptions in Section 3,4 on $F$ could be mildly sharpened to $F \in C^{p+}$, in the rough path literature \cite{LQ02, LCL07, FV10} this is written $F \in Lip^\gamma, \gamma > p$.
\end{rem}

\begin{rem} (\textbf{Marcus canonical RDEs}) A Marcus version of this limit theorem was shown in \cite{CF17x}. (Neither implies the other and the required techniques are different.)
\end{rem}

\begin{proof}

We only prove the convergence in probability version; the weak convergence version is quite similar. For any $p'>p,$ because of the continuity of solution map, one only needs to show the convergence of $\BX^n$ to $\BX$ in probability under $p'$-variation or its the corresponding Skorokhod metric, which follows by the tightness of $\|\BX^n\|_{p,[0,T]}$ and interpolation. Indeed, one has for any $\vep>0,$
\begin{eqnarray*}
P(\{\|\BX^n;\BX \|_{p'} > \vep\} ) &\leq& P(\{\|\BX^n;\BX \|_{p} > \vep, \|\BX^n \|_{p} \leq L\} ) + P(\{\|\BX^n \|_{p} > L \})\\
&\leq& P( \{ \|\BX^n;\BX \|_{\infty}^{1-\frac{p}{p'}} > \vep/(L^{\frac{p}{p'}}) \} )+ P(\{\|\BX^n \|_{p} > L \})
\end{eqnarray*}
where we apply the interpolation inequality (Lemma \ref{interpolation}) in the last inequality. Then one only needs to choose $L$ large for the second term to be small and $n$ large to make the first term small.

\end{proof}

%
%



\subsection{C\`adl\`ag semimartingales as rough paths}

Given a semimartingale $X,$ we want to lift it to a rough path $\BX$.  To be consistent with SDEs in It\^{o}'s sense, want
$$\BX = \left(X, \int X^- \otimes dX \right) \equiv \left(X, \int X \diamond dX - \tfrac{1}{2} [X] \right), $$
which exhibits the It\^o lift as (harmless, level-$2$) perturbation of the Marcus lift studied in \cite{CF17x}. There it was seen that the Marcus
lift of a general semimartingale is a.s. a geometric $p$-rough paths, any $p>2$. It is now immediate that the same (short of geometricity)
is true for the It\^o lift. To summarize

\begin{thm} With probability one, the It\^o-lift $\BX$ of $\R^d$-valued c\`adl\`ag semimartingale $X$ is a c\`adl\`ag $p$-rough path, $p \in (2,3)$.
\end{thm}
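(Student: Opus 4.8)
The plan is to reduce the statement to two facts: first, that the It\^o lift differs from the Marcus lift only at level two and by an explicit, finite-variation correction term; second, that the Marcus lift of a general c\`adl\`ag semimartingale is a.s. a c\`adl\`ag $p$-rough path for $p\in(2,3)$, which is established in \cite{CF17x}. So the content is to check that adding the correction $-\tfrac12[X]$ preserves the rough path axioms (Chen's relation, c\`adl\`agness) and, crucially, keeps the finite $p$-variation property.

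First I would fix the definition: write $X = M + A$ with $M$ a local martingale and $A$ of finite variation, set $\mathbb{X}_{s,t} := \int_s^t X_{s,r}^-\,dX_r$ (It\^o integral, with the increment convention), and recall that the Marcus lift is $\mathbb{X}^{\mathrm{Mar}}_{s,t} = \mathbb{X}_{s,t} + \tfrac12[X]_{s,t}$, where $[X]_{s,t} := [X]_t - [X]_s$ is the (matrix-valued) quadratic covariation, itself a genuine finite-variation path. One checks that $\mathbf{X} = (X,\mathbb{X})$ satisfies Chen's relation $\mathbb{X}_{s,t} = \mathbb{X}_{s,u} + \mathbb{X}_{u,t} + X_{s,u}\otimes X_{u,t}$ -- this is just additivity of the It\^o integral together with the identity $\int_u^t X^-_{s,r}\,dX_r = X_{s,u}\otimes X_{u,t} + \int_u^t X^-_{u,r}\,dX_r$ -- and hence $\mathbf{X}$ takes values in the group $G$. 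Since $[X]$ is c\`adl\`ag and (by \cite{CF17x}) $\mathbf{X}^{\mathrm{Mar}}$ is c\`adl\`ag, $\mathbf{X}$ is c\`adl\`ag as well; in fact one can note $\Delta_t^-\mathbb{X} = X_{t-}^-\Delta_t^- X$ pathwise, consistent with the earlier Remark on $\Delta^-\mathbb{X}\equiv 0$ only after the Marcus correction.

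Next I would verify the $p$-variation bound. Here the key observation is that $[X] \in V^1$ a.s. (quadratic variation is non-decreasing in each diagonal entry and of bounded variation off-diagonal by polarization), hence $[X]\in V^{p/2}$ for any $p\ge 2$ trivially, with $\|[X]\|_{p/2,[0,T]} \le \|[X]\|_{1,[0,T]}^{2/p}$ or simply bounded by its total variation. Therefore
$$
\|\mathbb{X}\|_{p/2,[0,T]} \le \|\mathbb{X}^{\mathrm{Mar}}\|_{p/2,[0,T]} + \tfrac12\|[X]\|_{p/2,[0,T]} < \infty \quad \text{a.s.},
$$
using subadditivity of the $p/2$-variation seminorm (via the elementary inequality $(a+b)^{p/2}\le 2^{p/2}(a^{p/2}+b^{p/2})$ applied termwise in the defining sum) together with the a.s. finiteness of $\|\mathbb{X}^{\mathrm{Mar}}\|_{p/2,[0,T]}$ from \cite{CF17x}. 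Combined with $X\in V^p$ a.s. (standard for c\`adl\`ag semimartingales, $p>2$), this gives $\|\mathbf{X}\|_{p,[0,T]} = \|X\|_{p,[0,T]} + \|\mathbb{X}\|_{p/2,[0,T]} < \infty$ a.s., so $\mathbf{X}$ is a.s. a c\`adl\`ag $p$-rough path.

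The main obstacle I anticipate is not conceptual but a matter of bookkeeping: one must be careful that the It\^o integral $\int X^-\,dX$ is taken in a sense compatible with the pathwise increment convention used throughout the paper (so that the ``$\ell$-integral'' interpretation and the stochastic interpretation genuinely coincide for this integrand), and that the decomposition $\mathbb{X} = \mathbb{X}^{\mathrm{Mar}} - \tfrac12[X]$ holds with the correct sign and on a single almost-sure event. Once the quadratic-variation term is isolated as an explicit $V^1$ correction, everything else is inherited from \cite{CF17x}. One should also note that the statement is about existence of a version; the null set on which either $X\notin V^p$, or $[X]$ has infinite total variation, or $\mathbf{X}^{\mathrm{Mar}}\notin\mathbf{V}^p$ is excluded.
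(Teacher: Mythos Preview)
Your proposal is correct and follows essentially the same route as the paper: identify the It\^o lift as the Marcus lift perturbed by $-\tfrac12[X]$, note that $[X]\in V^1\subset V^{p/2}$ a.s., and invoke \cite{CF17x} for the Marcus lift being a.s.\ a c\`adl\`ag $p$-rough path. The paper's argument is precisely this (in fact stated even more tersely), so there is nothing to add beyond observing that your aside about $\Delta^-_t\mathbb{X}$ has the roles reversed---it is the It\^o increment $\mathbb{X}_{t-,t}$ that vanishes a.s., not the Marcus one---but this does not affect the proof.
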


\begin{rem} For the special case of L\'evy processes, see also \cite{Wil01, FS17}.  \end{rem}

%
%
%

We also note the validity of a BDG estimates as follows.

\begin{thm}(\textbf{BDG inequality for It\^o local martingale rough paths})\label{BDG}
Let $X$ be a $\R^d$-valued local martingale with It\^o-lift $\BX$, as above. Fix $p>2$ and a convex, moderate function $\phi$ (Example: $\phi(x)=x^q, q \ge 1$). 
Then there exist $c,C$ such that for any $q\geq1,$
$$
c\E \left[ \phi (  |[X]_\infty|^{\frac 1 2}) \right] \leq \E \left[ \phi ( ||| \BX |||_{p} ) \right] \leq C\E \left[ \phi (  |[X]_\infty|^{\frac 1 2}) \right].
$$
\end{thm}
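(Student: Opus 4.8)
The plan is to reduce the rough-path BDG estimate to the classical (vector-valued) Burkholder--Davis--Gundy inequality for the martingale $X$ itself, by controlling each component of the homogeneous norm $|||\BX|||_p = \|X\|_{p,[0,T]} + \|\X\|_{p/2,[0,T]}^{1/2}$ in terms of $[X]_\infty^{1/2}$. The upper bound is the substantive direction; the lower bound is essentially trivial since the quadratic variation is recovered from the rough path via $[X]_{s,t} = X_{s,t}\otimes X_{s,t} - 2\operatorname{Sym}\X_{s,t}$ evaluated along any partition, so $\sum_{[s,t]\in\op} |X_{s,t}|^2 \le C(|[X]_\infty| + \|\X\|_{p/2}) $ and, more directly, $\phi(|[X]_\infty|^{1/2}) \le \phi(c\,|||\BX|||_p)$ up to the moderateness of $\phi$; applying $\E$ gives the left inequality.

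For the upper bound I would first treat the level-one term. Since $X$ is a c\`adl\`ag local martingale, for any partition $\op$ the process $i\mapsto X_{0,t_i}$ is a discrete-time martingale, and a discrete BDG inequality (or simply Doob plus the continuous-time BDG applied to stopped martingales) gives $\E[\phi(\|X\|_{p,[0,T]})] \le \E[\phi(\sup_t |X_{0,t}|)]$ controlled, via vector-valued BDG, by $C\,\E[\phi(|[X]_\infty|^{1/2})]$; here one uses $p>2$ only insofar as one wants finiteness, but actually $\|X\|_{p,[0,T]} \le \|X\|_{2,[0,T]}$-type bounds are not available, so the cleaner route is: $\|X\|_{p,[0,T]}^p \le \big(\sup_{s\le t}|X_{s,t}|\big)^{p-2}\sum_{[s,t]\in\op}|X_{s,t}|^2$, take $\op$ arbitrary, bound the sum of squares by $|[X]_\infty|$ up to the bracket of the ``discrete error'' — more precisely $\sum |X_{s,t}|^2 = \sum([X]_{s,t} + 2\operatorname{Sym}\X_{s,t}) $, so after a supremum this is $\le |[X]_\infty| + 2\|\X\|_{p/2,[0,T]}$. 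This shows $\|X\|_{p,[0,T]}^2 \le (\operatorname{Osc} X)^{(p-2)\cdot 2/p}(|[X]_\infty| + 2\|\X\|_{p/2})^{2/p}$, and by Young's inequality one may absorb: $\|X\|_{p,[0,T]}^2 \le \tfrac12\|\X\|_{p/2,[0,T]} + C(\operatorname{Osc} X)^2 + C|[X]_\infty|$. So it suffices to estimate $\|\X\|_{p/2,[0,T]}$ and $\operatorname{Osc}X = \sup_{s\le t}|X_{s,t}| \le 2\sup_t |X_{0,t}|$, the latter handled directly by vector BDG.

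For the level-two term $\X_{s,t} = \int_s^t X_{s,r}^- \otimes dX_r$ (It\^o integral), the key point is that $r\mapsto \int_0^r X_{0,u}^-\otimes dX_u$ is itself an ($\R^{d\times d}$-valued) local martingale, call it $M$, with $\X_{s,t} = M_{s,t} - X_{0,s}\otimes X_{s,t}$. Hence $\sum_{[s,t]\in\op}|\X_{s,t}|^{p/2} \le C\sum|M_{s,t}|^{p/2} + C\sup_s|X_{0,s}|^{p/2}\sum|X_{s,t}|^{p/2}$; the second sum is $\le \sup_s|X_{0,s}|^{p/2}\|X\|_{p/2,[0,T]}^{p/2}$, but since $p/2<p$ we have $\|X\|_{p/2}\le\|X\|_{1}$... which is not available, so instead I would interpolate once more: $\sum|X_{s,t}|^{p/2} \le (\operatorname{Osc}X)^{p/2-2}\sum|X_{s,t}|^2 \le (\operatorname{Osc}X)^{p/2-2}(|[X]_\infty|+2\|\X\|_{p/2})$, giving a term that is again absorbable into $\tfrac14\|\X\|_{p/2}$ after Young plus the $\operatorname{Osc}X$ bound. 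For $\sum|M_{s,t}|^{p/2}$ one repeats the level-one trick: $\|M\|_{p/2,[0,T]}^{p/2}\le(\operatorname{Osc}M)^{p/2-2}\sum|M_{s,t}|^2$ — valid since $p/2<2$ fails for $p<4$... indeed $p\in(2,3)$ so $p/2\in(1,3/2)<2$, good — and $\sum|M_{s,t}|^2$ equals (along refinements, vaguely) $|[M]_\infty|$ plus the bracket-discrepancy which vanishes. Now $[M]_\infty = \int_0^\infty |X_{0,u}^-|^2 d[X]_u \le (\sup_u|X_{0,u}|)^2 |[X]_\infty|$. Chaining everything: $\|\X\|_{p/2,[0,T]} \le \tfrac12\|\X\|_{p/2,[0,T]} + C(\sup_u|X_{0,u}|)^2 + C|[X]_\infty| + C\operatorname{Osc}(M)^2$ and $\operatorname{Osc} M\le 2\sup_r|M_{0,r}|$. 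Absorbing the $\tfrac12\|\X\|_{p/2}$ and collecting, we arrive at a pathwise bound
$$
|||\BX|||_p^2 \le C\Big( \sup_{t}|X_{0,t}|^2 + |[X]_\infty| + \sup_r\Big|\int_0^r X_{0,u}^-\otimes dX_u\Big| \Big).
$$
Applying $\phi$ (moderate, convex), taking expectations, and invoking vector-valued BDG for $X$ (giving $\E[\phi(\sup_t|X_{0,t}|)]\le C\E[\phi(|[X]_\infty|^{1/2})]$) and for $M$ (giving $\E[\phi(\sup_r|M_{0,r}|^{1/2}\cdot\sqrt{2})]$ controlled by $\E[\phi([M]_\infty^{1/4})] \le \E[\phi((\sup|X_{0,\cdot}|)^{1/2}|[X]_\infty^{1/4})]$, then Cauchy--Schwarz/Young in the moderate-function calculus and re-absorbing) yields the right inequality. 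The main obstacle I anticipate is the bookkeeping in the last ``absorb and re-close'' step for the level-two martingale $M$: the bound for $[M]_\infty$ mixes $\sup|X_{0,\cdot}|$ with $|[X]_\infty|$, so one must use the standard moderate-function trick (for moderate $\phi$ and $a,b\ge0$, $\E[\phi(\sqrt{ab})]\le \tfrac12\E[\phi(\lambda a)] + \tfrac12\E[\phi(b/\lambda)]$ with $\lambda$ tuned, plus BDG again on the first term) to decouple them and feed back into a single $\E[\phi(|[X]_\infty|^{1/2})]$ — this is routine but needs care to make all constants depend only on $p$ and the moderateness data of $\phi$. A standard localization (stopping times reducing $X$ to an $L^2$ martingale, then monotone convergence) handles the passage from local to genuine martingales throughout.
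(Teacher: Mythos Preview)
There is a genuine gap in your level-two estimate. The interpolation trick $|a|^{q} = |a|^{q-2}\,|a|^{2} \le (\sup|a|)^{q-2}\,|a|^{2}$, which you use for $\|X\|_{p}$, requires $q-2\ge 0$. You then attempt to reuse it with $q=p/2$, noting that $p/2<2$ is ``good''; but that is precisely the regime in which the exponent $p/2-2$ is negative and the inequality reverses. Concretely, for $p\in(2,3)$ neither $X$ nor the stochastic integral $M_{r}=\int_{0}^{r} X_{0,u}^{-}\otimes dX_{u}$ has finite $p/2$-variation in general (already Brownian motion has infinite $q$-variation for every $q<2$). Thus in the decomposition $\X_{s,t}=M_{s,t}-X_{0,s}\otimes X_{s,t}$ both terms on the right have infinite $p/2$-variation individually, with the cancellation producing the finite $\|\X\|_{p/2}$; bounding them separately, as you propose, cannot work. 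The same objection applies to your bound $\sum|X_{s,t}|^{p/2}\le (\mathrm{Osc}\,X)^{p/2-2}\sum|X_{s,t}|^{2}$ used to control the cross term.

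The paper avoids this obstruction entirely by reducing to the Marcus version of the BDG inequality, proved in \cite{CF17x}. The identity $\X^{\mathrm{It\hat{o}}}_{s,t}=\X^{\mathrm{Marcus}}_{s,t}-\tfrac{1}{2}[X]_{s,t}$ displays the It\^o lift as a level-two perturbation of the Marcus lift by the bracket $[X]$, which is an increasing process and hence has $1$-variation (a fortiori $p/2$-variation) equal to $|[X]_{\infty}|$. Since the homogeneous norm satisfies $|||\BX|||_{p}\asymp \|X\|_{p}+\|\X\|_{p/2}^{1/2}$, this perturbation contributes at most a constant multiple of $|[X]_{\infty}^{1/2}$ to $|||\BX|||_{p}$, and the Marcus BDG estimate from \cite{CF17x} then yields both inequalities immediately. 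If you wish to argue directly, you would need an estimate on $\|\X\|_{p/2}$ that does not pass through $\|M\|_{p/2}$ or $\|X\|_{p/2}$; the route taken in \cite{CF17x} for the Marcus lift (which exploits the area interpretation and a careful stopping argument) is one such option.
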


\begin{rem} This estimates extends simultaneously L\'epingle's classcial BDG inequality (with $ \| X \|_{p} $ instead of $ ||| \BX |||_{p}$) and a BDG inequality for
{\it continuous} semimartingale rough paths \cite{FV08b}.
\end{rem}


\begin{proof}
The conclusion follows from the Marcus version, i.e. Theorem 4.7 in \cite{CF17x}. Indeed, by the relation between It\^{o} and Marcus lift,
and the fact that we work with the {\it homogenous} norm
$$
      ||| \BX |||_{p} \asymp \| X \|_{p} + \| \X \|^{1/2}_{p/2} \ ,
$$
the proof is readily finished.

%
%
%

\end{proof}

%
%
%
%
%
%
%

Furthermore, one has the following equivalence between RDEs and SDEs driven by semimartingales. The Marcus version of this result is built in \cite[Prop. 4.15]{CF17x}.

\begin{prop}(\textbf{equivalence of RDEs and SDEs})\label{equivalent DE}
Suppose $X$ is a semimartingale and $\BX$ is its It\^{o}'s lift as above. $F \in C_b^3.$ Then the solution for the random RDE
$$
dY_t = F(Y_t)^\ell d\BX_t, \ \ \ \ Y_0= y_0,
$$
agrees, with probability one, with the It\^{o}'s SDE
$$
dY_t = F(Y_t^-)  dX_t, \ \ \ \ Y_0= y_0 \ .
$$
\end{prop}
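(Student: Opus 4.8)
The plan is to reduce the statement to an identification of integrals: I will show that for the (unique, by Theorem \ref{global sol.}) RDE solution $Y$, the rough integral $\int_0^\cdot F(Y)^\ell d\mathbf{X}$ agrees, with probability one and as a c\`adl\`ag process, with the It\^o integral $\int_{(0,\cdot]}F(Y_s^-)\,dX_s$. Once this is known, $Y$ itself solves the It\^o SDE, and strong uniqueness of that SDE (its coefficient $F\in C_b^3$ is globally Lipschitz) identifies $Y$ with the It\^o solution, which is the claim.

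The core step I would isolate is a lemma: if $\Phi$ is adapted and c\`adl\`ag with Gubinelli derivative $\Phi'$ (also adapted, c\`adl\`ag, and bounded -- automatic in our application since $F,DF\in C_b$), so that $(\Phi,\Phi')$ is an $\mathbf{X}$-controlled rough path for the It\^o lift $\mathbf{X}=(X,\mathbb{X})$, $\mathbb{X}_{s,t}=\int_s^t(X_{r-}-X_s)\otimes dX_r$, then $\int_0^t\Phi^\ell d\mathbf{X}=\int_{(0,t]}\Phi_s^-\,dX_s$ a.s. I would argue along a fixed sequence of partitions $(\op_n)$ with $|\op_n|\to0$, on which: (i) since $\mathbf{X}$ is a.s.\ a c\`adl\`ag $p$-rough path with $p\in(2,3)$ by the preceding theorem, Proposition \ref{level-2 rp} gives pathwise (MRS, hence along $(\op_n)$) convergence of the compensated sums $\sum_{[u,v]\in\op_n}(\Phi_uX_{u,v}+\Phi'_u\mathbb{X}_{u,v})$ to the rough integral; (ii) the left-point sums $\sum_{[u,v]\in\op_n}\Phi_uX_{u,v}$ converge in ucp to the It\^o integral $\int_{(0,t]}\Phi_s^-\,dX_s$, a standard fact for c\`agl\`ad integrands; (iii) the correction term satisfies $\sum_{[u,v]\in\op_n}\Phi'_u\mathbb{X}_{u,v}\to0$ in ucp. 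For (iii), writing $\pi_n(r)$ for the left endpoint of the $\op_n$-interval containing $r$ and using interval-additivity of the stochastic integral, $\sum_{[u,v]\in\op_n}\Phi'_u\mathbb{X}_{u,v}=\int_{(0,t]}\Phi'_{\pi_n(r)}(X_{r-}-X_{\pi_n(r)})\otimes dX_r$; the integrand is c\`agl\`ad and adapted, uniformly bounded on $[0,T_k]$ after localizing by $T_k=\inf\{t:|X_t|\vee|X_{t-}|\ge k\}$, and tends to $0$ pointwise in $(r,\omega)$ because $\pi_n(r)\uparrow r$ strictly, so $X_{\pi_n(r)}\to X_{r-}$ and $\Phi'_{\pi_n(r)}\to\Phi'_{r-}$ (including at jump times, where the factor $X_{r-}-X_{\pi_n(r)}$ still vanishes). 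The dominated convergence theorem for stochastic integrals (e.g.\ \cite[Ch.~IV]{Pro05}) then yields (iii). Extracting an a.s.-convergent subsequence along $(\op_n)$ and comparing the three limits on a common full-measure set gives the identity for each fixed $t$ a.s.; right-continuity in $t$ of both sides promotes it to ``for all $t$, a.s.''

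Then I would apply the lemma with $\Phi=F(Y)$ and $\Phi'=DF(Y)F(Y)$ -- the latter is the Gubinelli derivative of the integrand defining $\int_0^\cdot F(Y)^\ell d\mathbf{X}$, cf.\ Remark \ref{integ.as crp} -- noting that $Y$ is adapted (Picard iteration from the adapted $\mathbf{X}$), c\`adl\`ag and of finite $p$-variation, and that $\Phi,\Phi'$ are adapted, c\`adl\`ag and bounded because $F\in C_b^3$. This produces $Y_t=y_0+\int_{(0,t]}F(Y_s^-)\,dX_s$ a.s., i.e.\ $Y$ solves the It\^o SDE, and uniqueness closes the proof.

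The main obstacle is step (iii): showing the second-level Riemann sums $\sum\Phi'_u\mathbb{X}_{u,v}$ vanish. This is precisely where the It\^o (rather than Stratonovich/Marcus) nature of the lift enters, and it requires the localization and the dominated convergence theorem for stochastic integrals, with some care at jump times. A secondary, purely bookkeeping, point is reconciling the deterministic/pathwise MRS convergence of the rough integral with the probabilistic convergence of the It\^o integral; this is handled by fixing one mesh-vanishing partition sequence and passing to a.s.-convergent subsequences so that all three limits live on the same null-complement.
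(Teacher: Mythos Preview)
Your argument is correct and follows the same overall architecture as the paper's: identify the rough integral with the It\^o integral for the RDE solution, then invoke strong uniqueness of the It\^o SDE. The difference lies in how the integral identification is carried out. The paper outsources the identity $\int_0^t F(Y_s^-)^\ell d\BX_s=\int_0^t F(Y_s^-)\,dX_s$ to the proof of \cite[Prop.~4.15]{CF17x} (the Marcus companion paper), and then invokes the pure-jump sewing lemma (Theorem~\ref{pure jumps}) together with right-continuity of $\BX$ to pass from $F(Y^-)$ to $F(Y)$ inside the rough integral. You instead give a fully self-contained argument: the three-term decomposition (i)--(iii), with the key step being that $\sum_{[u,v]}\Phi'_u\X_{u,v}\to0$ in ucp via the rewriting $\int_{(0,t]}\Phi'_{\pi_n(r)}(X_{r-}-X_{\pi_n(r)})\,dX_r$ and dominated convergence for stochastic integrals. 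This is exactly where the It\^o nature of the lift is used, and your handling of it (localization, pointwise vanishing of the integrand since $\pi_n(r)<r$ forces $X_{\pi_n(r)}\to X_{r-}$) is clean. A bonus of your route is that the $F(Y)$ versus $F(Y^-)$ issue never arises: your Riemann sums already use $F(Y_u)$, which is what the It\^o integral sees in the limit, so no pure-jump sewing correction is needed. The paper's route is shorter on the page but less transparent, since the substance is hidden in the cited reference; your version would make the proof independent of \cite{CF17x}.
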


\begin{proof}

According to the proof of Proposition 4.15 in \cite{CF17x}, one has
$$
\int_0^t F(Y_s^-)^\ell d\BX_t = \int_0^t F(Y_s^-) dX_s, \ \ \  a.s.,
$$
which, by the right-continuity of $\BX$ and the sewing lemma for pure jumps, implies
$$
\int_0^t F(Y_s)^\ell d\BX_t = \int_0^t F(Y_s^-) dX_s, \ \ \  a.s..
$$
Then the statement follows by (strong) uniqueness of It\^o SDE solutions.

\end{proof}


\subsection{Manual for checking tightness} \label{sec:cat}

It\^o-lifted general semimartingales are natural random rough paths, however there are many other stochastic processes (with jumps) that admit a rough path interpretation. According to our convergence result for random RDEs, the key assumption is $$ (*) \ \ \ \ \{\|\BX^n\|_{p,[0,T]} \} \text{ is tight.} $$ This invites comparison with the UCV/UT (``uniformly controlled variation/uniform tightness'') conditions familiar from semimartingale theory (and more specifically the stability theory for It\^o SDEs). Remarkably, the two theories match perfectly in the sense that UCV/UT precisely implies $(*)$ and we detail this in the first subsection below. This in turn allows to avoid the UCV/UT condition altogether, which is of important for instance in homogenization theory (see Remark \ref{rem:KM}). In essence, one needs to check $(*)$ by hand, but -  fortunately -  we have a variety of tools available that can be used, also in a non-semimartingale context, as discussed in subsequent parts of this section.



\subsubsection{Semimartingale rough paths under UCV/UT condition} \label{sec:UCVUT}

Now we recall some classical SDEs convergence results from \cite{KP96}, and then we show how our result above consistent with the classical one. First, recall uniformly controlled variation(UCV) condition for a sequence of semimartingales $(X^n)_{n \geq 1}.$ For a path $X \in D([0,T],\R^d)$ and a constant $\delta>0,$ define
$$
X_t^\delta:= X_t - \sum_{s\leq t} (1-\delta/|X_{s-,s}|)^+ X_{s-,s}.
$$
We remark that $X^\delta$ is a version of $X$ with jumps bounded by $\delta,$ which implies the mapping $X \rightarrow X^\delta $ is continuous in both uniform and Skorokhod sense.

\begin{defi}(\textbf{UCV condition})
A sequence of semimartingales $\left(X^n \right)_{n\geq 1}$ satisfies uniformly controlled variation(UCV) condition if there exists a constant $\delta $ such that for any $\alpha>0,$ $X^{n,\delta }$ defined by the above formula has a martingale decomposition $X^{n,\delta } = M^{n, \delta } + A^{n, \delta },$ along with stopping times $\tau^{n, \alpha },$ such that for any $t>0,$

$$
\sup_{n \geq 1} P(\{ \tau^{n, \alpha } \leq \alpha \} ) \leq \frac1\alpha ,\ \ \  \sup_{n \geq 1} \E\left[[ M^{n,\delta } ]_{t\wedge \tau^{n, \alpha }}  + |A^{n, \delta } |_{1-v,[0,t\wedge \tau^{n, \alpha } ]}  \right] < \infty.
$$

\end{defi}

If $\left(X^n \right)_{n\geq 1}$ is a set of semimartingales converges to $X, $ then the lifted rough path $\X^n$ also converge to $\X$ in the same topology by the continuity of stochastic integral, i.e. Theorem 7.10 in \cite{KP96}. Precisely, one has the following proposition.

\begin{prop}(\textbf{convergence of lifted semimartingales})\label{conver. of lifted semimar}
Suppose $\left(X^n \right)_{n\geq 1}$ is a set of semimartingales converges in probability(or weakly) to a semimartingale $X $ under Skorokhod topology. Furthermore, suppose $\left(X^n \right)_{n\geq 1}$ satisfies UCV condition. Then the lifted rough path $\BX^n$ also converges in probability(or weakly) to $\BX$ under the Skorokhod topology, and moreover, the set $\{\|\BX^n \|_{p,[0,T]} | n \geq 1 \}$ is tight for any $p>2.$

\end{prop}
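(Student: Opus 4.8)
The plan is to reduce everything to two separate facts: (i) the weak (resp. in-probability) Skorokhod convergence $\BX^n \to \BX$ of the It\^o lifts, and (ii) the tightness of $\{\|\BX^n\|_{p,[0,T]}\}$ for any $p>2$. For (i) I would use that the first-level components $X^n \to X$ in Skorokhod topology by hypothesis, and that the second-level components are $\X^n_{0,\cdot} = \int_0^\cdot X^{n,-} \otimes dX^n$; the UCV condition is precisely the hypothesis under which Kurtz--Protter's continuity theorem for stochastic integrals (Theorem 7.10 in \cite{KP96}) applies, yielding joint Skorokhod convergence of the pair $(X^n, \int X^{n,-}\otimes dX^n)$ to $(X,\int X^- \otimes dX)$. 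Since the group operation on $G = \R^d \oplus \R^{d\times d}$ (and the passage from increments $\BX_{0,t}$ to the path $\BX_t$) is continuous, this transfers to Skorokhod convergence $\BX^n \to \BX$ in the rough path sense.

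The substance of the proof is (ii), the $p$-variation tightness. Here I would argue as follows. By the standard truncation built into the UCV definition, fix the constant $\delta$ and write $X^{n} = X^{n,\delta} + J^{n}$, where $J^n$ collects the ``big jumps'' (those of size $>\delta$) and $X^{n,\delta} = M^{n,\delta} + A^{n,\delta}$ has jumps bounded by $\delta$. On the localizing intervals $[0, t\wedge \tau^{n,\alpha}]$ one has uniform $L^1$-control of $[M^{n,\delta}]_\cdot$ and of $|A^{n,\delta}|_{1\text{-var}}$. The key point is that the homogeneous rough path norm $|||\BX^n|||_{p}$ of the It\^o lift is controlled, up to constants, by $\|X^n\|_p + \|\X^n\|_{p/2}^{1/2}$, and both quantities can be estimated by:
\begin{itemize}
\item the bounded-variation part $A^{n,\delta}$, which contributes a $1$-variation (hence $p$-variation and, for the iterated integral, $p/2$-variation) bound in terms of $|A^{n,\delta}|_{1\text{-var}}$;
\item the martingale part $M^{n,\delta}$, for which L\'epingle's BDG inequality and its rough-path upgrade, Theorem \ref{BDG} above, give $\E[\,|||\mathbf{M}^{n,\delta}|||_p^{\,}\,] \lesssim \E[\,|[M^{n,\delta}]_\infty|^{1/2}]$ on the stopped intervals;
\item the finitely many big jumps $J^n$, which add only finitely many terms to any $p$-variation sum and are controlled pathwise (one handles them ``by hand'' exactly as in the RDE existence proofs of Section \ref{sec:RDE}, using that $J^n$ has at most $T\cdot(\text{total mass})/\delta$-ish many jumps, tight by the convergence $X^n\to X$).
\end{itemize}
Assembling these via the subadditivity-type estimates for $p$-variation (and the elementary inequality $(a+b)^p \le 2^{p-1}(a^p+b^p)$) gives, on each event $\{\tau^{n,\alpha} > t\}$, a uniform-in-$n$ bound on $\E[\,|||\BX^n|||_p \wedge K\,]$ or more directly a bound of the form $\E\big[\phi(|||\BX^n|||_{p,[0,t\wedge\tau^{n,\alpha}]})\big] \le C(\alpha)$. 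Combined with $\sup_n P(\tau^{n,\alpha}\le\alpha) \le 1/\alpha$ and Markov's inequality, this yields tightness of $\{\|\BX^n\|_{p,[0,T]}\}$: given $\eta>0$, choose $\alpha$ large so the localizing events are negligible, then choose the threshold large using the uniform moment bound.

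The main obstacle I anticipate is the bookkeeping needed to pass from the scalar/BV L\'epingle--BDG estimates to the genuine level-$2$ rough path norm while respecting the truncation and localization: one must check that the iterated-integral (area) term of $X^{n,\delta}$ genuinely inherits a $p/2$-variation bound from $[M^{n,\delta}]$ and $|A^{n,\delta}|_{1\text{-var}}$, which is exactly where Theorem \ref{BDG} is essential, and that the interaction (cross) terms between the martingale, BV and big-jump parts do not spoil the estimate. Once the uniform moment bound on the stopped rough path norm is in hand, the tightness conclusion is routine, and the convergence statement then follows by combining (i) and (ii) with Lemma \ref{interpolation} to upgrade to $p'$-variation Skorokhod convergence for any $p'>p$ if desired.
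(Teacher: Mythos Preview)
Your plan for part (i) (Skorokhod convergence of the lifts via Kurtz--Protter's Theorem 7.10) matches the paper exactly. For the tightness part (ii), however, your route differs genuinely from the paper's. The paper does \emph{not} decompose $X^n$ directly via the UCV truncation and then apply the rough-path BDG inequality to the martingale part. Instead it exploits the algebraic identity relating the It\^o and Marcus lifts,
\[
\X^n_{s,t} = \X^{n,\mathbf{M}}_{s,t} - \tfrac12 [X^n]_{s,t},
\]
and cites the companion paper \cite{CF17x} (Theorem 4.11 there) for tightness of the Marcus lift $\{\|(\BX^n)^{\mathbf{M}}\|_{p,[0,T]}\}$ under UCV. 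This reduces the remaining work to tightness of $\{\|[X^n]\|_{p/2,[0,T]}\}$, which---since $[X^n]$ is non-decreasing---amounts to tightness of $\{|[X^n]_T|\}$, and that follows from the Skorokhod convergence $X^n \to X$ together with UCV (so that $[X^n]\to [X]$).

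Your approach is correct in outline and more self-contained, avoiding the external reference to \cite{CF17x}; indeed the argument in \cite{CF17x} for the Marcus case is essentially the decomposition-plus-BDG strategy you sketch. The price is exactly the bookkeeping you flag: the cross terms $\int M^{n,\delta,-}\,dA^{n,\delta}$, $\int A^{n,\delta,-}\,dM^{n,\delta}$, and the interactions with the big-jump part $J^n$ each require separate $p/2$-variation control (Young estimates for the mixed BV--martingale integrals, plus a finite-sum argument for the jumps). The paper's shortcut sidesteps all of this by pushing the hard work into the cited Marcus result and then handling only the single BV perturbation $[X^n]$, which is much cleaner.
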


\begin{proof}

The convergence part follows from the continuity of stochastic integral under Skorokhod topology, i.e. Theorem 7.10 in \cite{KP96}. The tightness part follows from the same argument as Theorem 4.11 in \cite{CF17x}. Indeed, notice for any semimartingale $Z,$ one has the identity
$$
\Z^{\mathbf{M}}_{s,t}= \Z_{s,t} + \frac12 [ Z ]_{s,t},
$$
where $\Z^{\mathbf{M}} $ is defined by the Marcus kind integration and $\Z$ is the It\^{o}'s version. It is proved in Theorem 4.11 \cite{CF17x} that the lifted rough path of Marcus kind $\{  \|(\BX^n)^M\|_{p,[0,T]} \}_{n}$ is tight. Since $[ X^n ]$ has bounded variation, one only needs to show $\{ |[ X^n ]_T | \}_n $ is tight, which is implied by the fact $X^n$ converges to $X$ in probability(or weakly) under Skorokhod topology.

\end{proof}

By the convergence of random RDEs and the convergence of lifted semimartingales, now we give a pathwise interpretation of the classical convergence result about SDEs solutions (e.g. Theorem 8.1 in \cite{KP96}).

\begin{thm}(\textbf{continuity for solutions of SDEs under UCV/UT})\label{continu.under UCV}
Suppose $F\in C_b^3$ and $\left(X^n \right)_{n\geq 1}$ is a set of semimartingales converges in probability(or weakly) to a semimartingale $X $ under Skorokhod topology which satisfy UCV condition. Denote $Y^n$ as solutions to SDEs
$$
dY^n= F(Y^n )^- d X^n,
$$
and $Y$ solves the same one driven by $X.$ Then $Y^n$ converges in probability(or weakly) to $Y$ under the Skorokhod topology. Moreover, $\{ \left\Vert Y^{n}\right\Vert _{p,\left[ 0,T\right] }\left( \omega \right) :n\geq 1 \} $ is tight and one also has convergence in probability(or weakly) under $p^{\prime }$-variation norm of Skorokhod type for any $p^{\prime }>p.$

\end{thm}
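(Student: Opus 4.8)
The plan is to deduce this as a corollary of three results already in place: the rough-path lift stability of semimartingales under the UCV condition (Proposition \ref{conver. of lifted semimar}), the general stability theorem for random RDEs (Theorem \ref{r.rde}), and the almost-sure identification of forward RDE and It\^o SDE solutions (Proposition \ref{equivalent DE}). No new estimate is needed; the entire analytic content sits in these cited results.

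Fix $p\in(2,3)$, so that $[p]=2$ and the hypothesis $F\in C_b^3$ matches the regularity $F\in C_b^{[p]+1}$ required by Theorem \ref{r.rde}, as well as the regularity required by Proposition \ref{equivalent DE}. First I would lift $X^n$ and $X$ to their It\^o rough paths $\BX^n = (X^n,\int (X^n)^-\otimes dX^n)$ and $\BX = (X,\int X^-\otimes dX)$; these are a.s.\ c\`adl\`ag $p$-rough paths by the It\^o-lift theorem. By Proposition \ref{conver. of lifted semimar}, the UCV assumption together with $X^n\to X$ in probability (resp.\ weakly) under the Skorokhod topology upgrades to $\BX^n\to\BX$ in probability (resp.\ weakly) under the Skorokhod topology, and moreover the family $\{\|\BX^n\|_{p,[0,T]}(\omega):n\ge1\}$ is tight.

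Next I would feed this into Theorem \ref{r.rde}: with $\BX^n\to\BX$ and the tightness just recorded, the RDE solutions $\tilde Y^n$ of $d\tilde Y^n = F(\tilde Y^n)^- d\BX^n$, $\tilde Y^n_0=y_0$, converge to the RDE solution $\tilde Y$ driven by $\BX$, in probability (resp.\ weakly) under the Skorokhod topology; the family $\{\|\tilde Y^n\|_{p,[0,T]}(\omega):n\ge1\}$ is tight; and the same convergence holds in the $p'$-variation Skorokhod metric for every $p'>p$. Since each $\BX^n$ and $\BX$ is c\`adl\`ag, the $\ell$-integral and the $^-$-integral coincide (pure-jump sewing), so these $\tilde Y^n,\tilde Y$ are exactly the objects appearing in Proposition \ref{equivalent DE}. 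Applying that proposition separately to the semimartingale $X^n$ (for each $n$) and to $X$ gives that almost surely $\tilde Y^n = Y^n$ and $\tilde Y = Y$, where $Y^n$ and $Y$ are the It\^o SDE solutions in the statement; substituting $Y^n$ for $\tilde Y^n$ and $Y$ for $\tilde Y$ in the conclusions above yields exactly the claimed convergence and tightness.

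The one point needing a little care is the passage through Proposition \ref{equivalent DE} in the weak-convergence case: that proposition is an almost-sure statement on the probability space carrying $X^n$ (resp.\ $X$), hence it only identifies the \emph{laws} of $\tilde Y^n$ and $Y^n$ (resp.\ of $\tilde Y$ and $Y$). Since weak convergence depends only on laws, transporting the conclusion of Theorem \ref{r.rde} from $(\tilde Y^n)$ to $(Y^n)$ is then immediate; in the in-probability case the almost-sure identities make the substitution trivial, and likewise for the tightness statements. This is really the only obstacle, and it is a purely bookkeeping matter.
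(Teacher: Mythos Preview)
Your proposal is correct and matches the paper's own proof essentially line for line: the paper too reduces everything to Proposition~\ref{equivalent DE} (a.s.\ identification of SDE and RDE solutions), Proposition~\ref{conver. of lifted semimar} (convergence and tightness of the It\^o lifts under UCV), and the RDE stability theorem. The only cosmetic difference is that the paper cites the deterministic Theorem~\ref{converge for RDEs} whereas you cite its probabilistic packaging Theorem~\ref{r.rde}; your choice is arguably the cleaner reference here, and your extra paragraph on laws versus a.s.\ identities in the weak-convergence case is a welcome clarification the paper leaves implicit.
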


\begin{proof}

According to Proposition \ref{equivalent DE}, $Y^n$ solves $dY^n= F(Y^n )^- d \BX^n, \ a.s..$ Then by applying Proposition \ref{conver. of lifted semimar} and Theorem \ref{converge for RDEs}, one finishes the proof.

\end{proof}

\bigskip

The following example shows that Theorem \ref{continu.under UCV} yields sharp results, using \cite{KP91,KP96} as benchmark.

\begin{example}(\textbf{random walk benchmark example})\label{random walk}
Given a sequence of partitions $\op_n :=\{0=t_0< t_1^n  <t_2^n  < \cdots < t_{N_n -1}^n< t_{N_n}^n=T \},$ for a fixed $n,$ suppose for any $t\in [t_k^n,t_{k+1}^n), k =0,...,N_n-1,$
$$
X^n_{0,t}=\sum_{i=0}^{k-1} \xi_i^n ,
$$
where $\{\xi^n_i\}_{i=0}^{N_n-1}$ are $\R^d-$valued independent random variables with null means and finite quadratic variation. The It\^o lift is clearly given by, for $t\in [t_k^n,t_{k+1}^n)$,
$$
\BX^n_{0,t }:= (X^n_{0,t}, \X^n_{0,t}) := (\sum_{i=0}^{k-1} \xi_i^n,\sum_{i=0}^{k-1} \sum_{j=0}^{i-1} \xi_i^n \xi_j^n),
$$
In particular, one has
\[
\X_{s,t}^n =\left\{
\begin{array}{ll}
0, & s, t \in [t_k^n,t_{k+1}^n], \\
\xi^n_k \xi^n_{k+1},& s=t_k^n, t=t_{k+2}^n.
\end{array}
\right.
\]
Furthermore, suppose $\sup_{n} \sum_{i=0}^{N_n-1} \E(|\xi_i^n|^2) < \infty,$ which implies $\{X^n\}_n$ satisfies the UCV condition. Indeed, one has $\E([X^n]_t) \leq \sum_{i=0}^{N_n-1} \E(|\xi_i^n|^2) < \infty $ uniformly over $n.$ Then one has the tightness of $\{\|\BX^n \|_{p,[0,T]}   \}_n$ for any $p>2,$ and the continuity of solutions for SDEs driven by $\{X^n\}_n.$ In particular, if $\{\zeta_i \}_{i \geq 0}$ be i.i.d. $\R^d-$valued random variables with $\E[\zeta_i]=0$ and $\E[\zeta_i \zeta_j]=Q $ and
$$
X^n(t):= n^{-\frac12} \sum_{i=0}^{\lfloor nt \rceil-1} \zeta_i, \ \ \ \
\X^n(t):= n^{-1} \sum_{i=0}^{\lfloor nt \rceil-1}\sum_{j=0}^{i-1} \zeta_j \zeta_i= \int_0^t (X^n_{r})^- dX^n_r.
$$
Note that $\X^n_{s,t}=0, $ if $s,t \in [t_i^n, t_{i+1}^n].$ Define
\begin{eqnarray}\label{Y_n}
Y_{t}^{n} := Y_{t^n_{i-1}}^{n}+ F \left( Y_{t^n_{i-1}}^{n}\right) \xi^n_{i-1} , \ \ \ t\in[t^n_{i}, t^n_{i+1}),
\end{eqnarray}
which solves random RDEs/SDEs $dY^n = F(Y^n)^-   d X^n$ and $Y$ solves the differential equation driven by Brownian motion,
$$
dY= F(Y) Q^{\frac12} dB.
$$
By classical results (e.g. Donsker's theorem plus UCV/UT criterion) one has $(X^n, \X^n) \implies (B, \B)=(B, \int B dB)$. 
Then by our Theorem \ref{continu.under UCV}, one
obtains $Y_n \implies Y$, meaning  weak convergence under uniform topology. This is exactly what classical ``UCV/UT theory'' yields, applied to the above sequence of SDEs.  The point is that, with regard to integrability of $\zeta$'s,
application of our Theorem \ref{continu.under UCV} requires only $2$ moments, as does classical theory, and this is optimal.  On the other hand, the very general Theorem \ref{continu.under UCV} remains  fully operational without any UCV/UT-type assumptions; see also Theorem \ref{thm:alaK} below for a corollary which improves on \cite{Kel16}.

\end{example}

%
%

\subsubsection{Strong Markov process under Manstavi\v{c}ius criterion}

A (branched) rough path takes values in a (truncated) Butcher group equipped with a left-invariant, homogenous metric as discussed in Section \ref{sec:not} and the appendix. As seen in \cite[Ch.16]{FV10} (uniformly elliptic symmetric diffusions) and also \cite{FS17} (L\'evy processes and jump diffusions) such lifted Markov processes can be viewed as Markov processes with values in such a  Lie group, equipped with a metric that precisely allows to express the $p$-variation rough path regularity.

\medskip

It follows that $p$-variation criteria for Markov processes with values in a metric space become an immediate source of the desired tightness estimates. Specifically, we introduce the criterion for the a.s. finite $p$-variation of a strong Markov process developed by Manstavi\v{c}ius \cite{Man04}, first employed in a L\'evy rough path setting by \cite{FS17}, and then the tightness criterion of step-function version developed by Chevyrev \cite{Che17}. Let $\BX_t,$ $t\in [0,T]$ be a c\`adl\`ag strong Markov process with values in a Polish space $(E,d)$.
Write $P_{s,t}(x,dy), x\in E$ for the transition probability. For any $h>0,$ $a >0,$ denote
$$
P^{s,x}(d(\BX_s, \BX_t)> a):= P_{s,t}(x,\{y: d(x,y)\geq a \}), \ \ \ \alpha(h,a):= \sup_{ {x\in E}\atop { s \leq t \leq (s+h) \wedge T} }P^{s,x}(d(\BX_s, \BX_t)\geq a).
$$
Consider the oscillations of $\BX(\omega)$ by defining
$$
\nu_b(\BX)(\omega):= \sup_{t_i \in [0,T], i=1,...,2k} \{k | t_1 < t_2 \leq t_3 < \cdots < t_{2k}, d(\BX_{t_{2k-1}}, \BX_{t_{2k}} ) > b \},
$$
i.e. the number of oscillations of size larger than $b.$ Since $\BX$ c\`adl\`ag, one has $\nu_b < \infty , a.s.$ for any $b>0.$ Denote
$$
M(\BX):= \sup_{t\in[0,T]} d(\BX_0,\BX_t)),
$$

and the contribution to $\|\BX\|_{p,[0,T]}$ from oscillations larger than $b$ is controlled by $(2M(\BX))^p \nu_b(\BX).$ Furthermore, by taking $2^{-r}$ as bounds for oscillations, one has
$$
\|\BX\|_{p,[0,T]}^p \leq \sum_{r>r_0}^\infty 2^{-rp+p } \nu_{2^{-r}}(\BX) + (2M(\BX))^p \nu_{2^{-r_0}}(\BX),
$$
for any $r_0 \geq 0.$ We have the following criterion for tightness.

\begin{prop}(\textbf{tightness for strong Markov under Manstavi\v{c}ius condition})
Suppose $\{\BX\}_{\BX \in \mathcal{M}}$ is a set of strong Markov processes with each element $\BX$ defined as above, and there exist constants $a_0 >0, K>0$ such that for all $h \in [0,T]$ and $a \in (0,a_0],$
$$
\sup_{\BX \in \mathcal{M}} \alpha(h,a) \leq K \frac{h^\beta}{a^\gamma}.
$$
where $\gamma>0, \beta>1.$ Furthermore, assume $\{M(\BX)  \}$ is tight. Then for any $p > \frac{\gamma}{\beta},$ $\{\|\BX\|_{p,[0,T]} \}_{\BX \in \mathcal{M}}$ is tight.
\end{prop}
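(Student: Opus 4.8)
The plan is to reduce the statement to a single uniform moment estimate for the oscillation counts $\nu_b(\BX)$ and then feed it into the elementary bound
$$
\|\BX\|_{p,[0,T]}^p \le \sum_{r\ge 1} 2^{-rp+p}\,\nu_{2^{-r}}(\BX) + \big(2M(\BX)\big)^p\,\nu_{1}(\BX)
$$
recorded just before the statement (with $r_0=0$). Precisely, I claim it suffices to prove that there is a constant $C=C(K,\beta,\gamma,T)$, independent of the particular process, with
$$
\sup_{\BX\in\mathcal M}\ \E\big[\nu_b(\BX)\big]\ \le\ C\,b^{-\gamma/\beta}\qquad\text{for all }0<b\le a_0 .
$$
Since $b\mapsto\nu_b$ is nonincreasing, this also gives $\sup_{\BX\in\mathcal M}\E[\nu_b(\BX)]<\infty$ for every fixed $b>0$, in particular for $b=1$.

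The key step is this moment bound, which is exactly the quantitative content of Manstavi\v{c}ius's $p$-variation criterion \cite{Man04} (used in a rough path context in \cite{FS17}); the only point needing care is that every constant in its proof depends solely on $K,\beta,\gamma,T$, and this is automatic because the sole analytic input is the modulus bound $\alpha(h,a)\le Kh^\beta/a^\gamma$, assumed to hold uniformly over $\mathcal M$. In outline one fixes $b\in(0,a_0]$, chooses $h=h(b)\asymp b^{\gamma/\beta}$ so that $\alpha(h,b)$ — and, after a maximal-inequality step, also $\sup_{x,s}P^{s,x}\big(\sup_{s\le u\le (s+h)\wedge T}d(\BX_s,\BX_u)\ge b\big)$ — is at most $\tfrac12$, partitions $[0,T]$ into $N=\lceil T/h\rceil\asymp b^{-\gamma/\beta}$ intervals, and uses the strong Markov property to dominate $\nu_b(\BX)$ stochastically by a negative-binomial-type sum built from the $N$ intervals, yielding $\E[\nu_b(\BX)]\le cN\le Cb^{-\gamma/\beta}$. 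Extracting this estimate, with the constants tracked as above, from \cite{Man04} is the main (and essentially only) obstacle; the maximal-inequality step replacing $d(\BX_s,\BX_{s+h})$ by the running maximum over $[s,(s+h)\wedge T]$ is the one slightly delicate ingredient.

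Granting the moment bound, tightness follows at once. For the series term, Markov's inequality together with $p>\gamma/\beta$ gives, uniformly in $\BX\in\mathcal M$,
$$
P\Big(\sum_{r\ge1}2^{-rp+p}\nu_{2^{-r}}(\BX)>\lambda\Big)\ \le\ \frac1\lambda\sum_{r\ge1}2^{-rp+p}\,\E\big[\nu_{2^{-r}}(\BX)\big]\ \le\ \frac{C'}{\lambda}\sum_{r\ge1}2^{-r(p-\gamma/\beta)+p}\ =\ \frac{C''}{\lambda},
$$
the finitely many indices $r$ with $2^{-r}>a_0$ being absorbed using monotonicity of $\nu$; hence this family is tight. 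For the remaining term $\big(2M(\BX)\big)^p\,\nu_1(\BX)$: the family $\{\nu_1(\BX)\}_{\BX\in\mathcal M}$ has uniformly bounded expectation, hence is tight, the family $\{M(\BX)\}_{\BX\in\mathcal M}$ is tight by hypothesis, and therefore the product family is tight by the union bound $P(UV>t^2)\le P(U>t)+P(V>t)$ applied to $U=\big(2M(\BX)\big)^{p}$ and $V=\nu_1(\BX)$. A sum of two tight families of nonnegative random variables is tight, so $\{\|\BX\|_{p,[0,T]}^p\}_{\BX\in\mathcal M}$, and hence $\{\|\BX\|_{p,[0,T]}\}_{\BX\in\mathcal M}$, is tight for every $p>\gamma/\beta$.
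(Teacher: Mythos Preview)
Your proposal is correct and follows essentially the same route as the paper: both invoke the oscillation-count decomposition, extract a uniform moment bound on $\nu_b(\BX)$ from Manstavi\v{c}ius's proof (the paper simply cites it, you outline it), and conclude tightness via Markov's inequality on the series term combined with the assumed tightness of $M(\BX)$ for the remaining product term. The only cosmetic difference is your choice $r_0=0$ versus the paper's $r_0$ tied to $a_0$, which you correctly compensate for by absorbing the finitely many large-$b$ indices via monotonicity of $\nu_b$.
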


\begin{proof}
According to the proof of Theorem 1.3 in \cite{Man04}, one has
$$
 \|\BX \|_{p,[0,T]}^p  \leq \sum_{r>r_0} 2^{-rp}  \nu_{2^{-r-1}}(\BX) + (2M(\BX))^p \nu_{2^{-r_0-1}}(\BX), \ \ \ \sup_{\BX\in \mathcal{M}} \left(\E[\sum_{r >r_0} 2^{-rp}  \nu_{2^{-r-1}}(\BX)] + \E[\nu_{2^{-r_0-1}}(\BX)]\right) < \infty,
$$
where $r_0$ is the largest integer such that $2^{-r_0-2} \geq  a_0.$ Then the tightness follows from our assumption on the tightness of large oscillation and classical Markov's inequality.

\end{proof}

\begin{rem}
Concerning the tightness of $M(\BX),$ one may check a basic criterion for a sequence of c\`adl\`ag processes $\{\BX^n\}_n$ in Theorem 4.8.1 of \cite{K11}, or the so called Aldous condition, i.e.
$$
d( \BX^n_{\tau_n} ,\BX^n_{\tau_n+h_n} ) \rightarrow 0 \text{ in probability,}
$$
for any sequence of bounded $\MF^n$-stopping times $\tau_n$ and any sequence of positive numbers $h_n$(see Theorem 4.8.2 in \cite{K11}). For example, one has the following criterion according to \cite{Che17}.
\end{rem}

\begin{prop}(\textbf{Corollary 4.9 in \cite{Che17}})
Suppose $(\BX^n)_{n \geq 1}$ a sequence of c\'adl\'ag strong Markov processes defined as above. Assume that
\begin{enumerate}
\item for any rational $h \in [0,T],$ $(\BX_h^n)_n$ is tight.\\
\item there exist constants $K , \beta, \gamma, b>0,$ such that for all $\delta \in (0,b], h>0,$
$$
\sup_n \sup_{s \in R_{\BX^n}} \sup_{x\in E} \sup_{t\in [s,s+h]} P^{s,x}(d(\BX^\op_s, \BX^\op_t)> \delta) \leq K \frac{h^\beta}{\delta^\gamma},
$$
\end{enumerate}
where $R_{\BX^n}=[0,T]\Z_{\BX^n},$ with $\Z_{\BX^n}$ denoting the union of so called stationary intervals, see Definition 4.4 in \cite{Che17}.
Then for any $p> \gamma/\beta,$ $(\|\BX^n\|_{p,[0,T]})_n$ is tight.

\end{prop}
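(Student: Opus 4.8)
The plan is to reduce the statement, essentially, to the $p$-variation criterion of Manstavičius applied on the complement of the stationary intervals, together with a tightness argument for the maximal oscillation $M(\BX^n)$. First I would recall that a strong Markov process whose transition probabilities satisfy a bound of the form $\alpha(h,\delta)\le K h^\beta/\delta^\gamma$ (with $\beta>1$) has a.s. finite $p$-variation for every $p>\gamma/\beta$, and more quantitatively, that the moments $\E[\sum_{r>r_0}2^{-rp}\nu_{2^{-r-1}}(\BX)]$ and $\E[\nu_{2^{-r_0-1}}(\BX)]$ are controlled uniformly by the constants $K,\beta,\gamma$ appearing in the bound. This is exactly the content of the preceding proposition (tightness for strong Markov under the Manstavičius condition); the only new feature here is that the process $\BX^n$ is a step-function (piecewise constant c\`adl\`ag rough path) possessing stationary intervals, so the transition-probability bound is only assumed over the reduced time set $R_{\BX^n}=[0,T]\setminus \Z_{\BX^n}$.

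The key observation — following Chevyrev's Definition 4.4 and the surrounding discussion — is that on a stationary interval the process does not move, hence contributes nothing to any $p$-variation sum: if $u,v$ lie in the same stationary interval then $d(\BX^n_u,\BX^n_v)=0$, and more generally any partition can be refined/coarsened so that its nonzero increments are realized between points of $R_{\BX^n}$. Therefore the $p$-variation of $\BX^n$ over $[0,T]$ equals (is dominated by) the $p$-variation computed using only times in $R_{\BX^n}$, on which the Markov bound hypothesis (2) applies verbatim. So the second step is to run the Manstavičius chaining estimate
$$
\|\BX^n\|_{p,[0,T]}^p \le \sum_{r>r_0}2^{-rp+p}\,\nu_{2^{-r}}(\BX^n) + (2M(\BX^n))^p\,\nu_{2^{-r_0}}(\BX^n),
$$
with $r_0$ the largest integer with $2^{-r_0-2}\ge b$, and to take expectations of the oscillation counts $\nu_{2^{-r}}$. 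The strong Markov property combined with $\sup_n\sup_{s\in R_{\BX^n}}\sup_x P^{s,x}(d(\BX^n_s,\BX^n_t)>\delta)\le K h^\beta/\delta^\gamma$ gives, exactly as in the proof of Theorem 1.3 in [Man04], a geometric decay of $\E[\nu_{2^{-r}}(\BX^n)]$ in $r$ which is summable against $2^{-rp}$ precisely when $p>\gamma/\beta$, and this bound is uniform in $n$. Hence $\{\|\BX^n\|_{p,[0,T]}^p - (2M(\BX^n))^p\nu_{2^{-r_0}}(\BX^n)\}_n$ is bounded in $L^1$, so tight.

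The remaining ingredient is the tightness of the boundary term $(2M(\BX^n))^p\nu_{2^{-r_0-1}}(\BX^n)$. Here $\nu_{2^{-r_0-1}}(\BX^n)$ is again $L^1$-bounded uniformly in $n$ by the same Manstavičius estimate (it counts oscillations larger than a fixed size $\sim b$), so by Markov's inequality it is tight. For $M(\BX^n)=\sup_{t}d(\BX^n_0,\BX^n_t)$ one invokes hypothesis (1): finite-dimensional tightness at rational times $h$, together with the Aldous-type control implicit in the transition bound (2) — namely $d(\BX^n_{\tau_n},\BX^n_{\tau_n+h_n})\to 0$ in probability for bounded stopping times $\tau_n$ and $h_n\downarrow 0$, which follows from $\alpha(h_n,\delta)\to 0$ — yields tightness of $\{\BX^n\}$ in the Skorokhod topology (Theorem 4.8.2 in [K11]), hence tightness of $M(\BX^n)$. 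Finally, combining a tight sequence and an (independent-looking but merely jointly controlled) product: since both $M(\BX^n)$ and $\nu_{2^{-r_0-1}}(\BX^n)$ are tight, for any $\epsilon>0$ one chooses $L$ with $\sup_n P(M(\BX^n)>L)<\epsilon/2$ and then $C$ with $\sup_n P((2L)^p\nu_{2^{-r_0-1}}(\BX^n)>C)<\epsilon/2$, so the product is tight; adding the $L^1$-bounded main term, $\{\|\BX^n\|_{p,[0,T]}\}_n$ is tight for every $p>\gamma/\beta$, which is the claim.

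The step I expect to be the main obstacle is the bookkeeping around the stationary intervals: one must check carefully that hypothesis (2), stated only for $s\in R_{\BX^n}$, genuinely suffices to control the oscillation counts $\nu_{2^{-r}}(\BX^n)$ that appear in the $p$-variation bound — i.e. that every "relevant" increment can be charged to a pair of times in $R_{\BX^n}$ — and that the strong Markov property can be applied at the stopping times naturally arising in the Manstavičius chaining even though the process is piecewise constant with a random (time-inhomogeneous) stationary structure. Once this reduction is cleanly justified, the rest is a routine transcription of [Man04, Thm 1.3] and the tightness criteria of [K11, §4.8] and [Che17, §4].
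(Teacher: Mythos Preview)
The paper does not give its own proof of this proposition; it is stated purely as a citation of Corollary~4.9 in \cite{Che17} and the text moves directly to the next subsection. So there is nothing in the paper to compare your attempt against.

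That said, your outline is a faithful reconstruction of what the surrounding material suggests: it combines the preceding proposition (the Manstavi\v{c}ius-type bound on oscillation counts, uniform in $n$) with the remark immediately before the statement, which explicitly points to the Aldous condition and Theorem~4.8.2 of \cite{K11} as the route to tightness of $M(\BX^n)$ from finite-dimensional tightness plus the transition bound. Your identification of the stationary-interval bookkeeping as the crux is exactly right: hypothesis~(2) is only stated for $s\in R_{\BX^n}$, and one must argue that every nonzero increment, and every stopping time arising in the chaining, can be charged to a time in $R_{\BX^n}$ because the process is constant on $\Z_{\BX^n}$. One small flag: you write the Manstavi\v{c}ius input ``with $\beta>1$'', matching the preceding proposition, whereas the present statement only assumes $\beta>0$; the step-function structure in \cite{Che17} is what allows the relaxation, and this is intertwined with the same stationary-interval reduction you highlight. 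For the actual details you will have to consult \cite{Che17} directly.
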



\subsubsection{Discrete random rough paths under Besov-type condition} 
\label{sec:Kelly}

Suppose $\op  :=\{0=t_0< t_1   <t_2   < \cdots < t_{N  -1} < t_{N } =1 \}$ is a partition and $(X , \X )$ is a piecewise constant random path, i.e. $(X_{0,t}, \X_{0,t})$ constant on $[t_i,t_{i+1}),$ which takes values on $\R^d \oplus \R^{d\times d}$, satisfies ``Chen's relation'', and have the following moment bounds
\begin{equation}\label{kolmog moment jump}
\left( \E[  | X_{t_k, t_j}|^{q} ] \right)^{\frac 1 {q}} < C |t_k - t_j|^{\beta}, \ \ \ \left( \E[  | \X_{t_k, t_j}|^{\frac {q} 2} ] \right)^{\frac 2 {q}} < C |t_k - t_j|^{2\beta},
\end{equation}
with $q>1, \beta> \frac 1 {q}.$ for all $t_k, t_j \in \op$.


Then one has the following criterions for the tightness of such piecewise constant processes.

\begin{prop}\textbf{(tightness for discrete rough paths)}\label{discrete criterion}
Let $(X, \X)$ be a c\`adl\`ag piecewise constant random path according to $\op$, i.e. for any $t \in [t_i,t_{i+1}),$
$$
(X_{0,t},\X_{0,t})(\omega)= (X_{0,t_i}, \X_{0,t_{i}})(\omega).
$$
Suppose $(X, \X)$ satisfies condition \eqref{kolmog moment jump} with $\beta\leq \frac12$. Then for any $   p \in ( \frac 1 \beta,q),$ $(X, \X)$ has finite $p$-variation a.s., and
$$
\E[\|X\|_{p,[0,1]}^q] + \E[\|\X\|_{\frac p 2,[0,1]}^{\frac q 2 } ]  < M
$$
with $M$ depending only on $C, p , q, \beta .$ In particular, if $\{\op^\alpha \}_{\alpha\in \mathcal{A}}$ is a set of partitions and for each $\op^\alpha,$ $(X^\alpha, \X^\alpha)$ is a c\`adl\`ag piecewise constant random rough path satisfying \eqref{kolmog moment jump} according to $\op^\alpha$ with uniform $C.$ Then one has
\begin{equation}
\sup_{\alpha \in \mathcal{A}} \left(\E[\|X^\alpha\|_{p,[0,1]}^q] + \E[\|\X^\alpha\|_{\frac p 2,[0,1]}^{\frac q 2 } ]\right)  < M
 \end{equation}
with $M$ depending only on $C, p , q, \beta ,$ and $\{  \|\BX^\alpha \|_{p,[0,1]} \}_\alpha$ is tight.
\end{prop}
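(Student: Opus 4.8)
The plan is to prove this as a Kolmogorov--Chentsov type criterion run on the \emph{variation} scale rather than the H\"older scale; this is exactly what keeps the integrability exponent at $q$ (only marginally above $1/\beta$) instead of the much larger exponents a H\"older argument in the spirit of \cite{Kel16} would need. The first step is a reduction to a finite discrete object. Since $(X_{0,\cdot},\X_{0,\cdot})$ is piecewise constant adapted to $\op$, for any partition $\op'$ of $[0,1]$ one has $X_{s,t}=X_{\phi(s),\phi(t)}$ and, using Chen's relation, $\X_{s,t}=\X_{\phi(s),\phi(t)}$, where $\phi(u):=\max\{t_i\in\op:t_i\le u\}$. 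Hence $\|X\|_{p,[0,1]}$ and $\|\X\|_{p/2,[0,1]}$ are determined by the finite family $(X_{t_i,t_j},\X_{t_i,t_j})_{t_i\le t_j}$, and it suffices to bound the $p$-variation of this discrete rough path indexed by $\op$.

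The second step is a Garsia--Rodemich--Rumsey / Besov-type variation embedding, evaluated on dyadic time points: there is a universal $c_p$ such that for every path $Y$ on $[0,1]$,
\[
\|Y\|_{p,[0,1]}\le c_p\sum_{n\ge 0}\Big(\sum_{k=0}^{2^n-1}\big|Y_{k2^{-n},(k+1)2^{-n}}\big|^p\Big)^{1/p},
\]
together with an analogous bound for $\X$ in which, because $\X_{s,t}=\X_{s,u}+\X_{u,t}+X_{s,u}\otimes X_{u,t}$, the dyadic right-hand side is supplemented by $\|X\|_{p,[0,1]}^2$ (these are exactly the embeddings of \cite{FV10}, adapted to step functions as in \cite{Che17}). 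Evaluating at $k2^{-n}$ returns the value at $\phi(k2^{-n})\in\op$, so that $Y_{k2^{-n},(k+1)2^{-n}}=X_{\phi(k2^{-n}),\phi((k+1)2^{-n})}$, whose $L^q$-norm is controlled by \eqref{kolmog moment jump} with the \emph{true} gap between those two points of $\op$ --- comparable to $2^{-n}$ up to the mesh, and in particular \emph{small} across coarse $\op$-intervals. Taking $L^q$ norms (and $L^{q/2}$ at the second level), passing into the inner $\ell^p$ sum by Minkowski's inequality in $L^{q/p}$ (legitimate as $q>p$), and inserting \eqref{kolmog moment jump}, the contribution of dyadic level $n$ is $\lesssim 2^{n(1/p-\beta)}$, so summing over $n$ converges precisely because $p>1/\beta$. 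This yields $\E[\|X\|_{p,[0,1]}^q]\le M$; feeding this back into the second-level estimate and repeating gives $\E[\|\X\|_{p/2,[0,1]}^{q/2}]\le M$, with $M=M(C,p,q,\beta)$ only.

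The third step is then immediate. The constant $M$ produced above depends only on $C,p,q,\beta$ and not on $\op$ or $N$, so the displayed uniform bound over $\{\op^\alpha\}_{\alpha\in\mathcal{A}}$ follows at once once the single-partition estimate is established with uniform $C$. Finally, $\|\BX^\alpha\|_{p,[0,1]}=\|X^\alpha\|_{p,[0,1]}+\|\X^\alpha\|_{p/2,[0,1]}$ are nonnegative random variables with $\sup_\alpha\E\big[\|\BX^\alpha\|_{p,[0,1]}^{q/2}\big]<\infty$ (bound $\|\BX\|_p^{q/2}\lesssim\|X\|_p^{q/2}+\|\X\|_{p/2}^{q/2}$ and use the two moment bounds, with $q/2\le q$), so Markov's inequality gives $\sup_\alpha\mathbb{P}(\|\BX^\alpha\|_{p,[0,1]}>R)\le MR^{-q/2}\to 0$ as $R\to\infty$, which is tightness on $[0,\infty)$.

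The main obstacle is the discrete variation embedding in the presence of an \emph{arbitrary}, possibly highly unbalanced, partition. Although across a coarse $\op$-interval the dyadic increments at fine scales only have $|\op|$-sized rather than $2^{-n}$-sized moments, such intervals carry negligible $p$-variation because \eqref{kolmog moment jump} itself forces the corresponding increments to be geometrically small --- this ``self-balancing'' is what makes $M$ uniform in $\op$, and making it precise is the delicate point. The clean ways to organise it are either to chain directly through the points of $\op$ (Chevyrev's step-function criterion, \cite{Che17}) or to exploit invariance of $p$-variation under reparametrisation; in either case the second-level bookkeeping via Chen's relation is a further, but routine, complication.
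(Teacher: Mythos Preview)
Your conceptual framing---a Kolmogorov/GRR criterion on the variation scale, followed by Markov's inequality for tightness---matches the paper's. But the implementation via the dyadic embedding
\[
\|Y\|_{p,[0,1]}\le c_p\sum_{n\ge 0}\Big(\sum_{k}|Y_{k2^{-n},(k+1)2^{-n}}|^p\Big)^{1/p}
\]
applied \emph{directly to the step function} does not work, and you in fact acknowledge this without resolving it. For a step function, each jump is seen at \emph{every} dyadic level: if $\op=\{0,\tfrac12,1\}$, then for all $n\ge 1$ the inner sum equals $|X_{0,1/2}|^p+|X_{1/2,1}|^p$, so the outer sum over $n$ diverges a.s.\ unless the path is constant. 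Your claimed control ``true gap $\asymp 2^{-n}$ up to the mesh'' is false in the relevant direction: one only has $t_j-t_i\le 2^{-n}+|\op|$, and $|\op|$ can be $O(1)$. The suggested fixes do not close the gap either: reparametrising to the uniform partition $\{i/N\}$ turns the inner sum at level $n$ into $(\sum_k a_k^{p\beta})^{1/p}$ with $\sum_k a_k=1$, which can equal $1$ for every $n$ up to $\log_2 N$, giving a $\log N$ bound rather than a uniform one; and the continuous Besov double integral $\iint |Y_{s,t}|^q|t-s|^{-q/p-1}\,ds\,dt$ diverges along the diagonal for step functions. The reference to \cite{Che17} is a Markov (Manstavi\v cius-type) criterion, not a Besov embedding for step functions.

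The paper's proof fixes exactly this by \emph{linearly interpolating} the step function: define $\tilde X$ piecewise linear through $(t_j,X_{t_j})$ and $\tilde\X_{0,\cdot}$ piecewise linear through $(t_j,\X_{0,t_j})$. Since $(\tilde X,\tilde\X)$ agrees with $(X,\X)$ on $\op$, one has $\|\BX\|_{p}\le\|\tilde\BX\|_{p}$. A direct computation using Chen's relation (this is where $\beta\le\tfrac12$ enters, via cross terms $X_{t_j,t_{j+1}}^{\otimes 2}$) shows that the moment bounds \eqref{kolmog moment jump} extend to \emph{all} $s,t\in[0,1]$ for $\tilde\BX$, with a constant depending only on $C,\beta$. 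Then the continuous Besov--variation embedding of \cite{FV06,FV10},
\[
\E\|\tilde\BX\|_{p,[0,1]}^q\ \lesssim\ \E\iint_{[0,1]^2}\frac{d(\tilde\BX_u,\tilde\BX_v)^q}{|v-u|^{q/p+1}}\,du\,dv\ \lesssim\ \iint\frac{|v-u|^{q\beta}}{|v-u|^{q/p+1}}\,du\,dv\ <\ \infty,
\]
is finite precisely when $p>1/\beta$, uniformly in $\op$. The interpolation step is the missing idea in your argument.
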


\begin{proof}

Define a continuous random rough path $(\tX, \tilde{\X})$ which agrees with $(X,\X)$ at $t_j$ and linear on $[t_j, t_{j+1}],$ i.e.
\begin{eqnarray*}
\tX_t &:= & \left\{
\begin{array}{ll}
X_{t_j} & \text{ if } t=t_j\\
 X_{t_j} + \frac{t-t_j}{t_{j+1} - t_j} X_{t_j, t_{j+1}} & \text{ if } t\in [t_j,t_{j+1}].
\end{array}
\right.\\
\tilde{\X}_{0,t} & :=&  \left\{
\begin{array}{ll}
\X_{0,t_j}, & \text{ if } t=t_j\\
 \X_{0,t_j} + \frac{t-t_j}{t_{j+1} - t_j} (\X_{0,t_{j+1}}-\X_{0,t_j}) & \text{ if } t\in [t_j,t_{j+1}].
\end{array}
\right.
\end{eqnarray*}

One obviously has that $\|\BX\|_{p,[0,T]} \leq \|\tBX\|_{p,[0,T]},$ which implies one only needs to show the conclusion for $\tBX.$ Note that $\tBX$ is a continuous and piecewise linear. (The construction
is more involved in a geometric setting (keyword ``log-linear interpolation'' \cite{CF17x}). The reason we get away with a seemingly simple construction here is that $\R^d \oplus \R^{d \times d}$ can be identified with the Lie algebra of the (level-$2$ truncated) Butcher group.) In any case, the computations are not difficult: we claim that for any $s,t \in [0,T],$ one has
$$
\left(\E[|\tX_{s,t}|^q]\right)^{\frac 1 q} \leq M |t-s|^{\beta}, \ \ \ \left(\E[|\tilde{\X}_{s,t}|^{\frac q 2}]\right)^{\frac 2 q} \leq M |t-s|^{2\beta},
$$
Indeed, for any $s\in[t_j, t_{j+1}), t\in [t_k, t_{k+1}),$ for the first estimate, one has
\begin{eqnarray*}
\E[ |\tX_{s,t}|^q] &\leq & 3^q \E[(|\tX_{s,t_{j+1}}|^q + |\tX_{t_{j+1} ,t_k}|^q  + |\tX_{ t_k, t}|^q)]\\
                    &\leq & 3^q \left( \big(\frac{ t_{j+1}-s}{t_{j+1} - t_j}\big)^q \E[ |X_{t_j, t_{j+1}}|^q] + C^q |t_k- t_{j+1}|^{q\beta} + \big(\frac{t-t_k}{t_{k+1} - t_k}\big)^q \E[ |X_{t_k, t_{k+1}}|^q] \right)\\
                    &\leq & 3^{q+1} C^q |t-s|^{q \beta}.
                    \end{eqnarray*}
For the second estimate, note that by ``Chen's relation,'' one has
\begin{eqnarray*}
\tilde{\X}_{s,t_{j+1}} &=& \tilde{\X}_{0,t_{j+1}}-  \tilde{\X}_{0,s}- \tX_{0,s}\tX_{s,t_{j+1}}\\
&=& \frac{t_{j+1}-s}{t_{j+1}-t_j}(\X_{0,t_{j+1}}- \X_{0,t_j})- (X_{0,t_j}+ \tX_{t_j,s}) \frac{t_{j+1}-s}{t_{j+1}-t_j}X_{t_j,t_{j+1}}\\
&=& \frac{t_{j+1}-s}{t_{j+1}-t_j} \X_{t_j,t_{j+1}}   +  \tX_{t_j,s} \frac{t_{j+1}-s}{t_{j+1}-t_j}X_{t_j,t_{j+1}}\\
&=& \frac{t_{j+1}-s}{t_{j+1}-t_j} \X_{t_j,t_{j+1}} + \frac{t_{j+1}-s}{t_{j+1}-t_j}\frac{ s-t_j}{t_{j+1}-t_j}X_{t_j,t_{j+1}}X_{t_j,t_{j+1}}.
\end{eqnarray*}
Then one obtains that
\begin{eqnarray*}
\E[|\tilde{\X}_{s,t_{j+1}}|^\frac q 2] &\leq& M \Big( (\frac{t_{j+1}-s}{t_{j+1}-t_j})^{\frac q 2} |t_{j+1}-t_j|^{q\beta}+ (\frac{t_{j+1}-s}{t_{j+1}-t_j}\frac{ s-t_j}{t_{j+1}-t_j})^{\frac q 2} |t_{j+1}-t_j|^{q\beta}\Big)\\
&\leq& M (t_{j+1}-s)^{q\beta} .
\end{eqnarray*}
By similar argument for $\tilde{\X}_{t_k, t},$ one has
\begin{eqnarray*}
\E[|\tilde{\X}_{s,t}|^\frac q 2] &\leq & C_p \E\left[ \left( |\tilde{\X}_{s, t_{j+1} } |^{\frac q 2} + |\tilde{\X}_{  t_{j+1},t_k } |^{\frac q 2} + |\tilde{\X}_{t_k, t}|^{\frac q 2} + |\tX_{s, t_{j+1}}\tX_{ t_{j+1},t_k}|^{\frac q 2} + |\tX_{s, t_{k}} \tX_{ t_{k},t }|^{\frac q 2} \right)\right]\\
&\leq & M |t-s|^{q \beta}.
\end{eqnarray*}
We recall from last section that $\tBX$ indeed takes values in a Butcher group equipped with metric $d(\tBX_{0,s},\tBX_{0,t}) \asymp |\tX_{s,t}|  + |\tilde{\X}_{s,t}|^\frac12.$ Then for any $  p\in ( \frac 1 \beta,q),$ one has
$$
\E \left( \iint_{[0,T]^2} \frac{d(\tBX_{0,u},\tBX_{0,v})^q}{|v-u|^{\frac q p +1}} du dv\right) \asymp \iint_{[0,T]^2}  \frac{|v-u|^{q\beta}}{|v-u|^{\frac q p +1}} du dv  \leq M ,
$$
which implies, by a Besov variation embedding theorem (\cite{FV06}, see also \cite[Cor. A.3]{FV10}),
$$
\E[\|\tBX\|_{p,[0,T]}^q] \leq  \E \left( \iint_{[0,T]^2} \frac{d(\tBX_{0,u},\tBX_{0,v})^q}{|v-u|^{\frac q p +1}}du dv\right) \leq M.
$$
The tightness follows from the classical Chebyshev inequality.

\end{proof}

For the sake of comparison with previous works \cite{Kel16}, \cite[Thm 9.3]{KM16} let us fully spell out the combination of the above tighness criterion (in case $\beta = 1/2$), stability of random RDEs and their interpretation as It\^o SDEs in a semimartingale setting.
For notational simplicity only, we take $[0,T]=[0,1]$ and assume each $X^n$ is pure jumps, with jump times $\{ j/n: j=1,...,n \}$. We have
\begin{thm} \label{thm:alaK}

Suppose that $(X_n,\X_n) \implies (X,\X)$ in $D([0,T], \R^d \oplus \R^{d\times d} ) $
where $X$ is a semimartingale started at $0$, and $\X$ can be written, by assumption, as
\[
\X(t) = \int_0^t X^-(s)  \,dX(s) + \Gamma (t),
\]
for some, possibly random, $\Gamma \in D^1 ([0,T], \R^{e\times e})$. Suppose there exist $C>0$ and $q>1$ such that
%
\begin{equation}\qquad
\label{e:disc_kolm} \bigl\|X_n(j/n,k/n) \bigr\|_{2q} \leq C \bigl\lvert
\frac{j-k}{n}\bigr\rvert ^{1/2} \quad\mbox{and} \quad\bigl\|\X_n(j/n,k/n)
\bigr\|_{q} \leq C \biggl\lvert \frac
{j-k}{n}\biggr\rvert ,
\end{equation}
hold for all $n\ge1$ and $j,k = 0, \ldots, n$.
Given $F : \R^d\to\R^{d\times e}$  we define
$Y_n\in D([0,T],\R^d)$ by $Y_n(t)=Y_{[nt],n}$ where
\[
Y_{j+1,n}=Y_{j,n}+F (Y_{j,n})
\biggl(X_n\biggl({\frac
{j+1}{n}}\biggr)-X_n\biggl({
\frac{j}{n}}\biggr) \biggr),\qquad Y_{0,n}=\xi.
\]
Then, provided $F \in C^{2+}$, we have $Y_n \implies Y$ in $ D([0,T],\R^d)$, where

\[
dY =  \sum_{\alpha,\beta,\gamma} \Gamma ^{\beta\gamma
}
\biggl(
\partial ^\alpha F^{\beta}(X) F^{ \alpha\gamma}(X) \biggr) \,dt +
F(X) \,dW, ,\qquad Y_{0}=\xi.
\]

\end{thm}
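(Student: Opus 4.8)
The strategy is to read $Y_n$ as the solution of a random c\`adl\`ag RDE driven by the level-$2$ rough path $\BX^n:=(X_n,\X_n)$, to deduce weak convergence of $Y_n$ from the stability theory of Section~\ref{sec:SDE}, and to identify the limiting RDE with the stated It\^o SDE. Fix $p\in(2,3)$ close enough to $2$ that $F\in C^{2+}$ suffices in place of $C^{[p]+1}_b$ (cf.\ the remark on regularity of coefficients); here $[p]=2$, so no branched machinery is needed. Since the recursion defining $Y_{j,n}$ uses $X_n$ only through its grid values, we may and do assume $X_n$ is piecewise constant, equal to $X_n(j/n)$ on $[j/n,(j{+}1)/n)$, and (consistently) that $\X_n$ is the associated discrete It\^o area, so that $\BX^n$ is a c\`adl\`ag $p$-rough path which is constant on each grid interval and whose left jumps satisfy $\Delta^-_t\X_n\equiv 0$, the discrete analogue of the remark following Lemma~\ref{preserve jumps}. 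By Lemma~\ref{preserve jumps} and Theorem~\ref{global sol.}, the solution of $dY=F(Y)^-d\BX^n$, $Y_0=\xi$, is constant between grid points and at $t=(j{+}1)/n$ jumps by $F(Y^n_{j/n})\Delta^-_tX_n+DF(Y^n_{j/n})F(Y^n_{j/n})\Delta^-_t\X_n=F(Y^n_{j/n})\big(X_n((j{+}1)/n)-X_n(j/n)\big)$, which is exactly the recursion; hence $Y_n$ is this RDE solution.

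\textbf{Tightness.} The moment bounds \eqref{e:disc_kolm} are precisely the hypothesis \eqref{kolmog moment jump} of Proposition~\ref{discrete criterion} with $\beta=\tfrac12$ (the exponent there being $2q$ for $X_n$ and $q$ for $\X_n$). Choosing $p\in(2,2q)\cap(2,3)$, Proposition~\ref{discrete criterion} gives $\sup_n\big(\E[\|X_n\|_{p,[0,T]}^{2q}]+\E[\|\X_n\|_{p/2,[0,T]}^{q}]\big)<\infty$, so $\{\|\BX^n\|_{p,[0,T]}\}$ is tight.

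\textbf{Passage to the limit.} One first checks that the candidate limit $\BX:=(X,\mathbb{A}+\Gamma)$ with $\mathbb{A}_{\cdot}:=\int_0^{\cdot}X^-\otimes dX$ is a genuine c\`adl\`ag $p$-rough path: Chen's relation holds because the It\^o lift satisfies it and $\Gamma_{s,t}=\Gamma_t-\Gamma_s$ is additive, and it has a.s.\ finite $p$-variation because the It\^o lift of a semimartingale is a $p$-rough path for $p>2$ while $\Gamma\in D^1\subset V^{1}\subset V^{p/2}$. Given $\BX^n\Rightarrow\BX$ weakly in Skorokhod (the hypothesis) and the tightness just obtained, Theorem~\ref{r.rde} yields $Y_n\Rightarrow Y$ weakly in $D([0,T],\R^d)$, where $Y$ solves $dY=F(Y)^-d\BX$.

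\textbf{Identification of the limit equation.} By linearity of the rough integral in the second (area) slot, $\int_0^tF(Y)^\ell d\BX=\int_0^tF(Y)^\ell d\BX^{\mathrm{It\hat o}}+\int_0^tDF(Y_r)^\ell F(Y_r)\,d\Gamma_r$, where $\BX^{\mathrm{It\hat o}}=(X,\mathbb{A})$; the first term equals the It\^o integral $\int_0^tF(Y^-)\,dX$ by (the argument of) Proposition~\ref{equivalent DE}, and the second is a left-point Riemann--Stieltjes integral against the bounded variation path $\Gamma$, treated by Proposition~\ref{bv integration}. Writing this in coordinates produces the stated equation, the correction drift being $\sum_{\alpha,\beta,\gamma}\Gamma^{\beta\gamma}\,\partial^\alpha F^\beta(Y)F^{\alpha\gamma}(Y)$. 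Uniqueness of the limiting It\^o SDE solution then pins down the weak limit. The main obstacle is this last step: tracking the $\Gamma$-correction precisely through the rough-integral/It\^o-integral dictionary and controlling the left-limit subtleties at the (possibly accumulating) jump times of $X$; by contrast, the tightness estimate and the weak-limit passage are essentially mechanical once the results of Sections~\ref{sec:RDE}--\ref{sec:SDE} are in hand.
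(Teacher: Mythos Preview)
Your proof is correct and follows the paper's strategy exactly: tightness via Proposition~\ref{discrete criterion} (with $\beta=\tfrac12$ and the relabeling $q\to 2q$), weak convergence of the RDE solutions via Theorem~\ref{r.rde}, and identification of the limit via Proposition~\ref{equivalent DE}. Your treatment is in fact more detailed than the paper's one-line proof, notably in making explicit (i) why the Euler recursion coincides with the RDE solution for the piecewise-constant driver and (ii) how the $\Gamma$-perturbation splits off from the It\^o lift in the limit, a step the paper leaves implicit.
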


\begin{rem} One readily includes a drift vector field $F_0 \in C^1$, cf. Remark \ref{rem:drift}.

\end{rem}

\begin{proof} Combine Proposition \ref{discrete criterion} (attention, switch $q \to 2q$ so that $q>1/\beta=2$ becomes $q>1$), Theorem \ref{r.rde} and Proposition \ref{equivalent DE}.
\end{proof}


\begin{rem} \label{rem:KM}

The result \cite[Thm 9.3]{KM16}) was restricted to the case when $X$ is a Brownian motion and $\Gamma$ is deterministic of constant speed; we make no such assumption. However, the limiting rough path $\BX$ has $p$-variation, any $p>2$, but also a H\"older exponent $\frac{1}{2} - \frac{1}{2q}>0$ so that (in the limit) we do deal with continuous $p$-rough paths -
this is also the reason that \cite{Kel16} ultimately was able to rely on (H\"older, geometric) rough path theory. The $p$-variation/jump setting conveniently avoids any Stratonovich detours. More importantly, even in the limiting absence of jumps, it significantly pays off to work with $p$-variation: an underlying H\"older regularity always comes with a price in terms of moments (think: Kolmogorov criterion), whereas $p$-variation comes at no extra price (think: L\'epingle BDG). Having that explanation in mind, the above theorem indeed improves from $2q>6$ moments (\cite[Thm 9.3]{KM16}) to $2q> 2$ which allows for some direct improvement for the validity of homogenization for certain discrete fast-slow systems, see Thm 2.2 and Cor 7.4. in \cite{KM16}.
\end{rem}

\subsubsection{Stability under perturbations}

Now consider adding a c\`adl\`ag perturbation process $\Gamma^n$ to a lifted c\`adl\`ag random rough path $(\tX^n, \tilde{\X}^n)$ by $(X^n, \X^n):= (\tX^n, \tilde{\X}^n + \Gamma^n).$ We remark that
perturbation on $\tX^n$ can be handled by translation argument, i.e. $(\tX', \tilde{\X}'):=(\tX+ h , \tilde{\X}+ \int h dX + \int X dh + \int h dh).$ One may consider the perturbation $\Gamma^n$ as correction terms
for different lifts(e.g. It\^{o} v.s. Marcus), or some noise for the system, or additional information from the system(e.g. Euler v.s. Milstein). Then one has the following obvious tightness conclusion.

\begin{prop}\textbf{(tightness for random rough paths under perturbation)}\label{tightness for perturbation}
Suppose that $\{(\tX^n, \tilde{\X}^n)\}_n$ are random rough paths with $\{  \|\tBX^n\|_{p,[0,T]} \}_{n}$ tight for some $p \in [2,3)$. Let
$$
(X^n_{ t}, \X^n_{u,v}):= (\tX^n_{ t}, \tilde{\X}^n_{u,v} + \Gamma^n_{u,v}),\ \ \ t \in[0,T], \ \{u,v\}\in \{0 \leq u \leq v \leq T\},
$$
where $\Gamma^n_t$ is a c\`adl\`ag $\R^{d\times d}-$valued process with $\{\|\Gamma^n \|_{\frac p 2,[0,T]}\}$ tight. Then $\{  \|\BX^n \|_{p,[0,T]} \}_n$ is tight.

\end{prop}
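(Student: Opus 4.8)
The plan is to reduce the statement to the elementary subadditivity of the $p/2$-variation seminorm together with a union-bound argument for tightness of real-valued random variables; no genuine analytic difficulty is present here.

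First I would recall from (\ref{equ:doublenorm}) that the inhomogeneous norm splits as $\|\BX^n\|_{p,[0,T]} = \|X^n\|_{p,[0,T]} + \|\X^n\|_{p/2,[0,T]}$. Since $X^n = \tX^n$ by construction, the first summand is $\|\tX^n\|_{p,[0,T]} \le \|\tBX^n\|_{p,[0,T]}$. For the second summand I would use that $p \in [2,3)$ forces $p/2 \ge 1$, so $\Xi \mapsto \|\Xi\|_{p/2,[0,T]}$ is subadditive (this follows from Minkowski's inequality applied to each partition sum in (\ref{equ:doubleRPnorm}), then taking the supremum over partitions); hence
$$
\|\X^n\|_{p/2,[0,T]} = \|\tilde{\X}^n + \Gamma^n\|_{p/2,[0,T]} \le \|\tilde{\X}^n\|_{p/2,[0,T]} + \|\Gamma^n\|_{p/2,[0,T]} \le \|\tBX^n\|_{p,[0,T]} + \|\Gamma^n\|_{p/2,[0,T]}.
$$
Combining the two bounds yields the pointwise (in $\omega$) estimate $\|\BX^n\|_{p,[0,T]} \le \|\tBX^n\|_{p,[0,T]} + \|\Gamma^n\|_{p/2,[0,T]}$, uniformly in $n$.

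The last step is to conclude tightness. Given $\vep>0$, by tightness of $\{\|\tBX^n\|_{p,[0,T]}\}_n$ and of $\{\|\Gamma^n\|_{p/2,[0,T]}\}_n$ I would choose $L_1,L_2$ with $\sup_n P\big(\|\tBX^n\|_{p,[0,T]} > L_1\big) < \vep/2$ and $\sup_n P\big(\|\Gamma^n\|_{p/2,[0,T]} > L_2\big) < \vep/2$. The inclusion $\{\|\BX^n\|_{p,[0,T]} > L_1 + L_2\} \subseteq \{\|\tBX^n\|_{p,[0,T]} > L_1\} \cup \{\|\Gamma^n\|_{p/2,[0,T]} > L_2\}$, valid by the estimate above, then gives $\sup_n P\big(\|\BX^n\|_{p,[0,T]} > L_1 + L_2\big) < \vep$, which is exactly tightness of the real-valued family $\{\|\BX^n\|_{p,[0,T]}\}_n$. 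The only point that requires a word of justification is the subadditivity of $\|\cdot\|_{p/2,[0,T]}$, which holds precisely because $p/2 \ge 1$; beyond that there is no real obstacle, and the same argument extends verbatim to the branched setting (using (\ref{equ:inhomogRPnorm}) and subadditivity of each $\|\cdot\|_{p/|f|}$ on the levels $|f|$ where the perturbation acts).
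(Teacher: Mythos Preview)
Your proof is correct and is precisely the argument the paper has in mind; the authors do not give a proof and simply call the result ``obvious''. One cosmetic point: as written, adding your two displayed bounds literally gives $\|\BX^n\|_{p,[0,T]} \le 2\|\tBX^n\|_{p,[0,T]} + \|\Gamma^n\|_{p/2,[0,T]}$, not the sharper inequality you state; the sharper version follows directly from $\|\tX^n\|_{p} + \|\tilde{\X}^n\|_{p/2} = \|\tBX^n\|_{p}$ without passing through the intermediate inequalities, and in any case the factor of $2$ is irrelevant for tightness.
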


Then one needs to consider the tightness of $\{\|\Gamma^n \|_{\frac p 2,[0,T]}\}$ which may be obtained by checking all the above conditions. Furthermore, we give another case for the tightness of $\{\|\Gamma^n \|_{\frac p 2,[0,T]}\}$ which may be helpful in a numerical framework. Suppose $\op_n :=\{0=t_0< t_1^n  <t_2^n  < \cdots < t_{N_n -1}^n< t_{N_n}^n=T \}$ is a sequence of partitions. For each $n,$ $\{\eta^n_i\}_{i=0}^{N_n-1}$ are $\R^{d\times d}-$valued independent random variables with null means.

\begin{prop}\textbf{(tightness for step-function like perturbation)}\label{tightness for step functions}
Suppose $\Gamma^n_t$ is a c\`adl\`ag $\R^{d\times d}-$valued process and has the form
$$
\Gamma^n_{0,t} = M_{0,t}^n + A_{0,t}^n,\ \ \ t\in [t_k^n,t_{k+1}^n),
$$
with $M_{0,t}^n = \sum_{i=0}^{k-1} \eta_i^n , t\in [t_i^n,t_{i+1}^n)$ and $\{A^n\}_n$ tight bounded variation processes satisfying UCV condition. Furthermore, suppose for $i=0,...,N_n-1,$
$$
\E [ | \eta_i^n|  ]  \leq  C |t_{i+1}^n- t_i^n| ,
$$
with $C$ uniform over $n.$ Then $\{\|\Gamma^n \|_{\frac p 2,[0,T]}\}$ is tight.

\end{prop}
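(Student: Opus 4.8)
The plan is to exploit that, since $p\in[2,3)$ so that $p/2\in[1,3/2)$, both summands $M^n$ and $A^n$ making up $\Gamma^n$ are of bounded variation, so their $\tfrac p2$-variation is controlled by their total $1$-variation; one then only has to control the latter. Concretely, I would first invoke the triangle inequality for the $\tfrac p2$-variation seminorm (Minkowski) together with the embedding $V^1\subset V^{p/2}$ and the elementary bound $\|\cdot\|_{\frac p2,[0,T]}\le\|\cdot\|_{1,[0,T]}$ recorded in Section \ref{sec:not} (indeed $\sum_i a_i^{p/2}\le(\sum_i a_i)^{p/2}$ for $a_i\ge0$ when $p/2\ge1$) to get
\[
\|\Gamma^n\|_{\frac p2,[0,T]}\le\|M^n\|_{\frac p2,[0,T]}+\|A^n\|_{\frac p2,[0,T]}\le|M^n|_{1-v,[0,T]}+|A^n|_{1-v,[0,T]} ,
\]
thereby reducing the statement to the tightness of $\{|M^n|_{1-v,[0,T]}\}_n$ and of $\{|A^n|_{1-v,[0,T]}\}_n$ separately.

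For the step process $M^n$: since $M^n_{0,\cdot}$ is piecewise constant along $\op_n$ with increments $M^n_{t^n_i,t^n_{i+1}}=\eta^n_i$, it has bounded variation and $|M^n|_{1-v,[0,T]}=\sum_{i=0}^{N_n-1}|\eta^n_i|$. Taking expectations and using the standing hypothesis $\E|\eta^n_i|\le C|t^n_{i+1}-t^n_i|$ gives $\E[|M^n|_{1-v,[0,T]}]\le C\sum_{i}|t^n_{i+1}-t^n_i|=CT$, uniformly in $n$, and Markov's inequality then yields tightness of $\{|M^n|_{1-v,[0,T]}\}_n$. It is worth noting that neither independence nor the vanishing means of the $\eta^n_i$ enter here; only the first-moment control matters for this part.

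For the drift process $A^n$: the tightness of $\{|A^n|_{1-v,[0,T]}\}_n$ is precisely what the assumption that the $A^n$ form a tight family of bounded variation processes provides, and it can be extracted from the UCV condition by the same localization argument used in the proof of Proposition \ref{conver. of lifted semimar} (cf.\ also Theorem 4.11 in \cite{CF17x}): localize by the stopping times $\tau^{n,\alpha}$ with $\sup_nP(\tau^{n,\alpha}\le\alpha)\le1/\alpha$ and use $\sup_n\E[|A^{n,\delta}|_{1-v,[0,T\wedge\tau^{n,\alpha}]}]<\infty$. Combining this with the previous step and the displayed triangle inequality concludes the proof.

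As to the main obstacle: there is in fact no deep point, which is why I would keep the write-up brief. The only two places deserving a word of care are the identification of the total variation of the piecewise constant $M^n$ with $\sum_i|\eta^n_i|$ and the (already-established) passage from the UCV condition to tightness of $|A^n|_{1-v,[0,T]}$; everything else is Minkowski's and Markov's inequalities.
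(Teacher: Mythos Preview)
Your proposal is correct and matches the paper's own proof essentially step for step: split $\Gamma^n=M^n+A^n$, bound the $\tfrac p2$-variation by the $1$-variation, handle $M^n$ via $\|M^n\|_{1}=\sum_i|\eta^n_i|$ with $\E$ bounded by $CT$ and Markov's inequality, and handle $A^n$ via the UCV localization. The only cosmetic difference is that the paper uses the cruder bound $(a+b)^{p/2}\le 2^{p/2}(a^{p/2}+b^{p/2})$ in place of your Minkowski inequality, and spells out one extra line for $A^n$ (namely $\|A^n\|_1\le\|A^{n,\delta}\|_1+\sum_{|\Delta_tA|>\delta}|\Delta_tA|$, with the large-jump sum controlled by tightness of $A^n$ and continuity of $x\mapsto\sum_{|\Delta_tx|>\delta}|\Delta_tx|$); you may want to mention this large-jump step explicitly rather than leave it entirely to the reference.
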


\begin{proof}

Since $\Gamma^n$ is indeed a random path with finite variation a.s., it has finite $p$-variation a.s.. For the tightness, by basic inequality $(a+b)^{\frac p 2} \leq 2^{p/2}(a^\frac p 2 + b^{\frac p 2}),$ one only needs to show the tightness of $\{\|A^n\|_{\frac p 2,[0,T]}\}_n$ and $\{\|M^n\|_{\frac p 2,[0,T]}\}_n.$ The tightness of $\{\|A^n\|_{\frac p 2,[0,T]}\}_n$ is implied in the proof of Proposition \ref{conver. of lifted semimar}. Indeed, for a path $x$ with finite variation, one has $\|x\|_{p,[0,T]}^p \leq \|x\|_{1,[0,T]}^p$ for any $p\geq 1.$ By applying the definition of UCV condition and this inequality, one has
\begin{eqnarray*}
\sup_n P\left( \|A^{n,\delta}\|_{\frac p 2,[0,T]}> K  \right) \leq  \sup_n P (\tau^{n,\alpha} \leq \alpha) + \sup_n K^{-1} \E[\|A^{n,\delta}\|_{1,[0,T\wedge \tau^{n,\alpha}]}] < \vep,
\end{eqnarray*}
with $\alpha > T \vee \frac{2}{\vep}$ and $K$ chosen to be large. Since $\|A^n\|_{1 } \leq \|A^{n,\delta}\|_1 + \sum_{|\Delta_t A|> \delta}|\Delta_t A|,$ the tightness of $\{ \|A^n\|_{1 }\}_n$ follows from the tightness of $\sum_{|\Delta_t A|> \delta}|\Delta_t A|,$ which is implied by the continuity of the mapping $x \rightarrow \sum_{|\Delta_t x|> \delta}|\Delta_t x|$ and the tightness of $A^n.$ Now we prove $\{\|M^n\|_{\frac p 2,[0,T]}\}_n$ is tight. Indeed, since $\|M^n\|_{\frac p 2} \leq \|M^n \|_{1}= \sum_{i=0}^{N_n-1} |\eta^n_i|,$ one has
\begin{equation*}
P(\|M^n\|_{\frac p 2} > K) \leq K^{-1} \E(\|M^n\|_{\frac p 2}) \leq C K^{-1} \sum_{i=0}^{N_n-1} |t_{i+1}^n- t_i^n|=C K^{-1} T.
\end{equation*}

\end{proof}


\section{Appendix}

\subsection{Regularity of controls}

If a path $x$ on some metric space is continuous, it is well-known that its $p$-variation function $\omega_x(s,t)$ is continuous in both variables $s,t.$ It turns out that if $x$ is right-continuous(or left continuous), then $\omega_x$ shares similar continuities. Indeed, the proof is the same as the continuous case. We provide a short proof for the convenience of readers.

\begin{lem}(\textbf{continuity of paths gives continuity of its $p$-variation})\label{vari.continuity}
Suppose $x:[0,T] \rightarrow (E,d)$ is right-continuous with finite $p$-variation. Denote its $p$-variation as $ \omega^\frac1p(s,t) $ for $s<t$ and $\omega(s,s)=0$ for convenience. Then for any fixed $s,$ the function $\omega(s,t), t\in [s,T]$ is right-continuous, and for any fixed $t,$ $\omega(s,t), s\in[0,t)$ is also right-continuous. Specially, $$\omega(s,s+):=\lim_{t \downarrow s} \omega(s,t)=0.$$

\end{lem}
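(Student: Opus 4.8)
The plan is to mimic the classical proof that continuity of a path transfers to continuity of its $p$-variation control, paying attention only to the fact that we need \emph{right}-continuity rather than two-sided continuity. Fix $s$ and set $\omega(s,t) = \|x\|_{p,[s,t]}^p$, which is nondecreasing in $t$; hence $\lim_{t\downarrow s}\omega(s,t)$ exists and we must show it equals $0$. Suppose not, so that $\omega(s,s+) =: a > 0$. The idea is to produce, for any $t$ slightly larger than $s$, a partition of $[s,t]$ whose $p$-variation sum is close to $a$; by refining we may assume the first point $s_1$ of that partition is as close to $s$ as we like, and then $\sum d(x_{s_i},x_{s_{i+1}})^p \ge a - \epsilon$. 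Now split off the first interval: superadditivity gives $d(x_s,x_{s_1})^p + \omega(s_1,t) \ge a - \epsilon$. Letting $s_1 \downarrow s$ and using right-continuity of $x$ at $s$, the term $d(x_s,x_{s_1})^p \to 0$, so $\omega(s_1,t)$ stays bounded below by $a - \epsilon$; but this forces $\omega(s,t) \ge d(x_s,x_{s_1})^p + \omega(s_1,t)$ to be bounded below by essentially $a$, contradicting nothing yet — the real contradiction comes from iterating to get infinitely many disjoint sub-blocks each carrying $p$-variation mass $\ge a/2$, say, which is incompatible with $\omega(s,T) < \infty$. So the clean route: show that if $\omega(s,s+) = a > 0$ then one can find a sequence $s < \cdots < t_2 < t_1$ decreasing to $s$ with $\omega(t_{k+1},t_k) \ge a/2$ for all $k$, whence $\omega(s,t_1) \ge \sum_k \omega(t_{k+1},t_k) = \infty$, a contradiction.

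Concretely, the first step is the key lemma: for every $\eta > 0$ and every $u > s$ there is a partition $s = \sigma_0 < \sigma_1 < \cdots < \sigma_m = u$ with $\sum_{i} d(x_{\sigma_i},x_{\sigma_{i+1}})^p > \omega(s,u) - \eta \ge a - \eta$, and by inserting an extra point we may take $\sigma_1$ arbitrarily close to $s$. Using right-continuity of $x$ at $s$, pick $\sigma_1$ so close to $s$ that $d(x_s,x_{\sigma_1})^p < \eta$; then $\sum_{i\ge 1} d(x_{\sigma_i},x_{\sigma_{i+1}})^p > a - 2\eta$, i.e. $\omega(\sigma_1,u) > a - 2\eta$. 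Thus for every $u>s$ and every neighbourhood of $s$ we can find $\sigma_1$ in that neighbourhood with $\omega(\sigma_1,u) > a - 2\eta$. Taking $\eta = a/4$ and choosing points inductively — $t_1 := T$, then $t_2 \in (s, t_1)$ with $\omega(t_2,t_1) > a/2$, then $t_3 \in (s,t_2)$ with $\omega(t_3,t_2) > a/2$, and so on, each time applying the lemma on $[\,\cdot\,,t_k]$ — we produce the promised sequence, and superadditivity yields $\omega(s,T) \ge \sum_{k\ge 1}\omega(t_{k+1},t_k) = +\infty$, contradicting $x \in V^p$. Hence $\omega(s,s+) = 0$.

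For the right-continuity of $t \mapsto \omega(s,t)$ at a general point $t_0 \in (s,T]$: monotonicity gives $\omega(s,t_0+) \ge \omega(s,t_0)$, and superadditivity gives $\omega(s,t) \ge \omega(s,t_0) + \omega(t_0,t)$ for $t > t_0$, but more usefully $\omega(s,t) = \omega(s,t_0+) $ in the limit can be controlled by writing $\omega(s,t) - \omega(s,t_0) \le$ (contribution of partitions of $[s,t]$ not adapted to $t_0$), which after inserting $t_0$ as a partition point is bounded by $\omega(t_0,t)$ plus an error from merging the interval straddling $t_0$; the merging error is controlled by $d(x_{a},x_b)^p$ for $a \le t_0 \le b$, which tends to $0$ as $b \downarrow t_0$ by right-continuity of $x$ at $t_0$, and $\omega(t_0,t) \to \omega(t_0,t_0+) = 0$ by the first part applied at $t_0$. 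So $\omega(s,t_0+) = \omega(s,t_0)$. Right-continuity of $s \mapsto \omega(s,t)$ at a point $s_0 \in [0,t)$ is symmetric: $\omega(s,t)$ is nonincreasing in $s$, so $\omega(s_0+,t)$ exists and $\le \omega(s_0,t)$; the reverse inequality $\omega(s_0,t) \le \omega(s_0,s) + \omega(s,t) \le d(x_{s_0},x_s)^p + (\text{refinement correction}) + \omega(s_0+,t)$ — wait, more cleanly, $\omega(s_0,t) \le \omega(s_0,s+) + \omega(s,t)$ for $s > s_0$ by superadditivity after possibly merging, and $\omega(s_0,s+) \to \omega(s_0,s_0+) = 0$; combined with right-continuity of $x$ at $s_0$ to absorb the merge error, this gives $\omega(s_0,t) \le \omega(s_0+,t)$.

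The main obstacle I anticipate is bookkeeping the ``merging correction'' cleanly: when a partition of $[s,t]$ has an interval $(a,b)$ straddling $t_0$, inserting $t_0$ only increases the sum, so one inequality is free; the delicate direction is showing the sum over partitions of $[s,t]$ cannot exceed that over $[s,t_0]$ by much, which uses the elementary inequality $d(x_a,x_b)^p \le C_p(d(x_a,x_{t_0})^p + d(x_{t_0},x_b)^p)$ together with $d(x_{t_0},x_b)^p \le \omega(t_0,t) \to 0$. I would present the $\omega(s,s+)=0$ argument in full (it is the substantive content and the only place right-continuity of $x$ is genuinely exploited in an essential way, beyond the routine merging estimates) and treat the two general right-continuity statements as corollaries obtained by the same straddling-interval trick plus the already-established fact that $\omega(t_0,t_0+) = 0$ at every point $t_0$.
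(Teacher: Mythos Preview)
Your approach is correct in spirit and genuinely different from the paper's. The paper proves $\omega(s,t+)\le\omega(s,t)$ in one shot by a direct $\varepsilon$-argument: choose $H$ so that $d(x_u,x_v)<\varepsilon$ on $[t,t+H]$, take a near-optimal partition of $[s,t+H]$, insert $t$, control the splitting via $(A+\varepsilon)^p\le A^p+C_{\omega(0,T),p}\,\varepsilon$, and then use superadditivity in the form $\omega(s,t+h)\le\omega(s,t+H)-\omega(t+h,t+H)$. You instead isolate $\omega(s,s+)=0$ as a standalone lemma, prove it by contradiction (accumulation of infinitely many disjoint blocks each carrying mass $>a/2$, forcing $\omega(s,T)=\infty$), and only then deduce right-continuity at a general $t_0$ from a straddling-interval estimate plus the already-established $\omega(t_0,t_0+)=0$. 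The paper's route is shorter; yours makes the ``special'' conclusion the conceptual centre, which matches how the lemma is actually used elsewhere in the paper.

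There is one real bookkeeping gap in your key step. You take a near-optimal partition $s=r_0<r_1<\cdots<r_m=u$ with sum $S>a-\eta$, insert $\sigma_1\in(s,r_1)$, and claim that because $d(x_s,x_{\sigma_1})^p<\eta$ the tail sum exceeds $a-2\eta$. But inserting a point can \emph{decrease} the partition sum when $p>1$, so after insertion you no longer know the total exceeds $a-\eta$, and subtracting the first term does not give what you want. The fix is precisely the mean-value estimate you later invoke for the merging correction: with $D$ a bound on the diameter of the range of $x$, one has $|d(x_{\sigma_1},x_{r_1})^p-d(x_s,x_{r_1})^p|\le p\,D^{p-1}d(x_s,x_{\sigma_1})\to0$ as $\sigma_1\downarrow s$, so $\omega(\sigma_1,u)\ge d(x_{\sigma_1},x_{r_1})^p+(S-d(x_s,x_{r_1})^p)>a-2\eta$ for $\sigma_1$ close enough to $s$. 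With this patched, the inductive construction of the $t_k$ and the superadditivity contradiction go through.

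Your paragraph on right-continuity in $s$ is tangled; the clean version (which the paper dismisses as ``simple'') is: take a near-optimal partition $s_0=r_0<r_1<\cdots<r_m=t$, replace $s_0$ by any $s\in(s_0,r_1)$, and observe that the sum changes by $d(x_s,x_{r_1})^p-d(x_{s_0},x_{r_1})^p\to0$ as $s\downarrow s_0$, giving $\omega(s_0+,t)\ge\omega(s_0,t)$; monotonicity gives the reverse.
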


\begin{proof}
The second right-continuity is simple so we only show the first continuity.
Thanks to the super-additivity of $\omega(s,t),$ one only needs to show $\omega(s,t+):=\lim_{u \downarrow t}\omega(s,u) \leq \omega(s,t),$ for any $t\in [s,T).$ Fix a $t\in [s,T).$ For any $0< \vep <1,$ choose $H>0$ such that $|x_{u,v}|:=d(x_u,x_v)<\vep,$ for any $u,v\in[t,t+H].$ According to the definition of $p$-variation, there exists a partition $\op$ of $[s,t+H]$ such that $\omega(s,t+H)< \sum_\op|x_{u,v}|^p + \vep.$ We add $t$ into the partition $\op,$ and denote $t'\in \op$ the point closest to $t$ from the right side. Then one has
$$
\sum_\op|x_{u,v}|^p \leq \sum_{\op\cup \{t\}|_{[s,t]}} |x_{u,v}|^p +\vep  C_{\omega(0,T),p}  + \sum_{\op|_{[t',t+H]}} |x_{u,v}|^p,
$$
where the triangular inequality and the basic inequality $(a+b)^p\leq a^p +  \vep C_{\omega(0,T),p}$ for $0<a<\omega(0,T), 0<b< \vep$ are applied. Then for any $0<h<H,$ one has
\begin{eqnarray*}
\omega(s,t+h) &\leq& \omega(s,t+H)- \omega(t+h,t+H)\\
&\leq &  \sum_\op|x_{u,v}|^p + \vep - \omega(t+h,t+H)\\
&\leq & \sum_{\op\cup \{t\}|_{[s,t]}} |x_{u,v}|^p +\vep  C_{\omega(0,T),p}  + \sum_{\op|_{[t',t+H]}} |x_{u,v}|^p + \vep - \omega(t+h,t+H)\\
&\leq & \omega(s,t) + C \vep + \omega(t',t+H) - \omega(t+h,t+H)
\end{eqnarray*}
Let $h$ decrease to $0,$ and one has $\omega(s,t+)\leq \omega(s,t) + C \vep.$

\end{proof}

\subsection{Sup-norm vs. $\infty$-norm for branched rough paths}\label{app:leftinvmetric}

Recall the ``homogenous'' infinity and sup-norm for paths in a truncated Butcher group $G$.
$$
||| \BX |||_{\infty , [0,T]} \equiv  \sum_{f\in \MF_{[p]} } \sup_{0\le s<t \le T} | \BX^f_{s,t} |^{1/|f|}$$
and

$$
 ||| \BX |||_{\sup , [0,T]} \equiv  \sum_{f\in \MF_{[p]} } \sup_{0 \le t \le T} | \BX^f_{0,t} |^{1/|f| }.
$$
\begin{lem} \label{equivalent norms}
In the sense of a two-sided estimate, $ ||| \BX |||_{\infty , [0,T]} \asymp ||| \BX |||_{\sup , [0,T]}$.

\end{lem}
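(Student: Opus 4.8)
The inequality $|||\BX|||_{\sup,[0,T]} \le |||\BX|||_{\infty,[0,T]}$ is immediate from the definitions, since for each forest $f$ the supremum defining $|||\BX|||_{\sup}$ runs over the sub-family $\{(0,t): 0\le t\le T\}$ of the pairs $\{(s,t):0\le s<t\le T\}$ appearing in $|||\BX|||_{\infty}$. So the entire content is the reverse estimate $|||\BX|||_{\infty,[0,T]} \le C\, |||\BX|||_{\sup,[0,T]}$ for some constant $C=C(p,d)$. The plan is to fix $0\le s<t\le T$ and an arbitrary forest $f\in\MF_{[p]}$, and to bound $|\BX^f_{s,t}|^{1/|f|}$ by a constant multiple of $|||\BX|||_{\sup,[0,T]}$, using the group relation $\BX_{s,t}=\BX_s^{-1}\star\BX_t$ together with the expansion of $S^*$ (the antipode) and of $\star$ on $\MH^*$.

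\textbf{Key steps.} First I would write $\langle \BX_{s,t},f\rangle = \langle \BX_s^{-1}\star \BX_t, f\rangle = \langle S^*(\BX_s)\star \BX_t, f\rangle = \langle S^*(\BX_s)\otimes \BX_t, \Delta f\rangle$, and expand $\Delta f$ in Sweedler notation; this produces a finite sum of terms $\langle S^*(\BX_s), f^{(1)}\rangle \langle \BX_t, f^{(2)}\rangle$, with $|f^{(1)}|+|f^{(2)}| = |f|$ (grading of $\Delta$) and each $|f^{(i)}|\le [p]$. Next, since $S^*$ is built from $\Delta$ and multiplication with the grading preserved, I would expand $\langle S^*(\BX_s), g\rangle$ for a forest $g$ recursively (as in Proposition \ref{hopf alg}) to see it is a finite $\Z$-linear combination of products $\prod_j \langle \BX_s, g_j\rangle = \prod_j \BX^{g_j}_{0,s}$ with $\sum_j |g_j| = |g|$. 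Combining, $\BX^f_{s,t}$ is a finite universal linear combination (coefficients depending only on the Hopf-algebra combinatorics, hence on $d$ and $[p]$) of monomials $\big(\prod_j \BX^{g_j}_{0,s}\big)\, \BX^{h}_{0,t}$ with $\sum_j|g_j| + |h| = |f|$. For each such monomial, by definition of the sup-norm $|\BX^{g_j}_{0,s}| \le (|||\BX|||_{\sup})^{|g_j|}$ and $|\BX^h_{0,t}| \le (|||\BX|||_{\sup})^{|h|}$, so the monomial is bounded by $(|||\BX|||_{\sup})^{|f|}$. Summing over the (finitely many, universally bounded number of) terms gives $|\BX^f_{s,t}| \le C_{d,p}\,(|||\BX|||_{\sup})^{|f|}$, hence $|\BX^f_{s,t}|^{1/|f|} \le C_{d,p}^{1/|f|}\,|||\BX|||_{\sup} \le C_{d,p}\,|||\BX|||_{\sup}$ (taking $C_{d,p}\ge 1$). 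Finally, sum over $f\in\MF_{[p]}$, a finite set, to conclude $|||\BX|||_{\infty,[0,T]} \le C\,|||\BX|||_{\sup,[0,T]}$.

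\textbf{Main obstacle.} None of the steps is deep; the only thing requiring care is bookkeeping the combinatorics of $\Delta$ and $S^*$ so as to verify the \emph{grading} claim ($\sum_j |g_j| + |h| = |f|$), which is exactly what makes each monomial homogeneous of degree $|f|$ in $|||\BX|||_{\sup}$ and thus produces a clean bound after taking the $|f|$-th root. This is the analogue, in the branched/Hopf-algebra setting, of the elementary fact in the level-$2$ case that $\X_{s,t} = \X_{0,t} - \X_{0,s} - X_{0,s}\otimes X_{s,t}$ with $X_{s,t} = X_{0,t}-X_{0,s}$, so that $|\X_{s,t}| \lesssim \|\X\|_{\sup} + \|X\|_{\sup}^2$; I would in fact present the level-$2$ computation explicitly first (mirroring how the paper treats other branched results) and then indicate that the general case is the same after expanding $\star$ and $S^*$ with the grading. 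A secondary, purely cosmetic point is that $|||\cdot|||_{\sup}$ as defined uses $\BX_{0,t}$; one should note $\BX_{0,0}=\BI$ so that the $s=0$ and $t=0$ endpoints cause no trouble.
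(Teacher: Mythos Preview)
Your proposal is correct and follows essentially the same route as the paper: expand $\BX^f_{s,t} = \langle S^*(\BX_s)\otimes \BX_t,\Delta f\rangle$, use that both $\Delta$ and the antipode preserve grading so every resulting monomial in the $\BX^{g}_{0,\cdot}$ has total degree $|f|$, and bound each factor by $(|||\BX|||_{\sup})^{|g|}$. The paper's write-up is slightly terser (it does not spell out the character factorization of $\langle \BX_s,S(f^{(1)})\rangle$ into tree-components as you do), and in a remark afterwards it also records the alternative one-line argument via equivalence of continuous homogeneous norms on the homogeneous group $G$.
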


\begin{proof} It suffices to show the $\lesssim$ direciton. Write $\BX_{0,t} \equiv \BX_t$. In terms of the antipode, we can write
$$
\BX^f_{s,t} = \langle \BX_s^{-1} \otimes \BX_t , \Delta f \rangle = \sum_{(f)} \langle \BX_s, S (f^1) \rangle \langle \BX_t, f^2) \rangle.
$$
Note that the antipode preserve the degree of forests(number of nodes), which could be seen from the proof of Proposition \ref{hopf alg} by induction, so that each forest appearing in $S (f^1) = \sum_i f^1_i$ (finite sum) has the same degree, i.e. $|f^1_i| + |f^2| =  |f^1| + |f^2| = |f|$. Hence
$$
   | \BX^f_{s,t} | \lesssim \sum_{(f)} |  \langle \BX_s, S (f^1) \rangle \langle \BX_t, f^2) \rangle |  \lesssim  ||| \BX |||_{\sup , [0,T]}^{|f|}
$$
\end{proof}

\begin{rem}

As a homogenous group, $G$ admits a smooth subadditive, homogenous norm \cite{HS90}, say $N$, so that $d(g,h) := N (g^{-1} h)$ defines a left-invariant metric. Since all continuous homogenous norm are equivalent (see as in Thm 7.44 \cite{FV10}), we have equivalence of $N$ with  $\tilde N (g) := \sum_{f\in \MF_{[p]} }  | g^f |^{1/|f|}$.
This gives another proof of the above lemma,
$$
       ||| \BX |||_{\infty , [0,T]}  \asymp  \sup_{0\le s<t \le T} \sum_{f\in \MF_{[p]} } | \BX^f_{s,t} |^{1/|f|}       \asymp  \sup_{0\le s<t \le T} d (\BX_s, \BX_t) \le 2 \sup_{0\le t \le T} d (\BX_0, \BX_t)  \asymp  ||| \BX |||_{\sup , [0,T]} .
$$
The advantage of the earlier proof, however, is that it also shows immediately that
$$
           ||| \BX; \BX^n |||_{\infty , [0,T]} \to 0 \text{ iff } ||| \BX; \BX^n |||_{\sup , [0,T]} \to 0
$$
and this is further equivalent to, now in terms of the inhomogenuous distance,
$$
            || \BX ; \BX^n ||_{\infty , [0,T]} \to 0 .
$$
\end{rem}

\end{document}